\documentclass[10pt,a4paper,oneside,reqno]{amsart}

\usepackage{setspace}
\usepackage[totalwidth=14cm,totalheight=22cm]{geometry}
\usepackage[T1]{fontenc}
\usepackage[dvips]{graphicx}
\usepackage{epsfig}
\usepackage{amsmath,amssymb,amscd,amsthm}
\usepackage{subfigure}
\usepackage[latin1]{inputenc}
\usepackage{latexsym}
\usepackage{graphics}
\usepackage{ae}
\usepackage{enumitem}
\usepackage[all]{xy}
\usepackage{scalerel}
\usepackage{cancel}
\usepackage{mathrsfs}
\usepackage[dvipsnames]{xcolor}
\usepackage[bookmarks=true, colorlinks=true, urlcolor=Blue,
linkcolor=BrickRed, citecolor=MidnightBlue, pagebackref=true]{hyperref}
\usepackage[normalem]{ulem}
\usepackage{nicefrac}
\usepackage{xfrac}


\linespread{1.09}

\makeatletter
\newtheorem*{rep@theorem}{\rep@title}
\newcommand{\newreptheorem}[2]{%
\newenvironment{rep#1}[1]{%
 \def\rep@title{#2 \ref{##1}}%
 \begin{rep@theorem}}%
 {\end{rep@theorem}}}
\makeatother

\makeatletter
\newtheorem*{rep@cor}{\rep@title}
\newcommand{\newrepcor}[2]{%
\newenvironment{rep#1}[1]{%
 \def\rep@title{#2 \ref{##1}}%
 \begin{rep@cor}}%
 {\end{rep@cor}}}
\makeatother

\makeatletter
\newtheorem*{rep@prop}{\rep@title}
\newcommand{\newrepprop}[2]{%
\newenvironment{rep#1}[1]{%
 \def\rep@title{#2 \ref{##1}}%
 \begin{rep@prop}}%
 {\end{rep@prop}}}
\makeatother

\newtheorem{cor}{Corollary}[section]

\newtheorem{theorem}[cor]{Theorem}

\newtheorem{prop}[cor]{Proposition}

\newrepcor{cor}{Corollary}
\newreptheorem{theorem}{Theorem}
\newrepprop{prop}{Proposition}

\newtheorem{lemma}[cor]{Lemma}

\theoremstyle{definition}
\newtheorem{defi}[cor]{Definition}
\theoremstyle{remark}
\newtheorem{remark}[cor]{Remark}
\newtheorem*{remark*}{Remark}
\newtheorem{example}[cor]{Example}

\newtheorem*{notation*}{Notation}

\newlist{steps}{enumerate}{1}
\setlist[steps, 1]{itemsep=8pt,leftmargin=0cm,itemindent=.5cm,labelwidth=\itemindent,labelsep=0cm,align=left,label = \textbf{\emph{Step \arabic*}:\,}}

\newcommand{\R}{{\mathbb R}}
\newcommand{\Z}{{\mathbb Z}}
\newcommand{\dev}{\mathrm{dev}}

\newcommand{\Hyp}{\mathbb{H}}

\newcommand{\AdS}{\mathbb{A}\mathrm{d}\mathbb{S}}
\newcommand{\HP}{\mathsf{HP}}
\newcommand{\HS}{\mathsf{HS}}
\newcommand{\dS}{\mathrm{d}\mathbb{S}}
\newcommand{\RP}{\mathsf{P}}
\newcommand{\SP}{\mathsf{S}}
\newcommand{\Aff}{\mathsf{A}}

\newcommand{\PGL}{\mathrm{PGL}}
\newcommand{\GL}{\mathrm{GL}}

\renewcommand{\r}{\mathfrak{r}}

\newcommand{\Isom}{\mathrm{Isom}}
\newcommand{\Aut}{\mathrm{Aut}}
\renewcommand{\O}{\mathrm{O}}
\newcommand{\X}{\mathbb{X}}


\newcommand{\p}[1]{\ensuremath{\boldsymbol{#1^+} }}
\newcommand{\m}[1]{\ensuremath{\boldsymbol{#1^-} } }
\renewcommand{\l}[1]{\ensuremath{\boldsymbol{#1}} }



\begin{document}

\title[Four-dimensional geometric transition]{Geometric transition from hyperbolic to anti-de Sitter structures in dimension four}

\author[Stefano Riolo]{Stefano Riolo}
\address{Stefano Riolo: Dipartimento di Matematica\\ Universit\`a di Bologna \newline Piazza di Porta San Donato 5\\ 40126 Bologna\\ Italy}
\email{stefano.riolo@unibo.it}

\author[Andrea Seppi]{Andrea Seppi}
\address{Andrea Seppi: CNRS and Universit\'e Grenoble Alpes \newline 100 Rue des Math\'ematiques\\ 38610 Gi\`eres\\ France} \email{andrea.seppi@univ-grenoble-alpes.fr}

\thanks{The authors were partially supported by FIRB 2010 project ``Low dimensional geometry and topology'' (RBFR10GHHH003), and are members of the national research group GNSAGA.
The first author was supported by the Mathematics Department of the University of Pisa (research fellowship ``Deformazioni di strutture iperboliche in dimensione quattro''), and by the Swiss National Science Foundation (project no.~PP00P2-170560).
}

\subjclass[2010]{57M50; 53C15, 53B30, 20H10, 53C30}

\begin{abstract}
We provide the first examples of geometric transition from hyperbolic to anti-de Sitter structures in dimension four, in a fashion similar to Danciger's three-dimensional examples.
The main ingredient is a deformation of hyperbolic 4-polytopes, discovered by Kerckhoff and Storm, eventually collapsing to a 3-dimensional ideal cuboctahedron. We show the existence of a similar family of collapsing anti-de Sitter polytopes, and join the two deformations by means of an opportune half-pipe orbifold structure. The desired examples of geometric transition are then obtained by gluing copies of the polytope.
\end{abstract}

\maketitle

\section{Introduction}

In this paper we provide explicit examples of geometric transition in dimension four. Before stating the main result (Theorem \ref{teo: main} below), we begin with some motivational preliminaries in dimension three.

\subsection*{Degeneration and transition}

In his famous notes \cite{thurstonnotes}, Thurston introduced a phenomenon called \emph{degeneration} of hyperbolic structures. Several contribuitions have then been given on this topic \cite{Hthesis,P98,P01,P02,MR2140265,P07,P13,Kozai_thesis,LM2,LM1,Kozai}, which plays an important role in the proof of the celebrated Orbifold Theorem \cite{BLP,CHK}. 

As an example, for some closed hyperbolic 3-orbifolds $\mathcal X$, singular along a knot $\Sigma\subset\mathcal X$ with cone angle $\frac{2\pi}m$, the following holds. There is a path $\theta\mapsto\mathcal X_\theta$ of hyperbolic cone-manifold structures on $\mathcal X$ with singular locus $\Sigma$ and cone angle $\theta\in\left[\frac{2\pi}m,2\pi\right)$, such that $\mathcal X_\theta$ collapses to a lower-dimensional orbifold as $\theta\to2\pi$. This holds, for instance, when $\mathcal X$ is an exceptional Dehn filling of the figure-eight knot complement admitting a Seifert fibration $\mathcal X\to\mathcal N$ with base a hyperbolic 2-orbifold $\mathcal N$. As $\theta\to2\pi$, the cone-manifold $\mathcal X_\theta$ collapses to $\mathcal N$, whose hyperbolic structure is said to \emph{regenerate} to 3-dimensional hyperbolic structures. 

The familiar idea of going from spherical to hyperbolic geometry, through Euclidean geometry, was known since Klein \cite{AP}. This is a continuous process inside projective geometry, seen as a common ``ambient'' geometry. This phenomenon, called \emph{geometric transition}, has been recently studied in greater generality by Cooper Danciger and Wienhard \cite{CDW} (see also \cite{trettel_thesis}) through the notion of \emph{limit geometry}. For example, among others, Euclidean geometry is a limit of both spherical and hyperbolic geometries inside projective geometry.

Let us come back to our hyperbolic cone 3-manifolds $\mathcal X_\theta$ collapsing to the hyperbolic 2-orbifold $\mathcal N$. The work of Danciger \cite{danciger,dancigertransition,dancigerideal} shows that in many such cases the hyperbolic structure of $\mathcal N$ regenerates to anti-de Sitter (AdS for short, the Lorentzian analogue of hyperbolic geometry) structures on $\mathcal X$, where the singular locus $\Sigma$ is a spacelike geodesic. Moreover, the two deformations are joined continuously via projective geometry so as to have geometric transition. To this purpose, Danciger introduced the so called \emph{half-pipe} (HP for short) geometry, which is a limit geometry \cite{CDW} inside projective geometry of both hyperbolic and anti-de Sitter geometry. Half-pipe space naturally identifies with the space of spacelike hyperplanes in Minkowski space $\R^{1,n-1}$, and its group of transformations, which is a Chabauty limit of both $\Isom(\Hyp^n)$ and $\Isom(\AdS^n)$, is isomorphic to $\Isom(\R^{1,n-1})$ by means of this duality. Suitable projective transformations are used to ``rescale'' the hyperbolic and AdS metric along the direction of collapse, thus obtaining geometric transition via half-pipe geometry.

In \cite[Theorem 1.1]{dancigertransition}, Danciger provides an infinite class of Seifert 3-manifolds $\mathcal X$ (unit tangent bundles of some hyperbolic 2-orbifolds) supporting such a kind of geometric transition. Also, \cite[Theorem 1.2]{dancigertransition} is a regeneration result of half-pipe structures under a cohomological condition: the 1-dimensionality of the twisted cohomology group $H^1_{\mathrm{Ad}\,\rho}(\pi_1(\mathcal X\smallsetminus\Sigma),\mathfrak{so}(1,2))$, where $\rho\colon\pi_1(\mathcal X\smallsetminus\Sigma)\to\Isom(\Hyp^2)$ is the representation associated to the degenerate structure and $\mathrm{Ad}\colon\Isom(\Hyp^2)\to\mathrm{Aut}(\mathfrak{so}(1,2))$ is the adjoint representation.

\subsection*{Examples of transition in dimension four}

It seems natural to ask whether this phenomenon is purely three-dimensional, or if it can happen also in higher dimension, where hyperbolic structures are typically more rigid. In this paper we answer affirmatively in dimension four. We indeed build some examples of geometric transition from hyperbolic to AdS structures. The construction is explicitly obtained by gluing copies of a hyperbolic or AdS collapsing 4-polytope. 

The study of deformations of 4-dimensional hyperbolic cone-manifolds is quite recent, and in general very little is known on this topic. Recently, Martelli and the first author \cite[Theorem 1.2]{MR} provided the first example of degeneration of hyperbolic cone structures on a 4-manifold to a 3-dimensional hyperbolic structure. We show that in this case there is geometric transition from hyperbolic to AdS structures, and provide an infinite class of such examples. The existence of such a phenomenon is a  novelty in dimension four. Precisely, we show the following:

\begin{theorem} \label{teo: main}
Let $\mathcal{N}$ be a hyperbolic 3-manifold that finitely orbifold-covers the ideal right-angled cuboctahedron. There exists a $C^1$ family $\{\sigma_t\}_{t\in\left(-\epsilon,\epsilon\right]}$ of simple projective cone-manifold structures on the 4-manifold
$$\mathcal X=\mathcal{N}\times S^1,$$
singular along a compact foam $\Sigma\subset\mathcal X$, such that $\sigma_t$ is conjugated to a cusped, finite-volume,
\begin{itemize}
\item hyperbolic orbifold structure with cone angles $\pi$ as $t=\epsilon$,
\item hyperbolic cone structure with decreasing cone angles $\alpha_t\in[\pi,2\pi)$ as $t>0$,
\item half-pipe structure with spacelike singularity as $t=0$,
\item anti-de Sitter structure with spacelike singularity of increasing magnitude $\beta_t\in (-\infty,0)$ as $t<0$.
\end{itemize}
As $t\to0^+$ (resp. $t\to0^-$), we have $\alpha_t\to2\pi$ (resp. $\beta_t\to0$) and the induced hyperbolic (resp. AdS) structures on $\mathcal X\smallsetminus\Sigma$ degenerate to the complete hyperbolic structure of $\mathcal{N}$.
\end{theorem}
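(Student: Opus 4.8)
The plan is to reduce Theorem~\ref{teo: main} to a statement about a single polytope and its symmetric deformations, following the scheme Danciger pioneered in dimension three. First I would recall (from Kerckhoff--Storm) the hyperbolic right-angled $4$-polytope $P$ whose relevant faces collapse, as the deforming parameter varies, onto a copy of the ideal right-angled cuboctahedron $C$; the key input is that $P$ admits a $C^1$ one-parameter family $\{P^{\hyp}_s\}$ of hyperbolic polytopes interpolating between the right-angled one (dihedral angles $\pi/2$ along the collapsing ridges, giving cone angle $\pi$ after doubling) and the collapsed configuration, with all combinatorics and all the other (non-collapsing) dihedral angles preserved. I would then establish the analogous AdS statement: there is a $C^1$ family $\{P^{\AdS}_s\}$ of collapsing AdS polytopes with the same combinatorics, limiting to the same collapsed cuboctahedron, and — crucially — the half-pipe polytope $P^{\HP}$ obtained as the first-order term of the collapse is the \emph{common} blow-up limit of both families. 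This is where one uses the rescaling projective transformations $r_t=\mathrm{diag}(1,\dots,1,1/t)$ conjugating the representations: the derivative at $s=0$ of the developing maps, after rescaling, lands in half-pipe geometry, and matching the hyperbolic and AdS sides amounts to checking that the infinitesimal deformation cocycle is the same, which follows because both are governed by the same linear problem (the transverse geometry to the collapsing locus is the Minkowski-space picture underlying $\HP^4$).

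Next I would assemble the cone-manifold structures. Since $\mathcal N$ orbifold-covers $C$, I pull back the polytope decomposition: $\mathcal N\times S^1$ is obtained by gluing copies of $P\times[\text{interval}]$, or more precisely $\mathcal X$ is realized as a union of copies of the deforming polytope $P_s$ (in the relevant $\X\in\{\Hyp^4,\HP^4,\AdS^4\}$) with face identifications dictated by the combinatorics of the covering $\mathcal N\to C$ and the $S^1$ factor. The singular locus $\Sigma$ — a ``foam'' (a $2$-complex) — is exactly the image of the collapsing ridges of $P$ under the gluing, and the cone angle around it is $2$ times, or the appropriate multiple of, the dihedral angle along the collapsing ridges; as $s$ varies this angle runs through $[\pi,2\pi)$ on the hyperbolic side and the AdS spacelike-singularity magnitude runs through $(-\infty,0)$. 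The holonomies are built from the polytope reflection/gluing data, and their convergence as $s\to 0$ (after rescaling by $r_t$) to the $\HP^4$ holonomy, and thence the well-definedness of the limiting half-pipe structure, is what makes the family $C^1$ across $t=0$. One must also check geodesic completeness and finiteness of volume: the non-collapsing cusps of $P$ persist through the deformation (their combinatorics is frozen), so the ends of $\mathcal X\smallsetminus\Sigma$ remain complete finite-volume cusps, and at the collapsed time the structure on $\mathcal X\smallsetminus\Sigma$ degenerates precisely onto the complete hyperbolic structure of $\mathcal N$ because the $S^1$-direction is the direction being crushed.

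The main obstacle, I expect, is the construction and control of the \emph{AdS} collapsing family and the matching of first-order terms at $t=0$ — that is, showing that the half-pipe structure one gets as the rescaled limit from the hyperbolic side genuinely regenerates to AdS structures on the other side, with the required $C^1$ regularity of $t\mapsto\sigma_t$ through $0$. Concretely this means: (i) producing AdS polytopes with prescribed combinatorics and dihedral angles near the collapsed configuration, which is a question about solvability of the (Lorentzian) Gram-matrix equations — here one can hope to mimic Kerckhoff--Storm's existence argument, or to deduce it from an implicit-function/regeneration argument once the infinitesimal picture is understood; and (ii) verifying that the transition is $C^1$, for which the clean statement is that the rescaled holonomies $r_t^{-1}\hol^{\Hyp}_{s(t)}r_t$ and $r_t^{-1}\hol^{\AdS}_{s(t)}r_t$ extend to a single $C^1$ path through the $\HP^4$ holonomy — a computation local to the group, reducing to the fact that $\Isom(\Hyp^4)$ and $\Isom(\AdS^4)$ have the same Chabauty/contraction limit $\Isom(\R^{1,3})$. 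The rest — pulling back along $\mathcal N\to C$, tracking the combinatorial type of $\Sigma$, reading off the cone angles, checking completeness at the cusps — is bookkeeping that I would organize polytope-by-polytope and then glue.
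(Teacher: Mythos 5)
Your overall strategy matches the paper's: deform the Kerckhoff--Storm polytope, build the AdS analogue, rescale to half-pipe, glue copies indexed by the cover $\mathcal N\to\mathcal C$, and read off the cone-manifold structure on $\mathcal N\times S^1$. However, there is a genuine gap in your justification of the $C^1$ matching at $t=0$. You write that the extension of the rescaled holonomies to a single $C^1$ path ``reduc[es] to the fact that $\Isom(\Hyp^4)$ and $\Isom(\AdS^4)$ have the same Chabauty/contraction limit $\Isom(\R^{1,3})$.'' Chabauty convergence of the ambient isometry groups does \emph{not} suffice here, and the paper explicitly flags this. The issue is that a degenerate hyperplane in $\HP^n$ does not determine a unique reflection: there is a one-parameter family of elements of $G_{\HP^n}$ fixing a given degenerate hyperplane pointwise (Proposition \ref{prop:hp reflections deg}). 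Since the facets $\partial\m i$ and $\partial\l X$ of the rescaled polytope are degenerate hyperplanes in the half-pipe limit, knowing that the rescaled polytopes converge and that the groups converge still leaves it ambiguous \emph{which} half-pipe reflection is the limit from the hyperbolic side, and whether the same one is obtained from the AdS side. The paper resolves this by an explicit matrix computation (Lemma \ref{lemma reflections C1}): writing $r_{\l H}(t)=\mathrm{id}\mp2J_{\pm1}\alpha(t)\alpha(t)^T$ and checking directly that $\r_{|t|}r_{\l H}(t)\r_{|t|}^{-1}$ extends $C^1$ across $t=0$ with a common limit. Without this explicit verification your argument does not close.

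A secondary point: you suggest producing the AdS polytope family by mimicking Kerckhoff--Storm's existence argument or by a Gram-matrix/implicit-function argument. In fact the paper cannot reuse Kerckhoff--Storm's method, which relies on Vinberg's theory of acute-angled \emph{hyperbolic} polytopes (no AdS analogue is available), and instead verifies the constancy of the combinatorics by a direct, computer-assisted enumeration of vertices (Lemma \ref{lem: vertices}, Proposition \ref{prop: const-combinatorics}). This is a nontrivial technical step that your proposal underestimates; an abstract regeneration argument would also require controlling why the combinatorics does not change, which is precisely what the paper's explicit check supplies.
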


Similarly to Danciger's \cite[Theorem 1.1]{dancigertransition}, but in higher dimension, there is a circle bundle over a hyperbolic orbifold (a 2-orbifold in his case, a 3-manifold in ours), and geometric transition from hyperbolic to AdS singular structures on the total space of the bundle with collapse to the base. Let us briefly explain some terminology used in the statement of Theorem \ref{teo: main}.

The \emph{cuboctahedron}, drawn in Figure \ref{fig:cuboct}, is a well-known uniform polyhedron whose \emph{ideal} hyperbolic counterpart $\mathcal C\subset\Hyp^3$ is \emph{right-angled}. As such, the polyhedron $\mathcal C$ can be seen as a cusped hyperbolic 3-orbifold.

\begin{figure}
\includegraphics[scale=.605]{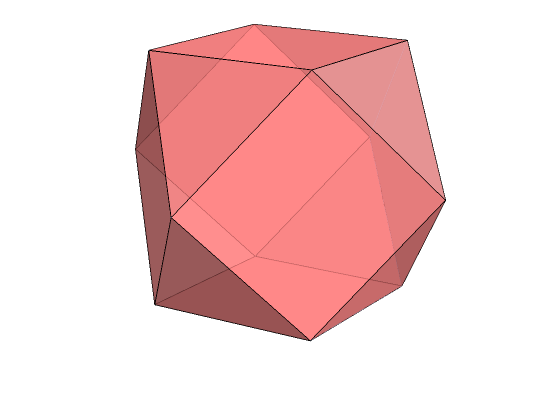}

\vspace{-.8cm}

\caption[The cuboctahedron.]{\footnotesize A cuboctahedron in $\R^3$. The ideal right-angled cuboctahedron in $\Hyp^3$ can be seen as a cusped hyperbolic 3-orbifold. }\label{fig:cuboct}
\end{figure}

Roughly speaking, \emph{simple projective cone-manifolds} (Definition \ref{def:proj_simple_cone-mfd}) are singular real projective manifolds locally modelled on the double of a simple polytope in projective space. The singular locus $\Sigma\subset \mathcal{X}$ of an $n$-dimensional simple projective cone-manifold $\mathcal{X}$ is an $(n-2)$-complex with generic singularities: {if} $n=1,2,3$ or $4$, {the set} $\Sigma$ is empty, discrete, a trivalent graph or a foam, respectively. A \emph{foam} is a 2-complex locally modelled on the cone over the 1-skeleton of the tetrahedron; see Figure \ref{fig: Sigma_local}. Our singular locus is not a surface, as it has edges and vertices. However foams are quite natural objects in dimension four (like trivalent graphs in 3-manifolds). To the best of our knowledge, it is not known whether there can even exist deformations of 4-dimensional, finite-volume, hyperbolic cone-manifolds with singular locus an embedded surface.

\begin{figure}
\includegraphics[scale=.18]{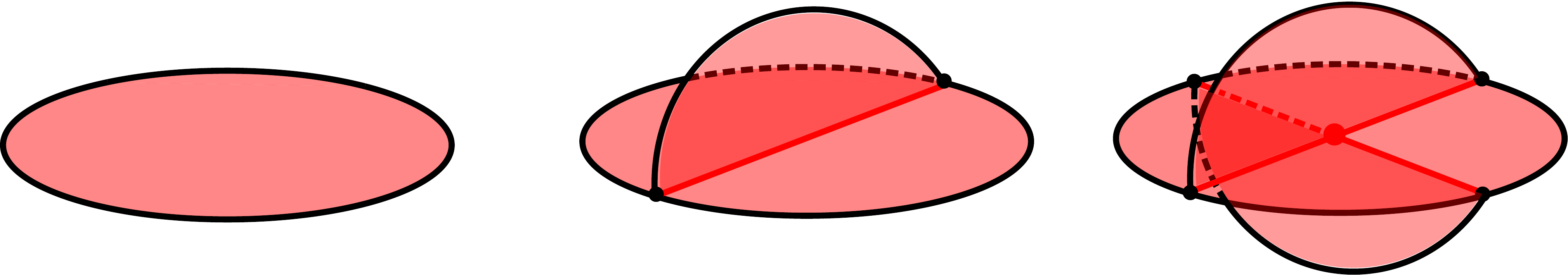}
\caption[Local models of a foam.]{\footnotesize In red, the local models of a foam $\Sigma$, seen as open cones over some graphs (drawn in black). From left to right, a neighbourhood in $\Sigma$ of a point in a 2-, 1-, and 0-stratum of $\Sigma$, respectively. Note that the third local model includes the other two. A foam in a 4-manifold is somehow the analogue of a trivalent graph in a 3-manifold.}
\label{fig: Sigma_local}
\end{figure}

The holonomy of a meridian $\gamma\in\pi_1(\mathcal X\smallsetminus\Sigma)$ of a 2-stratum of $\Sigma$ has a totally geodesic 2-plane as fixed point set. We have a rotation in $\Hyp^4$ of angle $\alpha_t$ when $t>0$, and a Lorentz boost in $\AdS^4$ of magnitude $\beta_t$ as $t<0$. In the half-pipe case, we have a transformation that can be interpreted as an infinitesimal rotation (resp. boost) in $\Hyp^4$ (resp. $\AdS^4$).

It is worth remarking that the cone-manifolds of Theorem \ref{teo: main} are non-compact, but of finite volume. (See \cite[Chapter 5]{surveyseppifillastre} and \cite{barbotfillastre} for the notion of volume in half-pipe geometry.) Nevertheless, the singularity $\Sigma$ is compact, or in other words, it does not enter into the ends of the cone-manifolds. These ends are (non-singular) \emph{cusps} in a suitable sense: while for hyperbolic manifolds this notion is well-established, we propose here an analogue definition for AdS and half-pipe manifolds (Definition \ref{defi cusp}). As a direct consequence of our methods, we achieve a nice description of the geometry of the cusps. A section of the cusps will indeed naturally support a geometric transition from Euclidean to Minkowski (non-singular) structures --- going through an intermediate geometry which is a ``flat version'' of half-pipe geometry and is the so-called \emph{Galilean geometry} \cite{yaglom}. The curious reader might want to have a preliminary look at Figure \ref{fig: cusp transition2} at page \pageref{fig: cusp transition2}.

Also the \emph{links} (the ``spheres of directions'') of the points of $\mathcal X$ naturally carry a geometric transition, which enlightens the structure of $\Sigma$ itself, and will be described in Figures \ref{fig:transition link 1}, \ref{fig:transition link 2} and \ref{fig:transition link 3} at page \pageref{fig:transition link 1}.

Finally, we remark that the statement of Theorem \ref{teo: main} can be made slightly more general by our methods, just assuming that $\mathcal N$ is a \emph{cuboctahedral manifold}, namely a hyperbolic manifold tessellated by ideal right-angled cuboctahedra. We however chose to keep the statement in this simpler version. See Remark \ref{rem: cuboctahedral_manifolds} for more details.

\subsection*{The proof: extending Kerkhoff and Storm's construction}

The essential ingredient for the proof of Theorem \ref{teo: main}
is a deforming 4-polytope ${\mathcal P}_t\subset\Hyp^4$ parametrised by $t\in(0,1]$, introduced by Kerckhoff and Storm \cite{KS}. For a particular choice of the 3-manifold $\mathcal N$, the hyperbolic cone structures $\sigma_t$ that degenerate were shown to exist by Martelli and the first author \cite[Theorem 1.2]{MR} by gluing eight copies of ${\mathcal P}_t$.

A fundamental property {of ${\mathcal P}_t$} is that most of its dihedral angles are right for all values of $t$, while the remaining dihedral angles are all equal and tend to $\pi$ as $t\to 0$, i.e. when ${\mathcal P}_t$ collapses to the aforementioned cuboctahedron. The presence of many right angles is essential in order to glue copies of $\mathcal P_t$ without creating a too complicated singular locus.

To prove Theorem \ref{teo: main}, we first show that the path of hyperbolic polytopes extends for negative times $t\in(-1,0)$ to a path of AdS polytopes with the same combinatorics of $\mathcal P_t\subset\Hyp^4$ with $t\in(0,\epsilon]$, and sharing similar properties on the dihedral angles and on the collapse. A remarkable difference is that, since the AdS metric is Lorentzian, some of the bounding hyperplanes are spacelike, and some others timelike.

The construction is however quite complicated and involves several computations. To prove that the combinatorics of the AdS polytopes remains constant, we needed to implement a \textsc{Sage} \cite{sagemath} worksheet to prove Lemma \ref{lem: vertices}. The proof of the analogous property on the hyperbolic side \cite{KS,MR} circumvented this amount of computations relying on Vinberg's theory of hyperbolic polytopes with non-obtuse dihedral angles. 

By opportunely rescaling ${\mathcal P}_t$ inside projective space along the direction of collapse, as suggested by the work of Danciger, we show that the resulting path of rescaled projective polytopes extends as $t=0$ to a half-pipe 4-polytope. This whole deformation can be interpreted as a geometric transition of ``cone-orbifold'' structures. More precisely, the subset
$${\mathcal P}_t^\times\subset{\mathcal P}_t$$
obtained by removing the ridges (the codimension-2 faces) with non-constant dihedral angles has a natural structure of hyperbolic (when $t>0$) or AdS (when $t<0$) orbifold. To show that these structures are linked by geometric transition, we construct an opportune half-pipe orbifold structure on the ``rescaled limit'' of $\mathcal P^\times_t$ as $t\to0$. 

Then, inspired by \cite{MR}, we glue several copies of $\mathcal P_t$ in the following way. Any $d$-sheeted orbifold cover $\mathcal N\to\mathcal C$ of the the ideal right-angled cuboctahedron naturally induces a way to pair certain facets of $d$ copies of $\mathcal P_t$. When $t<0$, these facets are precisely the timelike facets of the AdS polytope. The resulting space is homeomorphic to $\mathcal N\times[0,1]$, and its two boundary components contain all the ridges of the copies of $\mathcal P_t$ with non-constant dihedral angle. The final step is to double this manifold, thus obtaining $\mathcal X=\mathcal N\times S^1$ with a structure of hyperbolic, or AdS, cone-manifold. The singular locus $\Sigma$ consists of the union of the copies of the ridges with non-constant dihedral angle.

We would like to stress here a particular caveat of this construction. The fact that the polytope $\mathcal P_t$, suitably rescaled, converges when $t\to 0$ to a half-pipe polytope is \emph{not} sufficient to produce {a} half-pipe {orbifold} structure on the rescaled limit of $\mathcal P_t^\times$. Indeed, in contrast with the hyperbolic or AdS case, a hyperplane in half-pipe space does not uniquely determine a half-pipe reflection: there is a one-parameter family of reflections which fix a non-spacelike (i.e. \emph{degenerate}) hyperplane. This counterintuitive phenomenon, which often occurs in the realm of real projective geometry, highlights the fact that half-pipe geometry is neither Riemannian, nor pseudo-Riemannian. Hence finding the ``half-pipe glueings'' is somehow subtler, and will be achieved by analysing the behaviour of the holonomy representations of the hyperbolic and AdS structures infinitesimally, near the collapse.

This analysis of the holonomy representations ``nearby''  the collapse, which is important in our construction of half-pipe structures, is one of the motivations of our work \cite{transition_char_var}. In general, a half-pipe structure is never rigid, because one can always conjugate with a transformation which ``stretches'' the degenerate direction, and obtain a new structure equivalent to the initial one \emph{as a real projective structure}, but inequivalent \emph{as a half-pipe structure}. We discover \emph{a posteriori} in \cite{transition_char_var} that such ``stretchings'' are the only possible deformations of the HP orbifold structure we found, which is therefore essentially unique.

\subsection*{Organisation of the paper}

We first develop some tools which will be useful in the following. In Section \ref{section geometric transition} we recall the relevant notions of geometric structures and geometric transition in any dimension. We introduce AdS and HP cusps in Section \ref{sec:cusps}, hyperplanes, rotations and reflections in Section \ref{sec:half-spaces}, and cone-manifolds in Section \ref{sec:poly-conemfds}.

Then, we construct our examples of geometric transition. More precisely, in Section \ref{sec:warm-up} we study some examples in dimension three, which are of fundamental importance to the understanding of the four-dimensional construction. The latter is developed in Section \ref{sec 4dim}, which provides the proof of Theorem \ref{teo: main}.

\subsection*{Acknowledgments}

We are grateful to Francesco Bonsante and Bruno Martelli for interesting discussions, useful advices, and encouragement. We also thank Jeffrey Danciger and Gye-Seon Lee for interest in this work and related discussions. We owe to Fran\c{c}ois Fillastre and Ivan Izmestiev the observation that the transitional geometry from Euclidean to Minkowski geometry is called Galilean geometry.

We thank the mathematics departments of Pavia, Luxembourg and Neuch\^atel, for the warm hospitality during the respective visits while part of this work was done. The stage of this collaboration was set during the workshop ``Moduli spaces'', held in Ventotene in September 2017: we are grateful to the organisers for this opportunity.


\section{Geometric transition from $\Hyp^n$ to $\AdS^n$} \label{section geometric transition} \label{section geometric transition} \label{sec prel}

In this first part of the paper, we recall the relevant notions of geometric structures and geometric transition in any dimension, and we develop some tools which will be useful in the following.

We start by recalling the basic definitions concerning projective structures, in particular hyperbolic, anti-de Sitter and half-pipe structures, and geometric transition.

\subsection{(G,X)-structures}

Recall that, given a Lie group $G$ of analytic diffeomorphisms of a manifold $X$, a $(G,X)$-\emph{structure} $\mathscr P$ on a smooth manifold $\mathcal{M}$ consists of an atlas 
$$\mathscr A=\{\varphi^U\colon U\to X\,|\,U\in\mathscr U\}$$
where $\mathscr U$ is an open covering of $\mathcal{M}$, the maps $\varphi$ are diffeomorphisms onto their images, and the transition functions are restrictions of elements of $G$.

Let $\widetilde{\mathcal{M}}\to \mathcal{M}$ the universal covering. It is well-known that a $(G,X)$-structure on $\mathcal{M}$ is equivalent to the data of a \emph{developing map} 
$$\dev\colon\widetilde{\mathcal{M}}\to X~,$$
which is a local diffeomorphism, and a \emph{holonomy representation}
$$\rho\colon\pi_1\mathcal{M}\to G~,$$
satisfying the condition that $\dev$ is equivariant for the holonomy representation. The pair $(\dev,\rho)$ is well-defined up to the action of $G$ on such pairs, where $G$ is acting on local diffeomorphisms from $\widetilde{\mathcal{M}}$ to $X$ by post-composition and on $G$-valued representations by conjugation.

We say that a family $\mathscr P_t$ of $(G,X)$-structures on $\mathcal{M}$ is $C^k$ if it admits a family of pairs $(\dev_t,\rho_t)$ such that $t\mapsto\dev_t\in C^\infty(\widetilde{\mathcal{M}},X)$ is continuous  for the $C^k$-norm on any compact set of $\widetilde{\mathcal{M}}$, and {$t\mapsto\rho_t(\gamma)\in G$} is $C^k$ for every $\gamma\in\pi_1\mathcal M$.

\subsection{Real projective structures} \label{sec: proj structures}

In this paper, we are interested in \emph{real projective structures} on manifolds --- namely, structures locally modelled on the real projective space $\RP^n$. 

We denote by $\Aut(\RP^n)$ the group of projective transformations of $\RP^n$, which is identified to $\PGL_{n+1}(\R)$. 

\begin{defi}
A \emph{real projective structure} on an $n$-{manifold} $\mathcal{M}$ is an $(\Aut(\RP^n),\RP^n)$-structure.
A \emph{real projective manifold} is a manifold endowed with a real projective strucure.
\end{defi}

The goal of this paper is to produce families of real projective structures on a fixed smooth manifold. Our structures will be obtained by gluing several copies of a projective polytope. In general, (convex) polytopes are conveniently defined as the intersection of some half-spaces (see Section \ref{section polytopes}). Since in $\RP^n$ there is no notion of half-space, we will work with its double cover, namely the \emph{projective sphere}
$$\SP^n=\{x\in\R^{n+1}\smallsetminus\{0\}\}/\R_{>0}~,$$
where the group $\R_{>0}$ acts by multiplication.
We will always use the notation
$$x=(x_0,\ldots,x_n)\in\R^{n+1},\quad[x]=[x_0:\ldots:x_n]\in\SP^n.$$

The projective sphere $\SP^n$ is clearly endowed with a real projective structure induced by the double covering $\SP^n\to \RP^n$. 
We will denote by $\Aut(\SP^n)$ the group of projective automorphisms of $\SP^n$, which is
the double cover of $\Aut(\RP^n)$ induced by $\SP^n\to\RP^n$.

{An \emph{affine chart} is a subset of $\SP^n$ defined by an equation of the form $\alpha(x)>0$, for some nonzero linear form $\alpha\in\R^{n+1,*}$.} Throughout the paper, we will mostly consider the following affine chart:
$$\Aff^n=\{[x_0:\ldots:x_n]\in\SP^n\,|\,x_0>0\}~.$$
We will also denote the affine coordinates of $\Aff^n$ by 
$$\left(y_1,\ldots,y_n\right)=\left(\frac{x_1}{x_0},\ldots,\frac{x_n}{x_0}\right).$$

\begin{remark}
To be precise, in this paper we will construct families of $(\Aut(\SP^n),\SP^n)$-structures. Actually, for the structures we will construct, the restriction of the projection $\SP^n\to \RP^n$ on the image of the developing map will be injective. Thus the  $(\Aut(\SP^n),\SP^n)$-structures we will construct will be automatically equivalent to $(\Aut(\RP^n),\RP^n)$-structures.
\end{remark}

Our deformations of projective structures interpolate between \emph{hyperbolic} and \emph{anti-de Sitter} structures, going through \emph{half-pipe} structures. These are introduced in the following sections.

\subsection{Hyperbolic structures}

We introduce the hyperbolic $n$-space as follows:
$$\Hyp^n=\{[x]\in\SP^n\,|\,q_1(x)<0\,,\,x_0>0\}~,$$
where $q_1$ is the quadratic form
$$q_1(x)=-x_0^2+x_1^2+\ldots+x_n^2~.$$
Observe that $\Hyp^n$ is well-defined as a subset of $\SP^n$, since both conditions $$q_1(x)<0\qquad\text{and}\qquad x_0>0$$
are invariant under multiplication by a positive number. 
By construction, $\Hyp^n$ is contained in the affine chart $\Aff^n$ defined in Section \ref{sec: proj structures}, and is expressed in affine coordinates as the unit ball
$\{y_1^2+\ldots+y_n^2<1\}$.

The \emph{boundary at infinity} of $\Hyp^n$ is its topological frontier:
$$\partial\Hyp^n=\{[x]\in\SP^n\,|\,q_1(x)=0\,,\,x_0>0\}~,$$
which in affine coordinates is the sphere $\{y_1^2+\ldots+y_n^2=1\}$.

\begin{remark}
The symbol $\partial$ denotes the topological frontier of a subset of $\SP^n$. (Sometimes, at the end of the paper, it also denotes the boundary of an abstract manifold, but there will not be risk of ambiguity.)
\end{remark}

It is well-known that $\Hyp^n$ carries a Riemannian metric of constant sectional curvature $-1$, which is obtained by pulling-back the standard bilinear form $b_1$ of signature $(-,+,\ldots,+)$ on $\R^{n+1}$ (whose associated quadratic form is $q_1$) via the immersion $\sigma\colon\Hyp^n\to \R^{n+1}$ which maps the class $[x]$ to the unique positive multiple of $x$ such that $q_1(x)=-1$.

It then turns out that the group $\Isom(\Hyp^n)$ of isometries of $\Hyp^n$, endowed with the Riemannian metric $\sigma^*b_1$ as above, coincides with the subgroup of $\Aut(\SP^n)$ which preserves $\Hyp^n\subset\SP^n$. The group $\Isom(\Hyp^n)$ is also identified to an index two subgroup of $\O(q_1)$, the group of linear isometries of the quadratic form $q_1$. In conclusion, we have the following definition:

\begin{defi}
A \emph{hyperbolic structure} on an $n$-{dimensional} manifold $\mathcal{M}$ is an $(\Isom(\Hyp^n),\Hyp^n)$-structure.
\end{defi}

As a consequence of the above discussion, a hyperbolic structure on $\mathcal{M}$ is a particular case of real projective structure, as we can consider it as a $\SP^n$-valued atlas with transition functions in $\Aut(\SP^n)$.

\subsection{Anti-de Sitter structures}

Let us now introduce the anti-de Sitter $n$-space, in a somewhat parallel way to $\Hyp^n$. We define it as:
$$\AdS^n=\{[x]\in\SP^n\,|\,q_{-1}(x)<0\}~,$$
where now $q_{-1}$ is the quadratic form of signature $(-,+,\ldots,+,-)$:
$$q_{-1}(x)=-x_0^2+x_1^2+\ldots+x_{n-1}^2-x_n^2~.$$
The \emph{boundary at infinity} of $\AdS^n$ is then naturally defined as 
$$\partial\AdS^n=\{[x]\in\SP^n\,|\,q_{-1}(x)=0\}~.$$

\begin{remark} \label{rem: ads double covering}
Anti-de Sitter space is more often defined as the image of what we defined $\AdS^n$ through the double covering $\SP^n\to\RP^n$. 
Nevertheless, the polytopes we will construct are contained in the affine chart $\Aff^n=\{x_0>0\}$ (although $\AdS^n$ is not), hence this choice will make no substantial difference with the more frequent definition.
\end{remark}

As already said, $\AdS^n$ is not contained in a single affine chart. However, we can easily describe its intersection with 
$\Aff^n$ as the internal region of the one-sheeted hyperboloid
$$\AdS^n\cap\Aff^n=\{y_1^2+\ldots+y_{n-1}^2-y_n^2<1\},$$
while $\partial\AdS^n\cap\Aff^n$ is the one-sheeted hyperboloid $\{y_1^2+\ldots+y_{n-1}^2-y_n^2=1\}$ itself.

Similarly to $\Hyp^n$, the space $\AdS^n$ is endowed with a metric, which is now Lorentzian, obtained as the pull-back the standard bilinear form $b_{-1}$ of signature $(-,+,\ldots,+,-)$ on $\R^{n+1}$ by the immersion $\sigma\colon\AdS^n\to \R^{n+1}$ mapping the class $[x]$ to the unique positive multiple of $x$ such that $q_{-1}(x)=-1$. Again, the group $\Isom(\AdS^n)$ of isometries of $\AdS^n$ coincides with the subgroup of $\Aut(\SP^n)$ preserving $\AdS^n$, and is identified to $\O(q_{-1})$.

We give the following definition of anti-de Sitter structure, which will be another particular case of projective structure:

\begin{defi}
An \emph{anti-de Sitter} (or \emph{AdS}) \emph{structure} on an $n$-{dimensional} manifold $\mathcal{M}$ is an $(\Isom(\AdS^n),\AdS^n)$-structure.
\end{defi}

\subsection{Half-pipe structures}\label{subsec:hp structures}

In \cite{danciger}, Danciger introduced half-pipe geometry as a limit (in the sense of \cite{CDW}; see Section \ref{sec:rescaled_limit}) of both hyperbolic and anti-de Sitter geometries inside real projective geometry. Its definition is the following. Let us denote by $q_0$ the degenerate quadratic form on $\R^{n+1}$:
$$q_{0}(x)=-x_0^2+x_1^2+\ldots+x_{n-1}^2~.$$
Then we define
$$\HP^n=\{[x]\in\SP^n\,|\,q_{0}(x)<0\,,\,x_0\geq 0\}~,$$
This is again well-defined by homogeneity of the two conditions, and the \emph{boundary at infinity} of half-pipe space is:
$$\partial\HP^n=\{[x]\in\SP^n\,|\,q_{0}(x)=0\,,\,x_0\geq 0\}~.$$
By construction, $\HP^n$ is contained in the affine chart $\Aff^n=\{x_0> 0\}$, where it is represented as a solid cylinder, defined by the equation $y_1^2+\ldots+y_{n-1}^2<1$ in affine coordinates. Its boundary at infinity is topologically a sphere, consisting of the frontier of the solid cylinder in $\Aff^n$ and two additional points at infinity. 

In analogy with the hyperbolic and anti-de Sitter construction, we can introduce a degenerate metric on $\HP^n$ by means of the embedding of $\sigma\colon\HP^n\to\R^{n+1}$ sending $[x]$ to the unique positive multiple of $x$ on which $q_0$ takes value $-1$. Then one pulls-back the degenerate bilinear form $b_0$ of signature $(-,+,\ldots,+,0)$. The symmetric 2-tensor $\sigma^*b_0$ obtained in this way actually corresponds to the splitting $\HP^n=\Hyp^{n-1}\times\R$, where $\sigma^*b_0$ coincides with the hyperbolic metric when restricted to the first factor, and is zero whenever one of the two arguments is in the $\R$ factor.

One would be tempted to define the
transformation
group of $\HP^n$ as the group $\Aut(\HP^n)<\Aut(\SP^n)$ of projective transformations that preserve $\HP^n\subset\SP^n$.
However,
we are interested in a more rigid 
geometry,
which will be the 
limit of both hyperbolic and anti-de Sitter geometry.
One then \emph{defines} the
group of half-pipe transformations as:
$$G_{\HP^n}=\{A\in\O(q_0)\,|\,A(e_n)=\pm e_n\,,\,(A(e_0))_0>0\}~.$$
Here we used $e_0,\ldots,e_n$ to denote the standard basis of $\R^{n+1}$. 
The last condition means that the first coordinate of $A(e_0)$ in the standard basis is positive. Together with the  fact that $A\in\O(q_0)$, this implies that $A$ preserves $\HP^n\subset\SP^n$. 

As a consequence of the definition, one sees that elements of $G_{\HP^n}$ are of the form:
\begin{equation} \label{eq form isometries hp}
A=\left[
\begin{array}{ccc|c}
  &&& 0 \\
  
   & \widehat A & & \vdots \\
  &&& 0 \\
    \hline  
    \star&\ldots&\star   & \pm1
\end{array}
\right]~,
\end{equation}
where $\widehat A$ is a linear isometry for the quadratic form of signature $(-,+,\ldots,+)$, and the stars denote the entries of any vector in $\R^n$. The square brackets denote the projective class of a matrix in $\GL_{n+1}(\R)/\R_{>0}$. 

\begin{remark}
In contrast with $\Hyp^n$ and $\AdS^n$ (where in place of the strict inclusions there are equalities), we have
$$G_{\HP^n}\subsetneq\Aut(\HP^n)\subsetneq\Isom(\HP^n),$$
where by $\Isom(\HP^n)$ we denote the group of self-homeomorphisms of $\HP^n$ which preserve the degenerate symmetric 2-tensor $\sigma^*b_0$.
Indeed, from \eqref{eq form isometries hp}, the condition that $e_n$ is eigenvector with eigenvalue $\pm1$ implies that $A$ cannot ``stretch'' in the degenerate direction. Moreover, the group $\Isom(\HP^n)$ is infinite-dimensional, and so it cannot even embed into $\Aut(\SP^n)$. 
\end{remark}
%

This finally enables us to provide the definition of half-pipe structures, which is the third special type of projective structures of our interest:

\begin{defi}
A \emph{half-pipe} (or \emph{HP}) \emph{structure} on an $n$-{manifold} $\mathcal{M}$ is a $(G_{\HP^n},\HP^n)$-structure.
\end{defi}

\subsection{Relation with Minkowski geometry} \label{sec motivations hp}
There are two main motivations behind this definition of half-pipe geometry. One motivation
is that half-pipe geometry is transitional from hyperbolic to anti-de Sitter geometry, as explained in detail in Section \ref{sec:rescaled_limit} below.

The other motivation comes from the fact that $\HP^n$ is naturally the dual space of Minkowski space $\R^{1,n-1}$, which is the vector space $\R^{n}$ endowed with the quadratic form 
$$\widehat q(\widehat x)=-x_0^2+x_1^2+\ldots+x_{n-1}^2~,$$
where $\widehat x=(x_0,\ldots,x_{n-1})$. 
Indeed, if $\widehat b$ denotes the bilinear form of $\R^{1,n-1}$ whose associated quadratic form is $\widehat q$, then  any spacelike hyperplane in $\R^{1,n-1}$ writes as 
\begin{equation} \label{eq in remark motivation}
\{p\in\R^{1,n-1}\,|\,\widehat b(p,\widehat x)=a\}~,
\end{equation}
where $\widehat x$ is a future-directed normal vector to the hyperplane, hence satisfying 
$$\widehat q(\widehat x)=-x_0^2+x_1^2+\ldots+x_{n-1}^2<0\qquad\text{and}\qquad x_0>0~,$$
and $a\in\R$. This means that the class of the pair $(\widehat x,a)$ belongs to $\HP^n$.
Moreover, two pairs $(\widehat x,a)$ and $(\widehat x',a')$ determine the same spacelike hyperplane if and only if they differ by multiplication by a positive number. 

In conclusion, $\HP^n$ parameterises the spacelike hyperplanes in $\R^{1,n-1}$. Similarly, $\partial\HP^n$ consists of a cylinder (homeomorphic to $S^{n-2}\times\R$) which naturally parametrises lightlike hyperplanes of $\R^{1,n-1}$, plus two additional points at infinity. Moreover, we have:

%
\begin{lemma} \label{lemma isomorphism halfpipe and minkowski}
The action of $\Isom(\R^{1,n-1})$ on the set of spacelike hyperplanes of $\R^{1,n-1}$ induces a group isomorphism between $\Isom(\R^{1,n-1})$ and $G_{\HP^n}$.
\end{lemma}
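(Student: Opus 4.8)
The plan is to write down the isomorphism explicitly using the parametrization of spacelike hyperplanes by $\HP^n$ established just above. An element $g \in \Isom(\R^{1,n-1})$ is an affine map $p \mapsto L(p) + v$ with $L \in \O(\widehat q)$ and $v \in \R^{n}$; it carries the spacelike hyperplane $\{p \,|\, \widehat b(p,\widehat x) = a\}$ to another spacelike hyperplane, because $L$ preserves $\widehat q$ (hence preserves the property of being spacelike) and affine maps preserve hyperplanes. Concretely, the image hyperplane has future-directed normal $L(\widehat x)$ (using that $L \in \O(\widehat q)$ and $L$ is orientation-preserving on the time direction, which we may arrange since $\Isom(\R^{1,n-1})$ is the group of \emph{orthochronous} affine isometries — or, if all of $\O(\widehat q)$ is allowed, one composes with the relevant sign, matching the $\pm e_n$ ambiguity in $G_{\HP^n}$), and constant $a + \widehat b(L(\widehat x), v)$. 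So in the coordinates $(\widehat x, a)$ on the cone over $\HP^n$, the action of $g$ is the linear map
$$
(\widehat x, a) \longmapsto (L(\widehat x),\ a + \widehat b(L(\widehat x), v)).
$$

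Next I would observe that this linear map, written as an $(n+1)\times(n+1)$ matrix in the basis $e_0, \dots, e_{n-1}$ (for the $\widehat x$-block) together with $e_n$ (for the $a$-coordinate), has exactly the block form \eqref{eq form isometries hp}: the top-left $n \times n$ block is $L$, which is a linear isometry for the form of signature $(-,+,\dots,+)$; the last column is $e_n$ (eigenvalue $+1$, since $a$ maps to $a + (\text{stuff})$); and the bottom row consists of the coefficients of the linear form $\widehat x \mapsto \widehat b(L(\widehat x), v)$, i.e. an arbitrary vector in $\R^n$. Moreover the map preserves $q_0$ on $\R^{n+1}$ — since $q_0(\widehat x, a) = \widehat q(\widehat x)$ is independent of $a$ and $L$ preserves $\widehat q$ — and sends $e_0$ to $(L(e_0), 0)$ with positive first coordinate, so it lies in $G_{\HP^n}$. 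This defines a homomorphism $\Phi \colon \Isom(\R^{1,n-1}) \to G_{\HP^n}$; that it is a homomorphism is immediate from the fact that the action on hyperplanes is a genuine group action and these linear maps on the cone descend to that action on $\HP^n$.

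Finally I would check bijectivity. Injectivity: if $\Phi(g)$ is the identity on $\HP^n$ (equivalently, the identity on the cone up to positive scalar), then $L = \mathrm{id}$ and the linear functional $\widehat x \mapsto \widehat b(\widehat x, v)$ vanishes identically, so $v = 0$ by nondegeneracy of $\widehat b$; hence $g = \mathrm{id}$. Surjectivity: given $A \in G_{\HP^n}$, its block form \eqref{eq form isometries hp} gives $\widehat A \in \O(\widehat q)$ and a bottom row, i.e. a linear form $\ell$ on $\R^n$; since $\widehat b$ is nondegenerate there is a unique $v$ with $\ell(\cdot) = \widehat b(\widehat A(\cdot), v)$, and then $g \colon p \mapsto \widehat A(p) + v$ (after fixing orthochronality to match the $\pm 1$ entry) satisfies $\Phi(g) = A$. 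I expect the only genuinely delicate point is bookkeeping the $\pm e_n$ / orientation-of-time business so that the source group, its image, and the target group $G_{\HP^n}$ match up exactly; everything else is linear algebra. One should also note the geometric content that makes the statement natural: dually, this is just the standard fact that $\Isom(\R^{1,n-1})$ acts on the space of its spacelike hyperplanes, and the rigidity built into the definition of $G_{\HP^n}$ (no stretching of the degenerate direction) is precisely what matches the affine — as opposed to projective — nature of $\Isom(\R^{1,n-1})$.
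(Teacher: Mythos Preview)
Your proposal is correct and follows essentially the same route as the paper: compute the induced action of an affine isometry $p\mapsto \widehat A\,p+v$ on the parameter $(\widehat x,a)$ of a spacelike hyperplane, read off the block-lower-triangular matrix of the form \eqref{eq form isometries hp}, and deduce bijectivity from this explicit description. The only place where the paper is more explicit is precisely the point you flag as delicate: rather than alluding to ``orthochronality bookkeeping'', the paper splits into the two cases $\widehat A$ future-preserving versus $-\widehat A$ future-preserving, and shows that the latter yields the $-1$ entry in the bottom-right corner (since one must renormalise the image normal vector to be future-directed), which makes surjectivity onto all of $G_{\HP^n}$ transparent.
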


Although this fact has already been observed, for instance in \cite{surveyseppifillastre} and \cite{barbotfillastre}, we provide a complete proof since the explicit computation of the isomorphism will be useful in the remainder of the paper.
\begin{proof}
Our purpose is to construct a group isomorphism 
$$\phi\colon\Isom(\R^{1,n-1})\to G_{\HP^n}~.$$
Let us first check that it is well-defined, i.e. its image is actually composed of elements of $G_{\HP^n}$. It will then be obvious from the definition that $\phi$ is a group homomorphism, and that it is injective, since clearly only the identity element of $\Isom(\R^{1,n-1})$ fixes all spacelike hyperplanes.

For this purpose, we denote by $(\widehat A,v)$  an isometry of $\R^{1,n-1}$, of the form
 $$p\mapsto \widehat A\cdot p+v~,$$ 
 for $\widehat A\in \O(\widehat q)$, and let us compute its action on $\HP^n$. We need to distinguish two cases. If $\widehat A$ is future-preserving, namely $(\widehat A(e_0))_0>0$, then
 the hyperplane parameterised (up to positive multiples) by $(\widehat x,a)$, namely 
 $$P=\{p\in\R^{1,n-1}\,|\,\widehat b(p,\widehat x)=a\}$$
   is mapped to the hyperplane 
 $$\widehat A\cdot P+v=\{q\in\R^{1,n-1}\,|\,\widehat b(q,\widehat A\cdot\widehat x)=a+\widehat b(v,\widehat A\cdot \widehat x)\}~,$$
 which is  
 parameterised by $(\widehat A\cdot \widehat x, a+\widehat b(v,\widehat A\cdot \widehat x))$. From \eqref{eq form isometries hp}, this shows that $(\widehat A,v)$ corresponds to the following element of $G_{\HP^n}$:
 $$\phi(\widehat A,v)=\left[
\begin{array}{ccc|c}
  &&& 0 \\
  
   & \widehat A & & \vdots \\
  &&& 0 \\
    \hline  
   \ldots &v^TJ\widehat A & \ldots  & 1
\end{array}
\right]~,$$
 where $J=\mathrm{diag}(-1,1,\ldots,1)$ is the matrix which represents the bilinear form $\widehat b$.
 Similarly, one checks that the induced action of $(-\widehat A,v)$, with $(\widehat A(e_0))_0>0$, gives the following element of 
 $G_{\HP^n}$:
 $$\phi(-\widehat A,v)=\left[
\begin{array}{ccc|c}
  &&& 0 \\
  
   & \widehat A & & \vdots \\
  &&& 0 \\
    \hline  
   \ldots &v^TJ\widehat A & \ldots  & -1
\end{array}
\right]~.$$
It thus follows from \eqref{eq form isometries hp} that $\phi$ is well-defined and surjective, and this concludes the proof.
\end{proof}

%

\subsection{Rescaled limits and geometric transition} \label{sec:rescaled_limit}
Let us consider the family $\r_t\in \Aut(\SP^n)$, depending on the real parameter $t\neq0$, defined by:
$$\r_t=\left[\mathrm{diag}\left(1,\ldots,1,\frac{1}{t}\right)\right]\in \GL_{n+1}(\R)/\R_{>0}~.$$
Let us denote by $q_t$ the quadratic form:
$$q_t(x)=-x_0^2+x_1^2+\ldots+x_{n-1}^2+\mathrm{sign}(t)t^2x_n^2~.$$
(Observe that the notation is consistent with the definitions of $q_{-1},q_0,q_1$ in the previous sections.) Then it follows that, for $t>0$, $\r_{t}(\Hyp^n)$ is the domain $\X_t^n$ in $\SP^n$ defined as follows:
$$\X_t^n=\{[x]\in\SP^n\,|\,q_t(x)<0\,,\,x_0>0\}~.$$
In particular, $\X_1^n=\Hyp^n$. Moreover, if we endow $\X_t^n$ with a Riemannian metric induced by the quadratic form $q_t$ as we did for $\Hyp^n$, then $\r_t$ is an isometry between $\Hyp^n$ and $\X_t^n$, and therefore conjugates the isometry group of $\Hyp^n$ to the isometry group of $\X_t^n$:
$$\Isom(\X_t^n)=\r_t\Isom(\Hyp^n)\r_t^{-1}~.$$ The following lemma says that half-pipe geometry is a ``limit'' of hyperbolic geometry:

\begin{lemma}[\cite{CDW,surveyseppifillastre}] \label{lemma convergence hyp}
When $t\to 0^+$, the closure $\overline{\X_t^n}$ converges to $\overline{\HP^n}$ in the Hausdorff topology of $\SP^n$, and the groups $\Isom(\X_t^n)$ converge to $G_{\HP^n}$ in the Chabauty topology on closed subgroups of $\Aut(\SP^n)$.
\end{lemma}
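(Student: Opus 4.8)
The statement has two parts: Hausdorff convergence of the domains $\overline{\X_t^n}$, and Chabauty convergence of the isometry groups. I would handle them in that order, since the first is essentially a coordinate computation while the second is where the real content lies.

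For the convergence of domains, the plan is to work in the affine chart $\Aff^n = \{x_0 > 0\}$, where each $\X_t^n$ (for $t > 0$) is the ellipsoid $\{y_1^2 + \cdots + y_{n-1}^2 + t^2 y_n^2 < 1\}$ and $\HP^n$ is the solid cylinder $\{y_1^2 + \cdots + y_{n-1}^2 < 1\}$. As $t \to 0^+$ the ellipsoid stretches in the $y_n$-direction and fills out the cylinder; one then checks that the two ``caps'' of $\overline{\X_t^n}$ at $y_n \approx \pm 1/t$ escape towards the two points at infinity of $\SP^n$ lying on the $x_n$-axis, which are precisely the two extra points in $\overline{\HP^n} \setminus (\overline{\HP^n} \cap \Aff^n)$. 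I would make this precise by exhibiting, for each point of $\overline{\HP^n}$, a sequence in $\overline{\X_t^n}$ converging to it, and conversely showing any convergent sequence of points $[x^{(t)}] \in \overline{\X_t^n}$ has its limit in $\overline{\HP^n}$; this amounts to passing to the limit in the inequality $q_t(x) \leq 0$, using that $q_t \to q_0$ uniformly on the compact sphere of representatives. Since $\SP^n$ is compact, Hausdorff convergence of closed sets is equivalent to this ``Kuratowski'' pointwise condition, so this suffices.

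For the group convergence, recall that Chabauty convergence of a sequence $G_k$ of closed subgroups to $G$ means: (i) every $g \in G$ is a limit of elements $g_k \in G_k$, and (ii) any convergent subsequence $g_{k_j} \in G_{k_j}$ has its limit in $G$. We have $\Isom(\X_t^n) = \r_t \, \Isom(\Hyp^n) \, \r_t^{-1}$, so I would write a general element as $\r_t A \r_t^{-1}$ with $A \in \O(q_1)$ (index-two issues and the $x_0 > 0$ condition being harmless and tracked separately), and compute the conjugate explicitly as a block matrix: if $A$ has block form with an $n\times n$ block $\widehat A$, a last column $c$, a last row $\ell$, and a corner entry $d$, then $\r_t A \r_t^{-1}$ replaces $c$ by $tc$ and $\ell$ by $t^{-1}\ell$ while fixing $\widehat A$ and $d$. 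The subtlety — and the main obstacle — is the projective normalisation: the last row has entries blowing up like $1/t$, so the correct comparison is to rescale the matrix (divide by $t^{-1}$, say, or by the largest entry), after which the top-left $n\times n$ block and last column are killed in the limit unless one is careful about which coordinates of $A$ survive. I would argue that, in the projective class, the limit of $\r_t A \r_t^{-1}$ as $t \to 0^+$ is well-defined precisely when the relevant entries stay bounded, and that the set of such limits is exactly $G_{\HP^n}$ as described in \eqref{eq form isometries hp}: the $\pm 1$ corner comes from $A$ preserving the $x_0 > 0$ half, the bottom row $\star\ldots\star$ comes from $t^{-1}\ell$ having a finite limit along suitable sequences, and $\widehat A \in \O(q_0)$ in the signature $(-,+,\ldots,+)$ sense is forced by $A \in \O(q_1)$ together with the vanishing of the last column in the limit. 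Conversely, given any $B \in G_{\HP^n}$ of the form \eqref{eq form isometries hp}, I would construct an explicit sequence $A_t \in \O(q_1)$ with $\r_t A_t \r_t^{-1} \to B$, essentially by reading off from the bottom row of $B$ what the ``boost parameters'' of $A_t$ must be and checking that $A_t$ can be chosen in $\O(q_1)$ with the right sign conditions. For both inclusions, the key computational lemma is just that conjugation by $\r_t$ acts on matrix blocks as stated, combined with compactness of the ``angular part'' $\O(q_0)$. I expect the only genuinely delicate point to be verifying closedness and the absence of extra limit points — i.e. that a sequence $\r_t A_t \r_t^{-1}$ can only converge projectively if the bottom-row entries $t^{-1}\ell_t$ stay bounded, which forces $\ell_t = O(t)$ and hence pins down the limit to lie in $G_{\HP^n}$ rather than in the larger group $\Aut(\HP^n)$. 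This is exactly the phenomenon flagged in the Remark after \eqref{eq form isometries hp}, and handling it carefully is the heart of the proof.
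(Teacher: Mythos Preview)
The paper does not give a proof of this lemma: it is stated with a citation to \cite{CDW,surveyseppifillastre} and no argument follows. So there is nothing in the paper to compare your proposal against.

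That said, your outline is essentially correct, and the Hausdorff part is fine. For the Chabauty part, the structure is right but the step you single out as ``the only genuinely delicate point'' is missing one idea. You want to argue that if $B_t := \r_t A_t \r_t^{-1}$ converges projectively in $\Aut(\SP^n)$, then the bottom-row entries $t^{-1}\ell_t$ stay bounded, so that the limit has the shape \eqref{eq form isometries hp} with corner exactly $\pm 1$ rather than an arbitrary scalar. As stated, this is not obvious: $\O(q_1)$ is noncompact, so $\widehat A_t$ could blow up too, and projective convergence alone does not prevent that. The clean fix is the determinant: since $A_t \in \O(q_1)$ we have $\det B_t = \det A_t = \pm 1$, so if $\lambda_t B_t \to C$ with $\lambda_t>0$ and $C$ invertible, then $\lambda_t^{n+1} \to |\det C|$ forces $\lambda_t$ to be bounded above and below, whence $B_t$ itself converges as a matrix. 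Now pass to the limit in $B_t^T J_t B_t = J_t$ (with $J_t$ the Gram matrix of $q_t$) to get $B^T J_0 B = J_0$; a block computation then forces the last column of $B$ to be $(0,\ldots,0,d)^T$ with $\widehat B \in \O(\widehat q)$, and $\det B = \pm 1$ gives $d = \pm 1$. With this observation in place your argument goes through; without it, the claim that projective convergence pins the corner to $\pm 1$ is unjustified.
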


\begin{figure}
\centering
\begin{minipage}[c]{.2\textwidth}
\centering
\includegraphics[scale=0.32]{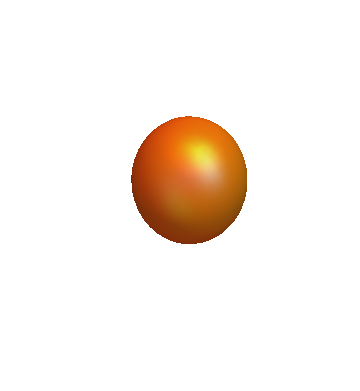}
\end{minipage}%
\begin{minipage}[c]{.2\textwidth}
\centering
\includegraphics[scale=0.3]{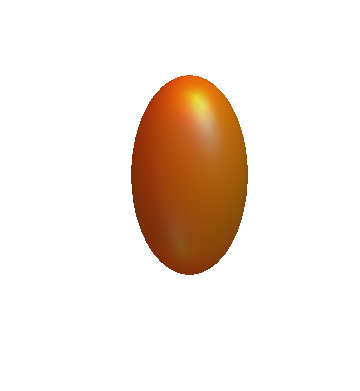}
\end{minipage}%
\begin{minipage}[c]{.2\textwidth}
\centering
\includegraphics[scale=0.28]{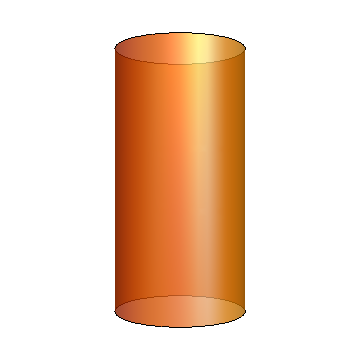}
\end{minipage}%
\begin{minipage}[c]{.2\textwidth}
\centering
\includegraphics[scale=0.27]{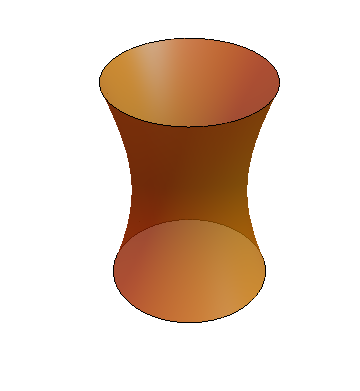}
\end{minipage}%
\begin{minipage}[c]{.2\textwidth}
\centering
\includegraphics[scale=0.25]{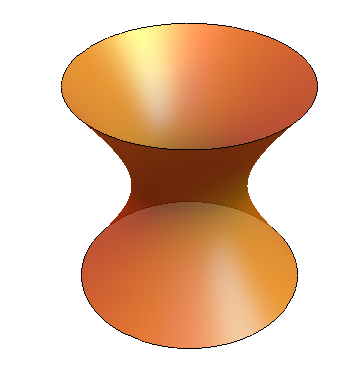}
\end{minipage}
\caption
[Transition from $\Hyp^n$ to $\AdS^n$ in an affine chart.]{\footnotesize The transition from the ball model of $\Hyp^n$ to the hyperboloid model of $\AdS^n$, in an affine chart (for $n=3$).} 
\label{fig:transition}
\end{figure}

If we project to $\RP^n$, the situation is exactly the same for $t<0$. However, in our setting there is a small difference due to the fact that we defined $\AdS^n$ as the double cover of what is usually defined as anti-de Sitter space inside $\RP^n$ (recall Remark \ref{rem: ads double covering}). In particular, $\AdS^n$ is invariant by the antipodal map
$a=[\mathrm{diag}(-1,\ldots,-1)]\in\Aut(\SP^n)$
(which is in the centre of $\Aut(\SP^n)$), while $\HP^n$ is contained in the affine chart $\{x_0>0\}$ and is therefore clearly not invariant by
$a$.
Nevertheless, if we define, for $t<0$:
 $$\X_t^n=\{[x]\in\SP^n\,|\,q_t(x)<0\}~,$$
 then again $\r_{|t|}$ defines an isometry between $\AdS^n$ and $\X_t^n$ endowed with the Lorentzian metric induced by $q_t$, so that
 $$\Isom(\X_t^n)=\r_{|t|}\Isom(\AdS^n)\r_{|t|}^{-1}~.$$
Similarly to the hyperbolic case, we have:
 
\begin{lemma}[\cite{CDW,surveyseppifillastre}] \label{lemma convergence ads}
When $t\to 0^-$, $\overline{\X_t^n\cap \Aff^n}$ converges to $\overline{\HP^n}$, and the groups $\Isom(\X_t^n)$ converge to
a central ${\Z/2\Z}$-extension of $G_{\HP^n}$ by means of the antipodal map.
\end{lemma}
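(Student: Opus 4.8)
The plan is to mimic the proof of Lemma \ref{lemma convergence hyp} almost verbatim, and then deal separately with the two features that distinguish the AdS case: the fact that we work in $\SP^n$ rather than $\RP^n$ (so the antipodal map $a$ is present), and the fact that $\AdS^n$ is not contained in a single affine chart. First I would establish the Hausdorff convergence of $\overline{\X_t^n\cap\Aff^n}$. For $t<0$, the set $\X_t^n\cap\Aff^n$ is, in affine coordinates, the region $\{y_1^2+\ldots+y_{n-1}^2+\mathrm{sign}(t)t^2y_n^2<1\}=\{y_1^2+\ldots+y_{n-1}^2-t^2y_n^2<1\}$, i.e. the interior of a one-sheeted hyperboloid whose ``waist'' circle is fixed and whose opening in the $y_n$-direction becomes wider and wider as $t\to0^-$. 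As $|t|\to0$ this region, together with its closure, exhausts and converges in the Hausdorff topology on compact subsets of $\Aff^n$ to the solid cylinder $\{y_1^2+\ldots+y_{n-1}^2<1\}$, which is exactly $\HP^n$ in this affine chart; passing to closures inside $\SP^n$ adds the boundary cylinder and the two points at infinity along the $e_n$-axis, giving $\overline{\HP^n}$. This is the direct analogue of the computation behind Lemma \ref{lemma convergence hyp}, the only difference being the sign of the $t^2$-term, which is why one must intersect with $\Aff^n$ first (the full $\X_t^n$ does not converge to $\overline{\HP^n}$, since it always contains points with $x_0\leq0$, e.g. its own antipodal copy).

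Next I would treat the convergence of the isometry groups in the Chabauty topology. We have $\Isom(\X_t^n)=\r_{|t|}\,\Isom(\AdS^n)\,\r_{|t|}^{-1}$, and an element of $\O(q_{-1})$ conjugated by $\r_{|t|}=[\mathrm{diag}(1,\ldots,1,1/|t|)]$ has its last row scaled by $|t|$ and its last column by $1/|t|$; writing a general such matrix in block form and imposing that it preserve $q_{-1}$, a direct limit computation as $|t|\to0$ shows that any convergent sequence $A_{t}\in\Isom(\X_t^n)$ has a limit of the block form \eqref{eq form isometries hp} with $\widehat A\in\O(q_0$-block$)$ — i.e. lies in $G_{\HP^n}$ — \emph{or} in the composition of such a matrix with the antipodal map $a$ (the $(A(e_0))_0>0$ condition can fail in the limit because $\O(q_{-1})$, unlike $\Isom(\Hyp^n)$, contains $-\mathrm{Id}$ and elements exchanging the two components of the $x_0$-cone). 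Conversely, every element of $G_{\HP^n}$, and its product with $a$, is realised as such a limit: given the block form \eqref{eq form isometries hp}, one checks it is the limit of explicit elements of $\Isom(\AdS^n)$ conjugated by $\r_{|t|}$, using the same interpolation argument as in the hyperbolic case (this is essentially where Lemma \ref{lemma isomorphism halfpipe and minkowski} and its explicit formulas get reused). Hence the Chabauty limit is exactly the subgroup of $\Aut(\SP^n)$ generated by $G_{\HP^n}$ and $a$; since $a$ is central in $\Aut(\SP^n)$ and $a\notin G_{\HP^n}$ (as $a$ does not fix $e_0$ with positive first coordinate), this subgroup is the central $\Z/2\Z$-extension $\langle G_{\HP^n},a\rangle\cong G_{\HP^n}\times\Z/2\Z$ of the statement.

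The routine part is the explicit matrix limit bookkeeping, which is parallel to \cite{CDW, surveyseppifillastre} and to Lemma \ref{lemma convergence hyp}. The step I expect to be the main obstacle — or at least the one requiring genuine care rather than copying — is pinning down exactly which ``extra'' limit elements appear, i.e. proving that the Chabauty limit is the full $\Z/2\Z$-extension and nothing larger: one has to rule out that conjugation by the degenerating $\r_{|t|}$ produces, in the limit, transformations that ``stretch'' the $e_n$-direction (these would be in $\Aut(\HP^n)\smallsetminus G_{\HP^n}$ or worse), and simultaneously show that the antipodal factor genuinely does survive. Both points come down to a careful analysis of the last row and column of $\r_{|t|}A\r_{|t|}^{-1}$ under the constraint $A\in\O(q_{-1})$: the $(n,n)$-entry is forced to stay bounded and bounded away from $0$ by the quadratic-form relation (so no stretching), while its \emph{sign} is free and the first-coordinate sign of the image of $e_0$ is likewise free, which together account for precisely the two $\Z/2\Z$ factors — one of which is absorbed into the $\pm1$ already present in \eqref{eq form isometries hp} and the other of which is the new antipodal extension. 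Once this is made precise, the lemma follows.
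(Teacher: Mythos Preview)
The paper does not give its own proof of this lemma: it is simply stated with references \cite{CDW,surveyseppifillastre}, exactly as Lemma \ref{lemma convergence hyp} is. So there is no ``paper's own proof'' to compare against here.

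Your outline is essentially correct and is the standard argument one would extract from those references. The Hausdorff-convergence part is fine; the one point worth tightening is the closure at infinity: for fixed $t<0$ the closure of $\X_t^n\cap\Aff^n$ in $\SP^n$ contains an entire $(n-1)$-disc of directions at infinity (the projectivisation of the cone $y_1^2+\ldots+y_{n-1}^2\leq t^2y_n^2$), not just two points, and you should say explicitly that this disc Hausdorff-collapses to the two poles $[0:\ldots:0:\pm1]$ as $t\to0^-$. For the Chabauty part, your identification of the ``extra'' $\Z/2\Z$ is right and the reason is the simple one you give: $-\mathrm{Id}\in\O(q_{-1})$ commutes with $\r_{|t|}$ and hence lies in every $\Isom(\X_t^n)$, while $a=[-\mathrm{Id}]\notin G_{\HP^n}$. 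The part you flag as delicate --- ruling out limits that stretch the $e_n$-direction --- does come down to the quadratic-form constraint on the $(n,n)$-entry, as you say; making that precise is exactly the content of the cited references, so your sketch is appropriate for a lemma the paper itself only quotes.
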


See also Figure \ref{fig:transition}. Motivated by this construction, we have the following definition of \emph{geometric transition}:

\begin{defi}{\cite[Definition 3.8]{dancigertransition}} \label{defi transition}
Given an $n$-dimensional manifold $\mathcal{M}$, a \emph{geometric transition} on $\mathcal{M}$ from hyperbolic to anti-de Sitter structures is a continuous path $\{\mathscr P_t\}_{t\in(-\epsilon,\epsilon)}$ of real projective structures on $\mathcal{M}$ such that $\mathscr P_t$ is conjugate to a hyperbolic structure for $t>0$, to a half-pipe structure for $t=0$, and to an anti-de Sitter structure for $t<0$.
\end{defi}

In fact, the geometric transitions we construct in this paper will be $C^1$ deformations of geometric structures.

\begin{remark}
Theorem \ref{teo: main} shows that there exists a cusped hyperbolic 3-manifold $\mathcal{N}$, a foam $\Sigma\subset\mathcal X=\mathcal{N}\times S^1$, and a geometric transition on the 4-manifold 
$\mathcal{M}=\mathcal X\smallsetminus\Sigma$ (see Corollary \ref{cor: transition manifold}).
As in \cite{danciger,dancigertransition}, the geometric structures of $\mathcal M$ extend to 
$\mathcal X$ with some special kinds of singularities along $\Sigma$. Such singular structures will be described in Section \ref{sec cone-mfds}.
\end{remark}

\subsection{A recipe to construct examples} \label{sec recipe}
A direct way to construct examples of geometric transition from hyperbolic to anti-de Sitter structures, which is essentially the strategy we will use in this paper, is the following. Observe that there is an isometric embedding $\iota$ of $\Hyp^{n-1}$ into both $\Hyp^n$ and $\AdS^n$, which is given by:
\begin{equation}\label{eq: iota}
\iota([x_0:\ldots:x_{n-1}])=[x_0:\ldots:x_{n-1}:0]~.
\end{equation}
Hence $\iota(\Hyp^{n-1})$ is a totally geodesic hyperplane in $\Hyp^n$ (resp. $\AdS^n$), whose image is $\Hyp^n\cap\{x_n=0\}$ (resp. $\AdS^n\cap\{x_n=0\}\cap \{x_0>0\}$), and the embedding $\iota$ extends to an embedding of $\partial\Hyp^{n-1}$ into $\partial\Hyp^{n}$ (resp. $\partial\AdS^{n}$). The same formula \eqref{eq: iota} defines also a copy of $\Hyp^{n-1}$ inside $\HP^n$.

In fact, observe that the subgroup 
\begin{equation} \label{eq:subgroup G0}
G_0=\left\{\left[
\begin{array}{ccc|c}
  &&& 0 \\
  
   & \widehat A & & \vdots \\
  &&& 0 \\
    \hline  
    0&\ldots&0   & \pm1
\end{array}
\right]\bigg|\ \widehat A\in\O(\widehat q), (\widehat A(e_0))_0>0\right\}~,
\end{equation}
is simultaneously a subgroup of $\Isom(\Hyp^n)$, $\Isom(\AdS^n)$ and $G_{\HP^n}$ in $\Aut(\SP^n)$, composed precisely of those elements of $\Isom(\Hyp^n)$, $\Isom(\AdS^n)$ and $G_{\HP^n}$ which preserve the image of $\iota$. The group $G_0$ is isomorphic to $\Isom(\Hyp^{n-1})\times {\Z/2\Z}$. The reflection $r=\mathrm{diag}(1,\ldots,1,-1)$, along the hyperplane $\iota(\Hyp^{n-1})$ of $\Hyp^n$, $\AdS^n$, or $\HP^n$ is indeed central. The group $G_0$ is also isomorphic to $\O(\widehat q)$, the isomorphism being given by
\begin{equation} \label{eq:isomorphism G0}
r^{i}\begin{bmatrix} \widehat A & 0 \\ 0 & 1\end{bmatrix}\in G_0\mapsto (-1)^i\widehat A\in\O(\widehat q)
\end{equation}
for $i=0,1$. We will sometimes implicitly use this isomorphism in the paper.

With these premises, one then constructs a continuous family of projective structures $\mathscr P_t$ on a manifold $\mathcal{M}$, with pairs developing map-holonomy $(\dev_t,\rho_t)$ such that:
\begin{itemize}
\item For $t>0$, $\dev_t$ takes values in $\Hyp^n$ and $\rho_t$ in $\Isom(\Hyp^n)$;
\item For $t<0$, $\dev_t$ takes values in $\AdS^n$ and $\rho_t$ in $\Isom(\AdS^n)$;
\item When $t\to 0^\pm$, $\dev_t$ converges to a submersion $d_0$ with image in $\iota(\Hyp^{n-1})$, which is $h_0$-equivariant for a representation $h_0$ of $\pi_1 \mathcal{M}$ into the subgroup $G_0$ preserving $\iota(\Hyp^{n-1})$.
\end{itemize}

Then by applying the projective transformations $\r_{|t|}$, the pair $(\r_{|t|}\circ\dev_t,\r_{|t|}\rho_t\r_{|t|}^{-1})$ determines
a path of projective structures which, by construction, are conjugate to a hyperbolic structure when $t>0$, and to an anti-de Sitter structure when $t<0$.
If the pair $(\r_{|t|}\circ\dev_t,\r_{|t|}\rho_t\r_{|t|}^{-1})$ converges to a pair developing map-holonomy $(\dev_0,\rho_0)$, then, as a consequence of Lemma \ref{lemma convergence hyp} and Lemma \ref{lemma convergence ads}, this will determine a half-pipe structure on $\mathcal{M}$.


\section{Geometry of the cusps} \label{sec:cusps}

Since the cone-manifolds of Theorem \ref{teo: main} are cusped, in this section we introduce the notion of cusp in AdS and half-pipe manifolds. 

\subsection{Horospheres} \label{sec: horospheres}

Let us start with the notion of horosphere in the three geometries of our interest.
 
\begin{defi}
A \emph{horosphere} in $\Hyp^n$ is a smooth surface $H\subset\Hyp^n$ which is orthogonal to all the geodesics with the same endpoint $p\in \partial\Hyp^n$. A \emph{horosphere} in $\AdS^n$ is a smooth timelike surface $H\subset\AdS^n$ which is orthogonal to all the spacelike geodesics with the same endpoint $p\in \partial\AdS^n$.
\end{defi}

Since there is no notion of orthogonality in $\HP^n$, the half-pipe notion is slightly different. 

\begin{defi}
A \emph{horosphere} in $\HP^n$ is the union of all the degenerate lines going through a hyperbolic horosphere $\widehat{H}$ contained in a spacelike hyperplane.
\end{defi}

\begin{figure}
\centering
\begin{minipage}[c]{.33\textwidth}
\centering
\includegraphics[scale=0.5]{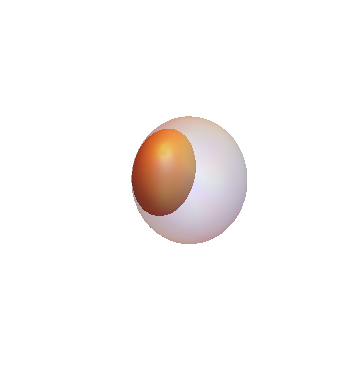}
\end{minipage}%
\begin{minipage}[c]{.25\textwidth}
\centering
\includegraphics[scale=0.45]{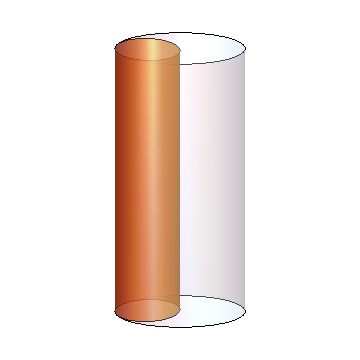}
\end{minipage}%
\begin{minipage}[c]{.5\textwidth}
\centering
\includegraphics[scale=0.42]{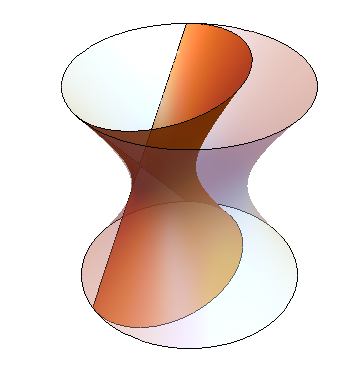}
\end{minipage}
\caption[Hyperbolic, half-pipe, and AdS horospheres.]{\footnotesize Hyperbolic, half-pipe, and anti-de Sitter horospheres in an affine chart.} 
\label{fig:horospheres}
\end{figure}

See Figure \ref{fig:horospheres} to visualise the horospheres in the affine models of $\Hyp^n$, $\HP^n$ and $\AdS^n$.
In each of the three geometries, we call \emph{boundary at infinity} of a horosphere $H$ {the set}
$$\partial_\infty H=\overline H\smallsetminus H$$ 
(here $\overline H$ denotes the closure of $H$ in $\SP^n$), which consists of a single point in $\partial\Hyp^n$ for the hyperbolic case, a pair of antipodal points in $\partial\AdS^n$ for the AdS case, and of a closed interval in $\partial\HP^n$ for the half-pipe geometry. In the latter case, the endpoints of such interval are the projectivisation in $\SP^n$ of the degenerate direction.

In fact, in terms of the duality with $\R^{1,n-1}$, a horosphere in $\HP^n$ can be described as the space of all spacelike hyperplanes in $\R^{1,n-1}$ whose normal vector (which is a point of $\Hyp^{n-1}$) lies in a horosphere of dimension $n-2$. Hence the boundary at infinity of a half-pipe horosphere consists of a degenerate (vertical) line, which corresponds to all lightlike hyperplanes in $\R^{1,n-1}$ containing the same lightlike direction, plus the two additional points which lie outside the affine chart $\Aff^n$.

\subsection{Metric expressions and upper half-space models} \label{sec transition horospheres}\label{sec upper halfspace}
Let us now give a parameterisation of horospheres and recover their Euclidean, Minkowski, or Galilean geometry. Recall that we defined 
$$\X^n_t=\{[x]\in\SP^n\,|\,q_t(x)<0\}~,$$
where 
$$q_t(x)=-x_0^2+x_1^2+\ldots+x_{n-1}^2+t|t|x_n^2~.$$
For $t\neq 0$, $\X_t^n$ is endowed with a pseudo-Riemannian metric (Riemannian for $t>0$ and Lorentzian for $t<0$) of constant curvature $-1$, given by pulling back the bilinear form $b_t$ on $\R^{n+1}$ associated to $q_t$ by the embedding that sends $[x]$ to the unique positive multiple of $x$ on which $q_t$ takes the value $-1$. Let us thus consider the embedding 
$$\eta_t\colon\R^{n-1}\to\R^{n+1}$$
given by 
$$\eta_t(y_2,\ldots,y_n)=(f_t(y_2,\ldots,y_n)+1,f_t(y_2,\ldots,y_n),y_2,\ldots,y_n)~,$$
where the function $f_t\colon\R^{n-1}\to\R$ is given by
$$f_t(y_2,\ldots,y_n)=\frac{1}{2}(y_2^2+\ldots+y_{n-1}^2+t|t|y_n^2)~.$$
 (Note that $f_t$ is determined by the condition $q_t\circ\eta_t\equiv -1$.)
By pulling back the bilinear form $b_t=-dx_0^2+dx_1^2+\ldots+t|t|dx_n^2$, we obtain
$$\eta_t^*b_t=dy_2^2+\ldots+dy_{n-1}^2+t|t|dy_n^2~.$$
In particular we obtained:
\begin{itemize}
\item for $t=1$, a horosphere in $\Hyp^n$ is isometric to Euclidean space $\R^{n-1}$;
\item for $t=-1$ a horosphere in $\AdS^n$ is isometric to Minkowski space $\R^{1,n-2}$;
\item for $t=0$ a horosphere in $\HP^n$ is isometric to $\R^{n-1}$, endowed with a degenerate metric of signature $+\ldots+0$ (the pull-back of the degenerate metric of $\HP^n$).
\end{itemize}
In all cases,
we have $$p=[1:1:0:\ldots:0]\in\partial_\infty\,\eta_t(\R^{n-1}).$$

In fact, applying the projective transformation $\r_t$ one sees that the half-pipe horosphere which is the image of $\eta_0$ is the rescaled limit of hyperbolic and AdS horospheres.


With a little more effort, we can use the embeddings $\eta_t$ to obtain an upper half-plane model for the spaces $\X_t$. Let us define a  parameterization $\zeta_t\colon\R_{>0}\times\R^{n-1}\to\X_t^n$ (this is only a local parameterization if $t<0$, see Remark \ref{rmk upper halfspace ads} below):
\begin{equation}\label{eq parameterization halfspace model}
\zeta_t(y_1,\ldots,y_n)=\left(
\begin{array}{ccccc}
  \frac{1}{2}\left(y_1+\frac{1}{y_1}\right) &\frac{1}{2}\left(y_1-\frac{1}{y_1}\right) &0&\ldots&  0 \\
  \frac{1}{2}\left(y_1-\frac{1}{y_1}\right) &\frac{1}{2}\left(y_1+\frac{1}{y_1}\right) &0&\ldots&  0 \\

 0 & 0 & 1 &  & \vdots \\

 \vdots &  && \ddots  &  \\

  0 &\ldots & \ldots & &1
\end{array}
\right)\eta_t\! \left(\frac{y_2}{y_1},\ldots,\frac{y_n}{y_1}\right)~.
\end{equation}
A tedious but elementary computation shows that 
$$\zeta_t^*b_t=\frac{dy_1^2+\ldots+dy_{n-1}^2+t|t|dy_n^2}{y_1^2}~.$$

\begin{remark}
To explain how the expression \eqref{eq parameterization halfspace model} is obtained, let us observe that
 the big matrix in Equation \eqref{eq parameterization halfspace model} is an isometry for every $\X_t^n$, translating along the geodesic $x_2=\ldots=x_n=0$, which is orthogonal to the horosphere parameterised by $\eta_t$.
\end{remark}

Hence $\zeta_t$ provides a upper half-space model for $\Hyp^n$ ($t=1$), $\AdS^n$ ($t=-1$) and $\HP^n$ ($t=0$). It is moreover evident that the multiplication of the last coordinate by $|t|$ provides an isometry between the upper half-space model for $\X_t^n$ and $\Hyp^n$ (if $t>0$) and for $\X_t^n$ and $\AdS^n$ (if $t<0$). The (spacelike, for the AdS and HP case) geodesics with endpoint at infinity $p=[1:1:0:\ldots:0]$ are represented by vertical lines, as expected.

\begin{remark} \label{rmk upper halfspace ads}
The upper half-space model for $\AdS^3$ has been described in \cite[Appendix A]{danciger}, although obtained in a different way. We remark here that, with our definition of $\AdS^n$, the upper half-space model only covers a part of $\AdS^n$ --- roughly speaking, half of $\AdS^n$. A little trick to visualise a larger portion is to allow $y_1\in\R\smallsetminus \{0\}$ in the parameterization $\zeta_{-1}$. In this way, one gets a parameterization of the complement of the lightlike hyperplane $\{x_0=x_1\}$ as the union of the upper half-space and the lower half-space. However, the boundary at infinity is somewhat more complicated to describe in this model. 
\end{remark}

\subsection{Cusps}
From the upper half-space models we constructed, we see that 
every isometry of $\Hyp^n$ or $\AdS^n$ which preserves a horosphere $H$ acts on $H$ by isometries for 
its intrinsic (Euclidean or Minkowski) metric. Conversely, every 
isometry of $H$ extends uniquely to a global isometry. 

{Given a horosphere $H$ of $\mathbb{X}^n_t$, we thus define:}
 \begin{itemize}
\item for $t=1$, the subgroup $P_1:=\mathrm{Stab}_{\Hyp^n}(H)\cong\Isom(\R^{n-1})$ of $\Isom(\Hyp^n)$; 
\item for $t=-1$, the subgroup $P_{-1}:=\mathrm{Stab}_{\AdS^n}\cong\Isom(\R^{1,n-2})$ of $\Isom(\AdS^n)$; 
\item for $t=0$, the subgroup $P_{0}:=\mathrm{Stab}_{\HP^n}(H)\cap \mathrm{Stab}_{\HP^n}(p)$ of $G_{\HP^n}$, 
for some $p\in\partial_\infty H$ that is not a vertex of the closed interval $\partial_\infty H$.
\end{itemize}
The third point needs some explication. First, recall that the boundary at infinity of a half-pipe horosphere does not consist of a single point, but of a closed interval. It is for this reason that we need to specify that $P_0$ is the stabiliser of \emph{both} $H$ and $p\in\partial_\infty H$. Moreover, the condition that $p$ is an interior point of $\partial_\infty H$ means that $p$ is not one of the two points which lie outside the affine chart $\Aff^n$ ({the two points} at infinity in Figure \ref{fig:horospheres}). In other words, in the usual duality with Minkowski space, $p$ corresponds to a lightlike hyperplane in $\R^{1,n-1}$.

{The geometry $(G,X)=(P_0,H)$ of a h}alf-pipe horosphere 
$H$ is called \emph{Galilean geometry} \cite{yaglom}. 
It can be checked that $P_0$ is the limit of $\r_{|t|}P_1\r_{|t|}^{-1}$ and $\r_{|t|}P_{-1}\r_{|t|}^{-1}$. In other words, Galilean geometry is transitional between Euclidean and Minkowski geometry.

\begin{remark} \label{rem:trans-link-ideal}
It turns out that $P_0$ is isomorphic to the semidirect product $\Isom(\R^{n-2})\ltimes\R^{n-1}$. (An explicit geometric interpretation can be given using the duality with Minkowski space.) We omit the precise details here, as we will explain concretely some examples of interest for this paper --- see Example \ref{ex:hp torical cusp} below. 
\end{remark}

We are now ready for the definition of cusp in each of the three cases.

\begin{defi} \label{defi cusp}
A \emph{cusp} in a hyperbolic (resp. anti-de Sitter, half-pipe) manifold is a region isometric to the quotient of $\{y_1>1\}$ in the  upper half-space model, by a subgroup $\Gamma$ of $P_1$ (resp. $P_{-1}$, $P_0$) acting properly and co-compactly on $H=\{y_1=1\}$.
\end{defi}

\begin{remark} \label{rem: finite volume}
By a standard computation, one sees that a cusp in a hyperbolic or anti-de Sitter manifold has finite volume. For half-pipe geometry, there is a canonical volume form as well \cite{surveyseppifillastre}, and in the same way it is immediate to check that the volume of a cusp is finite also in this case.
\end{remark}

\begin{example}\label{ex:hp torical cusp}
Simple examples are \emph{toric cusps}, where $\Gamma\cong\Z^{n-1}$ lies in the  normal subgroup $\R^{n-1}$ of $P_1$, $P_{-1}$, or $P_0$, and the section $\sfrac{H}{\Gamma}$ is a Euclidean, Minkowski, or Galilean $(n-1)$-torus, respectively. In hyperbolic geometry, $\Z^{n-1}$ acts by translations on a Euclidean horosphere, where the standard generators of $\Z^{n-1}$ are linearly independent translations. Exactly the same construction goes through for the AdS case for actions on Minkowski horospheres.

Let us now provide a similar example for half-pipe geometry. Recall that a horosphere $H$ in $\HP^n$ is the product of a horosphere $\widehat H$ in $\Hyp^{n-1}$ and the degenerate direction, in the standard decomposition $\HP^n\cong \Hyp^{{n-1}}\times\R$. We let $\Z^{n-1}$ act on $H$ in the following way. The first $n-2$ standard generators $\gamma_1,\ldots,\gamma_{n-2}$ act on $\widehat H$ by translation as above. We now define the action of the remaining generator $\gamma_{n-1}$. Suppose $\widehat H$ is obtained as the intersection of the hyperboloid in $\R^{1,n-1}$ with a lightlike hyperplane parallel to $w^\perp$, where $w$ is a lightlike vector in $\R^{1,n-1}$ and $w^\perp$ is its orthogonal complement with respect to the Minkowski bilinear form $\widehat b$. Then we let $\gamma_{n-1}$ act by translation by (a multiple of) $w$.

We now check, by means of the duality with Minkowski geometry (see Section \ref{sec motivations hp}), that this action of $\Z^{n-1}$ is faithful. Indeed, in the dual Minkowski picture, $\gamma_1,\ldots,\gamma_{n-2}$ act as
$$x\mapsto \widehat A\cdot x~,$$
where $\widehat A$ is a linear isometry such that $\widehat A\cdot w=w$. The generator $\gamma_{n-1}$ acts as 
$$x\mapsto x+\lambda w$$
for some $\lambda\in\R$. Since $\widehat A\cdot w=w$, it is clear that these actions commute. The resulting group $\Gamma<P_0$ thus provides an example of toric cusp in a half-pipe manifold.
\end{example}

The examples of geometric transition we construct in the second part of the paper will be examples of hyperbolic/anti-de Sitter/half-pipe manifolds with cusps. We will describe the geometry of the cusps, and their transition, in terms of the geometric structures induced on the quotient of a horosphere, as in Section \ref{sec: geometry polytope}. We will therefore be able to visualise the corresponding transition from Euclidean to Minkowski structures of codimension one.

\section{Half-spaces, reflections and rotations} \label{sec:half-spaces}

In this section, we describe the behaviour of hyperplanes and half-spaces under geometric transition and introduce projective reflections.
Finally, we introduce rotations, boosts and their infinitesimal analogues in half-pipe geometry.

\subsection{Dual projective sphere} \label{subsec:proj sphere}

Let us introduce the necessary notation.

{A \emph{hyperplane} (resp. \emph{subspace}) $H\subset\SP^n$ of the projective sphere is the image through the quotient map $\R^{n+1}\smallsetminus\{0\}\to\SP^n$ of a linear hyperplane (resp. \emph{subspace})} of $\R^{n+1}$. A \emph{half-space} $\l H\subset\SP^n$ of the projective sphere is the closure of one of the two connected components of $\SP^n\smallsetminus H$, where $H$ is a hyperplane. In other words, a half-space is the closure of an affine chart (see Section \ref{sec: proj structures}).

The \emph{dual projective sphere} is defined as 
$$\SP^{n,*}=\{\alpha\in\R^{n+1,*}\smallsetminus \{0\}\}/{\R_{>0}}~,$$
where $\R^{n+1,*}$ is the vector space of linear forms on $\R^{n+1}$. We will use coordinates with respect to the dual basis of the standard basis, namely the basis $\{e_0^*,\ldots,e_n^*\}$ defined by $e_i^*(x_0,\ldots,x_n)=x_i$. We will also denote elements of $\SP^{n,*}$ by
$$(\alpha)=(\alpha_0:\ldots:\alpha_n)\in\SP^{n,*}$$
if $\alpha=\alpha_0e_0^*+\ldots+\alpha_ne_n^*$.
The dual projective sphere $\SP^{n,*}$ is identified to the space of half-spaces in $\SP^n$, by associating to the class of a linear form $\alpha$ the half-space defined by:
$$\l H=\{[x]\in\SP^n\,|\,\alpha(x)\leq 0\,\}~.$$
By a small abuse of notation, we will sometimes denote a half-space $\l H\subset\SP^n$ with the corresponding point $(\alpha)\in\SP^{n,*}$ of the dual sphere.

The following elementary lemma will be useful to control the behaviour of half-spaces under geometric transition.

\begin{lemma} \label{lemma rescale halfspace}
Let $\l H=(\alpha_0:\ldots:\alpha_n)$ be a half-space in $\SP^n$. Then for every $t>0$, the half-space $\r_t\l H$ has coordinates
$$\r_t\l H=\left(\frac{\alpha_0}{t}:\ldots:\frac{\alpha_{n-1}}{t}:\alpha_n\right)~.$$
\end{lemma}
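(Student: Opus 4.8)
The plan is to unwind the definitions of the dual projective sphere and of the rescaling map $\r_t$, and simply match up coordinates. Recall from Section \ref{subsec:proj sphere} that a half-space $\l H=(\alpha_0:\ldots:\alpha_n)$ corresponds to the linear form $\alpha=\alpha_0e_0^*+\ldots+\alpha_ne_n^*$, and by definition $\l H=\{[x]\in\SP^n\,|\,\alpha(x)\le 0\}$. The rescaling map is $\r_t=[\mathrm{diag}(1,\ldots,1,1/t)]$, so a point $[x]\in\SP^n$ is sent to $[\r_t x]$ where $(\r_t x)_i=x_i$ for $i<n$ and $(\r_t x)_n=x_n/t$.

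First I would compute which half-space is the image $\r_t\l H$. A point $[y]$ lies in $\r_t\l H$ precisely when $\r_t^{-1}[y]\in\l H$, i.e.\ when $\alpha(\r_t^{-1}y)\le 0$. Writing this out, $\alpha(\r_t^{-1}y)=\sum_{i=0}^{n-1}\alpha_i y_i+\alpha_n(t\,y_n)$, since $\r_t^{-1}=[\mathrm{diag}(1,\ldots,1,t)]$. Thus $\r_t\l H$ is the half-space defined by the linear form $\beta$ with $\beta_i=\alpha_i$ for $i<n$ and $\beta_n=t\alpha_n$; in dual coordinates $\r_t\l H=(\alpha_0:\ldots:\alpha_{n-1}:t\alpha_n)$. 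Finally, since projective coordinates on $\SP^{n,*}$ are defined only up to multiplication by a positive scalar and $t>0$, I may divide all entries by $t$ to obtain $\r_t\l H=(\alpha_0/t:\ldots:\alpha_{n-1}/t:\alpha_n)$, which is the claimed formula.

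There is essentially no obstacle here: the only points requiring a word of care are (i) getting the inverse of $\r_t$ right, so that the action on the dual sphere comes out contravariantly (a half-space moves by $\r_t$ iff its defining form is precomposed with $\r_t^{-1}$), and (ii) observing that rescaling the resulting $(n+1)$-tuple of dual coordinates by the positive number $1/t$ is legitimate, which is exactly why the statement is restricted to $t>0$ (for $t<0$ one would be rescaling by a negative number, changing which of the two components of $\SP^n\smallsetminus H$ is selected, so the naive formula would pick the opposite half-space). With these two remarks the computation is immediate.
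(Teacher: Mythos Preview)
Your proof is correct and follows essentially the same approach as the paper: both compute that $[y]\in\r_t\l H$ iff $\alpha(\r_t^{-1}y)\le 0$, read off the resulting form $(\alpha_0,\ldots,\alpha_{n-1},t\alpha_n)$, and then rescale by the positive scalar $1/t$. Your additional remark explaining why the restriction $t>0$ matters is a helpful elaboration not present in the paper's proof.
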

\begin{proof}
Recall that the projective transformation $\r_t$ is defined by 
$$\r_t=\left[\mathrm{diag}\left(1,\ldots,1,\frac{1}{t}\right)\right]~.$$
Given $[x]=[x_0:\ldots:x_n]$, then $[x]\in \r_t \l H$ if and only if $[x]=\r_t([x'])$ for some $[x']\in\l H$; namely:
$$[x_0:\ldots:x_{n-1}:x_n]=\left[x'_0:\ldots:x_{n-1}':\frac{x'_n}{t}\right]~,$$
where $[x'_0:\ldots:x'_{n-1}:x'_n]$ satisfies the defining condition for $\l H$, namely:
$$\alpha_0x'_0+\ldots+\alpha_nx'_n\leq 0~.$$
By multiplying by $1/t$, this is equivalent to
$$\frac{\alpha_0}{t}x_0+\ldots+\frac{\alpha_{n-1}}{t}x_{n-1}+\alpha_nx_n\leq 0~.$$
This proves the claim.
\end{proof}

\begin{defi}
A \emph{half-space} (resp. \emph{hyperplane}, \emph{subspace}) in $\X^n_t$ is a nonempty intersection of $\X_t^n$ with a half-space (resp. hyperplane, subspace) of $\SP^n$.
\end{defi}

\subsection{Hyperplanes in $\Hyp^n$ and $\AdS^n$} \label{sec geom halfspaces}

We shall now provide a geometric description of hyperplanes and half-spaces in $\Hyp^n$, $\AdS^n$ and $\HP^n$. Let us start with the hyperbolic space. We have the following simple lemma.

\begin{lemma} \label{lemma hyperplane hyp}
Given a half-space $\l H=(\alpha)$ of $\SP^n$, $\partial\l H$ intersects $\Hyp^n$ if and only if 
$$q_1(\alpha_0,\ldots,\alpha_n)>0~.$$
\end{lemma}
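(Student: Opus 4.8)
The statement is a purely linear-algebraic fact about when the boundary hyperplane of a projective half-space meets the hyperbolic ball. The plan is to identify the hyperplane $\partial\l H$ with the orthogonal complement (with respect to $b_1$) of a vector representing $(\alpha)$, and then to recognise that ``$\partial\l H$ meets $\Hyp^n$'' is equivalent to ``this orthogonal complement contains a vector on which $q_1$ is negative'', which by Sylvester's law of inertia happens precisely when the normal direction itself is $b_1$-spacelike, i.e. $q_1(\alpha)>0$.

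First I would set up the dictionary. Let $J_1=\mathrm{diag}(-1,1,\ldots,1)$ be the matrix of $b_1$, and given $(\alpha)=(\alpha_0:\ldots:\alpha_n)\in\SP^{n,*}$ let $u=J_1^{-1}\alpha^T=(-\alpha_0,\alpha_1,\ldots,\alpha_n)$, so that $\alpha(x)=b_1(u,x)$ for all $x\in\R^{n+1}$. Note $q_1(u)=b_1(u,u)=\alpha(u)$, and a direct computation gives $q_1(u)=-\alpha_0^2+\alpha_1^2+\ldots+\alpha_n^2=q_1(\alpha_0,\ldots,\alpha_n)$; in particular $u$ is $b_1$-spacelike iff $q_1(\alpha)>0$. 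The hyperplane $H=\partial\l H\subset\SP^n$ is the image of the linear hyperplane $u^{\perp_{b_1}}=\{x\in\R^{n+1}\mid b_1(u,x)=0\}$. Since $\Hyp^n$ is the image in $\SP^n$ of the negative cone $\{x\mid q_1(x)<0,\ x_0>0\}$, the hyperplane $H$ meets $\Hyp^n$ if and only if the linear subspace $u^{\perp_{b_1}}$ contains a vector $x$ with $q_1(x)<0$ (the condition $x_0>0$ can then be arranged by possibly replacing $x$ by $-x$, since $q_1$ is even and $x_0\neq 0$ because $q_1(x)<0$).

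The core step is then to show: $u^{\perp_{b_1}}$ contains a $q_1$-negative vector $\iff q_1(u)>0$. For the forward direction, suppose $q_1(u)\leq 0$. If $q_1(u)<0$ then $\R u$ is a maximal $b_1$-negative subspace (since $b_1$ has signature $(-,+,\ldots,+)$, so its negative index is $1$), hence $u^{\perp_{b_1}}$ is positive definite and contains no $q_1$-negative vector. If $q_1(u)=0$ then $u\in u^{\perp_{b_1}}$ and $b_1$ restricted to $u^{\perp_{b_1}}$ is positive semidefinite (again because the negative index is $1$ and already ``used up'' infinitesimally along the null line), so it has no $q_1$-negative vector either. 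For the converse, if $q_1(u)>0$ then $u^{\perp_{b_1}}$ inherits a nondegenerate form of signature $(-,+,\ldots,+)$ by Sylvester, so it certainly contains a $q_1$-negative vector; concretely one can pick a $b_1$-orthogonal complement $v$ to $u$ inside $\mathrm{span}(e_0,u)$ and check $q_1(v)<0$.

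The main obstacle is essentially bookkeeping rather than depth: one must be careful that ``intersects $\Hyp^n$'' means the \emph{affine-chart} piece $\{x_0>0\}$, not merely the negative cone, and handle the sign/antipode issue cleanly — but as noted above $q_1(x)<0$ forces $x_0\neq 0$, so replacing $x$ by $-x$ settles it, and the same remark shows the intersection is automatically nonempty as a subset of $\SP^n$ once it is nonempty as a subset of the negative cone. A secondary point to state carefully is the signature computation for the restriction of $b_1$ to $u^{\perp}$ in each of the three cases ($q_1(u)$ positive, zero, negative); invoking Sylvester's law of inertia and the fact that $b_1$ has negative index exactly $1$ makes all three cases immediate.
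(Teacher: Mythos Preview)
Your proof is correct and follows essentially the same approach as the paper: identify $\partial\l H$ with the $b_1$-orthogonal complement of the vector $u=(-\alpha_0,\alpha_1,\ldots,\alpha_n)$, observe that $q_1(u)=q_1(\alpha)$, and reduce the question to whether $u^{\perp_{b_1}}$ contains a $q_1$-negative vector. The only difference is one of detail: the paper simply cites as ``well-known'' the fact that a hyperplane meets $\Hyp^n$ iff its $b_1$-normal is spacelike, whereas you actually prove this via the signature trichotomy on $q_1(u)$, making your argument more self-contained.
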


\begin{proof}
It is well-known that a hyperplane in $\R^{n+1}$ intersects $\Hyp^n$ if and only if its orthogonal complement for the bilinear form $b_1$ (whose associated quadratic form is $q_1$), seen as a line in Minkowski space $\R^{1,n}$, is spacelike. Compared to our lemma, there is only one small caveat: when choosing the dual basis of $\R^{n+1,*}$ as $\{e_0^*,\ldots,e_n^*\}$, we are essentially using the standard Euclidean product to identify $\R^{n+1,*}$ with $\R^{n+1}$. On the other hand, taking the orthogonal complement in $\R^{1,n}$ corresponds to choosing the basis $\{-e_0^*,e_1^*,\ldots,e_{n-1}^*,e_n^*\}$. However, the two choices differ by the following change of coordinates
$$(\alpha_0,\alpha_1,\ldots,\alpha_n)\mapsto (-\alpha_0,\alpha_1,\ldots,\alpha_n)$$
which is an isometry for the quadratic form $q_1$. Hence Lemma \ref{lemma hyperplane hyp} follows.
\end{proof}

By the same reason, we can also use the usual formulae to compute the dihedral angle between two half-spaces:

\begin{lemma} \label{lemma angle hyp}
Given $\alpha,\alpha'$ such that $q_1(\alpha),q_1(\alpha')<0$ let $\l H=(\alpha)$ and $\l H'=(\alpha')$ be the corresponding half-spaces. The hyperplanes $\partial\l H$ and $\partial\l H'$ intersect transversely in $\Hyp^n$ if and only if 
$${|b_1(\alpha,\alpha')|}<{\sqrt{|q_1(\alpha)|}\sqrt{|q_1(\alpha')|}}~.$$
In this case, the dihedral angle $\theta$ between the half-spaces $\l H$ and $\l H'$ satisfies 
$$\cos\theta=-\frac{b_1(\alpha,\alpha')}{\sqrt{|q_1(\alpha)|}\sqrt{|q_1(\alpha')|}}~.$$
\end{lemma}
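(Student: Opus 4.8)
The statement is a Lorentzian analogue of the classical formula for the angle between two hyperplanes in hyperbolic space, so the natural approach is to reduce everything to linear algebra on $\R^{n+1}$ equipped with the bilinear form $b_1$ of signature $(-,+,\ldots,+)$, exactly as in the proof of Lemma \ref{lemma hyperplane hyp}. First I would fix the identification of $\alpha$ (resp. $\alpha'$) with a vector $v$ (resp. $v'$) in $\R^{1,n}$ via the $b_1$-orthogonal complement, using the same change of coordinates $(\alpha_0,\alpha_1,\ldots,\alpha_n)\mapsto(-\alpha_0,\alpha_1,\ldots,\alpha_n)$ as in Lemma \ref{lemma hyperplane hyp}, which is a $q_1$-isometry; under this identification the hypothesis $q_1(\alpha)<0$, $q_1(\alpha')<0$ says precisely that $v$ and $v'$ are timelike, hence by Lemma \ref{lemma hyperplane hyp} both hyperplanes $\partial\l H$, $\partial\l H'$ meet $\Hyp^n$.

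The core computation is then the following two-dimensional reduction. Consider the $b_1$-span $W$ of $v$ and $v'$. If $v$ and $v'$ are not proportional, $W$ is a $2$-plane, and its restriction of $b_1$ has signature either $(-,-)$, $(-,+)$, or is degenerate; the three cases correspond exactly to the three geometric possibilities (the hyperplanes $\partial\l H$, $\partial\l H'$ are disjoint with a common perpendicular, intersect transversely, or are asymptotically parallel). The condition for transverse intersection in $\Hyp^n$ is that $W$ contains a negative direction and the $b_1$-orthogonal line $W^\perp$ (a $(n-1)$-plane, the intersection of the two hyperplanes) is spacelike, i.e. $b_1|_W$ has signature $(-,+)$. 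A Gram-matrix argument makes this explicit: the Gram matrix of $\{v,v'\}$ with respect to $b_1$ is
$$\begin{pmatrix} q_1(\alpha) & b_1(\alpha,\alpha') \\ b_1(\alpha,\alpha') & q_1(\alpha')\end{pmatrix},$$
whose determinant is $q_1(\alpha)q_1(\alpha') - b_1(\alpha,\alpha')^2$. Since both diagonal entries are negative, $b_1|_W$ has signature $(-,+)$ precisely when this determinant is negative, i.e. $b_1(\alpha,\alpha')^2 > q_1(\alpha)q_1(\alpha') = |q_1(\alpha)||q_1(\alpha')|$, which is exactly the stated inequality $|b_1(\alpha,\alpha')| < \sqrt{|q_1(\alpha)|}\sqrt{|q_1(\alpha')|}$ after taking square roots and keeping track of signs (note $q_1(\alpha)q_1(\alpha')>0$). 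I would spell out that the degenerate and $(-,-)$ cases give the opposite inequalities, so the trichotomy is complete.

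For the angle formula, once transversality holds I would normalise by choosing the positive multiples $\sigma(\alpha)$, $\sigma(\alpha')$ with $q_1 = -1$ — this is legitimate because $q_1(\alpha)<0$ — or equivalently divide by $\sqrt{|q_1(\alpha)|}$, $\sqrt{|q_1(\alpha')|}$. The dihedral angle between the half-spaces is measured at a point of $\partial\l H \cap \partial\l H' \cap \Hyp^n$ in the normal $2$-plane, which after the identification corresponds to the Euclidean angle (in the induced positive-definite metric on the Riemannian directions) between the inward unit normals; a standard sign bookkeeping — the inward normal to $\l H = \{\alpha \le 0\}$ points opposite to $\alpha$ — produces the minus sign, giving $\cos\theta = -b_1(\alpha,\alpha')/(\sqrt{|q_1(\alpha)|}\sqrt{|q_1(\alpha')|})$. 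The main obstacle, and the only point requiring care rather than routine manipulation, is pinning down the orientation/sign conventions so that the formula comes out with the correct sign and so that ``dihedral angle'' refers unambiguously to the angle of the convex region $\l H \cap \l H'$; this is where I would be most careful, and it is essentially identical to the bookkeeping in the classical hyperbolic case, which is why the lemma is stated as following ``by the same reason''.
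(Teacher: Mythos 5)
Your overall strategy — carry $\alpha,\alpha'$ to $\R^{1,n}$ via the $q_1$-isometric identification set up in the proof of Lemma~\ref{lemma hyperplane hyp}, restrict $b_1$ to the two-plane $W$ spanned by the dual normals, and read everything off the Gram matrix — is exactly the right way to flesh out the ``usual formulae'' that the paper invokes without spelling them out. However, the execution contains several coupled sign errors, and they do not cancel.

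The hypothesis you adopt, $q_1(\alpha),q_1(\alpha')<0$, must be a typo in the statement of the lemma: by Lemma~\ref{lemma hyperplane hyp} itself, $\partial\l H$ meets $\Hyp^n$ if and only if $q_1(\alpha)>0$, and the applications in the paper (for instance the derivation of $\cos\theta_t=\tfrac{3t^2-1}{1+t^2}$ from the first and third vectors of the left column of Table~\ref{table:link}, both of which have $q_1=t^2+1>0$) bear this out. You then assert that $q_1(\alpha)<0$ makes $v$ timelike ``hence by Lemma~\ref{lemma hyperplane hyp} both hyperplanes meet $\Hyp^n$'' — that reads Lemma~\ref{lemma hyperplane hyp} backwards. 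Your geometric characterization of transversality is also reversed: $\partial\l H\cap\partial\l H'\cap\Hyp^n$ is $W^\perp\cap\Hyp^n$, which is nonempty precisely when $W^\perp$ \emph{contains a timelike direction} (not when $W^\perp$ is spacelike), equivalently when $b_1|_W$ is positive-definite of signature $(+,+)$, not $(-,+)$. Finally, the last algebraic step is self-contradictory as written: you derive $b_1(\alpha,\alpha')^2>|q_1(\alpha)||q_1(\alpha')|$ and then declare that this ``is exactly'' $|b_1(\alpha,\alpha')|<\sqrt{|q_1(\alpha)|}\sqrt{|q_1(\alpha')|}$; taking square roots of your displayed inequality gives the strict inequality in the opposite direction. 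With the corrected hypothesis $q_1(\alpha),q_1(\alpha')>0$, the Gram matrix has positive diagonal entries, transverse intersection in $\Hyp^n$ is equivalent to $W$ being positive-definite, i.e.\ to the determinant $q_1(\alpha)q_1(\alpha')-b_1(\alpha,\alpha')^2$ being positive, which is exactly the stated bound. The angle formula then follows as you describe, but you should normalise the dual normals to $q_1=+1$ (spacelike unit vectors), not to $q_1=-1$ as you wrote.
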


One can also find similar formulae for the anti-de Sitter case. It turns out that every hyperplane $\partial\l H$ in $\SP^n$ intersects $\AdS^n$ non-trivially. Moreover, recall that a hyperplane in anti-de Sitter space is called \emph{spacelike}, \emph{timelike} or \emph{lightlike} if the induced bilinear form is positive definite, indefinite or degenerate, respectively. One then has the following characterisation:

\begin{lemma} \label{lemma hyperplane ads}
Let $\l H=(\alpha_0:\ldots:\alpha_n)$ be a half-space of $\SP^n$. Then $\partial\l H\cap\AdS^n$ is:
\begin{itemize}
\item Spacelike if $q_{-1}(\alpha_0,\ldots,\alpha_n)<0$. In this case, $\partial\l H\cap\AdS^n$ consists of two disconnected totally geodesic copies of $\Hyp^{n-1}$.
\item Timelike if $q_{-1}(\alpha_0,\ldots,\alpha_n)>0$. In this case, $\partial\l H\cap\AdS^n$ consists of a totally geodesic copy of $\AdS^{n-1}$.
\item Lightlike if $q_{-1}(\alpha_0,\ldots,\alpha_n)=0$.
\end{itemize}
\end{lemma}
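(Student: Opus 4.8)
The plan is to mimic the proof of Lemma \ref{lemma hyperplane hyp} as closely as possible, translating the statement about $\Hyp^n$ into one about $\AdS^n$ by keeping careful track of the bilinear form. Recall that a half-space $\l H=(\alpha)$ corresponds to the linear form $\alpha\in\R^{n+1,*}$, and $\partial\l H$ is the image of the linear hyperplane $\ker\alpha\subset\R^{n+1}$. The key point is that the causal type of the totally geodesic subspace $\partial\l H\cap\AdS^n$ is governed by the causal type of the $b_{-1}$-orthogonal complement of $\ker\alpha$. So first I would identify $\R^{n+1,*}$ with $\R^{n+1}$ via the \emph{Euclidean} product (the dual basis $\{e_0^*,\ldots,e_n^*\}$), and observe that under this identification the $b_{-1}$-orthogonal complement of $\ker\alpha$ corresponds to the line spanned by the vector obtained from $(\alpha_0,\ldots,\alpha_n)$ by the change of coordinates $(\alpha_0,\alpha_1,\ldots,\alpha_n)\mapsto(-\alpha_0,\alpha_1,\ldots,\alpha_{n-1},-\alpha_n)$ --- exactly as in the hyperbolic proof, but now with two sign flips because the bilinear form $b_{-1}$ has two negative entries. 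Since this change of coordinates is an isometry of $q_{-1}$, the $b_{-1}$-norm of the normal line equals $q_{-1}(\alpha_0,\ldots,\alpha_n)$.

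Next, recall the general dictionary for totally geodesic hyperplanes in $\AdS^n$ (which is the model-space analogue of the same fact in pseudo-Riemannian space forms): a linear hyperplane $W\subset\R^{n+1}$ meets $\{q_{-1}=-1\}$ in a copy of a pseudo-sphere, and the signature of $b_{-1}|_W$ is complementary to the causal type of $W^{\perp_{b_{-1}}}$. Concretely, if the normal line is spacelike ($q_{-1}(\alpha)>0$), then $b_{-1}|_W$ has signature $(-,+,\ldots,+,-)$, i.e.\ $W\cap\AdS^n$ is a totally geodesic $\AdS^{n-1}$ (timelike); if the normal line is timelike ($q_{-1}(\alpha)<0$), then $b_{-1}|_W$ has signature $(+,\ldots,+)$, i.e.\ $W\cap\AdS^n$ is spacelike, and the affine quadric $\{q_{-1}=-1\}\cap W$ is a two-sheeted hyperboloid, giving two disjoint copies of $\Hyp^{n-1}$; and if the normal line is lightlike ($q_{-1}(\alpha)=0$), then $b_{-1}|_W$ is degenerate, so $W\cap\AdS^n$ is lightlike. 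This is precisely the trichotomy in the statement. It also shows every hyperplane meets $\AdS^n$, since in all three cases $W\cap\AdS^n\neq\emptyset$ --- unlike in $\Hyp^n$, the quadric $q_{-1}=-1$ is connected through "all directions" and any linear hyperplane intersects it.

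I would then package this: since we have a bijection between points $(\alpha)\in\SP^{n,*}$ and half-spaces, and the three open/closed conditions $q_{-1}(\alpha)\lessgtr 0$, $=0$ exhaust $\SP^{n,*}$, the three bullet points follow. For the "two disconnected copies of $\Hyp^{n-1}$" claim, I would note that when $b_{-1}|_W$ is positive definite, $\{q_{-1}=-1\}\cap W$ is the set of vectors $v\in W$ with $q_{-1}(v)=-1$; but $q_{-1}|_W\geq 0$, contradiction --- so instead one intersects with the two sheets in the $e_0$-or-$e_n$ negative directions; more cleanly, the relevant locus inside $\AdS^n=\{q_{-1}<0\}/\R_{>0}$ is $\{[x]\in\SP^n: x\in W,\ q_{-1}(x)<0\}$, and since $W\cap\{q_{-1}<0\}$ is an open cone with two components (because $q_{-1}|_{W^\perp}<0$ forces the negative cone of $q_{-1}$ to be "pinched" along $W$), projectivising gives two disjoint totally geodesic $\Hyp^{n-1}$'s.

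The main obstacle --- really the only subtle point --- is getting the signs and the connectedness statement exactly right: one must be careful that $\AdS^n$ is defined as a subset of $\SP^n$ (not $\RP^n$), so "two disconnected copies of $\Hyp^{n-1}$" genuinely refers to two components, and one should double-check that projectivising the two-component cone $W\cap\{q_{-1}<0\}$ to $\SP^n$ does not identify the two components (it does not, because $\R_{>0}$ acts, not $\R^*$). Everything else is the routine linear-algebra translation already carried out for $\Hyp^n$ in Lemma \ref{lemma hyperplane hyp}, together with the standard classification of totally geodesic hypersurfaces in $\AdS^n$ by the causal type of the normal; I would cite or briefly recall the latter rather than reprove it.
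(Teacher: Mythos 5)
The paper gives no proof for Lemma \ref{lemma hyperplane ads}; it is stated directly after the remark ``One can also find similar formulae for the Anti-de Sitter case,'' leaving the reader to adapt the argument given for Lemma \ref{lemma hyperplane hyp}. Your proposal does exactly that, so the overall framework is the intended one: identify $\R^{n+1,*}$ with $\R^{n+1}$ via the Euclidean pairing, observe that the $b_{-1}$-normal to $\ker\alpha$ is $J_{-1}\alpha=(-\alpha_0,\alpha_1,\ldots,\alpha_{n-1},-\alpha_n)$, note that this change of variable preserves $q_{-1}$, and then read off the causal type of the hyperplane from the causal type of the normal. Up to that point you are correct.

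However, the central signature computation in the timelike-normal case is wrong. Starting from $b_{-1}$ of signature $(2,n-1)$ on $\R^{n+1}$ and removing a \emph{timelike} line $W^\perp$ (signature $(1,0)$), the restriction $b_{-1}|_W$ has signature $(1,n-1)$, i.e.\ $(-,+,\ldots,+)$ — a Lorentzian form on the $n$-dimensional space $W$ — not $(+,\ldots,+)$ as you claim. Your own text notices that $(+,\ldots,+)$ leads to the absurdity that $\{q_{-1}=-1\}\cap W$ would be empty, but instead of correcting the signature you switch to a hand-wavy ``pinched cone'' description that never actually identifies \emph{why} $W\cap\{q_{-1}<0\}$ has two components. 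The reason is precisely the Lorentzian signature of $b_{-1}|_W$: the locus $\{x\in W:q_{-1}(x)<0\}$ is the open light-cone of a Lorentzian form, hence has exactly two nappes, and since $\SP^n$ is the quotient only by $\R_{>0}$ these two nappes remain disjoint and each projectivises to a copy of $\Hyp^{n-1}$. With this one correction your argument becomes clean and matches what the paper implicitly intends; without it, the key ``two disconnected copies of $\Hyp^{n-1}$'' claim is supported only by a suggestive picture rather than a correct signature count. The spacelike and lightlike cases in your proposal are fine (signature $(-,+,\ldots,+,-)$ giving a totally geodesic $\AdS^{n-1}$, and degeneracy because the lightlike normal lies inside $W$, respectively).
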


One can then compute angles between hyperplanes by a direct formula in terms of the bilinear form $b_{-1}$. We provide here the formula for the case of two spacelike hyperplanes.

We recall that the \emph{angle} between two spacelike hyperplanes in a Lorentzian space is a number $\varphi\in [0,+\infty)$, which is defined as the distance in a copy of $\Hyp^{n-1}$ between the two points corresponding to the two future unit normal vectors in the tangent space at an intersection point. This notion of angle is used in Lemma \ref{lem: AdS angle} below. See also Figure \ref{fig:AdSplanes}.

\begin{lemma} \label{lem: AdS angle}
Given $\alpha,\alpha'$ such that $q_{-1}(\alpha),q_{-1}(\alpha')<0$ let $\l H=(\alpha)$ and $\l H'=(\alpha')$ be the corresponding half-spaces. The hyperplanes $\partial\l H$ and $\partial\l H'$ intersect transversely in $\AdS^n$ if and only if 
$${|b_{-1}(\alpha,\alpha')|}>{\sqrt{|q_{-1}(\alpha)|}\sqrt{|q_{-1}(\alpha')|}}~.$$
In this case, the angle $\varphi$ between the hyperplanes $\partial\l H$ and $\partial\l H'$ satisfies the equation:
$$\cosh\varphi=\frac{|b_{-1}(\alpha,\alpha')|}{\sqrt{|q_{-1}(\alpha)|}\sqrt{|q_{-1}(\alpha')|}}~,$$
where the sign of $b_{-1}(\alpha,\alpha')$
is negative if $\l H\cap \l H'$ contains timelike segments with endpoints in $\partial\l H\cap\partial\l H'$, and positive otherwise.
\end{lemma}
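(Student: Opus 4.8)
The plan is to mimic the proof of Lemma \ref{lemma angle hyp} on the hyperbolic side, reducing everything to a computation in the two-plane $\mathrm{span}(\alpha,\alpha')\subset\R^{n+1,*}$ equipped with the form $b_{-1}$, which we transport to $\R^{n+1}$ via the change of basis $(\alpha_0,\ldots,\alpha_n)\mapsto(-\alpha_0,\alpha_1,\ldots,\alpha_{n-1},-\alpha_n)$ exactly as in the proof of Lemma \ref{lemma hyperplane hyp} (this change of variables is an isometry for $q_{-1}$, so we may freely identify half-spaces with their normal covectors). Under the hypothesis $q_{-1}(\alpha),q_{-1}(\alpha')<0$, the two hyperplanes $\partial\l H\cap\AdS^n$ and $\partial\l H'\cap\AdS^n$ are spacelike by Lemma \ref{lemma hyperplane ads}, and by the same lemma each one is a copy of $\Hyp^{n-1}$ — more precisely each consists of the intersection of $\AdS^n$ with the orthogonal complement of a timelike line in $\R^{n+1}$.

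First I would set up the intersection condition. The hyperplanes $\partial\l H$ and $\partial\l H'$ meet transversely along a codimension-two projective subspace, and this subspace meets $\AdS^n$ if and only if the orthogonal complement $W=\langle\alpha,\alpha'\rangle^{\perp_{b_{-1}}}$ of the plane spanned by the two normals contains a $q_{-1}$-negative vector, i.e.\ $b_{-1}|_W$ is not positive semidefinite. Since $b_{-1}$ has signature $(-,+,\ldots,+,-)$ globally and $W^{\perp}=\langle\alpha,\alpha'\rangle$ carries the restricted form, the signature of $b_{-1}|_W$ is controlled by the signature of $b_{-1}$ restricted to $\langle\alpha,\alpha'\rangle$: we have $W\oplus W^\perp=\R^{n+1}$ precisely when $b_{-1}|_{\langle\alpha,\alpha'\rangle}$ is nondegenerate, and in that nondegenerate case $\mathrm{sign}(b_{-1}|_W)+\mathrm{sign}(b_{-1}|_{\langle\alpha,\alpha'\rangle})=(-2,n-1)$ (two minus signs, $n-1$ plus signs). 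Now $\langle\alpha,\alpha'\rangle$ contains the two $q_{-1}$-negative vectors $\alpha,\alpha'$; its restricted Gram determinant is $q_{-1}(\alpha)q_{-1}(\alpha')-b_{-1}(\alpha,\alpha')^2$. If this determinant is positive, the plane is negative definite, hence $b_{-1}|_W$ has signature $(0,n-1)$ and the hyperplanes do not meet transversely inside $\AdS^n$ — the two copies of $\Hyp^{n-1}$ are ultraparallel. If the determinant is negative, the plane has signature $(-,+)$, so $b_{-1}|_W$ has signature $(-,+,\ldots,+)$ (one minus) and the intersection with $\AdS^n$ is a nonempty copy of $\Hyp^{n-2}$. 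Since $q_{-1}(\alpha)q_{-1}(\alpha')=|q_{-1}(\alpha)||q_{-1}(\alpha')|$, the condition ``determinant negative'' is exactly $|b_{-1}(\alpha,\alpha')|>\sqrt{|q_{-1}(\alpha)|}\sqrt{|q_{-1}(\alpha')|}$, which is the stated transversality criterion.

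Next, for the angle formula, I would work at a point $x$ of $\partial\l H\cap\partial\l H'\cap\AdS^n$. The tangent space $T_x\AdS^n$ is $x^{\perp}$ with the induced Lorentzian form; the two future unit normals $\nu,\nu'$ of the two spacelike hyperplanes at $x$ are the future-directed $q_{-1}$-unit vectors in $T_x\AdS^n$ orthogonal to the respective tangent hyperplanes, and they both lie in a timelike two-plane inside $T_x\AdS^n$, i.e.\ a copy of $\Hyp^1$; the ``angle'' $\varphi$ is by definition the hyperbolic distance between $\nu$ and $\nu'$ in the hyperbolic plane $\Hyp^{n-1}\subset T_x\AdS^n$ of future unit normals — and two points at hyperbolic distance $\varphi$ satisfy $-\langle\nu,\nu'\rangle=\cosh\varphi$ for the Lorentzian inner product. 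Now $\nu$ is a unit multiple of the $b_{-1}$-orthogonal projection of the normal covector (viewed as a vector via the identification above) onto $T_x\AdS^n$, and because $\alpha,\alpha'$ are timelike while their span intersected with $x^\perp$ is where $\nu,\nu'$ live, a direct computation — projecting out the $x$-component and normalising — gives $\langle\nu,\nu'\rangle=\pm b_{-1}(\alpha,\alpha')/(\sqrt{|q_{-1}(\alpha)|}\sqrt{|q_{-1}(\alpha')|})$, whence $\cosh\varphi=|b_{-1}(\alpha,\alpha')|/(\sqrt{|q_{-1}(\alpha)|}\sqrt{|q_{-1}(\alpha')|})$. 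The only subtle point is the sign rule for $b_{-1}(\alpha,\alpha')$: the two unit normals $\nu,\nu'$ can be chosen consistently future-directed, and the half-spaces $\l H,\l H'$ are on the ``$\alpha\le 0$'' side; whether $b_{-1}(\alpha,\alpha')$ is negative corresponds to whether moving from $\l H$ into $\l H'$ along a path through the intersection is timelike-flavoured, which is the content of the displayed sign clause. I would pin this down by checking it explicitly in the model case $n=2$ (two spacelike geodesics in $\AdS^2$ crossing once), where the normals and the wedge region can be written down in coordinates, and then invoke that the general picture is the product of this two-dimensional picture with the flat $\Hyp^{n-2}$ of the intersection.

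The main obstacle I expect is exactly this sign bookkeeping: unlike the Riemannian case, where $\cos\theta$ carries its own sign and the formula is unambiguous, here $\cosh\varphi\ge 1$ forces us to take an absolute value and then separately identify which geometric configuration (the timelike-wedge case versus the other) corresponds to which sign of $b_{-1}(\alpha,\alpha')$. Getting the orientation conventions for ``future unit normal'' compatible with the choice ``$\l H=\{\alpha\le 0\}$'' consistent across the two half-spaces, and matching this to the characterisation via timelike segments with endpoints on $\partial\l H\cap\partial\l H'$, is the delicate part; everything else is linear algebra in a plane.
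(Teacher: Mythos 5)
The paper states this lemma without proof, as it does for the neighbouring Lemmas \ref{lemma hyperplane ads} and \ref{lem: AdS intersection timelike} (only Lemma \ref{lemma hyperplane hyp} in this block comes with a full argument), so there is no paper proof to compare against; I evaluate your proposal on its own.

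Your approach is the natural one, and the transversality half is correct and complete: the sign-flip $(\alpha_0,\ldots,\alpha_n)\mapsto(-\alpha_0,\alpha_1,\ldots,\alpha_{n-1},-\alpha_n)$ is indeed an isometry of $q_{-1}$ that converts dual covectors into $b_{-1}$-normal vectors, mirroring the proof of Lemma \ref{lemma hyperplane hyp}; and the Gram-determinant signature argument for $b_{-1}$ on $\langle\alpha,\alpha'\rangle$ versus its orthogonal complement $W$ is right (trace negative plus determinant positive gives negative definite; determinant negative gives $(-,+)$; hence $W$ is positive definite in the first case, signature $(-,+,\ldots,+)$ in the second). The $\cosh\varphi$ computation is also correct in spirit, but you introduce an unnecessary step: since $x\in\partial\l H$ means $b_{-1}(v,x)=0$, the normal vector $v$ already lies in $x^\perp=T_x\AdS^n$, so ``projecting out the $x$-component'' is the identity and $\nu$ is simply $\pm v/\sqrt{|q_{-1}(v)|}$. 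This is a simplification, not an error. A smaller slip: by Lemma \ref{lemma hyperplane ads} each of $\partial\l H\cap\AdS^n$ and $\partial\l H'\cap\AdS^n$ is \emph{two} disjoint copies of $\Hyp^{n-1}$, not one; this does not affect the argument but should be stated correctly since it is exactly why the sign clause has content.

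The one genuine gap is the sign clause, which you flag but do not prove. The intersection $\partial\l H\cap\partial\l H'\cap\AdS^n$ is the projectivization of the negative cone of $b_{-1}|_W$ and hence has two components; the claim in the lemma amounts to deciding when these two components can be joined by a timelike path staying in $\l H\cap\l H'$, and relating that to the sign of $b_{-1}(\alpha,\alpha')$ once both unit normals are chosen future-directed in the same sense. Your plan (check the $2$-dimensional slice spanned by the two normals inside $T_x\AdS^n$ and observe that the general picture is the product with the fixed $\Hyp^{n-2}$) is the right idea, but it needs to actually be carried out, with care about how the orientation of $\nu$ and $\nu'$ relates to the inequalities $\alpha\le 0$, $\alpha'\le 0$ defining the half-spaces. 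As it stands this part is a sketch, not a proof.
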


We also need to briefly analyse the situation for the intersection between two timelike hyperplanes. 
See also Figure \ref{fig:AdSplanes_timelike}.

\begin{lemma} \label{lem: AdS intersection timelike}
Given $\alpha,\alpha'$ such that $q_{-1}(\alpha),q_{-1}(\alpha')>0$ let $\l H=(\alpha)$ and $\l H'=(\alpha')$ be the corresponding half-spaces. The hyperplanes $\partial\l H$ and $\partial\l H'$ always intersect in $\AdS^n$. Moreover:
\begin{itemize}
\item The intersection is spacelike (i.e. a totally geodesic copy of $\Hyp^{n-2}$) if and only if $${|b_{-1}(\alpha,\alpha')|}>{\sqrt{|q_{-1}(\alpha)|}\sqrt{|q_{-1}(\alpha')|}}~.$$
\item The intersection is timelike (i.e. a totally geodesic copy of $\AdS^{n-2}$) if and only if $${|b_{-1}(\alpha,\alpha')|}<{\sqrt{|q_{-1}(\alpha)|}\sqrt{|q_{-1}(\alpha')|}}~.$$
\end{itemize} 
\end{lemma}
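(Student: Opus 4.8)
The plan is to reduce Lemma \ref{lem: AdS intersection timelike} to a two-dimensional computation, exactly as in the proofs of Lemmas \ref{lemma angle hyp} and \ref{lem: AdS angle}. First I would show that the intersection $\partial\l H\cap\partial\l H'$ is always nonempty in $\AdS^n$: since $\AdS^n=\{[x]\in\SP^n\,|\,q_{-1}(x)<0\}$ is an open neighbourhood of the quadric in the sense that any linear subspace of $\R^{n+1}$ on which $q_{-1}$ is negative somewhere meets $\AdS^n$, it suffices to see that the codimension-$2$ subspace $\ker\alpha\cap\ker\alpha'$ of $\R^{n+1}$ always contains a vector $v$ with $q_{-1}(v)<0$. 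This follows because the negative index of $q_{-1}$ is $2$, so $q_{-1}$ restricted to $\ker\alpha$ (codimension $1$) still has negative index at least $1$, and then restricting once more to $\ker\alpha'$ keeps a negative direction unless that direction happens to lie in the $q_{-1}$-orthocomplement of the span of (the duals of) $\alpha,\alpha'$; a short linear-algebra argument, using $q_{-1}(\alpha),q_{-1}(\alpha')>0$, rules this out and gives nonemptiness.

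Next I would set up the orthogonal decomposition. Using the identification of $\SP^{n,*}$ with $\R^{n+1}$ by the bilinear form $b_{-1}$ (the same caveat about the sign change $(\alpha_0,\dots,\alpha_n)\mapsto(-\alpha_0,\alpha_1,\dots,\alpha_n)$ as in Lemma \ref{lemma hyperplane hyp}), let $u,u'\in\R^{n+1}$ be the $b_{-1}$-duals of $\alpha,\alpha'$, so $q_{-1}(u),q_{-1}(u')>0$ and $\partial\l H\cap\partial\l H'$ corresponds to the subspace $W=\{u,u'\}^{\perp_{b_{-1}}}$. Since $q_{-1}$ has signature $(-,+,\dots,+,-)$, i.e.\ negative index $2$, and $q_{-1}$ is positive on the plane $V=\mathrm{span}(u,u')$ when $V$ is nondegenerate, the restriction $q_{-1}|_W$ carries the \emph{entire} negative index, so it has signature $(\,-,+,\dots,+,-\,)$, i.e.\ $W$ is a copy of the ambient quadratic space of dimension $n-1$; hence $\partial\l H\cap\partial\l H'\cap\AdS^n$ is a totally geodesic copy of $\AdS^{n-2}$ — \emph{provided} $V$ is nondegenerate and $q_{-1}|_V$ is positive definite. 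The Gram determinant of $V$ with respect to $b_{-1}$ is $q_{-1}(u)q_{-1}(u')-b_{-1}(u,u')^2$; so $q_{-1}|_V$ is positive definite exactly when $|b_{-1}(u,u')|<\sqrt{q_{-1}(u)}\sqrt{q_{-1}(u')}$, and $q_{-1}|_V$ is indefinite (signature $(+,-)$, one can't have $(-,-)$ here since the ambient negative index is only $2$ and... actually one must check this) precisely when $|b_{-1}(u,u')|>\sqrt{q_{-1}(u)}\sqrt{q_{-1}(u')}$. In the first case $W$ absorbs both negative directions, giving a timelike intersection $\AdS^{n-2}$; in the second case $V$ already contains one negative direction, so $q_{-1}|_W$ has signature $(-,+,\dots,+)$, i.e.\ $W$ is Lorentzian of the hyperbolic type and $\partial\l H\cap\partial\l H'\cap\AdS^n=\varnothing$... no: $q_{-1}|_W$ has negative index $1$, so $W\cap\AdS^n$ is a copy of $\Hyp^{n-2}$, a spacelike totally geodesic subspace. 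Translating back through the change of coordinates, which is a $q_{-1}$-isometry and so leaves all these quantities unchanged except for renaming, $q_{-1}(u)=q_{-1}(\alpha)$, $q_{-1}(u')=q_{-1}(\alpha')$, $b_{-1}(u,u')=b_{-1}(\alpha,\alpha')$, and the stated dichotomy follows.

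The main obstacle I anticipate is the careful signature bookkeeping: one has to be sure that when $q_{-1}|_V$ is indefinite it has signature $(+,-)$ and not $(-,-)$, and symmetrically that $W$ cannot acquire more negative directions than the count allows; this is where the hypotheses $q_{-1}(\alpha),q_{-1}(\alpha')>0$ (each of $u,u'$ is individually positive, so $V$ contains a positive vector and cannot be negative definite) do the real work, together with the fact that the total negative index is exactly $2$, forcing the negative index of $V$ plus that of $W$ to equal $2$ whenever $V$ is nondegenerate. The borderline degenerate case $|b_{-1}(\alpha,\alpha')|=\sqrt{|q_{-1}(\alpha)|}\sqrt{|q_{-1}(\alpha')|}$ (when $V$ is degenerate, i.e.\ the hyperplanes meet in a lightlike subspace) is excluded by the strict inequalities in the two bullet points, so it need not be discussed, though one could remark on it for completeness. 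Everything else is the same linear algebra already invoked for Lemmas \ref{lemma hyperplane ads} and \ref{lem: AdS angle}, so I would keep the proof brief and cite those.
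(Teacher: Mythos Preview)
The paper states this lemma without proof, treating it (like Lemmas \ref{lemma hyperplane ads} and \ref{lem: AdS angle}) as a standard fact of pseudo-Riemannian linear algebra. Your approach---passing to the $b_{-1}$-duals $u,u'$, analysing the signature of the restriction of $q_{-1}$ to the $2$-plane $V=\mathrm{span}(u,u')$ via its Gram determinant, and reading off the signature of $W=V^\perp$ by Sylvester's law of inertia---is precisely the natural argument and is correct. There is therefore nothing substantive to compare.

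Two small points are worth tightening. First, your opening paragraph on nonemptiness is vague, and you later say the degenerate case ``need not be discussed''; but the first assertion of the lemma (that $\partial\l H\cap\partial\l H'$ \emph{always} meets $\AdS^n$) does include the borderline case $|b_{-1}(\alpha,\alpha')|=\sqrt{q_{-1}(\alpha)}\sqrt{q_{-1}(\alpha')}$. The clean way to cover all cases at once is to observe that since $q_{-1}(u)>0$, the hyperplane $u^\perp$ has negative index~$2$; as $W=u^\perp\cap (u')^\perp$ is a hyperplane in $u^\perp$, a dimension count forces $W$ to contain a vector on which $q_{-1}$ is negative, regardless of whether $V$ is degenerate. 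Second, the running self-corrections (``no:\,\ldots'', ``actually one must check this'') should be edited out once you have settled on the argument; the mathematics behind them is fine, but as written the proof is hard to follow.
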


\begin{figure}
\includegraphics[scale=0.35]{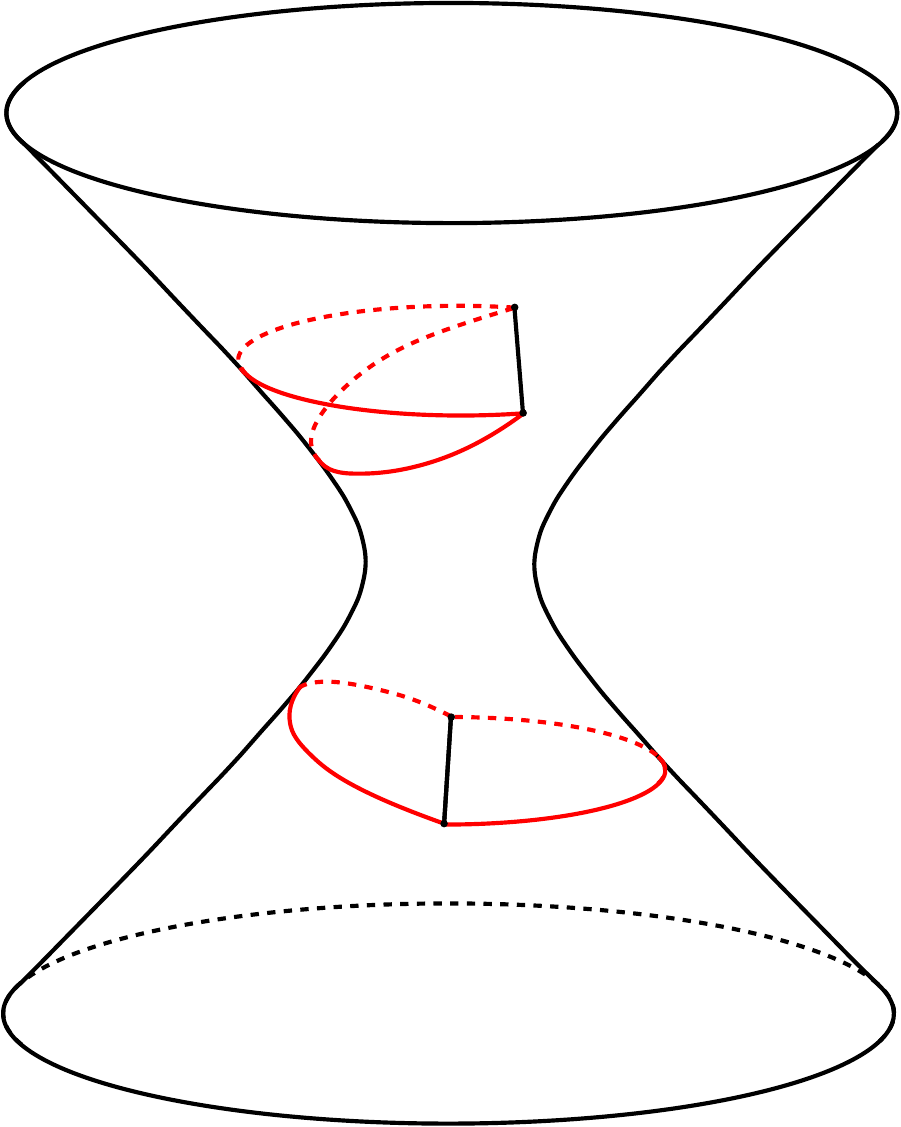}
\caption[Intersecting spacelike hyperplanes in $\AdS^n$.]{\footnotesize In an affine chart for anti-de Sitter space, the two possibilities (above and below in the same figure) for the configuration of $\l H$ and $\l H'$ as in Lemma \ref{lem: AdS angle}.}
\label{fig:AdSplanes}
\end{figure}

\begin{figure}
\begin{minipage}[c]{.4\textwidth}
\centering
\includegraphics[scale=.35]{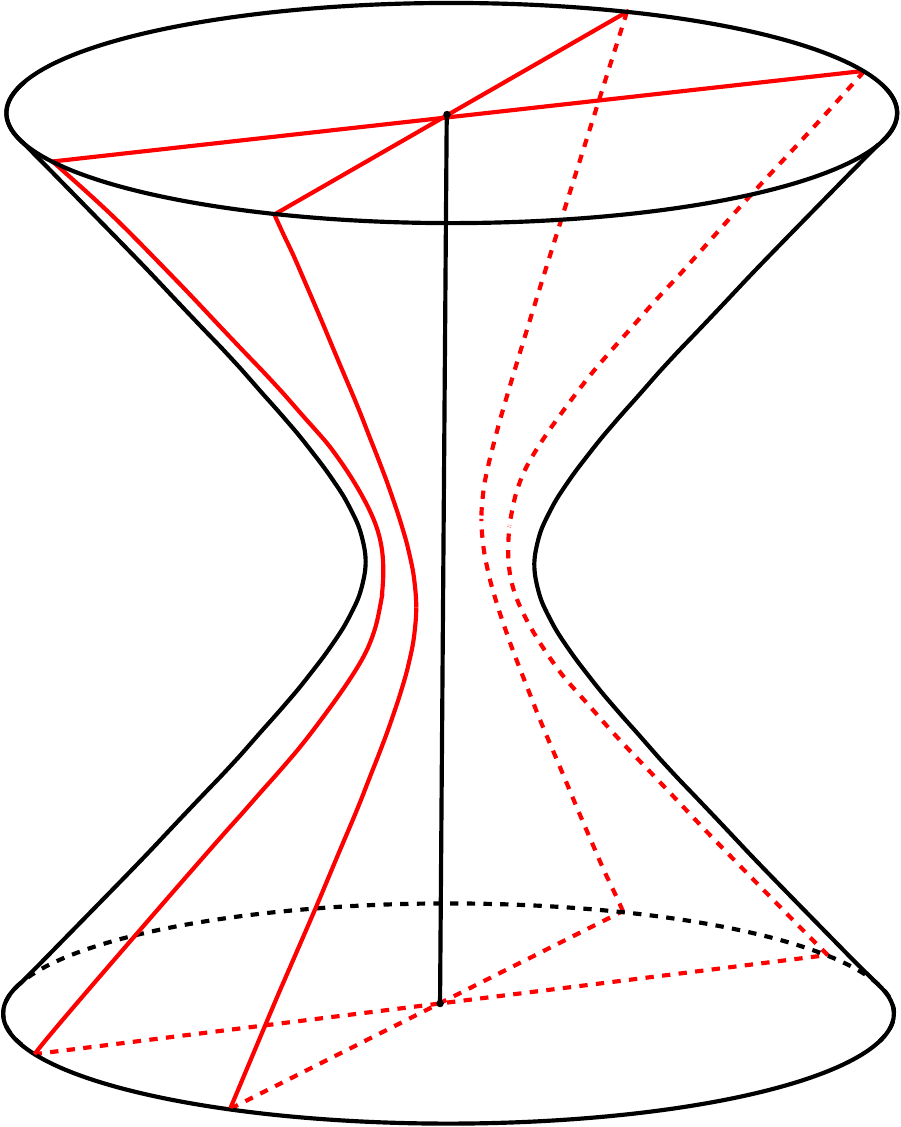}
\end{minipage}
\hspace{1cm}
\begin{minipage}[c]{.4\textwidth}
\centering
\includegraphics[scale=.35]{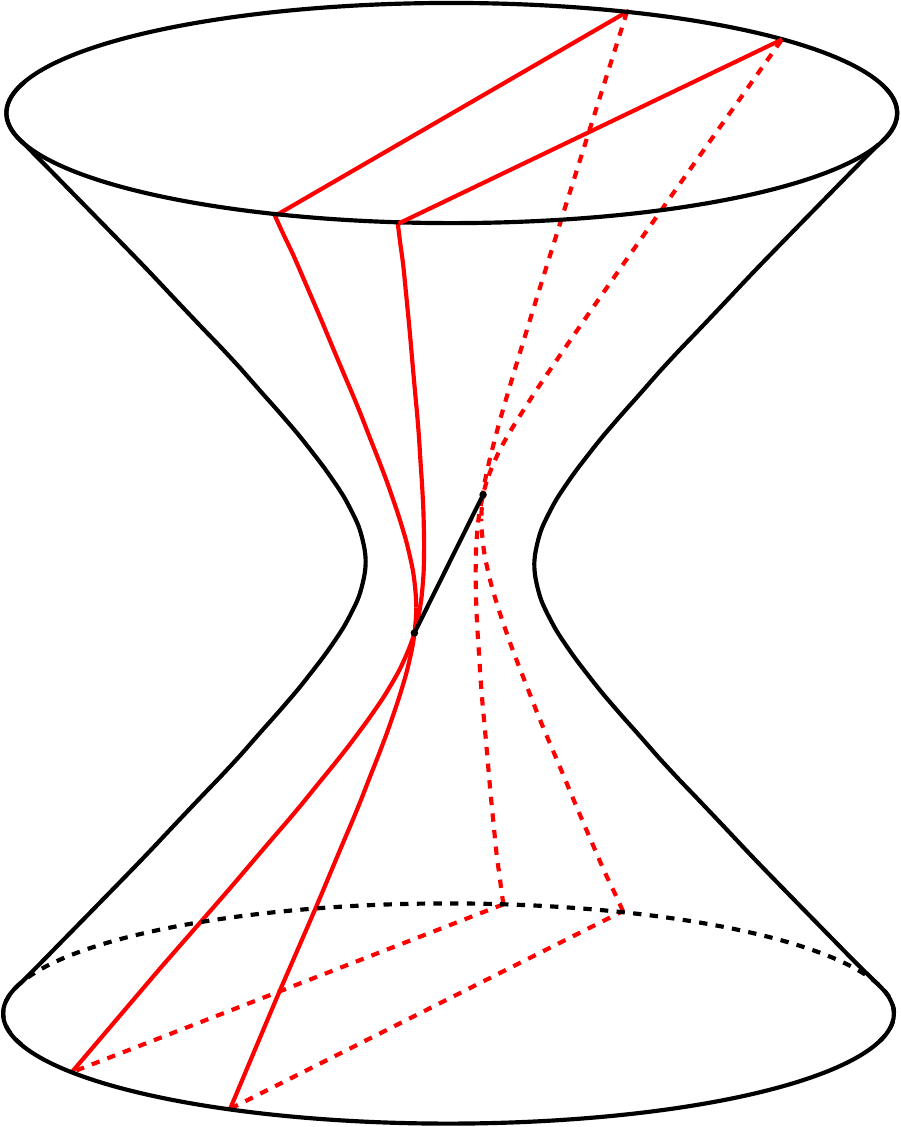}
\end{minipage}
\caption[Intersecting timelike hyperplanes in $\AdS^n$.]{\footnotesize The two possibilities for the intersection of two timelike planes in $\AdS^3$: a timelike (left) or spacelike (right) line.}
\label{fig:AdSplanes_timelike}
\end{figure}

\subsection{Hyperplanes in half-pipe geometry}

Let us now move to the case of hyperplanes in half-pipe space. In this case, we have two types of hyperplanes: \emph{spacelike} hyperplanes, for which the induced bilinear form is positive definite (these are isometrically embedded copies of $\Hyp^{n-1}$), and \emph{degenerate} hyperplanes, for which the induced bilinear form is indeed degenerate. These two types are detected by the following lemma.

\begin{figure}
\includegraphics[scale=0.5]{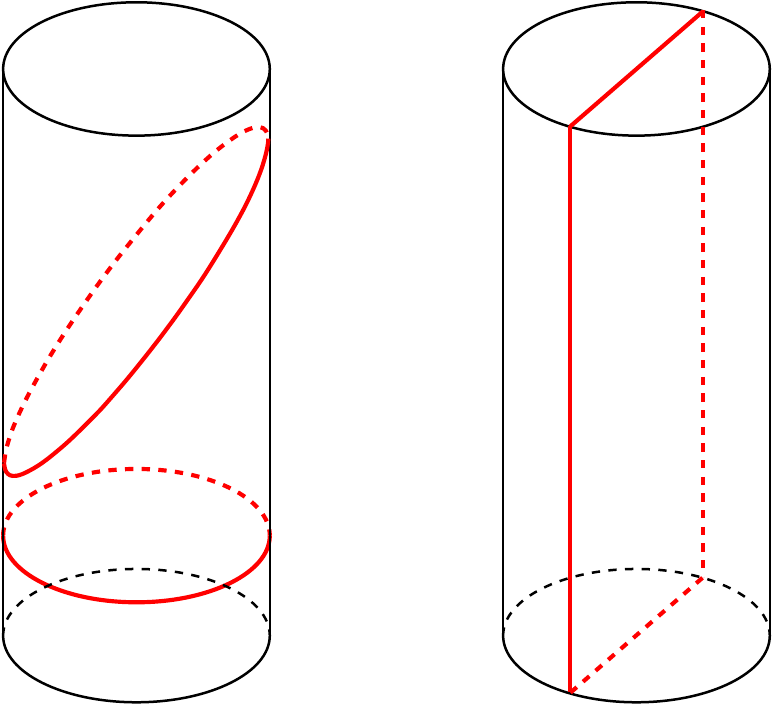}
\caption[Hyperplanes in half-pipe space.]{\footnotesize Hyperplanes in the affine (cylindric) model of $\HP^n$: on the left, two spacelike hyperplanes, on the right, a degenerate hyperplane.}
\label{fig:HPplanes}
\end{figure}

\begin{lemma} \label{lemma hyperplane hp}
A half-space $\l H=(\alpha_0:\ldots:\alpha_n)$ in $\SP^n$ intersects $\HP^n$ if and only if $\alpha_n\neq 0$ or $q_0(\alpha_0,\ldots,\alpha_n)>0$. In this case, the hyperplane $\partial\l H\cap\HP^n$ is:
\begin{itemize}
\item Spacelike if $\alpha_n\neq 0$. In this case, $\partial\l H\cap\HP^n$ consists of a copy of $\Hyp^{n-1}$.
\item Degenerate if $\alpha_n= 0$ and $q_{0}(\alpha_0,\ldots,\alpha_n)>0$. In this case, $\partial\l H\cap\HP^n$ consists of a copy of $\HP^{n-1}$.
\end{itemize}
\end{lemma}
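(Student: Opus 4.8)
The plan is to mimic exactly the strategy used for the hyperbolic case (Lemma \ref{lemma hyperplane hyp}) and the Anti-de Sitter case (Lemma \ref{lemma hyperplane ads}), only now the quadratic form governing the geometry is the \emph{degenerate} form $q_0(x)=-x_0^2+x_1^2+\ldots+x_{n-1}^2$ on $\R^{n+1}$, whose radical is the line spanned by $e_n$. First I would fix the half-space $\l H=(\alpha_0:\ldots:\alpha_n)$ and work with the linear hyperplane $V=\ker\alpha\subset\R^{n+1}$ and its induced quotient map to $\partial\l H\subset\SP^n$. Since $\HP^n=\{[x]\mid q_0(x)<0,\ x_0\geq 0\}$, the hyperplane $\partial\l H$ meets $\HP^n$ precisely when $V$ contains a vector $x$ with $q_0(x)<0$ (together with the harmless normalisation $x_0>0$, which is automatic by homogeneity once such an $x$ exists, as in the discussion after the definition of $\HP^n$). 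So the first step is the purely linear-algebra statement: $V$ contains a $q_0$-negative vector if and only if $\alpha_n\neq 0$ or $q_0(\alpha_0,\ldots,\alpha_n)>0$.

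For that step I would split into the two cases of the statement. If $\alpha_n\neq 0$, then $e_n\notin V$ (since $\alpha(e_n)=\alpha_n\neq 0$), so $V$ is a complement-line to the radical; hence $q_0$ restricted to $V$ is nondegenerate, and one computes its signature. Because $q_0$ has signature $(1,n-1,0)$ on $\R^{n+1}$ with radical $\langle e_n\rangle$, and $V$ is transverse to the radical, $q_0|_V$ has signature $(1,n-1)$ — in particular it is a copy of the form defining $\Hyp^{n-1}$, so $V$ certainly contains negative vectors and $\partial\l H\cap\HP^n$ is an isometrically embedded $\Hyp^{n-1}$. If $\alpha_n=0$, then $e_n\in V$, i.e. $V$ contains the radical, and $q_0|_V$ is itself degenerate with one-dimensional radical $\langle e_n\rangle$; the quotient $V/\langle e_n\rangle$ inherits a nondegenerate form, which one identifies with the restriction of the form of signature $(1,n-2)$ — and this is negative for some vector iff the corresponding point $(\alpha_0:\ldots:\alpha_{n-1}:0)$ is ``spacelike'' for the dual picture, i.e. iff $q_0(\alpha_0,\ldots,\alpha_{n-1},0)>0$. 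Here I would reuse, for the reduced form on $\R^n=\langle e_0,\ldots,e_{n-1}\rangle$, the exact same orthogonal-complement computation as in Lemma \ref{lemma hyperplane hyp}: the coordinates $(\alpha_0,\ldots,\alpha_{n-1})$ computed against the dual basis differ from the $q_0$-orthogonal complement by the sign change $\alpha_0\mapsto -\alpha_0$, which is a $q_0$-isometry, so $V\cap\R^n$ contains negative vectors iff $q_0(\alpha_0,\ldots,\alpha_{n-1},0)>0$; and in that case the orthogonal complement is a $(1,n-2)$-subspace, giving $\partial\l H\cap\HP^n\cong\HP^{n-1}$. Conversely if $\alpha_n=0$ and $q_0(\alpha)\leq 0$ one checks $V$ contains no negative vector, so $\partial\l H$ misses $\HP^n$.

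The last step is the identification of the induced bilinear form. For $\alpha_n\neq 0$ this is immediate from $q_0|_V$ being positive on a codimension-one subspace and negative on a line, i.e. it restricts to a genuine hyperbolic metric on the affine slice — hence ``spacelike'' and a copy of $\Hyp^{n-1}$. For $\alpha_n=0$, $q_0|_V$ is degenerate with radical $\langle e_n\rangle$, which is exactly the defining feature of a degenerate hyperplane, and its nondegenerate part has signature $(1,n-2)$; combined with the cylindrical normalisation $x_0\geq 0$ one recognises $\partial\l H\cap\HP^n$ as a copy of $\HP^{n-1}$ sitting inside $\HP^n$ (it is the ``vertical'' hyperplane $\{\widehat\alpha(\widehat x)=0\}$ times the degenerate direction). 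I expect the only mild subtlety — the real ``hard part'' — to be bookkeeping the degenerate direction correctly: making sure that when $\alpha_n=0$ the radical $\langle e_n\rangle$ genuinely lies inside $\ker\alpha$ and that passing to the quotient $V/\langle e_n\rangle$ does not lose the information distinguishing $q_0(\alpha)>0$ from $q_0(\alpha)\leq 0$; everything else is the same linear algebra already carried out for $\Hyp^n$ and $\AdS^n$.
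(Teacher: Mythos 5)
Your proof is correct and takes essentially the same route as the paper: both arguments handle the case $\alpha_n=0$ by reducing to Lemma \ref{lemma hyperplane hyp} one dimension down (the paper via the explicit product decomposition $\HP^n\cong\Hyp^{n-1}\times\R$ in the affine chart, you via quotienting by the radical $\langle e_n\rangle$), and both handle $\alpha_n\neq 0$ by observing that $\ker\alpha$ is transverse to the degenerate direction, hence spacelike. The only small imprecision is that your claim that $V/\langle e_n\rangle$ "inherits a nondegenerate form of signature $(1,n-2)$" is really a consequence of the condition $q_0(\alpha)>0$ rather than something known a priori, but since you then correctly derive that condition from Lemma \ref{lemma hyperplane hyp}, the argument is sound.
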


\begin{proof}
If $\alpha_n=0$, then $\l H$ is of the form 
$$\l H=\{(\widehat x:x_n)\,|\,\widehat x\in\widehat{\l H},x_n\in\R\}~,$$where $\widehat{\l H}$ is a half-space of $\SP^{n-1}$ defined by the condition $\alpha_0x_0+\ldots+\alpha_{n-1}x_{n-1}\leq 0$. Hence in the affine chart $\Aff^n$, $\l H$ is the product of a half-space in $\Aff^{n-1}\subset\Aff^n$ and $\R$. Since $\HP^n$ can be regarded in $\Aff^n$ as the product of $\Hyp^{n-1}\times\R$, the condition that $\l H$ intersects $\HP^n$ is equivalent to the condition that $\widehat{\l H}$ intersects $\Hyp^{n-1}$, which by Lemma \ref{lemma hyperplane hyp} is $-\alpha_0^2+\alpha_1^2+\ldots+\alpha_{n-1}^2>0$, or equivalently,  
$$q_0(\alpha_0,\ldots,\alpha_n)>0~.$$
In this case, for each $\widehat x\in\widehat{\l H}$, the ``vertical'' line $\{(\widehat x,t)\,|\,t\in \R\}$ is in $\partial\l H$ and its tangent space coincides with the kernel of the degenerate form $\sigma^*b_0$ (see Section \ref{subsec:hp structures}). Hence $\partial\l H\cap\HP^n$ is degenerate, namely it is vertical in the affine chart $\Aff^n$. See Figure \ref{fig:HPplanes}, on the right. 

The other case is thus $\alpha_n\neq 0$. In this case, the sign of $\alpha_n$ determines whether $\l H$ is unbounded in the positive or negative $x_n$ direction. In fact, up to multiplying by a positive number, we can assume 
$$\l H=(\alpha_0:\ldots:\alpha_{n-1}:\pm 1)~.$$
For instance, if $\alpha_n=1$, we get that $\l H$ is defined by the condition
$$ \alpha_0x_0+\ldots \alpha_{n-1}x_{n-1}+x_n\leq 0~,$$
and therefore $\partial\l H$ is given by the condition 
\begin{equation} \label{eq minkowski duality}
\alpha_0x_0+\ldots \alpha_{n-1}x_{n-1}+x_n=0~.
\end{equation}
This shows that $\partial\l H\cap\HP^n$ is transverse to the degenerate direction, and it is therefore of spacelike type.
\end{proof}

\begin{remark}
This discussion also gives a deeper insight into the duality between $\HP^n$ and Minkowski space. Recall that, as mentioned in Section \ref{sec motivations hp}, any point of $\HP^n$ corresponds to a spacelike hyperplane in Minkowski space $\R^{1,n-1}$. Dually, any spacelike hyperplane $\partial\l H$ of $\HP^n$ corresponds to a point in $\R^{1,n-1}$, which turns out to be the intersection point of all the spacelike hyperplanes associated to points of $\partial\l H$. Such a dual point is easily computed: if the hyperplane $\partial\l H$ in $\HP^n$ is determined by the equation
$$\alpha_0x_0+\ldots +\alpha_{n-1}x_{n-1}+x_n=0~,$$
then its dual point is 
$$p=(\alpha_0,-\alpha_1,\ldots,-\alpha_{n-1})~.$$
Moreover, this correspondence is again natural with respect to the action of the isometry groups $G_{\HP^n}$ and $\Isom(\R^{1,n-1})$.
\end{remark}

As a consequence of the above remark, the condition that two spacelike hyperplanes $H,H'\subset\HP^n$ intersect is equivalent to the condition that the two dual points $p$ and $p'$ in $\R^{1,n-1}$ belong to the same spacelike hyperplane. Indeed, every point in the intersection $H\cap H'$ corresponds to a spacelike hyperplane in $\R^{1,n-1}$ which contains both $p$ and $p'$. See also Figure \ref{fig:sheafHP}. This shows the following:

\begin{lemma} \label{lemma intersection half pipe}
Given $\widehat \alpha,\widehat \alpha'$ such that $\widehat  q(\widehat  \alpha),\widehat  q(\widehat \alpha')<0$, consider the half-spaces $\l H=(\widehat \alpha:1)$ and $\l H'=(\widehat  \alpha':1)$. The hyperplanes $\partial\l H$ and $\partial\l H'$ intersect transversely in $\HP^n$ if and only if $\widehat\alpha-\widehat\alpha'$ is a spacelike segment in Minkowski space.
\end{lemma}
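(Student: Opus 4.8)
The plan is to deduce Lemma \ref{lemma intersection half pipe} directly from the duality between $\HP^n$ and Minkowski space developed in the preceding remarks, rather than by a coordinate computation. By Lemma \ref{lemma hyperplane hp}, the hypotheses $\widehat q(\widehat\alpha),\widehat q(\widehat\alpha')<0$ with last coordinate equal to $1$ guarantee that both $\partial\l H$ and $\partial\l H'$ are genuine spacelike hyperplanes in $\HP^n$, each a totally geodesic copy of $\Hyp^{n-1}$. By the remark immediately following Lemma \ref{lemma hyperplane hp}, the dual point of $\partial\l H$ in $\R^{1,n-1}$ is $p=(\alpha_0,-\alpha_1,\ldots,-\alpha_{n-1})$, and similarly $p'$ for $\partial\l H'$; and a point $[x]\in\HP^n$ lies on $\partial\l H$ precisely when the spacelike hyperplane of $\R^{1,n-1}$ associated to $[x]$ passes through $p$. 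Hence $\partial\l H\cap\partial\l H'$ is nonempty and transverse exactly when there is a spacelike hyperplane of $\R^{1,n-1}$ through both $p$ and $p'$, and the family of such hyperplanes has the right codimension; that is, exactly when the affine line through $p$ and $p'$ admits a spacelike hyperplane containing it, i.e. when $p-p'$ is spacelike.

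Concretely, I would argue as follows. A hyperplane $\{q\in\R^{1,n-1}\mid \widehat b(q,\widehat x)=a\}$ contains both $p$ and $p'$ if and only if $\widehat b(p,\widehat x)=\widehat b(p',\widehat x)=a$, equivalently $\widehat b(p-p',\widehat x)=0$ with $\widehat b(p,\widehat x)=a$. Such a hyperplane is spacelike iff $\widehat x$ is timelike (future-directed), so the set of spacelike hyperplanes through $p$ and $p'$ is parametrized by timelike directions $\widehat x\in (p-p')^\perp$, up to scaling, together with the choice of $a$. This set is nonempty — equivalently $(p-p')^\perp$ contains a timelike vector — precisely when $p-p'$ is spacelike (a standard fact about the Minkowski form: $v^\perp$ is Lorentzian iff $v$ is spacelike). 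Moreover, when $p-p'$ is spacelike, $(p-p')^\perp$ has signature $(-,+,\ldots,+)$ of dimension $n-1$, and the projectivized set of timelike directions in it has dimension $n-2$, which is exactly the expected dimension of $\partial\l H\cap\partial\l H'=\Hyp^{n-2}$; this gives transversality. Finally I would note that $p-p'=(\alpha_0-\alpha_0',-(\alpha_1-\alpha_1'),\ldots,-(\alpha_{n-1}-\alpha_{n-1}'))$ has the same $\widehat q$-norm as $\widehat\alpha-\widehat\alpha'$, since negating the spacelike coordinates is an isometry of $\widehat q$ (exactly as in the proof of Lemma \ref{lemma hyperplane hyp}); so ``$p-p'$ spacelike'' is literally the statement ``$\widehat\alpha-\widehat\alpha'$ is a spacelike segment in Minkowski space''.

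The one point requiring a little care — and the main thing to get right — is the transversality clause and the degenerate-intersection cases: I must rule out that $\partial\l H$ and $\partial\l H'$ meet non-transversely, i.e. coincide or meet in something of the wrong dimension, and also handle the borderline case where $p-p'$ is lightlike or timelike (then $(p-p')^\perp$ has no timelike vector, or only a lightlike one, so there is either no common spacelike hyperplane, or the intersection in $\HP^n$ is empty or non-transverse). Equivalently, on the $\HP^n$ side, two distinct spacelike hyperplanes always intersect in a degenerate or spacelike subspace, or not at all, and the trichotomy is governed by the causal type of $p-p'$; the ``transverse intersection'' case of the lemma corresponds exactly to $p-p'$ spacelike. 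I would organize this by observing that $\partial\l H\cap\partial\l H'$ corresponds, under the duality, to the set of spacelike hyperplanes of $\R^{1,n-1}$ containing the affine span of $\{p,p'\}$, which is a single point if $p=p'$ and a line otherwise; since $p\neq p'$ under our standing assumption that the hyperplanes are distinct, and a line admits a spacelike hyperplane through it iff its direction is spacelike, the result follows. Alternatively, for a purely computational backup, one can intersect the two defining equations $\sum\alpha_ix_i+x_n=0$ and $\sum\alpha_i'x_i+x_n=0$ in $\HP^n$, subtract to get $\sum(\alpha_i-\alpha_i')x_i=0$ (a vertical, hence degenerate, hyperplane through the origin of $\Aff^n$), and apply Lemma \ref{lemma hyperplane hp} to see this meets $\HP^n$ in a copy of $\HP^{n-1}$ iff $q_0(\widehat\alpha-\widehat\alpha',0)=\widehat q(\widehat\alpha-\widehat\alpha')>0$, i.e. iff $\widehat\alpha-\widehat\alpha'$ is spacelike; transversality then follows because the intersection has codimension one in each of the two hyperplanes. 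I would present the duality argument as the main proof and mention the direct computation as a remark.
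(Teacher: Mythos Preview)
Your proof is correct and follows essentially the same approach as the paper: the paper simply states, in the paragraph immediately preceding the lemma, that the result is a consequence of the duality remark (points of $\partial\l H\cap\partial\l H'$ correspond to spacelike hyperplanes of $\R^{1,n-1}$ containing both dual points $p$ and $p'$), and leaves the verification to the reader. You have filled in precisely those details, including the transversality analysis and the observation that $\widehat q(p-p')=\widehat q(\widehat\alpha-\widehat\alpha')$, and added a direct computational check via Lemma~\ref{lemma hyperplane hp} as an alternative; both routes are sound.
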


\begin{figure}
\includegraphics[scale=0.5]{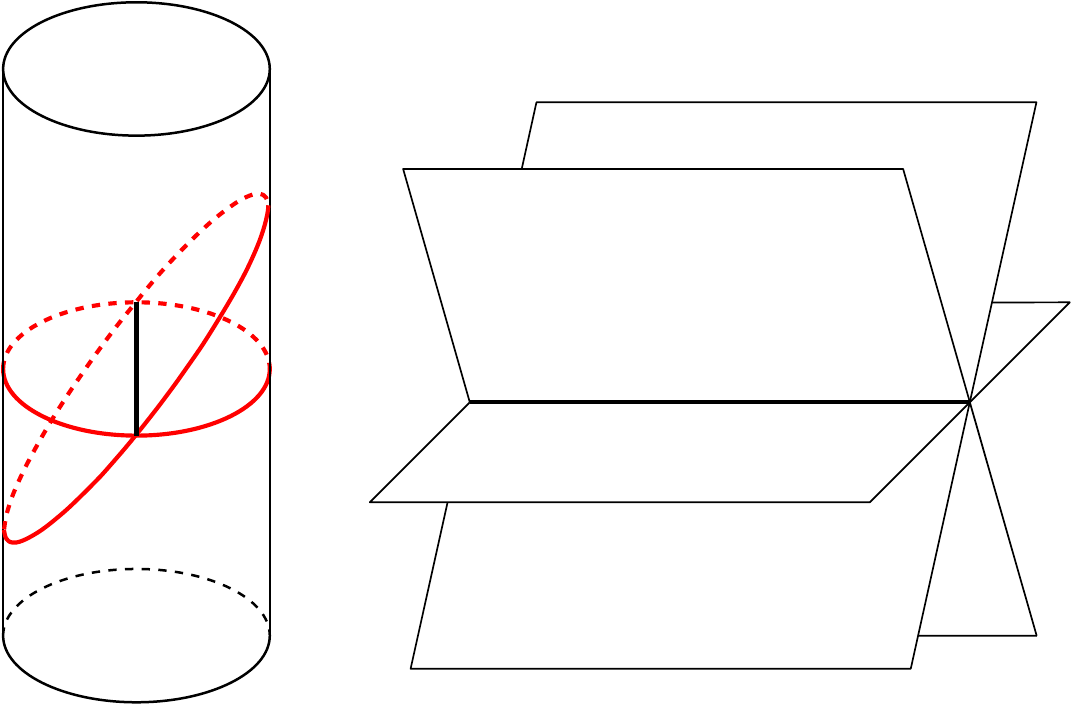}
\caption[Sheaf of hyperplanes in $\HP^n$ and duality with $\R^{1,n-1}$.]{\footnotesize A sheaf of hyperplanes in $\HP^n$ (on the left) corresponds to the points of $\R^{1,n-1}$ lying on a spacelike line $\ell$ (on the right). Viceversa, the intersection of the sheaf is in bijection with the spacelike hyperplanes containing $\ell$.}
\label{fig:sheafHP}
\end{figure}

Recall that we denote by $\widehat q$ the quadratic form of signature $(-,+,\ldots,+)$ on $\R^n$, viewed as the subspace of $\R^{n+1}$ defined by the vanishing of the last coordinate.

In \cite{dancigertransition} a notion of angle between spacelike hyperplanes in $\HP^n$ was introduced, which is the \emph{infinitesimal version} of dihedral angles in $\Hyp^n$ and $\AdS^n$ under the geometric transition we described above. In this setting, it is easy to give a definition of dihedral angle between two spacelike hyperplanes:

\begin{defi} \label{def: angle HP}
Given two HP spacelike half-spaces defined by $\l H=(\widehat \alpha:1)$ and $\l H'=(\widehat  \alpha':1)$ (i.e. such that $\widehat  q(\widehat  \alpha),\widehat  q(\widehat \alpha')<0$), the \emph{angle} between $\partial \l H$ and $\partial \l H'$ is the number 
$$\psi=\sqrt{\widehat q(\widehat\alpha-\widehat\alpha')}\in [0,+\infty)~.$$
\end{defi}
In other words, the angle is defined as the length of the segment connecting the two dual points, which is spacelike by Lemma \ref{lemma intersection half pipe}. In Section \ref{sec rotations} below we show that this notion actually coincides with the infinitesimal version of the angles between hyperplanes in $\Hyp^n$ and $\AdS^n$.

\subsection{Rotations, boosts, and their infinitesimal analogues} \label{sec rotations}

In this section we briefly introduce rotations and their analogues in AdS geometry (boosts) and in half-pipe geometry (infinitesimal rotations). This will be relevant for our Theorem \ref{teo: main}, because the holonomy of the geometric structures on a peripheral loop around the singular locus $\Sigma$ will consists of these elements.

\begin{defi}
A \emph{rotation} (resp. \emph{boost} or \emph{infinitesimal rotation}) is a non-trivial orientation-preserving element of $\Isom(\Hyp^n)$ (resp. $\Isom(\AdS^n)$ or $G_{\HP^n}$) which fixes point-wise a co-dimension two subspace (which is required to be spacelike, for $\AdS^n$ and $\HP^n$).
\end{defi}

If $r$ is such a rotation (resp. boost or infinitesimal rotation), the \emph{angle} associated to $r$ is defined as the angle between $H$ and $r(H)$, where $H$ is any (spacelike) hyperplane containing $\mathrm{Fix}(r)$.

\begin{remark} \label{remark:reflections}
To motivate the existence of infinitesimal rotations in $\HP^n$, recall Lemma \ref{lemma intersection half pipe} and Figure \ref{fig:sheafHP}. Two intersecting spacelike hyperplanes correspond precisely to two points $\widehat \alpha,\widehat \alpha'\in\R^{1,n-1}$ which are spacelike separated. Hence any translation in $\R^{1,n-1}$ in the direction of $\widehat \alpha-\widehat \alpha'$ induces an infinitesimal rotation $r$ of $\HP^n$, which fixes the points of $\HP^n$ corresponding to spacelike hyperplanes $P$ containing $\widehat \alpha-\widehat \alpha'$. 

Observe that $r$ also fixes pointwise an entire degenerate hyperplane in $\HP^n$, corresponding to all the translates of the hyperplanes $P$ as above. This is a qualitative difference with respect to hyperbolic and AdS geometries.
\end{remark}

We now briefly show that the infinitesimal angle in half-pipe geometry is exactly the infinitesimal version of angle in $\Hyp^n$ and $\AdS^n$. Let $r_t$ be a smooth family of rotations of angle $\theta(t)$ with $\theta(0)=0$. Up to isometries, we assume that 

$$r_t=\left(
\begin{array}{cccc}
  1 &0 &\ldots&  0 \\

 0 &\ddots &  & \vdots \\

 \vdots &  &  \cos\theta(t)& \sin\theta(t) \\

  0 &\ldots & -\sin\theta(t) & \cos\theta(t)
\end{array}
\right)~.$$

That is, $r_t$ is a rotation which fixes the codimension two totally geodesic subspace defined by $x_{n-1}=x_n=0$, and sends the hyperplanes $x_n=0$ to another hyperplane forming an angle $\theta(t)$. By a direct computation, one sees that
$$\lim_{t\to 0}\r_t r_t\r_t^{-1}=\left(
\begin{array}{cccc}
  1 &0 &\ldots&  0 \\

 0 &\ddots &  & \vdots \\

 \vdots &  &  1& 0 \\

  0 &\ldots & -\dot\theta & 1
\end{array}
\right)~,$$
which is a half-pipe infinitesimal rotation, corresponding under the usual isomorphism with $\Isom(\R^{1,n-1})$ to a translation of the vector $(0,\ldots,0,-\dot\theta)$. Hence in the limit the angle of the infinitesimal rotation is $|\dot\theta|$, by Definition \ref{def: angle HP}. The computation can be done analogously for a boost in anti-de Sitter space by replacing sin and cos by sinh and cosh, respectively.

This argument also explains the name ``infinitesimal rotation'', introduced in \cite{dancigertransition}.

\subsection{Reflections along hyperplanes} \label{sec: reflections}

More generally, the holonomy group of our geometric structures will be generated by compositions of reflections through the bounding hyperplanes of the polytopes that we will glue. 
In this section, we describe half-pipe  limits of reflections in $\Hyp^n$ or $\AdS^n$.

\begin{defi} \label{def:reflection} 
A \emph{(projective) reflection} is a non-trivial involution $r\in\Aut(\SP^n)$ that fixes point-wise a hyperplane.
\end{defi}

Note that in $\Isom(\Hyp^n)$ and $\Isom(\AdS^n)$ there is a unique reflection fixing a given hyperplane. This turns out not to be true in half-pipe geometry. Since this point will be very relevant in the following, let us explain this phenomenon more precisely.

Let $H$ be a degenerate hyperplane in $\HP^n$, as in the second point of Lemma \ref{lemma hyperplane hp}. In the duality with Minkowski space, $H$ corresponds to the set of all the spacelike hyperplanes $P$ in $\R^{1,n-1}$ having normal vector in a totally geodesic hyperplane of $\Hyp^{n-1}$, which is of the form $w^\perp\cap \Hyp^{n-1}$, where $w$ is some spacelike vector in $\R^{1,n-1}$ and $w^\perp$ is its orthogonal complement for the Minkowski product. Now, every reflection of $\R^{1,n-1}$ in a spacelike hyperplane orthogonal to $w$ leaves (set-wise) invariant each such hyperplane $P$. See Figure \ref{fig:arg_reflections}.

\begin{figure}
\includegraphics[scale=0.4]{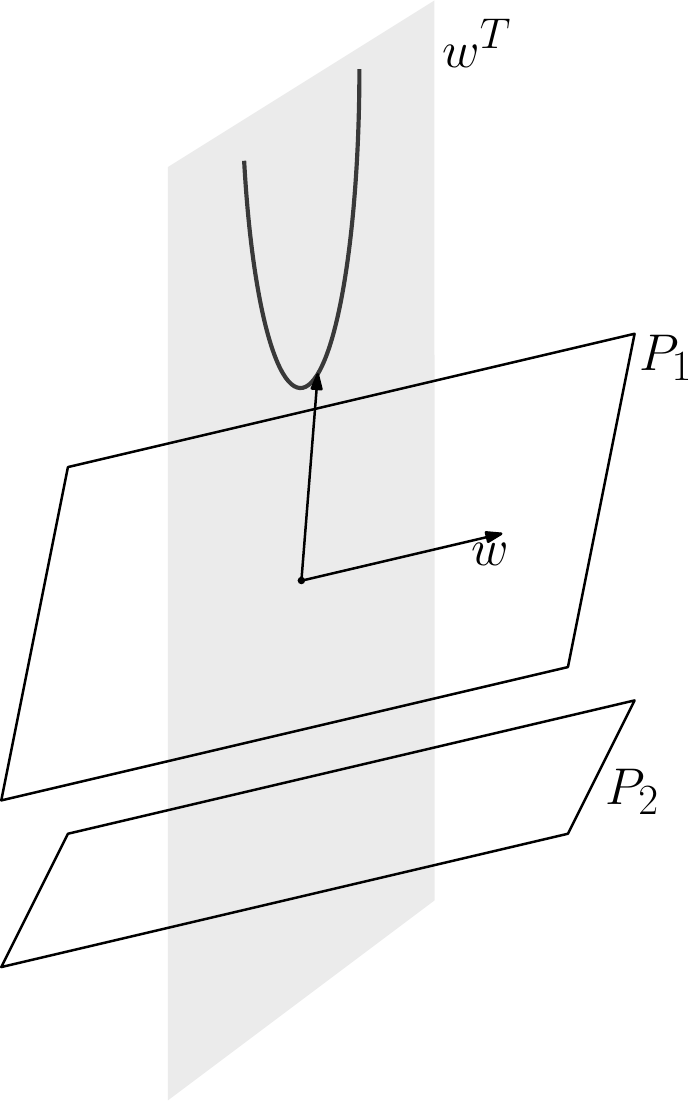}
\caption[Non-uniqueness of half-pipe reflections.]{\footnotesize The argument of Proposition \ref{prop:hp reflections deg}: Minkowski reflections in hyperplanes parallel to $w^\perp$ (in grey) leave set-wise invariant every spacelike hyperplane (like $P_1$ and $P_2$ in the figure) having normal vector in $w^\perp\cap \Hyp^{n-1}$ (this intersection is pictured as a hyperbola here). They all induce half-pipe reflections fixing a degenerate hyperplane, as in Figure \ref{fig:HPplanes} on the right.} \label{fig:arg_reflections}
\end{figure}

In summary, the above argument shows the following proposition, which is a remarkable difference with respect to hyperbolic and AdS geometry.

\begin{prop} \label{prop:hp reflections deg}
Given any degenerate hyperplane in $\HP^n$, there is a one-parameter family of reflections in $G_{\HP^n}$  which fix the hyperplane pointwise.
\end{prop}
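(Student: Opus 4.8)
The plan is to make the informal argument preceding the statement fully explicit by parametrising the degenerate hyperplane, finding all half-pipe reflections fixing it, and exhibiting the one-parameter family directly in matrix form. Up to composing with an element of $G_{\HP^n}$, by Lemma \ref{lemma hyperplane hp} I may assume the degenerate hyperplane is $H=\partial\l H\cap\HP^n$ with $\l H=(\widehat\alpha:0)$ and $\widehat q(\widehat\alpha)>0$; after a further isometry (using that $G_0$, hence $\O(\widehat q)$, acts transitively on spacelike directions) I can normalise so that $H$ is the hyperplane $\{x_{n-1}=0\}$, i.e. the degenerate hyperplane through the vertical line in the $x_n$-direction. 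So the goal becomes: describe all projective reflections $r\in G_{\HP^n}$ fixing $\{x_{n-1}=0\}$ pointwise.

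The key computation is then purely linear-algebraic. An element of $G_{\HP^n}$ has the block form \eqref{eq form isometries hp}, with $\widehat A\in\O(q_0')$ (the form of signature $(-,+,\ldots,+)$ on the first $n$ coordinates) future-preserving, and bottom-right entry $\pm 1$. Imposing that such a matrix fix the hyperplane $\{x_{n-1}=0\}$ pointwise forces: it acts as the identity on the span of $e_0,\ldots,e_{n-2}$, so $\widehat A$ restricted there is the identity; since $\widehat A\in\O(q_0')$ and $e_{n-1}$ is $q_0'$-orthogonal to that span, $\widehat A(e_{n-1})=\pm e_{n-1}$, and to get a non-trivial involution we take $\widehat A(e_{n-1})=-e_{n-1}$; the bottom row is $(\star,\ldots,\star,\pm1)$, but fixing $e_0,\ldots,e_{n-2}$ pointwise kills all the stars except possibly the one over the $e_{n-1}$-column, and the $\pm1$ must be $+1$ (otherwise $e_n$ is not fixed, but the line spanned by $e_n$ lies in $\{x_{n-1}=0\}$). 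This leaves exactly a one-parameter family
$$
r_s=\left[\begin{array}{cccc|c}
1 & & & & 0\\
& \ddots & & & \vdots\\
& & 1 & & 0\\
& & & -1 & 0\\
\hline
0 & \cdots & 0 & s & 1
\end{array}\right],\qquad s\in\R,
$$
where the free entry $s$ sits in the bottom row, in the column of $e_{n-1}$. One checks directly that $r_s^2=\mathrm{Id}$, that $r_s\in\O(q_0)$ and satisfies $r_s(e_n)=e_n$ and $(r_s(e_0))_0=1>0$, so $r_s\in G_{\HP^n}$, and that $r_s$ fixes $\{x_{n-1}=0\}$ pointwise; moreover $r_s\ne r_{s'}$ for $s\ne s'$ as projective transformations (compare the action on $[0:\cdots:0:1:0]$, which is sent to $[0:\cdots:0:1:s]$). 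This produces the desired one-parameter family, and the reflections for different $s$ are genuinely distinct in $G_{\HP^n}$ even though they fix the same hyperplane.

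I do not anticipate a serious obstacle — the statement is a direct consequence of the matrix form \eqref{eq form isometries hp} of $G_{\HP^n}$ together with the duality picture of Figure \ref{fig:arg_reflections}. The one point that needs a little care is the normalisation step: one should justify that after acting by $G_{\HP^n}$ every degenerate hyperplane can be brought to $\{x_{n-1}=0\}$, which follows from Lemma \ref{lemma hyperplane hp} (degenerate $\Leftrightarrow$ $\alpha_n=0$ and $q_0(\alpha)>0$) and the transitivity of $\O(\widehat q)$, acting through $G_0\subset G_{\HP^n}$, on the spacelike rays of $\R^{1,n-1}$. One could also phrase the whole argument intrinsically via the Minkowski duality: $r_s$ corresponds to the Minkowski reflection in a spacelike hyperplane orthogonal to the fixed spacelike direction $w$, and translating that hyperplane (a one-parameter family of choices) gives the one-parameter family of reflections — but the matrix computation above is the cleanest way to be sure the family is exactly one-dimensional and lands in $G_{\HP^n}$.
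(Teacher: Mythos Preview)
Your proof is correct and complete. The paper takes a different, more conceptual route: it argues entirely via the Minkowski duality of Section~\ref{sec motivations hp}, observing that a degenerate hyperplane $H\subset\HP^n$ parametrises those spacelike hyperplanes $P\subset\R^{1,n-1}$ whose unit normal lies in $w^\perp\cap\Hyp^{n-1}$ for some fixed spacelike $w$, and that any Minkowski reflection in a hyperplane orthogonal to $w$ preserves each such $P$ setwise; since there is a one-parameter family of parallel hyperplanes orthogonal to $w$, the isomorphism $\phi$ of Lemma~\ref{lemma isomorphism halfpipe and minkowski} transports this to a one-parameter family in $G_{\HP^n}$. Your approach instead normalises the hyperplane to $\{x_{n-1}=0\}$ and reads off the reflections directly from the block form \eqref{eq form isometries hp}, which has the advantage of making the family completely explicit and showing that it is \emph{exactly} one-dimensional. (One tiny slip: $r_s$ sends $[0:\cdots:0:1:0]$ to $[0:\cdots:0:-1:s]$, not $[0:\cdots:0:1:s]$; the distinctness conclusion is unaffected.) You correctly note the duality argument as an alternative at the end --- that is precisely the paper's proof.
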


On the other hand, for spacelike hyperplanes uniqueness of the half-pipe reflection holds:

\begin{prop}\label{prop:hp reflections space}
Given any spacelike hyperplane in $\HP^n$, there is a unique reflection in $G_{\HP^n}$  which fix the hyperplane pointwise.
\end{prop}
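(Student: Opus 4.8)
The plan is to reduce to a single model spacelike hyperplane and then classify the elements of $G_{\HP^n}$ that fix it pointwise, using the explicit matrix form \eqref{eq form isometries hp}. First I would note that $G_{\HP^n}$ acts transitively on the set of spacelike hyperplanes of $\HP^n$: under the isomorphism of Lemma \ref{lemma isomorphism halfpipe and minkowski}, a spacelike hyperplane corresponds to its dual point in $\R^{1,n-1}$ (the remark following Lemma \ref{lemma hyperplane hp}), and a translation moves this dual point anywhere; equivalently one can conjugate by a suitable element of the subgroup $G_0$ of \eqref{eq:subgroup G0} together with a translation. Hence, up to conjugating in $G_{\HP^n}$, one may assume the given hyperplane is the model one $H_0=\{x_n=0\}\cap\HP^n=\iota(\Hyp^{n-1})$, i.e.\ $\partial\l H\cap\HP^n$ for $\l H=(0:\ldots:0:1)$. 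Since $H_0$ is a nonempty open subset of the projective hyperplane $\{x_n=0\}\subset\SP^n$, an element fixing $H_0$ pointwise fixes that whole projective hyperplane pointwise, so there is no ambiguity between ``fixing the hyperplane of $\SP^n$'' and ``fixing its intersection with $\HP^n$''.

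Next I would take $A\in G_{\HP^n}$ fixing $H_0$ pointwise and expand it via \eqref{eq form isometries hp}: writing $x=(\widehat x,x_n)$, one has $A(\widehat x,x_n)=(\widehat A\,\widehat x,\ \ell(\widehat x)\pm x_n)$ for some $\widehat A\in\O(\widehat q)$ and some linear form $\ell$ on $\R^n$. The points of $H_0$ are the classes $[\widehat x:0]$ with $\widehat x$ future-directed timelike, so the fixing condition reads $[\widehat A\,\widehat x:\ell(\widehat x)]=[\widehat x:0]$ for all such $\widehat x$. This forces $\ell(\widehat x)=0$ for every future timelike $\widehat x$, hence $\ell=0$ since timelike vectors span $\R^n$; and it forces $\widehat A\,\widehat x\in\R\,\widehat x$ for every timelike $\widehat x$, i.e.\ $\widehat A$ acts as the identity on the open subset $\Hyp^{n-1}$ of $\mathbb{P}(\R^n)$. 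A projective transformation fixing an open set pointwise is the identity, so $\widehat A=\lambda\,\mathrm{Id}$, and $\widehat A\in\O(\widehat q)$ gives $\widehat A=\pm\mathrm{Id}$. Therefore $A$ is the projective class of a diagonal matrix $\mathrm{diag}(\pm1,\ldots,\pm1,\pm1)$, that is $A\in\{[\mathrm{Id}],\,[\mathrm{diag}(1,\ldots,1,-1)]\}$; the second element is the reflection $r$ along $\iota(\Hyp^{n-1})$ from Section \ref{sec recipe}, it lies in $G_{\HP^n}$ and fixes $H_0$ pointwise, and it is the only non-trivial (hence the unique) such element, which proves the claim.

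I would also record the conceptual reading of this computation through the Minkowski duality, in parallel with the proof of Proposition \ref{prop:hp reflections deg}: an element of $G_{\HP^n}$ fixing pointwise the spacelike hyperplane dual to $p\in\R^{1,n-1}$ corresponds to an isometry of $\R^{1,n-1}$ fixing \emph{setwise} every spacelike hyperplane through $p$; such an isometry necessarily fixes $p$ and has linear part preserving every timelike line, hence linear part $\pm\mathrm{Id}$, so it is either the identity or the central symmetry at $p$. The contrast with the degenerate case is precisely that there the $\star$-entries of \eqref{eq form isometries hp} survive as a free parameter, whereas here the pointwise-fixing condition kills them. I do not expect a genuine obstacle: the only point needing care is the elementary fact that an element of $\O(\widehat q)$ acting as the identity on an open subset of $\Hyp^{n-1}$ must be $\pm\mathrm{Id}$, and this is exactly the place where the spacelike case diverges from the degenerate one treated in Proposition \ref{prop:hp reflections deg}.
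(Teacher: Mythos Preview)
Your proof is correct. The paper's own argument is precisely the ``conceptual reading through the Minkowski duality'' that you record in your final paragraph: it passes immediately to the dual point $p\in\R^{1,n-1}$ (taken to be the origin), observes that an isometry $(\widehat A,v)$ fixing setwise every spacelike hyperplane through $p$ must fix $p$ (so $v=0$) and preserve every timelike line (so $\widehat A=\pm\mathrm{id}$), and concludes. Your primary argument via the explicit matrix form \eqref{eq form isometries hp} is a more hands-on route to the same conclusion; it has the small advantage of making explicit where the $\star$-entries get killed (so the contrast with Proposition \ref{prop:hp reflections deg} is visible in coordinates), at the cost of an extra reduction step to the model hyperplane. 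Either approach is fine, and since you already include the duality version, you have in fact covered the paper's proof as well.
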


To see this, recall that a spacelike hyperplane $H$ in $\HP^n$ corresponds to all the spacelike hyperplanes $P$ in $\R^{1,n-1}$ which contain a given point $p$, and we can assume that $p$ is the origin. If an isometry $(\widehat A,v)$ of $\R^{1,n-1}$ fixes (set-wise) all the hyperplanes $P$ going through the origin, it must also fix the origin itself, hence it must be linear (i.e. the translation part $v$ is trivial). Since it fixes timelike normal directions, then $\widehat A$ is either $\mathrm{id}$ or  $-\mathrm{id}$. In conclusion, the unique half-pipe reflection fixing $H$ is $(-\mathrm{id},0)$.

\section{Polytopes and cone-manifolds} \label{sec:poly-conemfds}

In this section, we provide some additional tools to prove Theorem \ref{teo: main}. We first introduce projective polytopes and simple projective cone-manifolds. Then, we describe the singularities of such cone-manifolds {in the hyperbolic, AdS, and HP cases} by means of the $(G,X)$-structures given by the links of points.

\subsection{Polytopes} \label{section polytopes}

We now introduce our main tool to provide examples of transition. Recall Section \ref{sec:half-spaces} about half-spaces of the projective sphere.

\begin{defi}
An $n$-dimensional \emph{projective polytope} is a finite intersection of half-spaces
$${\mathcal P}=\l H_1\cap\ldots\cap\l H_N\subset\SP^n$$
in the projective sphere such that the interior of ${\mathcal P}$ is non-empty.
\end{defi}

We will always assume that the set of half-spaces defining ${\mathcal P}$ is minimal, that is, there is no $\l H_i$ containing the intersection of the remaining half-spaces.
In this case, each $\partial\l H_i$ is called a \emph{bounding hyperplane} of ${\mathcal P}$.

The polytope ${\mathcal P}$ is naturally stratified into $k$-\emph{faces}, $k=0,\ldots,n$, as follows. The unique $n$-face is ${\mathcal P}$ itself. The $(n-1)$-faces, called \emph{facets}, are given by $\mathcal F_i={\mathcal P}\cap\partial\l H_i$, $i=1,\ldots,N$. Now, each facet $\mathcal F_i$ is a projective $(n-1)$-polytope. The $(n-2)$-faces of ${\mathcal P}$, called \emph{ridges}, are by definition the facets of each $\mathcal F_i$. One proceeds inductively in this way, until reaching the 1-faces, called \emph{edges}, and the 0-faces, called \emph{vertices}. The \emph{combinatorics} of a polytope ${\mathcal P}$ is (the isomorphism class of) the poset of the faces of ${\mathcal P}$, ordered by set inclusion.

Given a point $p\in\mathcal P$, we have $p=[v]$ for some $v\in\R^{n+1}\smallsetminus\{0\}$. Each (possibly none) half-space $\l H_i$ such that $p\in\partial\l H_i$ determines a half-space of the projective sphere $\SP^{n-1}$ over the quotient vector space $\R^{n+1}/{\langle v\rangle}$. The \emph{link} of the point $p$ is the $(n-1)$-dimensional projective polytope $\mathcal L_p\subset\SP^{n-1}$ defined as the intersection of such half-spaces. (If $p$ is in the interior of $\mathcal P$, then $\mathcal L_p$ is nothing but $\SP^{n-1}$.)

The polytope ${\mathcal P}$ is said to be \emph{simple} if every $k$-face is contained in exactly $(n-k)$ bounding hyperplanes. Equivalently, the link of each vertex is a simplex.

\begin{defi}
A \emph{hyperbolic} (resp. \emph{half-pipe} or \emph{anti-de Sitter}) \emph{polytope} is a non-empty subset
$$\hat {\mathcal P}={\mathcal P}\cap\Hyp^n\quad(\mbox{resp.\ }\hat {\mathcal P}={\mathcal P}\cap\HP^n\ \mbox{or}\ \hat {\mathcal P}={\mathcal P}\cap\AdS^n),$$
where ${\mathcal P}\subset\SP^n$ is a projective polytope.
\end{defi}

\subsection{Simple projective cone-manifolds} \label{sec cone-mfds}

The process of geometric transition typically involves a path of \emph{singular} geometric structures. In this section, we describe such structures in the special case of Theorem \ref{teo: main}.

We call \emph{stratified manifold} a topological $n$-manifold $\mathcal{X}$ together with a \emph{stratification}, that is a partition
$$\mathcal{X}=\mathcal{X}^{[0]}\sqcup\ldots\sqcup \mathcal{X}^{[n]}$$
such that $\mathcal{X}^{[k]}$ is an embedded $k$-manifold with empty boundary. The connected components of $\mathcal{X}^{[k]}$ are called $k$\emph{-strata}.

Let us fix $n$ half-spaces $\l H_1,\ldots,\l H_n\subset\SP^n$ such that the hyperplanes $\partial\l H_1,\ldots,\partial\l H_n$ are in general position. The intersection
$${\mathcal P}=\l H_1\cap\ldots\cap\l H_n\subset\SP^n$$
is a simple projective polytope (see Section \ref{section polytopes}).

Let us now consider the \emph{double} $D({\mathcal P})$ of ${\mathcal P}$, that is, the space obtained from two copies of ${\mathcal P}$ by identifying the two copies of $\partial {\mathcal P}$ through the map induced by the identity. Note that $D({\mathcal P})$ is homeomorphic to the $n$-sphere. By considering the natural stratification ${\mathcal P}^{[0]}\sqcup\ldots\sqcup {\mathcal P}^{[n]}$ of ${\mathcal P}$ induced by its faces, we define the following stratification of $D({\mathcal P})$:
\begin{itemize}
\item $D({\mathcal P})^{[k]}={\mathcal P}^{[k]}$ for $k\leq n-2$,
\item $D({\mathcal P})^{[n-1]}=\emptyset$, 
\item $D({\mathcal P})^{[n]}=D({\mathcal P}^{[n-1]}\cup {\mathcal P}^{[n]})$.
\end{itemize}

Recalling now Section \ref{sec: proj structures} about projective stuctures, we note that for $k\leq n-2$ each $k$-stratum is a projective manifold homeomorphic to $\R^k$, while the $n$-stratum $D({\mathcal P})^{[n]}$ does not have a preferred projective structure. Choosing for each $i\in\{1,\ldots,n\}$ a projective reflection $r_i$ (recall Definition \ref{def:reflection}) that fixes the hyperplane $\partial\l H_i$, we have a well defined projective structure also on $D({\mathcal P})^{[n]}$.

We call $\mathcal{D}$ the double $D({\mathcal P})$ together with its stratification and such an additional projective structure on each of its strata. This will be the local model for our cone-manifolds. 

\begin{defi} \label{def:proj_simple_cone-mfd}
A \emph{simple projective cone-manifold} is a stratified manifold $\mathcal{X}$ with an atlas of stratum-preserving charts, each with values in some $\mathcal D$ constructed as above, whose transition functions restrict to an isomorphism of projective manifolds on each stratum.
\end{defi}

\begin{remark}
One could give a much more general definition of ``projective cone-manifold'', by induction on the dimension. We do not need this here. However, our notion of simple projective cone-manifold includes the one introduced by Danciger in his works on geometric transition. In contrast with the projective cone-manifolds of Theorem \ref{teo: main}, the ones considered in \cite{danciger,dancigertransition,LMR} are stratified as $\mathcal{X}=\mathcal{X}^{[n]}\sqcup \mathcal{X}^{[n-2]}$. In other words, the singular locus $\Sigma\subset \mathcal{X}$ is a codimension-two submanifold.
\end{remark}

We note that each stratum of such a cone-manifold $\mathcal{X}$ is a real projective manifold. The set
$$\Sigma=\mathcal{X}^{[0]}\cup\ldots\cup \mathcal{X}^{[n-2]}$$
is called the \emph{singular locus} of $\mathcal{X}$, while $\mathcal{X}^{[n-1]}=\emptyset$ and $\mathcal{X}^{[n]}=\mathcal{X}\smallsetminus\Sigma$ (which is the unique $n$-stratum if $\mathcal{X}$ connected) is called the \emph{regular locus} of $\mathcal{X}$. The singular locus $\Sigma$ is an $(n-2)$-complex with generic singularities: $\Sigma$ is empty if $n=1$, a discrete set if $n=2$, and is locally modelled on the cone over the $(n-3)$-skeleton of an $(n-1)$-simplex if $n\geq3$. In particular, $\Sigma$ is a trivalent graph if $n=3$, and a so called \emph{foam} if $n=4$.

Given a point $p\in\mathcal X\smallsetminus\Sigma$, we define its \emph{link} $\mathcal L_p$ to be the sphere of directions at $p$ of the projective manifold $X\smallsetminus\Sigma$, that is, the projective sphere $\SP^{n-1}$ over the tangent space at $p$. If instead $p\in\Sigma$, we define its \emph{link} $\mathcal L_p$ to be the double of the link (see Section \ref{section polytopes}) of a corresponding point in $\partial\mathcal P$ through a chart. The link $\mathcal L_p$ is naturally a simple projective cone-manifold homeomorphic to the $(n-1)$-sphere.

For example, if $\mathcal X$ has dimension $n=2$, then $\mathcal L_p$ is a projective circle, which is equivalent to $\SP^1$ if and only if $p$ is non-singular. If $n=3$, then $\mathcal L_p$ is a cone 2-sphere with 0, 2, or 3 singular points, depending on whether $p$ belongs to a 3-, 1-, or 0-stratum, respectively. If $n=4$, the singular locus of the cone 3-sphere $\mathcal L_p$ is empty, an unknotted cicle, an unknotted theta graph, or the 1-skeleton of a tetrahedron, depending whether $p$ belongs to a 4-, 2-, 1-, or 0-stratum, respectively.

The \emph{holonomy representation} and \emph{developing map} of a projective cone-manifold $\mathcal{X}$ are by definition those of its regular locus. The holonomy $\rho(\gamma)$ of a meridian $\gamma\in\pi_1(\mathcal{X}\smallsetminus\Sigma)$ of an $(n-2)$-stratum is conjugated to a product of reflections $r_ir_j\in\Aut(\SP^n)$ that fix a common $(n-2)$-subspace of $\SP^n$.

\subsection{The hyperbolic, AdS, and HP case} \label{sec:HS}

We are interested in some special classes of simple projective cone-manifolds:

\begin{defi} \label{def: AdS cone-manifold}

A simple projective cone-manifold $\mathcal{X}$ is said to be \emph{hyperbolic} (resp. \emph{half-pipe} or \emph{anti-de Sitter}) if 
\begin{itemize}
\item each chart has values in some $D(\hat {\mathcal P})\subset D({\mathcal P})=\mathcal D$, where $\hat {\mathcal P}={\mathcal P}\cap\Hyp^n$ (resp. $\hat {\mathcal P}={\mathcal P}\cap\HP^n$ or $\hat {\mathcal P}={\mathcal P}\cap\AdS^n$);
\item for each $\mathcal D$, the reflections $r_1,\ldots,r_n$ belong to $\Isom(\Hyp^n)$ (resp. $G_{\HP^n}$ or $\Isom(\AdS^n)$);
\item the transition functions restrict to isometries (resp. $G_{\HP}$-isomorphisms, isometries) on the strata.
\end{itemize}

A simple AdS or HP cone-manifold has \emph{spacelike} \emph{singularities} if every bounding hyperplane of each $\hat{\mathcal P}$ is spacelike.
\end{defi}

We refer to \cite{BLP} (see also \cite{thurston_shapes,CHK,McM}) for the general definition of hyperbolic cone-manifold, and to \cite{bbsads} for the 3-dimensional AdS case.  

Recall now Section \ref{sec rotations} about rotations, boosts and their infinitesimal counterpart. Given an $n$-dimensional simple projective cone-manifold $\mathcal{X}$, the holonomy of a meridian of an $(n-2)$-stratum is conjugated to:
\begin{itemize}
\item a rotation in $\Isom(\Hyp^n)$ if $\mathcal{X}$ is hyperbolic,
\item a boost in $\Isom(\AdS^n)$ if $\mathcal{X}$ is anti-de Sitter and $\Sigma$ is spacelike,
\item their infinitesimal version in $G_{\HP^n}$ if $\mathcal{X}$ is half-pipe and $\Sigma$ is spacelike.
\end{itemize}
In such cases, to each $(n-2)$-stratum is thus associated a number: the angle of rotation is called \emph{cone angle}, the angle of the boost (with opposite sign) is called \emph{magnitude}, and that of the infinitesimal rotation (again with opposite sign) is called \emph{infinitesimal cone angle}, respectively. {The sign convention is consistent with \cite{danciger,dancigertransition}, where roughly speaking the negative sign corresponds to the fact that  the singularity is a ``defect'' with respect to the non-singular case. We refer to \cite[Sections 2.5 and 4.2]{dancigertransition}} for more details.

The transitional 3-dimensional AdS cone-manifolds in \cite{danciger,dancigertransition} have spacelike singularities --- said ``with tachyons'' \cite{bbsads}, being each 1-stratum a spacelike geodesic, i.e. a ``particle faster than light''. In this case, the holonomy representation at a meridian is a AdS boost which fixes pointwise a spacelike geodesic.

The local structure of a point $p\in\mathcal X$ is determined by its link $\mathcal L_p$. We now briefly describe the situation in the three cases of interest for us.

\subsubsection{In hyperbolic geometry}
Given a point $x\in\Hyp^n$, 
{we have} $\mathrm{Stab}_{\Hyp^n}(x)\cong\O(n)$, and the link of $x$ can be identified to the round sphere. Simple hyperbolic cone-manifolds can be defined as manifolds locally modelled on the hyperbolic cone \cite{BH} over a spherical cone-manifold one dimension less, which is the double of a spherical simplex.

\subsubsection{In anti-de Sitter geometry}
The analogous geometry for $\AdS^n$ has been introduced in \cite{bbsads}, where it is called \emph{HS geometry}. By identifying $T_x\AdS^n$ with $\R^{1,n-1}$, the link of a point $x\in\AdS^n$ is called $\HS^{n-1}$, and is a projective $(n-1)$-sphere identified with the space of rays in $\R^{1,n-1}$. The stabiliser $\mathrm{Stab}_{\AdS^n}(x)$ is identified to $\O(1,n-1)$. The sphere $\HS^{n-1}$ is partitioned into (see Figure \ref{fig:transition_stabilizers}--right):

\begin{itemize}
\item the region of timelike rays, which corresponds to two copies of $\Hyp^{n-1}$;
\item  the region of spacelike rays, which is a copy of de Sitter space $\dS^{n-1}$;
\item the set of lightlike rays, which is the common boundary of the two latter regions and is topologically the disjoint union of two $(n-2)$-spheres.
\end{itemize}

An \emph{HS-structure} is by definition an $(\O(1,n-1),\HS^{n-1})$-structure. An HS manifold is thus partitioned into a hyperbolic region, a de Sitter region, and a null locus. In analogy with the hyperboic case, simple AdS cone-manifolds are locally modelled on the AdS suspension \cite{bbsads} over the double of an HS simplex of one dimension less. In the AdS case with spacelike singularities, the facets of the HS simplex are contained in the de Sitter region of $\HS^{n-1}$ and are spacelike.

\begin{figure}
\includegraphics[scale=.26]{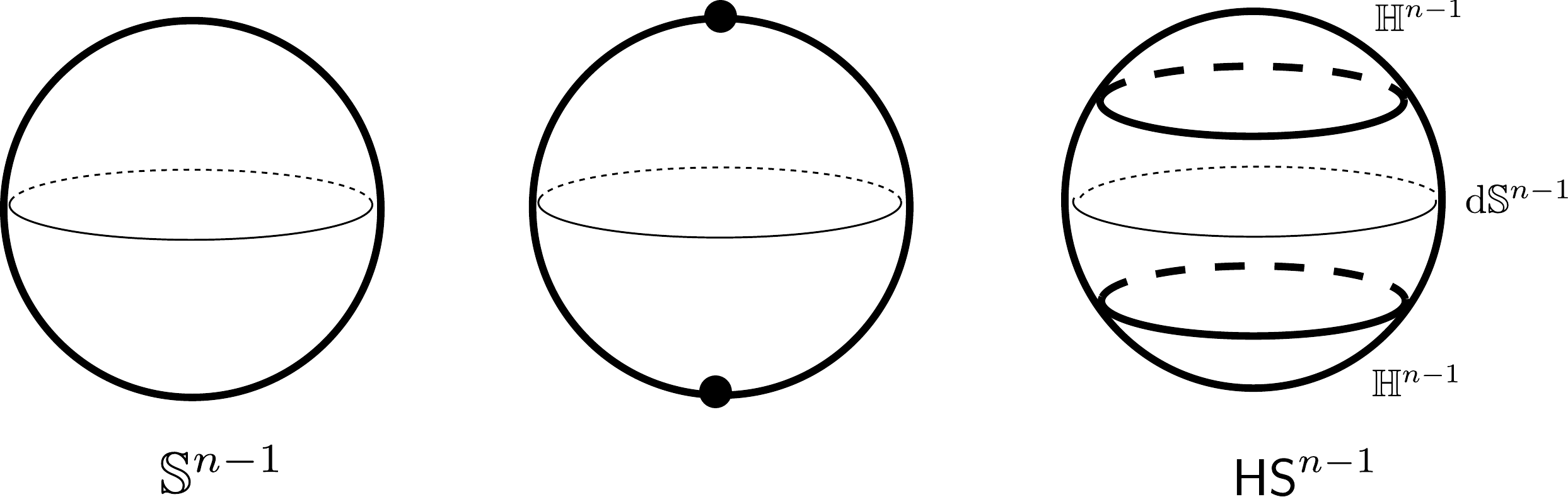}
\caption[The link of a point in a hyperbolic, half-pipe, or AdS manifold.]{\footnotesize The link of a point in a hyperbolic, half-pipe, or anti-de Sitter $n$-manifold. In the hyperbolic case (left), we have the round sphere $\mathbb S^{n-1}$. In the AdS case (right), the link is called $\HS^{n-1}$, and is subdivided into two timelike regions (copies of $\Hyp^{n-1}$), one spacelike region (copy of the de Sitter space $\dS^{n-1}$), and two lightlike $(n-2)$-spheres. In the HP case (centre), we have just a sphere with two marked antipodal points, corresponding to the degenerate direction. In the transition from AdS to hyperbolic geometry the two timelike regions shrink to points and then disappear.} \label{fig:transition_stabilizers}
\end{figure}

\subsubsection{In half-pipe geometry}

The stabiliser in $G_{\HP^n}$ of a point $x\in\HP^n$ is isomorphic to $\Isom(\R^{n-1})\times{\Z/2\Z}$. To see this, recall that by the usual duality (Section \ref{sec motivations hp}), the stabiliser of a point in $\HP^n$ is the same as the stabiliser of a spacelike hyperplane $P$ in $\Isom(\R^{1,n-1})$. The ${\Z/2\Z}$ factor is generated by a reflection in $P$, while  $\Isom(\R^{n-1})$ corresponds to Euclidean isometries of $H$ which extend to $\R^{1,n-1}$ by preserving setwise each component of $\R^{1,n-1}\smallsetminus P$. The link of a point in $\HP^n$ is thus endowed with a $(G,X)$-structure, where $X$ is an $(n-1)$-sphere identified to the set of rays in $T_x \HP^n$, and $G=\mathrm{Stab}_{\HP^n}(x)$ is the stabiliser of $x$ as described above. Note that such a $(G,X)$-structure has some distinguished points, which correspond to the degenerate rays exiting from $x$. These are well-defined since they are preserved by the action of $G$ (see Figure \ref{fig:transition_stabilizers}--centre).

Analogously to the hyperbolic and AdS case, one can visualise the link of a point in a simple half-pipe cone-manifold as the double of a simplex in the space of rays in $T_x \HP^n$. If the singularity $\Sigma$ is spacelike, then the facets of such simplex are spacelike.

\begin{remark} \label{rem: AdS-cone}
We omit the details here, but (similarly to Remark \ref{rem:trans-link-ideal}) it can be seen directly that the rescaled limits of the point stabilisers $\mathrm{Stab}_{\Hyp^n}(x)$ and $\mathrm{Stab}_{\AdS^n}(x)$ are point stabilisers in half-pipe geometry. Hence a geometric transition from hyperbolic to AdS geometry on simple projective cone-manifolds induces a geometric transition from spherical to HS cone structures, going through the analogous structure for half-pipe geometry. This can be visualised in Figure \ref{fig:transition_stabilizers} for non-singular points, and in Figures \ref{fig:transition link 1}, \ref{fig:transition link 2}, \ref{fig:transition link 3} for singular points in dimension four (as in Theorem \ref{teo: main}).
\end{remark}

\section{Warm up in dimension three}\label{sec:warm-up}

We are ready to build explicitly some examples of geometric transition. Section \ref{sec:warm-up} is a warm up in dimension three, while in Section \ref{sec 4dim} we prove Theorem \ref{teo: main}.

In this section, we describe two examples of 3-dimensional geometric transition from hyperbolic to an anti-de Sitter structures, going through a half-pipe structure. These will serve as a toy model for the 4-dimensional geometric transition constructed in Section \ref{sec 4dim}. In contrast with the deformations studied in \cite{dancigertransition} and \cite{dancigerideal}, where the 3-manifold is closed and the singular locus is a knot, our examples are cusped and the singular locus is either a link or a trivalent graph. We will not provide all the proofs in this section, since they will actually follow from the results we prove in dimension four; the reader can also see the survey paper \cite{TSG} for more details on some examples in dimension two and three.

\subsection{Singularity along a link}

\begin{figure}
\includegraphics[scale=.4]{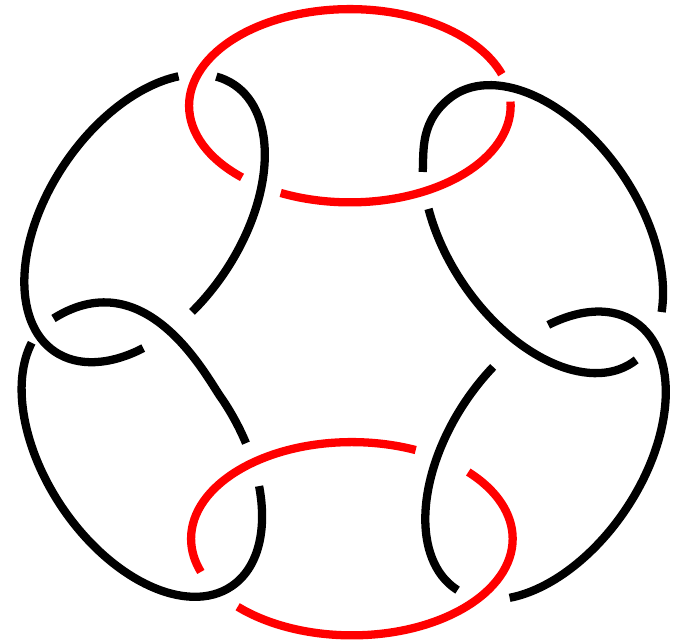}
\caption[The minimally twisted 6-chain link.]{\footnotesize The minimally twisted 6-chain link in the 3-sphere. Its complement $\mathcal{M}$ is hyperbolic, and can be tessellated by four ideal right-angled octahedra. By 0-surgery on the two red components, we get a Dehn filling ${\mathcal{X}}$ of $\mathcal{M}$ homeomorphic to  $\mathcal{S}_{0,4}\times S^1$, where $\mathcal{S}_{0,4}$ is a 4-times punctured sphere.}
\label{fig:6-chain-link}
\end{figure}

Consider an ideal right-angled octahedron ${\mathcal O}_0\subset\Hyp^3$, with the natural colouring of its facets in black and white in the chequerboard fashion. By doubling ${\mathcal O}_0$ along its white faces, and then doubling the resulting manifold with boundary, we get \cite[Figure 2]{KM} a well-known complete, finite-volume, hyperbolic 3-manifold $\mathcal{M}$ homeomorphic to the complement in $S^3$ of the minimally twisted 6-chain link (see Figure \ref{fig:6-chain-link}).
Note that $\mathcal{M}$ is tessellated by four copies of the octahedron ${\mathcal O}_0$.

Let us now deform ${\mathcal O}_0$ by a path $\theta\mapsto {\mathcal O}_\theta\subset\Hyp^3$ of polyhedra described in Figure \ref{fig:facet_L}, where the two red edges have varying dihedral angle $\theta$ and all the remaining edges are right-angled, while the white dots represent ideal vertices. Geometric models of this deformation are shown in Figure \ref{fig:collapse oct}. The polyhedron ${\mathcal O}_\theta$ exists for all $\theta\in(0,\pi)$ by Andreev's Theorem \cite{A1,A2}. As $\theta\to0$, the red edges shrink and go to infinity, and we have the original octahedron ${\mathcal O}_0$.

\begin{figure}
\includegraphics[scale=1]{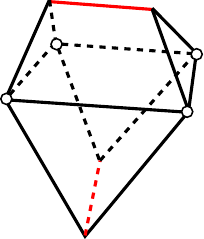}
\caption[A facet $\mathcal F_{\l X}$ of $\mathcal P_t$: Dehn filling the octahedron.]{\footnotesize The polyhedron $\mathcal O_\theta\subset\Hyp^3$. The white dots represent ideal vertices, the black edges are right-angled, and the red edges have dihedral angle $\theta\in(0,\pi)$. As $\theta\to0$, the red edges disappear, and we have the original ideal octahedron $\mathcal O_0$. As $\theta\to\pi$, the polyhedron collapses to the horizontal ideal quadrilateral ${\mathcal Q}$. By rescaling and continuing the path, we have similarly an AdS polyhedron with the same combinatorics and the same convention on the dihedral angles, where now the quadrliateral faces are spacelike, and the triangular ones are timelike. The polyhedron $\mathcal O_\theta$ (and its AdS version) is isometric to a facet ${\mathcal F}_{\l X}$, $\l X\in\{\l A,\ldots,\l F\}$, of the 4-polytope $\mathcal P_t$ of Section \ref{sec 4dim}. The quadrilateral faces are the ridges ${\mathcal R}_{\l X\p i}$ of $\mathcal P_t$, while the triangular faces are the ridges of type ${\mathcal R}_{\l X\m i}$ (see the end of Section \ref{sec: combinatorics} for the notation).}
\label{fig:facet_L}
\end{figure}

\begin{figure}
\centering
\begin{minipage}[c]{.25\textwidth}
\centering
\includegraphics[scale=0.2]{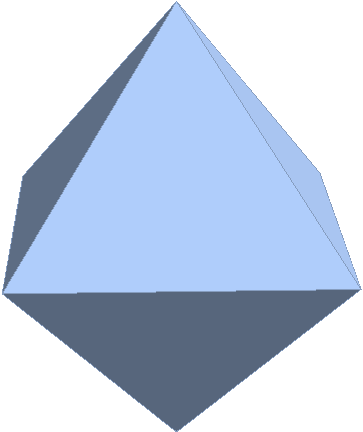}
\end{minipage}%
\begin{minipage}[c]{.25\textwidth}
\centering
\includegraphics[scale=0.2]{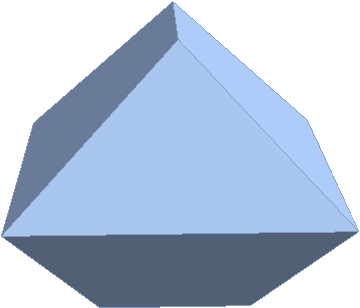}
\end{minipage}%
\begin{minipage}[c]{.25\textwidth}
\centering
\includegraphics[scale=0.2]{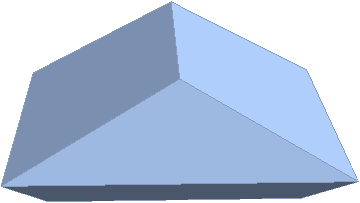}
\end{minipage}
\begin{minipage}[c]{.2\textwidth}
\vspace{0.35cm}
\centering
\includegraphics[scale=0.05]{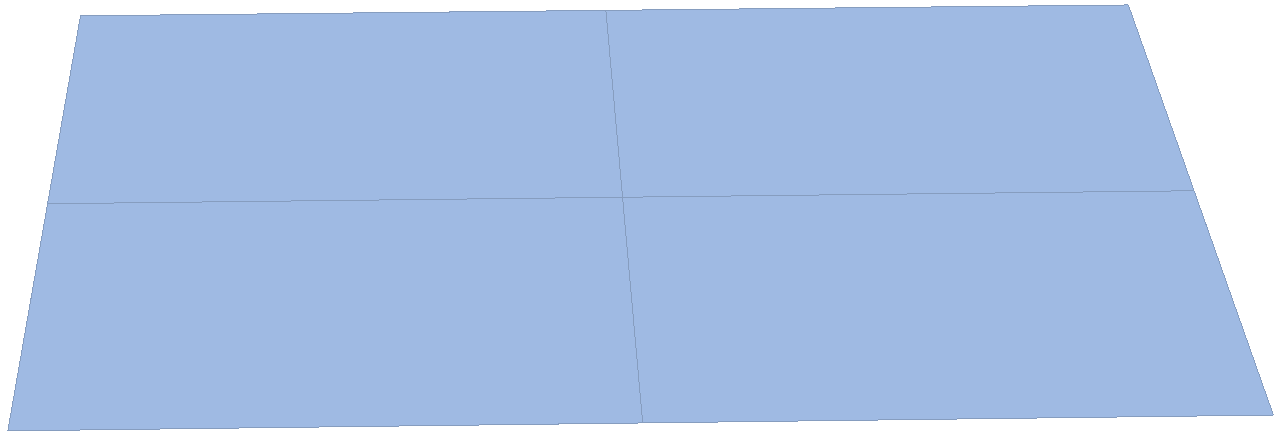}
\end{minipage}%
\caption[The collapse of $\mathcal F_{\l X}$.]{\footnotesize A movie of the collapse of the polyhedron ${\mathcal O}_\theta$ in an affine chart (Klein model of $\Hyp^3$), from the ideal octahedron $\mathcal O_0$ to the ideal quadrilateral $\mathcal Q$.} \label{fig:collapse oct}
\end{figure}

More concretely, ${\mathcal O}_\theta$ can be defined as the intersection in $\SP^3$ of the half-spaces in Table \ref{table:link}, for $t\in(0,1)$, using the notation introduced in Section \ref{subsec:proj sphere}. It can be checked directly that the correct orthogonality relations hold, and that the relation between the angle $\theta$  and the parameter $t\in(0,1)$ (by applying Lemma \ref{lemma angle hyp} to the first and third vector in the left column, for instance) is given by:
$$\cos\theta_t=\frac{3t^2-1}{1+t^2}~.$$

\begin{table}[htb]
\begin{eqnarray*}
\left( -|t|:-\sqrt{2}|t|:0:-1 \right) , & &
\left( -1:-\sqrt{2}:0:+t \right),\\
\left( -|t|:0:-\sqrt{2}|t|:+1 \right),& &
\left( -1:0:-\sqrt{2}:-t \right),\\
\left( -|t|:+\sqrt{2}|t|:0:-1 \right),& &
\left( -1:+\sqrt{2}:0:+t \right),\\
\left( -|t|:0:+\sqrt{2}|t|:+1 \right),& &
\left( -1:0:+\sqrt{2}:-t \right).\\
\end{eqnarray*}
\caption{\footnotesize The half-spaces defining the deformation of the right-angled ideal octahedron, expressed as elements of $\SP^{3,*}$. When $t<0$, we have an AdS polyhedron.}\label{table:link}
\end{table}

By deforming simultaneously in this way each copy of ${\mathcal O}_0$ in $\mathcal{M}$, we get a \emph{Dehn filling} ${\mathcal{X}}$ of the hyperbolic 3-manifold $\mathcal{M}$, i.e. ${\mathcal{X}}\smallsetminus \Sigma$ is homeomorphic to $\mathcal{M}$ for a link $\Sigma\subset{\mathcal{X}}$; see Figure \ref{fig:6-chain-link}. (It is standard to check that $\mathcal{X}$ is obtained from the link complement $\mathcal{M}$ by 0-surgery along the two red components in the picture. However, we will not use this fact here.) Each component of $\Sigma$ is the double of a red edge of $\mathcal O_\theta$. Moreover, the deformation describes a path of hyperbolic cone-structures on ${\mathcal{X}}$ with cone angles $2\theta$ along $\Sigma$, converging to the hyperbolic manifold $\mathcal{M}$ as $\theta\to0$.

The path of polyhedra is arranged in such a way that the four ideal vertices of ${\mathcal O}_\theta$ stay fixed, and belong to $\partial\Hyp^2\subset\partial\Hyp^3$ for the fixed hyperplane $\Hyp^2=\{x_3=0\}\subset\Hyp^3$ --- this arrangement is indeed used in Table \ref{table:link} and in Figure \ref{fig:collapse oct}. In particular, we have a fixed ideal quadrilateral ${\mathcal Q}={\mathcal O}_\theta\cap\Hyp^2$. As $\theta\to\pi$, the polyhedron ${\mathcal O}_\theta$ collapses to the polygon ${\mathcal Q}$.
The corresponding cone-manifolds collapse to the hyperbolic four-punctured sphere $\mathcal{S}_{0,4}$ obtained by doubling ${\mathcal Q}$. Note that there is a homeomorphism ${\mathcal O}_\theta\to {\mathcal Q}\times[-1,1]$ wich sends the quadrilateral faces to ${\mathcal Q}\times\{1,-1\}$ and the triangular faces to $\partial {\mathcal Q}\times[-1,1]$. It follows that ${\mathcal{X}}$ is homeomorphic to $\mathcal{S}_{0,4}\times S^1$. (In particular, ${\mathcal{X}}$ does not admit any complete hyperbolic structure.)

 The reader can check that when $t\in(-1,0)$, the polyhedron defined in Table \ref{table:link} is anti-de Sitter, its quadrliateral faces are spacelike, and the triangular ones are timelike. All the edges are right-angled, with the exception of the red ones (which are spacelike). By rescaling the polyhedron in the direction of collapse, i.e. orthogonally to the plane $\Hyp^2\subset\Hyp^3$, we get transition from hyperbolic to AdS polyhedra with constant combinatorics. In the limit half-pipe polyhedron, the triangular faces are degenerate. The path of rescaled polyhedra is easily computed by applying Lemma \ref{lemma rescale halfspace} and pictured (in an affine chart) in Figure \ref{fig:rescale oct}.

\begin{figure}
\centering
\begin{minipage}[c]{.33\textwidth}
\centering
\includegraphics[scale=0.3]{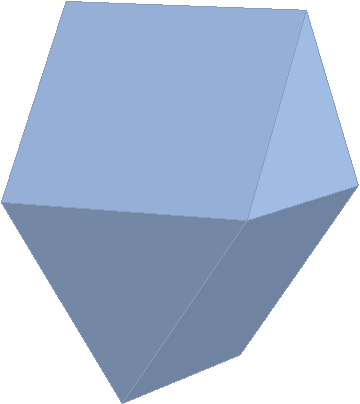}
\end{minipage}%
\begin{minipage}[c]{.33\textwidth}
\centering
\includegraphics[scale=0.28]{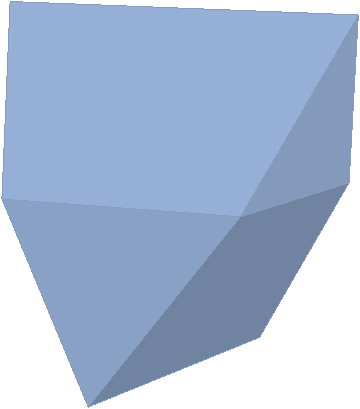}
\end{minipage}%
\begin{minipage}[c]{.33\textwidth}
\centering
\includegraphics[scale=0.33]{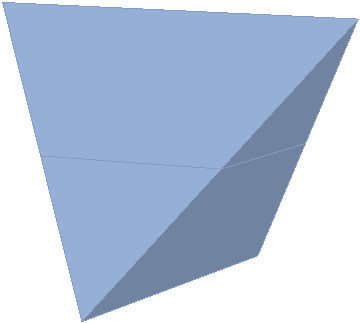}
\end{minipage}

\caption[Rescaling $\mathcal F_{\l X}$ when $t\leq 0$.]{\footnotesize In an affine chart, the rescaled path of polyhedra for $t\leq 0$. In the left figure (t=0), the polyhedron is in half-pipe space and the triangular faces are degenerate (vertical). In the middle picture, an AdS polyhedron with timelike triangular faces and spacelike quadrilateral faces. When $t=-1$ (in the right), the faces become lightlike and the polyhedron is inscribed in the one-sheeted hyperboloid which is the boundary at infinity of $\AdS^3$.}
\label{fig:rescale oct}
\end{figure}

The essential point to prove the transition at the level of geometric structures on $\mathcal M$ is to show that for every plane in the list of Table \ref{table:link} (which depends on the parameter $t$), if $r_t$ is the hyperbolic (for $t>0$) or AdS (for $t<0$) reflection in the given plane, then $\r_t r_t\r_t^{-1}$ converges to a half-pipe reflection when $t\to 0$ (with the same limit for $t\to0^+$ and $t\to0^-$). This essentially shows the convergence at the level of holonomy representations, since the holonomy of any element of $\pi_1(\mathcal M)$ is obtained by composition of a finite number of such reflections. This point is certainly non-trivial in general since, as we explained in Section \ref{remark:reflections}, a {(degenerate)} plane in $\HP^3$ does not determine uniquely a half-pipe reflection.
 
Although all the previous statements can be verified directly, we avoid detailed computations, since everything follows from the fact that, as $\theta\geq\frac\pi2$, the polyhedron ${\mathcal O}_\theta$ is a facet of the 4-polytope introduced in Section \ref{sec 4dim} (see also Remark \ref{rem:24-cell} for the case $\theta<\frac\pi2$), and meets the adjacent facets of the 4-polytope orthogonally (see Propositions \ref{prop: const-combinatorics}, \ref{prop: facets}, \ref{prop: hyperplanes polytope} and \ref{prop: ridges} in the sequel).

In particular, we get (all the details follow from the proof of Theorem \ref{teo: main} in Section \ref{sec 4dim}):

\begin{prop} \label{prop:example link}
There exists a $C^1$ family $\{\sigma_t\}_{t\in\left(-1,1\right]}$ of simple projective cone-manifold structures on the 3-manifold $\mathcal X=\mathcal{S}_{0,4}\times S^1$, singular along a link $\Sigma$ with two components, such that $\sigma_t$ is conjugated to a cusped, finite-volume
\begin{itemize}
\item hyperbolic cone structure with decreasing cone angles $2\theta_t\in(0,2\pi)$ as $t>0$,
\item half-pipe structure with spacelike singularity as $t=0$,
\item anti-de Sitter structure with spacelike singularity of increasing magnitude $\varphi_t\in(-\infty,0)$ as $t<0$.
\end{itemize}
As $t\to1$, we have {$2\theta_t\to0$} and the hyperbolic structures on $\mathcal{M}={\mathcal X}\smallsetminus\Sigma$ converge to the complete one. As $t\to0^+$ (resp. $t\to0^-$), we have {$2\theta_t\to2\pi$} (resp. $\varphi_t\to0$) and the hyperbolic (resp. AdS) structures degenerate to the hyperbolic structure of $\mathcal{S}_{0,4}=\mathrm{Double}(\mathcal Q)$.
\end{prop}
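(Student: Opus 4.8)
The plan is to realise, in dimension three, the construction that Section~\ref{sec 4dim} performs in dimension four, using the explicit half-spaces of Table~\ref{table:link}; since $\mathcal{O}_{\theta_t}$ is a facet of the $4$-polytope $\mathcal{P}_t$ meeting its neighbours orthogonally, the statements we need are special cases of Propositions~\ref{prop: const-combinatorics}, \ref{prop: facets}, \ref{prop: hyperplanes polytope} and \ref{prop: ridges}. First I would pin down the single polyhedron: writing $\alpha^{(1)}_t,\dots,\alpha^{(8)}_t$ for the dual vectors in Table~\ref{table:link} and $q_s,b_s$ (with $s=\mathrm{sign}(t)$) for the corresponding forms among $q_1,q_0,q_{-1}$ and $b_1,b_0,b_{-1}$, I would compute the Gram matrix $\bigl(b_s(\alpha^{(i)}_t,\alpha^{(j)}_t)\bigr)_{i,j}$ and check: each bounding hyperplane meets $\Hyp^3$ (resp.\ is spacelike in $\AdS^3$, resp.\ spacelike in $\HP^3$) by Lemmas~\ref{lemma hyperplane hyp}, \ref{lemma hyperplane ads}, \ref{lemma hyperplane hp}; all pairs of adjacent hyperplanes are orthogonal except for two (``red'') pairs; and these red pairs meet, for $t>0$, with dihedral angle $\theta_t$ given by $\cos\theta_t=(3t^2-1)/(1+t^2)$ via Lemma~\ref{lemma angle hyp}, while for $t<0$ they are spacelike ridges of an AdS polyhedron whose magnitude $\varphi_t$ is read off from Lemma~\ref{lem: AdS angle}. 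I would also check that the four marked vertices stay on $\partial\Hyp^2=\{x_3=0\}\cap\partial\Hyp^3$, so $\mathcal{O}_{\theta_t}\cap\{x_3=0\}$ is a fixed ideal quadrilateral $\mathcal{Q}$, that $\mathcal{O}_{\theta_t}\cong\mathcal{Q}\times[-1,1]$, and that the combinatorics is independent of $t\in(-1,1)\smallsetminus\{0\}$ --- for $t>0$ this last point already follows from Andreev's theorem, and for $t<0$ from the vertex enumeration of Lemma~\ref{lem: vertices} (or directly, since the vertices vary continuously and no face degenerates while $t\neq 0,\pm1$).

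Next I would glue four copies. The minimally twisted $6$-chain link complement $\mathcal{M}$ is tessellated by four copies of $\mathcal{O}_0$ (double $\mathcal{O}_0$ along its white faces, then double the resulting manifold); deforming all four copies simultaneously along $\{\mathcal{O}_{\theta_t}\}$ with the same face pairings produces a Dehn filling $\mathcal{X}$ of $\mathcal{M}$ in which the four ideal quadrilaterals glue to $\mathcal{S}_{0,4}=\mathrm{Double}(\mathcal{Q})$, the red edges double to a two-component link $\Sigma$, and $\mathcal{X}\cong\mathcal{S}_{0,4}\times S^1$ with $\mathcal{X}\smallsetminus\Sigma\cong\mathcal{M}$. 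Choosing for each bounding hyperplane the hyperbolic (resp.\ AdS) reflection in it, Definition~\ref{def:proj_simple_cone-mfd} yields a simple projective cone-manifold structure $\sigma_t$ on $\mathcal{X}$ singular along $\Sigma$, hyperbolic for $t>0$ and AdS with spacelike singularity for $t<0$; the holonomy of a meridian of a component of $\Sigma$ is conjugate to the product of the two reflections through the two red hyperplanes, hence a rotation of angle $2\theta_t$ for $t>0$ and a boost of magnitude $\varphi_t\in(-\infty,0)$ for $t<0$. Finite volume and the geometry of the cusps come from Section~\ref{sec:cusps}: the cusps are toric, and on a horospherical section they undergo the Euclidean--Galilean--Minkowski transition of Example~\ref{ex:hp torical cusp}.

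The heart of the matter, and where I expect the main obstacle, is the passage through $t=0$. Applying $\r_{|t|}$ and Lemma~\ref{lemma rescale halfspace}, the rescaled half-spaces depend $C^1$ on $t$ throughout $(-1,1)$: the four ``red-type'' hyperplanes become $C^1$ families with constant first three coordinates and last coordinate $\pm t|t|$, while the remaining four become constant, so both the $t\to0^+$ and the $t\to0^-$ families of rescaled polyhedra converge to the \emph{same} half-pipe polyhedron, with spacelike ``quadrilateral'' faces and degenerate ``triangular'' ones, of the same combinatorics. The subtlety (Section~\ref{remark:reflections}, Proposition~\ref{prop:hp reflections deg}) is that a degenerate HP hyperplane does \emph{not} determine a unique HP reflection, so convergence of the hyperplanes is not enough: one must check that the conjugated reflections $\r_{|t|}r^{(i)}_t\r_{|t|}^{-1}$ themselves converge in $\Aut(\SP^3)$, and to the \emph{same} element of $G_{\HP^3}$ from both sides. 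Following Section~\ref{sec rotations}, I would write each $r^{(i)}_t$ as $I-\tfrac{2}{q_s(\alpha^{(i)}_t)}\,v^{(i)}_t(\alpha^{(i)}_t)^{T}$ with $v^{(i)}_t$ the $b_s$-normal to the hyperplane and compute the conjugate: for the spacelike faces the limit is the unique reflection of Proposition~\ref{prop:hp reflections space}, while for the degenerate faces the limit is the reflection selected by $\lim_{t\to0}\r_{|t|}v^{(i)}_t$, which coincides for $t\to0^+$ and $t\to0^-$ precisely because the last entry of $v^{(i)}_t$ picks up the sign of the last diagonal entry of $b_s$ --- opposite for $b_1$ and $b_{-1}$ --- so that after division by $|t|$ the two one-sided limits agree. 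Since the holonomy of any loop in $\mathcal{M}$ is a word in the $r^{(i)}_t$, the pair $(\r_{|t|}\circ\dev_t,\,\r_{|t|}\rho_t\r_{|t|}^{-1})$ then converges to a developing pair into $(\HP^3,G_{\HP^3})$, and Lemmas~\ref{lemma convergence hyp} and \ref{lemma convergence ads} identify the limit $\sigma_0$ as a half-pipe cone-manifold structure with spacelike singularity and infinitesimal cone angle $2|\dot\theta_0|$, with $C^1$-dependence in $t$ inherited from the rescaled data (analytic on $\{t>0\}$ and on $\{t<0\}$, matching to first order at $0$). Finally, as $t\to1$ the red edges shrink to ideal vertices, $\mathcal{O}_{\theta_t}\to\mathcal{O}_0$, $2\theta_t\to0$, and the structures converge to the complete hyperbolic structure on $\mathcal{M}$; as $t\to0^+$ (resp.\ $t\to0^-$) the polyhedron collapses onto $\mathcal{Q}$, $2\theta_t\to2\pi$ (resp.\ $\varphi_t\to0$), and the hyperbolic (resp.\ AdS) structures on $\mathcal{M}$ degenerate to the complete hyperbolic structure on $\mathrm{Double}(\mathcal{Q})=\mathcal{S}_{0,4}$.
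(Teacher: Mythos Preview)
Your approach is essentially the paper's: the paper defers everything to the 4-dimensional construction in Section~\ref{sec 4dim}, observing that $\mathcal O_{\theta_t}$ is a facet $\mathcal F_{\l X}$ of $\mathcal P_t$ orthogonal to its neighbours, so the claims follow from Propositions~\ref{prop: const-combinatorics}, \ref{prop: facets}, \ref{prop: hyperplanes polytope}, \ref{prop: ridges} and Lemma~\ref{lemma reflections C1}, exactly as you outline. One slip to fix: in $\AdS^3$ the triangular faces are \emph{timelike}, not spacelike (as you in fact use later when calling them degenerate in $\HP^3$); only the quadrilateral faces are spacelike, which is what makes the red ridges --- and hence $\Sigma$ --- spacelike.
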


\subsection{Singularity along a graph} \label{subsec:theta graph}

Let now $\mathcal{S}_{0,3}$ be the hyperbolic thrice punctured sphere. Similarly to the previous section, we now provide an example of 3-dimensional transition where the singular locus is a theta-graph:

\begin{prop} \label{prop:example theta graph}
There exists a $C^1$ family $\{\sigma_s\}_{s\in\left(-1,\epsilon\right]}$ of simple projective cone-manifold structures on the 3-manifold $\mathcal X=\mathcal{S}_{0,3}\times S^1$, singular along a theta-graph $\Sigma$, such that $\sigma_s$ is conjugated to a cusped, finite-volume
\begin{itemize}
\item hyperbolic orbifold structure with cone angles $\pi$
as $s=\epsilon>0$,
\item hyperbolic cone structure with decreasing cone angles {$\vartheta_s\in[\pi,2\pi)$} as $s>0$,
\item half-pipe structure with spacelike singularity as $s=0$,
\item anti-de Sitter structure with spacelike singularity of increasing magnitude $\phi_s<0$ as $s<0$.
\end{itemize}
As $s\to0^+$ (resp. $s\to0^-$), we have $\vartheta_s\to2\pi$ (resp. $\phi_s\to0$) and the hyperbolic (resp. AdS) structures on $\mathcal X\smallsetminus\Sigma$ degenerate to the complete hyperbolic structure of $\mathcal{S}_{0,3}$.
\end{prop}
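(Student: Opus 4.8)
The plan is to run, in dimension three, the same argument that yields Proposition~\ref{prop:example link}; as announced in this section, all the details will in fact follow from the four-dimensional construction of Section~\ref{sec 4dim}, so I only outline the steps and single out the crucial one.

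First I would exhibit, for small $s>0$, a path $s\mapsto\mathcal{T}_s\subset\Hyp^3$ of hyperbolic polyhedra with fixed combinatorics, all of whose ideal vertices lie on $\partial\Hyp^2\subset\partial\Hyp^3$ for the fixed totally geodesic plane $\Hyp^2=\{x_3=0\}$: most dihedral angles are right, while a distinguished family of three edges---which, together with the two vertices at which they meet, form a theta-graph contained in $\partial\mathcal{T}_s$---carries a common dihedral angle $\vartheta_s/2\in[\pi/2,\pi)$, equal to $\pi/2$ exactly when $s=\epsilon$, so that $\mathcal{T}_\epsilon$ is an orbifold piece. As $\vartheta_s\to2\pi$, i.e.\ $s\to0^+$, the polyhedron collapses, in the direction orthogonal to $\Hyp^2$, onto a fixed ideal triangle $\mathcal{T}=\mathcal{T}_s\cap\Hyp^2$. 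This path of polyhedra---together with its defining half-spaces, which can be listed explicitly as in Table~\ref{table:link}---arises within the four-dimensional construction of Section~\ref{sec 4dim}, where $\mathcal{T}_s$ plays, over a triangular face of the limiting cuboctahedron, the role that $\mathcal{O}_\theta$ plays over a square one, its distinguished edges being (copies of) ridges of $\mathcal{P}_t$ with non-constant dihedral angle. Consequently the existence of $\mathcal{T}_s$, the constancy of its combinatorics, and the orthogonality with the adjacent faces all follow from Propositions~\ref{prop: const-combinatorics}, \ref{prop: facets}, \ref{prop: hyperplanes polytope} and~\ref{prop: ridges}.

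Next I would continue the path, for $s\in(-1,0)$, to Anti-de Sitter polyhedra with the same combinatorics; by Lemma~\ref{lemma hyperplane ads} two of the faces become spacelike, the remaining three timelike, and the distinguished edges spacelike. Applying the rescaling $\r_{|s|}$ in the direction of collapse---whose effect on half-spaces is Lemma~\ref{lemma rescale halfspace}---produces a continuous path of projective polyhedra which, by Lemmas~\ref{lemma convergence hyp} and~\ref{lemma convergence ads}, extends at $s=0$ to a half-pipe polyhedron whose three timelike faces have become degenerate. The delicate point---exactly as in the link case, and as flagged right after Table~\ref{table:link}---is that a degenerate plane in $\HP^3$ does \emph{not} determine a unique half-pipe reflection (Proposition~\ref{prop:hp reflections deg}); one must check that, for each defining plane, the rescaled reflections $\r_{|s|}r_s\r_{|s|}^{-1}$ converge, as $s\to0^+$ and as $s\to0^-$, to one and the same half-pipe reflection, so that the rescaled holonomy representations converge. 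This infinitesimal analysis of the holonomy near the collapse, done in Section~\ref{sec 4dim}, is the main obstacle.

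Finally I would glue copies. Since $\mathcal{S}_{0,3}=\mathrm{Double}(\mathcal{T})$, pairing the three timelike faces of the copies of $\mathcal{T}_s$ (resp.\ the corresponding faces when $s\ge0$) produces a space homeomorphic to $\mathcal{S}_{0,3}\times[0,1]$, endowed with the appropriate hyperbolic, half-pipe or AdS cone-orbifold structure, and whose two boundary components contain all the distinguished edges. Doubling along the boundary gives $\mathcal{X}=\mathcal{S}_{0,3}\times S^1$ with a simple projective cone-manifold structure $\sigma_s$, whose singular locus $\Sigma$ is the double of the theta-graph formed by the distinguished edges---hence itself a theta-graph. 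By Section~\ref{sec rotations} the holonomy of a meridian of an edge of $\Sigma$ is a rotation (resp.\ an infinitesimal rotation, a boost), so the cone angle $\vartheta_s$ (resp.\ the infinitesimal cone angle $0$, the magnitude $\phi_s$) equals twice the dihedral angle of the corresponding edge of $\mathcal{T}_s$; in particular $\vartheta_s\to2\pi$ and $\phi_s\to0$ as $s\to0^\pm$, and the structures induced on $\mathcal{X}\smallsetminus\Sigma$ degenerate to the complete hyperbolic structure of $\mathcal{S}_{0,3}$, which one reads off from the collapse $\mathcal{T}_s\to\mathcal{T}$. Geodesic completeness and finiteness of the volume are checked on the (toric) cusps as in Remark~\ref{rem: finite volume}, while the $C^1$-regularity of $\{\sigma_s\}_{s\in(-1,\epsilon]}$ follows from that of the defining half-spaces.
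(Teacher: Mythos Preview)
Your outline is correct and follows essentially the same approach as the paper: you use the deforming polyhedron that appears as a facet $\mathcal F_{\m i}$ of the four-dimensional polytope, collapse it to an ideal triangle, extend to the AdS side, rescale to obtain the half-pipe limit (correctly flagging the non-uniqueness of half-pipe reflections as the delicate point), and then double twice to obtain $\mathcal S_{0,3}\times S^1$ with theta-graph singularity, deferring all verifications to Section~\ref{sec 4dim}. One small slip: the polyhedron $\mathcal F_{\m i}$ has seven faces, of which three (the vertical triangles $\mathcal R_{\m i\l X}$) are timelike and four (the horizontal ideal triangle $\mathcal R_{\m i\p i}$ together with the three quadrilaterals $\mathcal R_{\m i\p j}$) are spacelike, not ``two spacelike, three timelike''; and the red locus in $\mathcal F_\vartheta$ is a tripod rather than a theta-graph, its double being the theta-graph $\Sigma$.
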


\begin{figure}
\includegraphics[scale=1]{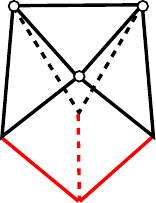}
\caption[A facet ${\mathcal F}_{\m i}$ of ${\mathcal P}_t$.]{\footnotesize A facet ${\mathcal F}_{\m i}$, $\m i\in\{\m0,\ldots,\m7\}$, of the 4-polytope of Section \ref{sec 4dim} (see the end of Section \ref{sec: combinatorics} for the notation). The white dots represent ideal vertices, the black edges are right-angled, and the red edges have dihedral angle $\theta\in[\frac\pi2,\pi)$. As $\theta\to\pi$, the polyhedron collapses to the horizontal ideal triangle. By rescaling and continuing the path, we have similarly an AdS polyhedron with the same combinatorics and the same convention on the dihedral angles, where now the three vertical triangular faces (ridges of the 4-polytope of type ${\mathcal R}_{\m i\l X}$) are timelike, while the horizontal ideal triangle (the ridge ${\mathcal R}_{\m i\p i}$ of the 4-polytope) and the remaining three quadrilateral faces (ridges of type ${\mathcal R}_{\m i\p j}$) are spacelike.}
\label{fig:facet_N}
\end{figure}

Consider indeed the polyhedron ${\mathcal F}_\vartheta$ in Figure \ref{fig:facet_N}, where the red edges have varying dihedral angle ${\tfrac\vartheta2}\in[\frac\pi2,\pi)$ and the black edges are right-angled. Again, the path $\vartheta\mapsto {\mathcal F}_\vartheta$ can be arranged so that the three ideal vertices stay fixed. As $\vartheta\to2\pi$, the polyhedron collapses to the horizontal ideal triangle ${\mathcal T}$, which is a face of ${\mathcal F}_\vartheta$ for all $\vartheta_s\in[\pi,2\pi)$. Figure \ref{fig:collapse theta} gives a geometric picture.

\begin{figure}
\centering
\begin{minipage}[c]{.25\textwidth}
\centering
\includegraphics[scale=0.2]{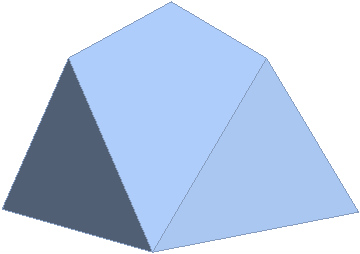}
\end{minipage}%
\begin{minipage}[c]{.25\textwidth}
\centering
\vspace{0.3cm}
\includegraphics[scale=0.2]{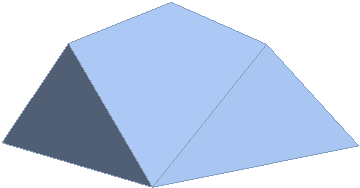}
\end{minipage}%
\begin{minipage}[c]{.25\textwidth}
\vspace{0.7cm}
\centering
\includegraphics[scale=0.2]{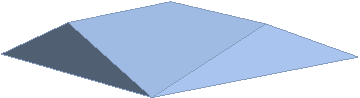}
\end{minipage}
\begin{minipage}[c]{.2\textwidth}
\vspace{1cm}
\centering
\includegraphics[scale=0.2]{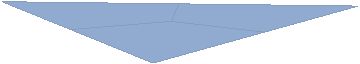}
\end{minipage}%
\caption[The collapse of $\mathcal F_{\m i}$.]{\footnotesize The collapse of the polyhedron ${\mathcal F}_\vartheta$ of Section \ref{subsec:theta graph}, in the Klein model of $\Hyp^3$. The leftmost polyhedron turns out to be also the rescaled limit, inside $\HP^3$.}
\label{fig:collapse theta}
\end{figure}

Similarly to the previous section, for $s<0$ we have a path of AdS polyhedra collapsing to ${\mathcal T}$, and by rescaling opportunely the two paths are joined by a half-pipe polyhedron with the same combinatorics. Again, the black edges are constantly right-angled. In the AdS side, the three vertical triangular faces are timelike, while the remaining three quadrilateral faces (and the ideal triangle ${\mathcal T}$) are spacelike. In the half-pipe limit, the triangular faces are degenerate.

By doubling ${\mathcal F}_\vartheta$ along the three vertical triangular faces, and then doubling the resulting manifold with boundary, we get the desired transition on a 3-manifold homeomorphic to $\mathcal{S}_{0,3}\times S^1$ (where $\mathcal{S}_{0,3}$ is the double of ${\mathcal T}$), with singularity along a theta-graph (which is the double of the red locus in ${\mathcal F}_\vartheta$).

Again, all these statements can be proved directly, but will follow from the fact that ${\mathcal F}_\vartheta$ is isometric to a facet of the 4-dimensional polytope of Section \ref{sec 4dim}. 

\section{Transition in dimension four} \label{sec 4dim}

In this section we prove Theorem \ref{teo: main}, giving as a byproduct also rigours to the assertions of the previous section about 3-dimensional transition.

\subsection{A deforming polytope}

Recall Section \ref{sec:half-spaces} about half-spaces of the projective sphere $\SP^n$, and Section \ref{section polytopes} about projective polytopes. We define
$$\overline {\mathcal P}_t\subset\SP^4$$
to be the intersection of the 22 half-spaces listed in Table \ref{table:walls}, depending on the time parameter
$$t\in I=\left(-1,\tfrac1{\sqrt3}\right].$$
We set
$$I^-=\Big(-1,0\Big),\quad I^+=\left(0,\tfrac1{\sqrt3}\right],$$
and
$${\mathcal P}_t=
\begin{cases}
\overline {\mathcal P}_t\cap\Hyp^4\mathrm{\ when\ }t\in I^+\cup\{0\},\\
\overline {\mathcal P}_t\cap\AdS^4\mathrm{\ when\ }t\in I^-\cup\{0\}.
\end{cases}$$
We will see that this last definition is well posed as $t=0$.
It will be clear later that $\overline {\mathcal P}_t$ is the closure in $\SP^4$ of ${\mathcal P}_t$.

\begin{table} 
\begin{eqnarray*}
\p{0} = \left( -\sqrt{2}\ |t|:+|t|:+|t|:+|t|:+1 \right) , & &
\m{0} = \left( -\sqrt{2}:+1:+1:+1:-t \right),\\
\p{1} = \left( -\sqrt{2}\ |t|:+|t|:-|t|:+|t|:-1 \right),& &
\m{1} = \left( -\sqrt{2}:+1:-1:+1:+t \right),\\
\p{2} = \left( -\sqrt{2}\ |t|:+|t|:-|t|:-|t|:+1 \right),& &
\m{2} = \left( -\sqrt{2}:+1:-1:-1:-t \right),\\
\p{3} = \left( -\sqrt{2}\ |t|:+|t|:+|t|:-|t|:-1 \right),& &
\m{3} = \left( -\sqrt{2}:+1:+1:-1:+t \right),\\
\p{4} = \left( -\sqrt{2}\ |t|:-|t|:+|t|:-|t|:+1 \right),& &
\m{4} = \left( -\sqrt{2}:-1:+1:-1:-t \right),\\
\p{5} = \left( -\sqrt{2}\ |t|:-|t|:+|t|:+|t|:-1 \right),& &
\m{5} = \left( -\sqrt{2}:-1:+1:+1:+t \right),\\
\p{6} = \left( -\sqrt{2}\ |t|:-|t|:-|t|:+|t|:+1 \right),& &
\m{6} = \left( -\sqrt{2}:-1:-1:+1:-t \right),\\
\p{7} = \left( -\sqrt{2}\ |t|:-|t|:-|t|:-|t|:-1 \right),& &
\m{7} = \left( -\sqrt{2}:-1:-1:-1:+t \right),\\
\l{A} = \left( -1:+\sqrt{2}:0:0:0 \right),& &
\l{B} = \left( -1:0:+\sqrt{2}:0:0 \right),\\
\l{C} = \left( -1:0:0:+\sqrt{2}:0 \right),& &
\l{D} = \left( -1:0:0:-\sqrt{2}:0 \right),\\
\l{E} = \left( -1:0:-\sqrt{2}:0:0 \right),& &
\l{F} = \left( -1:-\sqrt{2}:0:0:0 \right).
\end{eqnarray*}
\caption{\footnotesize The half-spaces in $\SP^4$ that define the projective polytope $\overline {\mathcal P}_t$ are given by these elements of $\SP^{4,*}$ and denoted by the same symbols. We will often omit the dependence in $t$ in the symbols $\p i$, $\m i$ and $\l X$, to simplify the notation.}\label{table:walls}
\end{table}

When $t\in I^+$, each element $(\alpha)\in\SP^{4,*}$ in the list of Table \ref{table:walls} satisfies $q_1(\alpha)>0$, and thus by Lemma \ref{lemma hyperplane hyp} defines a half-space of $\Hyp^4$. When $t\in I^+$, the set ${\mathcal P}_t\subset\Hyp^4$ is indeed a hyperbolic 4-polytope, first introduced in \cite{KS} and then studied in \cite{MR}. The set of half-spaces of Table \ref{table:walls} is minimal for $t\neq 0$; in other words, none of them contains any other --- this is shown in \cite[Proposition 3.3]{MR} for $t\in I^+$ and holds for $t\in I^-$ by a straightforward adaptation of the proof.

Throughout the remainder of the paper, we will denote
$$\iota(\Hyp^3)=\{x_4=0\}\subset\Hyp^4,\HP^4,\AdS^4,$$ 
simply as $\Hyp^3$, where the isometric embedding $\iota$ is defined by \eqref{eq: iota} in Section \ref{sec recipe}. As $t\to0^+$, the polytope ${\mathcal P}_t$ collapses to a 3-dimensional polytope in the hyperplane $\Hyp^3\subset\Hyp^4$.

\begin{prop}[\cite{KS,MR}] \label{prop:polytope_hyp}
When $t\in I^+$, the set ${\mathcal P}_t$ is a finite-volume hyperbolic 4-polytope, whose combinatorics does not depend on $t\in I^+$. The set ${\mathcal P}_0$ is a finite-volume 3-polytope in $\Hyp^3\subset\Hyp^4$.
\end{prop}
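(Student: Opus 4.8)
The plan is to verify the combinatorial type of $\mathcal{P}_t$ directly from the defining half-spaces in Table~\ref{table:walls}, exploiting the wealth of right angles. First I would record, using Lemma~\ref{lemma hyperplane hyp}, that each of the $22$ linear forms $(\alpha)$ in Table~\ref{table:walls} satisfies $q_1(\alpha)>0$ for $t\in I^+$, so each determines a genuine hyperbolic half-space; the statement that the $22$ half-spaces form a \emph{minimal} family is quoted from \cite[Proposition 3.3]{MR}. Next, for every pair of the $22$ forms I would compute the Gram entry $b_1(\alpha,\alpha')/\sqrt{|q_1(\alpha)||q_1(\alpha')|}$ and apply Lemma~\ref{lemma angle hyp}: most of these pairs turn out to be orthogonal ($b_1=0$) for \emph{all} $t\in I^+$, the pairs $(\p i,\m j)$ sharing a suitable sign pattern meet at the non-right angle $\theta_t$ with $\cos\theta_t=(3t^2-1)/(1+t^2)$ (as already recorded in the three-dimensional warm-up), and the remaining pairs do not intersect inside $\Hyp^4$ at all. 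Because the angle data, and in particular which pairs of hyperplanes are orthogonal, which meet at the angle $\theta_t\in(0,\pi)$, and which fail to intersect, are all \emph{independent of $t$} on $I^+$, one gets a candidate face poset that does not vary with $t$. The key tool here is that for a hyperbolic polytope all of whose dihedral angles are non-obtuse, Vinberg's theory guarantees that the combinatorics is completely determined by the Gram matrix (equivalently: the face poset is recovered from which subsets of facets have a common point in $\overline{\Hyp^4}$); since the Gram matrix is a continuous family of matrices all of the same combinatorial pattern, the combinatorial type is locally constant, hence constant, on the connected interval $I^+$. This is exactly the argument of \cite{KS,MR}, which I would invoke rather than reprove.

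For the statement that $\mathcal{P}_t$ has finite volume for $t\in I^+$, I would check that its vertices all lie in $\overline{\Hyp^4}$ — i.e.\ $\mathcal{P}_t$ has no vertices ``beyond infinity'' — so that $\mathcal{P}_t$ is a convex polytope of finite volume in $\Hyp^4$; this again follows from the Vinberg/Andreev-type analysis in \cite{KS,MR}, where the ideal vertices are identified explicitly (they are the vertices lying on $\partial\Hyp^4$, and there are no hyperideal ones). Concretely, one exhibits for each vertex the $n=4$ facets meeting there (simplicity) and checks the sign of $q_1$ on the corresponding point of $\SP^4$.

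Finally, for the degenerate fibre $t=0$: setting $t=0$ in Table~\ref{table:walls} makes the eight forms $\p i$ collapse to $(0:0:0:0:\pm1)$, i.e.\ they all degenerate to the single hyperplane $\{x_4=0\}$ (with both choices of side), so the only effective half-spaces are $\m0,\dots,\m7$ together with the six walls $\l A,\dots,\l F$, \emph{restricted to} $\Hyp^3=\{x_4=0\}\subset\Hyp^4$. One then checks that the traces of these $14$ hyperplanes on $\{x_4=0\}$ cut out a finite-volume $3$-polytope there — in fact, as anticipated in the introduction and in the warm-up, this limit polytope is (a copy of) the ideal right-angled cuboctahedron $\mathcal C\subset\Hyp^3$: the forms $\m i\big|_{x_4=0}=(-\sqrt2:\pm1:\pm1:\pm1:0)$ give the eight ``triangular'' walls and $\l A,\dots,\l F$ give the six ``square'' walls of the cuboctahedron, all meeting orthogonally by Lemma~\ref{lemma angle hyp}, with the eight cusps at the points $[1:\pm\frac1{\sqrt2}:\pm\frac1{\sqrt2}:0:0]$-type directions lying on $\partial\Hyp^3$. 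Hence $\mathcal{P}_0$ is a finite-volume $3$-polytope contained in $\Hyp^3\subset\Hyp^4$, as claimed; that it is genuinely the Hausdorff limit of $\mathcal{P}_t$ as $t\to0^+$ (the ``collapse'') follows from the explicit rescaling in Lemma~\ref{lemma rescale halfspace}, although that statement is slightly beyond what the proposition asserts.

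The main obstacle is the constancy of the combinatorics on $I^+$: one must be sure that \emph{no} vertex, edge, or higher face appears or disappears as $t$ varies, i.e.\ that no degeneracy of the Gram matrix occurs in the interior of $I^+$. In the hyperbolic range this is handled painlessly by Vinberg's non-obtuse-angle theory (the dihedral angles are all $\le\theta_t<\pi$ and stay off the bad values), which is precisely why \cite{KS,MR} could avoid case-by-case vertex enumeration — and it is exactly this shortcut that is \emph{not} available on the AdS side $I^-$, forcing the later \textsc{Sage}-assisted Lemma~\ref{lem: vertices}. So for Proposition~\ref{prop:polytope_hyp} itself I would simply cite \cite{KS,MR} for the constancy and finiteness, and spend the bulk of the (short) proof only on identifying $\mathcal{P}_0$ explicitly inside $\{x_4=0\}$.
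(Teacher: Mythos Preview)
The paper does not give a proof of this proposition at all: it is simply quoted from \cite{KS,MR}, and the identification of $\mathcal P_0$ with the ideal right-angled cuboctahedron is deferred to Proposition~\ref{prop: cuboctahedron}. Your overall strategy --- invoke Vinberg's theory via \cite{KS,MR} for the constancy of the combinatorics and the finite volume on $I^+$, then read off $\mathcal P_0$ from Table~\ref{table:walls} at $t=0$ --- is therefore exactly right in spirit and matches the paper's treatment.

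However, your sketch contains two factual slips you should correct. First, the pairs of facets carrying the non-right dihedral angle $\theta_t$ are \emph{not} of type $(\p i,\m j)$: they are the pairs $(\p i,\p j)$ with $i\neq j$ and $i\equiv j\pmod 2$, as stated in Proposition~\ref{prop: ridges}. (A quick check: $b_1(\p0,\m0)=b_1(\p0,\m1)=0$, so those pairs are orthogonal, whereas $b_1(\p0,\p2)=1-3t^2$ gives the claimed $\cos\theta_t=(3t^2-1)/(1+t^2)$.) Second, the ideal cuboctahedron has \emph{twelve} vertices, not eight: the ideal points are the $\langle\mathfrak S_3,(\Z/2\Z)^2\rangle$-orbit of $[1:\tfrac{1}{\sqrt2}:\tfrac{1}{\sqrt2}:0:0]$, i.e.\ the twelve points with two of $y_1,y_2,y_3$ equal to $\pm\tfrac{1}{\sqrt2}$ and the third zero. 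Neither error affects the logical structure of your argument, but both misstate the geometry of $\mathcal P_t$.
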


In the next sections, we will show that  when $t\in I^-$ the behaviour of ${\mathcal P}_t$ is analogue, in the AdS setting, to that when $t\in I^+$ given by Proposition \ref{prop:polytope_hyp}.

\begin{remark} \label{rem:24-cell} \label{rmk:orthogonality preserved}
The path $t\mapsto {\mathcal P}_t\subset\Hyp^4$ of hyperbolic polytopes extends beyond $t=\frac1{\sqrt3}$ to all $t\in(0,1]$. When $t>\frac1{\sqrt3}$ the combinatorics changes a few times, and moreover when $t>\frac1{\sqrt2}$ the volume becomes infinite. This whole path of hyperbolic polytopes was discovered by Kerckhoff and Storm \cite{KS}.

The smaller polytope ${\mathcal P}_1\cap\l G\cap\l H$ is the ideal right-angled 24-cell, where the half-spaces $\l G=(-1:0:0:0:-\sqrt2)$ and $\l H=(-1:0:0:0:\sqrt2)$ correspond to two opposite facets. The partition
$$\{\p0,\ldots,\p7\}\sqcup\{\m0,\ldots,\m7\}\sqcup\{\l A,\ldots,\l H\}$$
gives the standard 3-colouring of the facets of the 24-cell: if two hyperplanes belong to the same octet, then they are disjoint in $\Hyp^4$.

Moreover, it is easily checked that all the orthogonality relations that occur when $t=1$ between any two vectors in Table \ref{table:walls} are maintained for all $t\in(-1,1]$, with respect to the bilinear form of signature $-++++$ when $t\in[0,1)$, and $-+++-$ when $t\in(-1,0]$.
\end{remark}

\subsection{Combinatorics of the polytope} \label{sec: combinatorics}

The main goals here are to prove that when $t\in I^-$ the set ${\mathcal P}_t\subset\AdS^4$ is a deforming AdS 4-polytope, that the combinatorics of ${\mathcal P}_t$ is independent on $t\in I\smallsetminus\{0\}$, and that the rescaled limit (see Section \ref{sec:rescaled_limit})
$$\lim_{t\to0}\mathfrak r_{|t|}({\mathcal P}_t)\subset\HP^4$$
is a half-pipe polytope with the same combinatorics of ${\mathcal P}_t$ with $t\neq0$. In particular, we will show that $\{\mathfrak r_{|t|}({\mathcal P}_t)\}_{t\in I}$ is a path of projective 4-polytopes (extended to $t=0$ by continuity) whose combinatorics is constant. For easiness of the reader, we record the list of rescaled half-spaces defining $\mathfrak r_{|t|}({\mathcal P}_t)$ in Table \ref{table:walls rescaled}, calculated using Lemma \ref{lemma rescale halfspace}.

All these facts will follow from the following proposition (recall that we denote by $\Aff^4$ the affine chart $\{x_0>0\}\subset\SP^4$):

\begin{prop}\label{prop: const-combinatorics}
The set $\mathfrak r_{|t|}(\overline {\mathcal P}_t)$ is contained in $\overline{\mathbb X^4_t}\cap\Aff^4$, and is a 4-polytope whose combinatorics does not depend on $t\in I$.
\end{prop}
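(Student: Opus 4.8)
The plan is to reduce the statement to a finite, explicit verification that the combinatorial structure (the face lattice) of $\mathfrak r_{|t|}(\overline{\mathcal P}_t)$ is the same for every $t\in I$. First I would record that, by Lemma \ref{lemma rescale halfspace}, each rescaled half-space $\mathfrak r_{|t|}(\l H)$ with $\l H=(\alpha_0:\ldots:\alpha_4)$ becomes $(\alpha_0/|t|:\ldots:\alpha_3/|t|:\alpha_4)$; applying this to Table \ref{table:walls}, the entries $\p i$ lose their $|t|$ factors and become constant vectors, while the entries $\m i$ and $\l X$ pick up a $1/|t|$ on the first four coordinates. One then checks that, after clearing denominators, the resulting family of $22$ covectors in $\SP^{4,*}$ extends continuously (indeed real-analytically) to $t=0$, and that the limiting covectors are exactly the half-spaces defining a half-pipe polytope (so the ``rescaled limit'' makes sense and is a genuine $4$-polytope, not a collapsed object). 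This already shows $\mathfrak r_{|t|}(\overline{\mathcal P}_t)\subset\overline{\X^4_t}\cap\Aff^4$: indeed $\mathfrak r_{|t|}$ conjugates $\Hyp^4$ (resp.\ $\AdS^4$) to $\X^4_t$ by the discussion in Section \ref{sec:rescaled_limit}, and $\X^4_t\cap\Aff^4$ is the relevant domain, while the limiting polytope lies in $\overline{\HP^4}\subset\Aff^4$.

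\textbf{Fixing the combinatorics.} The heart of the argument is to prove that the face lattice is locally constant in $t$, hence constant on each of the connected intervals $I^+$, $\{0\}$, $I^-$, and then that it agrees across $t=0$ and at the endpoint $t=1/\sqrt3$. For $t\in I^+$ this is already Proposition \ref{prop:polytope_hyp} (from \cite{KS,MR}). To handle $t\in I^-$ and $t=0$ uniformly, the natural approach is: (i) list the candidate vertices as the (projectivized) solutions of the $\binom{22}{4}$ systems cutting out $4$ of the $22$ bounding hyperplanes, discard those that are not $4$-fold intersections in general position, and among the rest keep exactly those lying in $\overline{\mathcal P}_t$, i.e.\ satisfying all $22$ inequalities; (ii) show this set of vertices, together with the incidences vertex-$\in$-hyperplane, is independent of $t\in I$; (iii) conclude simplicity (each vertex on exactly $4$ hyperplanes) and recover the whole face poset combinatorially from the vertex–facet incidence matrix. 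The fact that the incidences are $t$-independent is exactly what is promised via the \textsc{Sage} computation underlying Lemma \ref{lem: vertices}: one verifies, polynomially in $t$, which sign conditions $q_t(\text{vertex})\lessgtr 0$ and which inequalities $\l H_j(\text{vertex})\le 0$ hold, and checks these signs never change on $I$. Because Remark \ref{rmk:orthogonality preserved} guarantees that all orthogonality relations present at $t=1$ persist for all $t\in(-1,1]$, many of these sign computations are forced and only the ``non-right'' dihedral angles (those between the $\p i$'s and the $\l X$'s, or between $\p i$ and $\m j$) require genuine checking.

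\textbf{Gluing the intervals and the endpoint.} Once the face lattice is known to be constant on each of $I^+\cup\{0\}$ and $I^-\cup\{0\}$ separately (continuity of the defining half-spaces plus constancy of all the relevant strict sign conditions gives local constancy, and a connected interval has no monodromy), it remains to see the lattice is the \emph{same} on the two sides. This follows because at $t=0$ there is a single well-defined polytope $\mathfrak r_0(\mathcal P_0)\subset\HP^4$ — the common limit from $t\to0^+$ and $t\to0^-$ of $\mathfrak r_{|t|}(\overline{\mathcal P}_t)$ in the Hausdorff topology of $\SP^4$ — whose combinatorics must coincide with the (locally constant) combinatorics on a punctured neighbourhood of $0$ on either side; hence the two are equal. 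The endpoint $t=1/\sqrt3$ is included by the same local-constancy argument applied on a half-open neighbourhood. The last thing to check is that $\mathfrak r_{|t|}(\overline{\mathcal P}_t)$ is really the \emph{closure} in $\SP^4$ of $\mathfrak r_{|t|}(\mathcal P_t)$, which amounts to noting that $\mathcal P_t$ has nonempty interior meeting $\Hyp^4$ (resp.\ $\AdS^4$, resp.\ $\HP^4$) and that $\overline{\mathcal P}_t$ has no extra faces ``at infinity'' beyond $\partial\X^4_t$ — again a consequence of the vertex list being exactly the one computed.

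\textbf{Main obstacle.} The only real difficulty is step (ii) above: verifying by explicit computation that, over the \emph{entire} parameter interval $I=(-1,1/\sqrt3]$ — crucially including the Lorentzian range $t<0$, where Vinberg's theory of non-obtuse hyperbolic polytopes (used in \cite{KS,MR}) is unavailable — the list of vertices and their incidences with the $22$ hyperplanes does not change. This is precisely the content of Lemma \ref{lem: vertices}, proved with a \textsc{Sage} worksheet; everything else in this proposition is soft (conjugation by $\mathfrak r_{|t|}$, Hausdorff limits, reconstructing a face lattice from an incidence matrix for a simple polytope). I would therefore structure the write-up as: a short reduction lemma (defining the rescaled half-spaces and noting analytic dependence on $t$, including $t=0$), then invoke Lemma \ref{lem: vertices} for the constancy of vertices/incidences, then a purely combinatorial paragraph deducing simplicity and the full constant face poset, and finally the gluing-across-$0$ and endpoint remarks.
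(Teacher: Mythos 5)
Your treatment of the constancy of the face lattice follows the paper's approach: both reduce to an explicit (computer-assisted) enumeration of vertices and their incidences, which is the content of Lemma~\ref{lem: vertices}. The genuine gap is in the containment claim $\mathfrak r_{|t|}(\overline{\mathcal P}_t) \subset \overline{\X^4_t} \cap \Aff^4$, which you present as already following from the conjugation $\mathfrak r_{|t|}\,\Isom(\Hyp^4)\,\mathfrak r_{|t|}^{-1} = \Isom(\X^4_t)$ (resp.\ $\AdS^4$). This is circular for $t<0$: the conjugation gives $\mathfrak r_{|t|}(\mathcal P_t) \subset \X^4_t$, but $\overline{\mathcal P}_t$ is \emph{defined} as an intersection of $22$ projective half-spaces in $\SP^4$, not a priori as the closure of $\mathcal P_t$, so the containment of the whole polytope requires its own argument. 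Two points are glossed over. First, $\overline{\AdS^4}$, unlike $\overline{\Hyp^4}$, is not contained in $\Aff^4$, so the inclusion $\mathfrak r_{|t|}(\overline{\mathcal P}_t)\subset \Aff^4$ for $t<0$ must be proved; the paper does this (Lemma~\ref{lem:subset A^4}) by an explicit inequality computation on the fundamental domain $\overline{\mathcal Q}_t$ for the symmetry group $\langle r_{\l L},r_{\l M},r_{\l N}\rangle$. Second, and more fundamentally, $\overline{\X^4_t}\cap\Aff^4$ is \emph{not convex} when $t<0$, so the fact that all $46$ vertices lie in it does not imply that their convex hull does, as it would on the hyperbolic side (the paper explicitly flags this right after Lemma~\ref{lem: vertices}). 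The paper closes this with Lemma~\ref{lem:subset X^4_t}: the fundamental domain meets $\partial\X^4_t$ in a single (ideal) vertex, so the polytope, being connected and meeting $\X^4_t$ at $[1:0:0:0:0]$, cannot cross the boundary quadric and therefore lies in $\overline{\X^4_t}$. Your passing sentence about ``no extra faces at infinity'' gestures at the closure issue but does not carry out either step.

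A smaller slip: you assert simplicity of $\overline{\mathcal P}_t$ (``each vertex on exactly $4$ hyperplanes''). This is false; the $12$ ideal vertices each lie on $6$ facets, and it is only $\mathcal P_t = \overline{\mathcal P}_t \cap \X^4_t$ that is simple. This does not break your face-lattice reconstruction (the face poset of a bounded convex polytope is determined by the vertex--facet incidence matrix with no simplicity hypothesis), but the claim as written is incorrect.
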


\begin{remark} \label{rem: transition interior polytope}
Recall Definition \ref{defi transition} of geometric transition. Proposition \ref{prop: const-combinatorics} implies that we already have a transition on the interior of ${\mathcal P}_t$.
\end{remark}

\begin{table} 
\begin{eqnarray*}
\r_{|t|}\p{0} = \left( -\sqrt{2}:+1:+1:+1:+1 \right) , & &
\r_{|t|}\m{0} = \left( -\sqrt{2}:+1:+1:+1:-t|t| \right),\\
\r_{|t|}\p{1} = \left( -\sqrt{2}:+1:-1:+1:-1 \right),& &
\r_{|t|}\m{1} = \left( -\sqrt{2}:+1:-1:+1:+t|t| \right),\\
\r_{|t|}\p{2} = \left( -\sqrt{2}:+1:-1:-1:+1 \right),& &
\r_{|t|}\m{2} = \left( -\sqrt{2}:+1:-1:-1:-t|t| \right),\\
\r_{|t|}\p{3} = \left( -\sqrt{2}:+1:+1:-1:-1 \right),& &
\r_{|t|}\m{3} = \left( -\sqrt{2}:+1:+1:-1:+t|t| \right),\\
\r_{|t|}\p{4} = \left( -\sqrt{2}:-1:+1:-1:+1 \right),& &
\r_{|t|}\m{4} = \left( -\sqrt{2}:-1:+1:-1:-t|t| \right),\\
\r_{|t|}\p{5} = \left( -\sqrt{2}:-1:+1:+1:-1 \right),& &
\r_{|t|}\m{5} = \left( -\sqrt{2}:-1:+1:+1:+t|t| \right),\\
\r_{|t|}\p{6} = \left( -\sqrt{2}:-1:-1:+1:+1 \right),& &
\r_{|t|}\m{6} = \left( -\sqrt{2}:-1:-1:+1:-t|t| \right),\\
\r_{|t|}\p{7} = \left( -\sqrt{2}:-1:-1:-1:-1 \right),& &
\r_{|t|}\m{7} = \left( -\sqrt{2}:-1:-1:-1:+t|t| \right),\\
\r_{|t|}\l{A} = \left( -1:+\sqrt{2}:0:0:0 \right),& &
\r_{|t|}\l{B} = \left( -1:0:+\sqrt{2}:0:0 \right),\\
\r_{|t|}\l{C} = \left( -1:0:0:+\sqrt{2}:0 \right),& &
\r_{|t|}\l{D} = \left( -1:0:0:-\sqrt{2}:0 \right),\\
\r_{|t|}\l{E} = \left( -1:0:-\sqrt{2}:0:0 \right),& &
\r_{|t|}\l{F} = \left( -1:-\sqrt{2}:0:0:0 \right).
\end{eqnarray*}
\caption{\footnotesize The half-spaces in $\SP^4$ defining $\mathfrak r_{|t|}({\mathcal P}_t)$, by a direct application of Lemma \ref{lemma rescale halfspace} to Table \ref{table:walls}.}\label{table:walls rescaled}
\end{table}

Before proving Proposition \ref{prop: const-combinatorics}, let us begin with a simple lemma.

\begin{lemma}
For all $t\in I$, the set $\mathfrak r_{|t|}(\overline {\mathcal P}_t)\subset\SP^4$ is a 4-polytope.
\end{lemma}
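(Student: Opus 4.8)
The plan is to reduce the statement to exhibiting one explicit point in the interior of an intersection of half-spaces. First, using Lemma \ref{lemma rescale halfspace}, I would observe that for $t\in I\smallsetminus\{0\}$ the projective transformation $\mathfrak r_{|t|}$ carries each half-space of Table \ref{table:walls} to the corresponding half-space of Table \ref{table:walls rescaled}; hence $\mathfrak r_{|t|}(\overline{\mathcal P}_t)$ is exactly the intersection of the $22$ half-spaces of Table \ref{table:walls rescaled}. For $t=0$, where $\mathfrak r_{|t|}$ is undefined, one reads $\mathfrak r_{|t|}(\overline{\mathcal P}_t)$ as this very intersection with $t=0$ substituted — legitimate since the covectors of Table \ref{table:walls rescaled} depend continuously on $t\in I$. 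Note that this differs from the collapsed set $\overline{\mathcal P}_0$, which by Proposition \ref{prop:polytope_hyp} is only $3$-dimensional: the rescaling is precisely what prevents this collapse. By the definition of projective polytope in Section \ref{section polytopes}, it then remains to show that this intersection of $22$ half-spaces has non-empty interior for every $t\in I$.

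This is the core of the (short) argument, and it rests on one observation about Table \ref{table:walls rescaled}: the first coordinate of each of its $22$ covectors equals $-\sqrt2$ or $-1$, in particular it is negative and independent of $t$. Consequently, for every such covector $\alpha=(\alpha_0:\ldots:\alpha_4)$ and every $t\in I$ we get $\alpha(1,0,0,0,0)=\alpha_0<0$, so that $[1:0:0:0:0]\in\Aff^4$ lies in the interior $\{\,\alpha(x)<0\,\}$ of the associated half-space. Hence $[1:0:0:0:0]$ is an interior point of $\mathfrak r_{|t|}(\overline{\mathcal P}_t)$ for all $t\in I$, and being a finite intersection of half-spaces with non-empty interior, this set is a $4$-polytope.

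I do not expect a genuine obstacle. The only place that calls for care is $t=0$: one must interpret $\mathfrak r_{|t|}(\overline{\mathcal P}_t)$ via the rescaled Table \ref{table:walls rescaled} and not via the unrescaled Table \ref{table:walls}, where at $t=0$ the half-spaces $\p i$ and $\m i$ degenerate — their first coordinate $-\sqrt2\,|t|$ vanishes — and the point $[1:0:0:0:0]$ would then sit on their boundary. Passing to the rescaled picture is exactly what makes the interior point work uniformly in $t$, and this is the feature worth emphasising.
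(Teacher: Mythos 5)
Your proof is correct and takes essentially the same approach as the paper: both exhibit $[1:0:0:0:0]$ as an interior point of $\mathfrak r_{|t|}(\overline{\mathcal P}_t)$ by observing that every covector in Table \ref{table:walls rescaled} has negative first entry. Your extra discussion of the $t=0$ case is a reasonable clarification, but not a departure from the paper's argument.
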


\begin{proof}
It sufficies to show that $\mathfrak r_{|t|}(\overline {\mathcal P}_t)$ has non-empty interior. From Table \ref{table:walls rescaled}, a small neighbourhood of the point $[1:0:0:0:0]\in\SP^4$ is contained in $\mathfrak r_{|t|}(\overline {\mathcal P}_t)$ because the first entry of each vector of Table \ref{table:walls rescaled} is negative.
\end{proof}

As in \cite{KS,MR}, we now descibe the symmetries of the polytope $\overline {\mathcal P}_t$ and of its rescaled $\mathfrak r_{|t|}(\overline {\mathcal P}_t)$ which are useful to reduce the number of computations. We refer to \cite[Section 4]{KS} and \cite[Section 3.2]{MR} for details in the hyperbolic case.

Let us introduce three auxiliary half-spaces $\l L$, $\l M$ and $\l N$, defined in Equation \eqref{table:walls_aux}. Observe that they are all left invariant by $\r_{|t|}$. 
\begin{equation} \label{table:walls_aux}
\begin{aligned} 
\l L  &=  \left( 0:-1:+1:0:0 \right), \\
\l M  &= \left( 0:0:-1:+1:0 \right), \\
\l N  &=\left( 0:0:-1:-1:0 \right).
\end{aligned}
\end{equation}
The following projective involutions of $\SP^4$
\begin{align*}
r_{\l L}\colon & [x_0:x_1:x_2:x_3:x_4] \longmapsto [x_0:x_2:x_1:x_3:x_4], \\
r_{\l M}\colon & [x_0:x_1:x_2:x_3:x_4] \longmapsto [x_0:x_1:x_3:x_2:x_4], \\
r_{\l N}\colon & [x_0:x_1:x_2:x_3:x_4] \longmapsto [x_0:x_1:-x_3:-x_2:x_4], \\
R\colon & [x_0:x_1:x_2:x_3:x_4] \longmapsto [x_0:x_1:x_2:-x_3:-x_4]
\end{align*}
commute with $\mathfrak r_{|t|}$, preserve the hyperplane $\Hyp^3$, and all belong to $\Isom(\Hyp^4)$, $G_{\HP^4}$ and $\Isom(\AdS^4)$. As the notation suggests, $r_{\l L},r_{\l M}$ and $r_{\l N}$ are reflections along the hyperplanes $\partial\l L$, $\partial \l M$ and $\partial \l N$, respectively. The map $R$, instead, is a rotation {(called ``roll symmetry'' in \cite{KS})}. The group $\langle r_{\l L},r_{\l M},r_{\l N}\rangle<\Aut(\SP^4)$ is isomorphic to the symmetric group $\mathfrak S_4$ on 4 elements.

\begin{lemma} \label{lem:symmetry}
The maps $r_{\l L},r_{\l M},r_{\l N}$ and $R$ preserve $\overline {\mathcal P}_t$ and $\mathfrak r_{|t|}(\overline {\mathcal P}_t)$ for all $t\in I$.
Moreover, the set
$$\overline {\mathcal Q}_t=\p0\cap\m0\cap\p3\cap\m3\cap\l A\cap\l L\cap\l M\cap\l N\subset\SP^4$$
is a fundamental domain for the action of the group $\langle r_{\l L},r_{\l M},r_{\l N}\rangle<\Aut(\SP^4)$ on $\overline {\mathcal P}_t$. Moreover, $\mathfrak r_{|t|}(\overline {\mathcal Q}_t)$ is a fundamental domain for the action of the group $\langle r_{\l L},r_{\l M},r_{\l N}\rangle<\Aut(\SP^4)$ on $\mathfrak r_{|t|}(\overline {\mathcal P}_t)$.
\end{lemma}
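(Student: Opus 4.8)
The plan is to verify two things: first, that each of the four maps $r_{\l L}, r_{\l M}, r_{\l N}, R$ preserves the polytope $\overline{\mathcal P}_t$ (and hence, since they commute with $\mathfrak r_{|t|}$, also $\mathfrak r_{|t|}(\overline{\mathcal P}_t)$); second, that $\overline{\mathcal Q}_t$ is a fundamental domain for the $\mathfrak S_4$-action generated by $r_{\l L}, r_{\l M}, r_{\l N}$. For the first part, since $\overline{\mathcal P}_t$ is the intersection of the half-spaces in Table \ref{table:walls}, it suffices to check that each of these four maps permutes the set of $22$ dual vectors $\{\p 0, \ldots, \p 7, \m 0, \ldots, \m 7, \l A, \ldots, \l F\}$ in $\SP^{4,*}$ (up to positive scalar). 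This is a finite, purely combinatorial check: one computes the action of each involution/rotation on a dual vector $(\alpha_0:\alpha_1:\alpha_2:\alpha_3:\alpha_4)$ by the corresponding coordinate permutation/sign change, and reads off from Table \ref{table:walls} which vector it lands on. For instance $r_{\l L}$ (swap $x_1 \leftrightarrow x_2$) sends $\p 0 = (-\sqrt2|t|:|t|:|t|:|t|:1)$ to itself, sends $\p 1 = (-\sqrt 2|t|:|t|:-|t|:|t|:-1)$ to $(-\sqrt2|t|:-|t|:|t|:|t|:-1) = \p 5$, and fixes each $\l X$ with $X \in \{\l C, \l D\}$ while swapping $\l A \leftrightarrow$ (the vector with $\sqrt 2$ in slot $2$) $= \l B$, and so on; one checks the analogous bookkeeping for $r_{\l M}$, $r_{\l N}$, $R$. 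Once each map is seen to permute the defining half-spaces, it automatically preserves their intersection $\overline{\mathcal P}_t$, and applying $\mathfrak r_{|t|} \circ (-) \circ \mathfrak r_{|t|}^{-1}$ — which equals the identity on these maps since they commute with $\mathfrak r_{|t|}$ — gives the statement for $\mathfrak r_{|t|}(\overline{\mathcal P}_t)$.

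For the second part, first note that $\langle r_{\l L}, r_{\l M}, r_{\l N}\rangle \cong \mathfrak S_4$ acts on $\SP^4$ essentially by permuting and sign-changing the coordinates $x_1, x_2, x_3$ together with the "doubling" effect on $x_2, x_3$: concretely, identifying the relevant part of the action, $r_{\l L}$ and $r_{\l M}$ generate the permutation group of $\{x_1, x_2, x_3\}$ and $r_{\l N}$ adds the sign-flip needed to realise $\mathfrak S_4$ rather than $\mathfrak S_3$. The claim is that the four extra half-spaces $\l L, \l M, \l N$ (together with the already-present $\l A$, and the choice of $\p 0, \m 0, \p 3, \m 3$) cut out a region $\overline{\mathcal Q}_t$ such that the $24$ translates $\{g \overline{\mathcal Q}_t : g \in \mathfrak S_4\}$ tile $\overline{\mathcal P}_t$ with disjoint interiors and union all of $\overline{\mathcal P}_t$. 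The hyperplanes $\partial\l L = \{x_1 = x_2\}$, $\partial\l M = \{x_2 = x_3\}$, $\partial\l N = \{x_2 = -x_3\}$ are precisely the mirrors (and their conjugates) of the reflection group $\langle r_{\l L}, r_{\l M}, r_{\l N}\rangle$ acting on the relevant coordinate subspace, and they chop $\SP^4$ (restricted to the cone $\{x_0 > 0\}$ where $\overline{\mathcal P}_t$ lives) into $24$ chambers permuted simply transitively by $\mathfrak S_4$; intersecting each chamber with $\overline{\mathcal P}_t$ gives the tiling, provided one checks that $\overline{\mathcal P}_t$ is itself invariant (part one) and that the particular chamber $\{x_1 \geq x_2 \geq |x_3|\}$ (which is $\l L \cap \l M \cap \l N$) meets $\overline{\mathcal P}_t$ in exactly the listed intersection — i.e. that among $\{\p 0,\ldots,\p 7\}$ only $\p 0, \p 3$ and among $\{\m 0,\ldots,\m 7\}$ only $\m 0, \m 3$ contribute a new face once one restricts to that chamber, the others being redundant there, and that $\l A$ is the only one of $\l A, \ldots, \l F$ that survives. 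This last redundancy check is a direct inequality verification using the explicit vectors in Table \ref{table:walls}, exactly parallel to the hyperbolic computation carried out in \cite[Section 4]{KS} and \cite[Section 3.2]{MR}; since the dual vectors in Table \ref{table:walls} depend on $t$ only through the scalars $|t|$, $t$ and $1$ in a uniform way, the combinatorial structure of which half-spaces are redundant on the chamber is the same for all $t \in I$, so the argument of \cite{KS,MR} transfers verbatim.

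The only subtlety, and the step I expect to require the most care, is that \cite{KS,MR} work inside $\Hyp^4$ with Vinberg's non-obtuse-angle theory, whereas here we must make the argument purely projectively (and for $t \leq 0$, inside $\AdS^4$ or $\HP^4$ where Vinberg's theory does not apply). However, the \emph{combinatorial} statement — that the $\mathfrak S_4$-translates of $\overline{\mathcal Q}_t$ tile $\overline{\mathcal P}_t$ — is really a statement about the arrangement of $22$ (resp. with $\l L, \l M, \l N$, $25$) hyperplanes in $\SP^4$, independent of any metric, and depends only on the signs of the finitely many determinants/inner products that determine face incidences. Since Table \ref{table:walls} is polynomial in $t$ and the relevant sign pattern is verified at, say, $t = 1$ (the $24$-cell case, Remark \ref{rem:24-cell}) and seen not to change on the connected interval $I$ — which is exactly the content being proved in Proposition \ref{prop: const-combinatorics} and Lemma \ref{lem: vertices} — the fundamental-domain property follows once the invariance under $r_{\l L}, r_{\l M}, r_{\l N}$ is established. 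In practice, I would (1) do the coordinate-permutation bookkeeping to establish invariance of $\overline{\mathcal P}_t$ and $\mathfrak r_{|t|}(\overline{\mathcal P}_t)$ under all four maps; (2) identify $\langle r_{\l L}, r_{\l M}, r_{\l N}\rangle$ as the Weyl-type group $\mathfrak S_4$ and its fundamental chamber as $\l L \cap \l M \cap \l N$ within the cone $\Aff^4$; (3) intersect with $\overline{\mathcal P}_t$ and, citing \cite[Section 4]{KS} and \cite[Section 3.2]{MR} for the explicit redundancy check (which transfers since the dependence on $t$ is uniform), conclude that $\overline{\mathcal Q}_t = \p 0 \cap \m 0 \cap \p 3 \cap \m 3 \cap \l A \cap \l L \cap \l M \cap \l N$ is the fundamental domain; (4) apply $\mathfrak r_{|t|}$, using that it commutes with the four maps, to obtain the statement for $\mathfrak r_{|t|}(\overline{\mathcal Q}_t)$ and $\mathfrak r_{|t|}(\overline{\mathcal P}_t)$.
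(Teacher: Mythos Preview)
Your approach is essentially the paper's: verify that each of the four maps permutes the $22$ dual vectors of Table~\ref{table:walls} (hence preserves $\overline{\mathcal P}_t$), cite \cite[Section 4]{KS} and \cite[Section 3.2]{MR} for the fundamental-domain statement when $t\in I^+$, and then extend to all $t\in I$ by observing that the permutation action of $\langle r_{\l L},r_{\l M},r_{\l N}\rangle$ on the set of vectors is independent of $t$; finally, use commutativity with $\mathfrak r_{|t|}$ for the rescaled statement. The paper compresses all of this into three sentences, but the logic is identical.

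One point to clean up: in your discussion of the ``subtlety'' you appeal to Proposition~\ref{prop: const-combinatorics} and Lemma~\ref{lem: vertices} to argue that the relevant sign pattern does not change on $I$. This is circular, since both of those results \emph{use} Lemma~\ref{lem:symmetry} in their proofs. Fortunately you do not need them at all. The fundamental-domain claim is purely about the action of the finite reflection group $\langle r_{\l L},r_{\l M},r_{\l N}\rangle\cong\mathfrak S_4$ on the set of $22$ half-spaces, together with the identification of which half-spaces become redundant on the chamber $\l L\cap\l M\cap\l N$; both of these are determined by the \emph{same} permutation bookkeeping for every $t\in I$ (the vectors in Table~\ref{table:walls} differ only by scalars $|t|$, $t$, $1$ in fixed slots, so the orbit structure is literally the same), and that bookkeeping is exactly what \cite{KS,MR} carried out. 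So drop the forward references and simply say: the action on the set of vectors does not depend on $t$, hence the computation of \cite{KS,MR} for $t\in I^+$ is valid verbatim for all $t\in I$.
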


\begin{proof}
This is already proved for $t\in I^+$ in  \cite[Section 4]{KS} and \cite[Section 3.2]{MR}. To conclude, it suffices to observe that the action of $\langle r_{\l L},r_{\l M},r_{\l N}\rangle$ on the set of vectors in Table \ref{table:walls} does not depend on $t\in I$. The second statement follows as a consequence, using that $\mathfrak r_{|t|}$ commutes with $r_{\l L},r_{\l M}$ and $r_{\l N}$.
\end{proof}

\begin{lemma} \label{lem:subset A^4}
For all $t\in I^-$, the sets $\overline {\mathcal P}_t$ and $\mathfrak r_{|t|}(\overline {\mathcal P}_t)$ are contained in the affine chart $\Aff^4$.
\end{lemma}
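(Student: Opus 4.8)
The plan is to verify directly that $\overline{\mathcal P}_t$ is disjoint from the hyperplane $\{x_0=0\}$ while lying in $\{x_0\geq 0\}$, so that in fact $\overline{\mathcal P}_t\subset\{x_0>0\}=\Aff^4$; the assertion for $\mathfrak r_{|t|}(\overline{\mathcal P}_t)$ then follows because $\mathfrak r_{|t|}$ leaves the coordinate $x_0$ untouched.

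First I would note that $\overline{\mathcal P}_t\subset\l{A}\cap\l{F}$, so every $[x]\in\overline{\mathcal P}_t$ satisfies $\l{A}(x)\leq 0$ and $\l{F}(x)\leq 0$ for its (positive-scalar-determined, hence sign-meaningful) representative $x$; adding these gives $\l{A}(x)+\l{F}(x)=-2x_0\leq 0$, so $x_0\geq 0$ on $\overline{\mathcal P}_t$. To exclude $x_0=0$, I would suppose $[x]\in\overline{\mathcal P}_t$ with $x_0=0$: then $\l{A}(x)+\l{F}(x)=0$ together with $\l{A}(x),\l{F}(x)\leq 0$ forces $\l{A}(x)=\l{F}(x)=0$, whence $x_1=0$; applying the identical argument to the pairs $(\l{B},\l{E})$ and $(\l{C},\l{D})$ yields $x_2=x_3=0$. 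Hence $x=(0,0,0,0,x_4)$ with $x_4\neq 0$, but then $\p{0}(x)=x_4$ and $\p{7}(x)=-x_4$ cannot both be $\leq 0$ unless $x_4=0$, a contradiction. Therefore $x_0>0$ throughout $\overline{\mathcal P}_t$, i.e. $\overline{\mathcal P}_t\subset\Aff^4$.

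For the rescaled polytope I would simply observe that $\mathfrak r_{|t|}=[\mathrm{diag}(1,1,1,1,1/|t|)]$ acts on $\SP^4$ by $[x_0:x_1:x_2:x_3:x_4]\mapsto[x_0:x_1:x_2:x_3:x_4/|t|]$, so it preserves $\Aff^4=\{x_0>0\}$; hence $\mathfrak r_{|t|}(\overline{\mathcal P}_t)\subset\Aff^4$. Alternatively, one may rerun the previous computation verbatim using Table \ref{table:walls rescaled}, noting that $\l{A},\ldots,\l{F}$ are fixed by $\mathfrak r_{|t|}$ and that $\mathfrak r_{|t|}\p{0}$ and $\mathfrak r_{|t|}\p{7}$ still have last coordinate $\pm 1$.

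I do not foresee a genuine obstacle here: the only point requiring care is that the defining inequalities of half-spaces in $\SP^4$ are well posed (a point of $\SP^4$ has a representative unique up to multiplication by a positive scalar), which legitimises the sign manipulations above. Note that the argument is uniform in $t$ and uses nothing specific to $I^-$ beyond the nonemptiness of $\overline{\mathcal P}_t$, established in the preceding lemma.
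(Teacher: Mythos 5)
Your proof is correct, and it takes a genuinely different route from the paper's. The paper first invokes the symmetry group $\langle r_{\l L},r_{\l M},r_{\l N}\rangle$ (Lemma \ref{lem:symmetry}) and the fact that this group and $\mathfrak r_{|t|}$ all preserve $\Aff^4$, which reduces the claim to showing $\mathfrak r_{|t|}(\overline{\mathcal Q}_t)\subset\Aff^4$ for the fundamental domain $\overline{\mathcal Q}_t$; it then runs a chain argument through the eight inequalities \eqref{eq:Qt_1}--\eqref{eq:Qt_3}, deducing successively $x_2=x_3=0$, $x_1=0$, $x_0=0$, $x_4=0$ under the hypothesis $x_0\leq 0$. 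You instead work directly on $\overline{\mathcal P}_t$ by exploiting the antipodal pairs $(\l A,\l F)$, $(\l B,\l E)$, $(\l C,\l D)$ and the pair $(\p 0,\p 7)$: the summed linear forms collapse to $-2x_0$, $-2x_2$, etc., yielding first $x_0\geq 0$ and then, in the boundary case $x_0=0$, $x_1=x_2=x_3=0$ and finally $x_4=0$. This is a bit cleaner — you avoid the symmetry reduction and the auxiliary half-spaces $\l L,\l M,\l N$ entirely — while the paper's choice of reducing to $\overline{\mathcal Q}_t$ is more in the spirit of the surrounding lemmas (which also reduce to the fundamental domain). One small overstatement: your argument does not actually need the non-emptiness of $\overline{\mathcal P}_t$; the contradiction from $x_0=0$ comes purely from the defining inequalities and the fact that a representative of a point of $\SP^4$ is nonzero. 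The observation that $\mathfrak r_{|t|}$ fixes the $x_0$-coordinate, hence preserves $\Aff^4$, correctly handles the rescaled polytope.
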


\begin{proof}
Since the maps $r_{\l L},r_{\l M},r_{\l N},\mathfrak r_{|t|}\in\Aut(\SP^4)$ preserve the affine chart $\Aff^4$, it sufficies to show that $\mathfrak r_{|t|}(\overline {\mathcal Q}_t)\subset\Aff^4$.
By looking at Table \ref{table:walls rescaled} and Equation \eqref{table:walls_aux}, $\mathfrak r_{|t|}(\overline {\mathcal Q}_t)$ is defined by the following inequalities:
\begin{eqnarray}
\label{eq:Qt_1} -\sqrt{2} x_{0} + x_{1} + x_{2} + x_{3} + x_{4} \leq 0,\quad -\sqrt{2} x_{0} + x_{1} + x_{2} - x_{3} - x_{4} \leq 0,\\%
\label{eq:Qt_2} - \sqrt{2} x_{0} + x_{1} + x_{2} + x_{3} + t^{2} x_{4} \leq 0,\quad - \sqrt{2} x_{0} + x_{1} + x_{2} - x_{3} -t^{2} x_{4} \leq 0,\\%
\label{eq:Qt_3} - x_{0} + \sqrt{2} x_{1} \leq 0,\quad -x_{1} + x_{2} \leq 0,\quad -x_{2} + x_{3} \leq 0,\quad -x_{2} - x_{3} \leq 0.
\end{eqnarray}
Suppose by contradiction that $x_0\leq0$.
By \eqref{eq:Qt_3}, we would also have $x_1,x_2,x_3\leq0$.
Together with the last inequality of \eqref{eq:Qt_3}, this gives $x_2=x_3=0$.
By the second inequality of \eqref{eq:Qt_3}, we have also $x_1=0$.
By the first inequality of \eqref{eq:Qt_3}, we have also $x_0=0$.
Substituting $x_0=x_1=x_2=x_3=0$ in \eqref{eq:Qt_1}, we have also $x_4=0$, and this is absurd.
\end{proof}

We will thus be free to use the affine coordinates $y_1,\ldots,y_4$ of $\Aff^4$, where $y_i=x_i/x_0$. Let us now analyse the vertices of $\overline {\mathcal P}_t$. 

\begin{lemma} \label{lem: vertices}
For all $t\in I$, the polytope $\mathfrak r_{|t|}(\overline {\mathcal P}_t)$ has 46 vertices, of which 12 belong to $\partial\mathbb X_t^4$ and 34 belong to $\mathbb X_t^4$.
\end{lemma}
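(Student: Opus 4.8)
The plan is to exploit the symmetry group $\langle r_{\l L},r_{\l M},r_{\l N}\rangle\cong\mathfrak S_4$ from Lemma~\ref{lem:symmetry} to reduce the count of vertices of $\mathfrak r_{|t|}(\overline{\mathcal P}_t)$ to a count inside the fundamental domain $\mathfrak r_{|t|}(\overline{\mathcal Q}_t)$, then determine the vertices there by direct linear algebra. First I would recall that $\overline{\mathcal Q}_t$ is cut out by the eight half-spaces $\p0,\m0,\p3,\m3,\l A,\l L,\l M,\l N$ (so after rescaling, by the inequalities \eqref{eq:Qt_1}--\eqref{eq:Qt_3}); a vertex of $\mathfrak r_{|t|}(\overline{\mathcal P}_t)$ lying in this chamber is an intersection point of (at least) four of the bounding hyperplanes of $\mathfrak r_{|t|}(\overline{\mathcal P}_t)$, and its $\mathfrak S_4$-orbit recovers all vertices. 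So the combinatorial content is: enumerate the quadruples (in general position) of hyperplanes from Table~\ref{table:walls rescaled} whose common intersection point lies in $\overline{\mathbb X^4_t}\cap\overline{\mathcal Q}_t$, compute the orbit sizes under $\mathfrak S_4$, and check the total is $46$ with the stated split $12+34$ according to whether $q_t$ vanishes or is negative at the point.

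The key steps, in order: (1) Using Proposition~\ref{prop: const-combinatorics} (whose proof precedes this lemma in the logical order, or else I would invoke only the part already available), the combinatorics of $\mathfrak r_{|t|}(\overline{\mathcal P}_t)$ is constant on $I$, so it suffices to carry out the vertex computation at a single convenient value of $t$ — but the subtlety is that the \emph{position} of a vertex relative to $\partial\mathbb X^4_t$ (i.e. the sign of $q_t$ at it) can in principle vary, so I must verify that the $12$ ideal vertices remain on $\partial\mathbb X^4_t$ for \emph{all} $t\in I$, not just one. (2) Enumerate the candidate vertices of $\mathfrak r_{|t|}(\overline{\mathcal Q}_t)$: solve the relevant $4\times 4$ linear systems with the rows taken from the rescaled vectors, obtaining each candidate as a rational function of $t$ (or of $t^2$, by the form of Table~\ref{table:walls rescaled}). (3) For each candidate, check membership in $\overline{\mathcal Q}_t$ (the inequalities \eqref{eq:Qt_1}--\eqref{eq:Qt_3} must all hold) and evaluate $q_t$ on it to decide whether it is an ideal vertex ($q_t=0$) or an interior vertex ($q_t<0$); discard those with $q_t>0$ or violating some inequality. (4) Compute the $\mathfrak S_4$-stabiliser of each surviving candidate — equivalently the orbit size — from the action on coordinates $(x_1,x_2,x_3)$ described by $r_{\l L},r_{\l M},r_{\l N}$ on Table~\ref{table:walls}, and sum up the orbit sizes, separating the $q_t=0$ and $q_t<0$ cases to obtain $12$ and $34$.

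The main obstacle — and the reason the authors needed the \textsc{Sage} worksheet mentioned in the introduction — is step~(3): in the AdS range $t\in I^-$ one cannot appeal to Vinberg's non-obtuse-polytope machinery, so there is no shortcut, and one must genuinely check, for each of the many candidate intersection points, both the system of linear inequalities defining $\overline{\mathcal Q}_t$ and the sign of the quadratic form $q_t$, uniformly in $t\in I$. These are polynomial inequalities in $t$ (the denominators appearing in the solved systems have a definite sign on $I$, which must itself be verified), and establishing them for the whole interval rather than a sample point is exactly where the computer assistance enters. A secondary but conceptually important point is to confirm that no \emph{new} candidate vertex enters or leaves $\overline{\mathcal Q}_t$ as $t$ crosses $0$ — this is what makes the count (and the split $12+34$) genuinely independent of $t\in I$, and it follows once step~(3) is carried out uniformly, together with the continuity of $t\mapsto\mathfrak r_{|t|}(\overline{\mathcal P}_t)$ and Lemma~\ref{lem:subset A^4} guaranteeing everything stays in the fixed affine chart $\Aff^4$.
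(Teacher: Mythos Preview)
Your overall strategy matches the paper's: reduce to the fundamental domain $\r_{|t|}(\overline{\mathcal Q}_t)$ via the $\mathfrak S_4$-symmetry, enumerate candidate vertices there by solving linear systems, check membership and the sign of $q_t$, then let the group act to recover all vertices of $\r_{|t|}(\overline{\mathcal P}_t)$; both you and the paper acknowledge this requires computer assistance for the AdS range.

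There is, however, a logical circularity in your step~(1): in the paper, Lemma~\ref{lem: vertices} is a \emph{prerequisite} for Proposition~\ref{prop: const-combinatorics} --- the constancy of the combinatorics is deduced precisely from the fact that the vertices, once computed as explicit functions of $t$ (Table~\ref{table:vertices}), persist for all $t\in I$. So you cannot invoke Proposition~\ref{prop: const-combinatorics} here. Your parenthetical hedge anticipates this possibility, and fortunately your steps~(2)--(4) are carried out uniformly in $t$ anyway, so the substance is unaffected; but step~(1) should simply be dropped.

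One technical difference worth noting: the paper enumerates subsets of size $4\le k\le 8$ among the \emph{eight} bounding hyperplanes of $\overline{\mathcal Q}_t$ (namely $\p0,\m0,\p3,\m3,\l A,\l L,\l M,\l N$), obtains the $13$ vertices of $\r_{|t|}(\overline{\mathcal Q}_t)$ listed in Table~\ref{table:vertices}, and then \emph{selects} among them those that are vertices of $\overline{\mathcal P}_t$. Your proposal instead enumerates quadruples among the $22$ hyperplanes of Table~\ref{table:walls rescaled} and tests membership in $\overline{\mathcal Q}_t$. Both are valid, but the paper's route is computationally lighter ($\sum_{k=4}^{8}\binom{8}{k}$ systems rather than a search over $\binom{22}{4}$) and makes the passage from vertices of $\overline{\mathcal Q}_t$ to vertices of $\overline{\mathcal P}_t$ an explicit selection step, which keeps the orbit count clean. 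For $t\in I^+$ the paper also short-circuits the computation by citing \cite[Proposition~3.16]{MR}, doing the direct enumeration only for $t\in I^-\cup\{0\}$.
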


\begin{proof}
This is already proven in \cite[Proposition 3.16]{MR} for $t\in I^+$, by applying to ${\mathcal Q}_t\subset\Hyp^4$ Vinberg's theory of acute-angled hyperbolic polytopes \cite{V} and then by letting the group $\langle r_{\l L},r_{\l M},r_{\l N}\rangle$ act. We cannot do the same for $t\in I^-\cup\{0\}$, being now in the AdS (or HP) setting, so in this case we proceed as follows.
\begin{enumerate}
\item For every $k=4,5,6,7,8$ and every set $\{\l H_1,\ldots,\l H_k\}$ of bounding hyperplanes of $\mathfrak r_{|t|}(\overline {\mathcal Q}_t)$, we consider the linear system in $\Aff^4$ defining $\bigcap_i \partial\l H_i$.
\item Every time such linear system has a unique solution, we check if the solution belongs to $\mathfrak r_{|t|}(\overline {\mathcal Q}_t)$.
\item We collect all such points, which are the 13 vertices of $\mathfrak r_{|t|}(\overline {\mathcal Q}_t)$.
\item We check that one vertex belongs to $\partial\mathbb X_t^4$, while the remaing 12 belong to $\mathbb X_t^4$.
\item We select the vertices of $\mathfrak r_{|t|}(\overline {\mathcal Q}_t)$ which are vertices of $\overline {\mathcal P}_t$.
\item We let the group $\langle r_{\l L},r_{\l M},r_{\l N}\rangle$ act on these latter points, to finally find all the vertices of $\mathfrak r_{|t|}(\overline {\mathcal P}_t)$.
\end{enumerate}
The final output is reported in Table \ref{table:vertices}. Although this procedure is very simple, the number of computations is terribly big, so we omit the complete proof. The details can be checked through a computer (see \cite{worksheet}). 
\end{proof}

\begin{table}
\begin{eqnarray*}
{\mathcal V}_{\p0\p3\m0\m3\l A\l L} =&  \left(\tfrac{\sqrt2}2,\ \tfrac{\sqrt2}2,\ 0,\ 0\right),\\
{\mathcal V}_{\p0\m0\l A\l M} =&  \left(\tfrac{\sqrt2}2,\ \tfrac{\sqrt2}4,\ \tfrac{\sqrt2}4,\ 0\right),\\
{\mathcal V}_{\p0\m0\l L\l M} =&  \left(\tfrac{\sqrt2}3,\ \tfrac{\sqrt2}3,\ \tfrac{\sqrt2}3,\ 0\right), \\
{\mathcal V}_{\p0\m3\l A\l N} =&  \left(\tfrac{\sqrt2}2,\ \tfrac{\sqrt2}4(t^2 + 1),\ -\tfrac{\sqrt2}4(t^2 + 1),\ \tfrac{\sqrt2}4\right),\\
{\mathcal V}_{\p0\m3\l L\l N} =&  \left(\sqrt2\tfrac{t^2 + 1}{t^2 + 3},\ \sqrt2\tfrac{t^2 + 1}{t^2 + 3},\ -\sqrt2\tfrac{t^2 + 1}{t^2 + 3},\ \tfrac{2\sqrt2}{t^2 + 3}\right),\\
{\mathcal V}_{\p0\l A\l M\l N} =&  \left(\tfrac{\sqrt2}2,\ 0,\ 0,\ \tfrac{\sqrt2}2\right),\\
{\mathcal V}_{\p0\l L\l M\l N} =&  \left(0,\ 0,\ 0,\ \sqrt2\right),\\
{\mathcal V}_{\p3\m 0\l A\l M} =&  \left(\tfrac{\sqrt2}2,\ \tfrac{\sqrt2}4(t^2 + 1),\ \tfrac{\sqrt2}4(t^2 + 1),\ -\tfrac{\sqrt2}2\right),\\
{\mathcal V}_{\p3\m0\l L\l M} =&  \left(\sqrt2\tfrac{t^2 + 1}{t^2 + 3},\ \sqrt2\tfrac{t^2 + 1}{t^2 + 3},\ \sqrt2\tfrac{t^2 + 1}{t^2 + 3},\ -\tfrac{2\sqrt2}{t^2 + 3}\right),\\
{\mathcal V}_{\p3\m3\l A\l N} =&  \left(\tfrac{\sqrt2}2,\ \tfrac{\sqrt2}4,\ -\tfrac{\sqrt2}4,\ 0\right),\\
{\mathcal V}_{\p3\m3\l L\l N} =&  \left(\tfrac{\sqrt2}3,\ \tfrac{\sqrt2}3,\ -\tfrac{\sqrt2}3,\ 0\right),\\
{\mathcal V}_{\p3\l A\l M\l N} =&  \left(\tfrac{\sqrt2}2,\ 0,\ 0,\ -\tfrac{\sqrt2}2\right),\\
{\mathcal V}_{\p3\l L\l M\l N} =&  \left(0,\ 0,\ 0,\ -\sqrt2\right).
\end{eqnarray*}

\vspace{.3cm}

\caption{\footnotesize The vertices of $\mathfrak r_{|t|}(\overline {\mathcal Q}_t)$ in affine coordinates.} \label{table:vertices}
\end{table} 

Note that the previous lemma implies that $\mathfrak r_{|t|}(\overline {\mathcal P}_t)\subset\overline{\mathbb X_t^4}\cap\Aff^4$ when $t\in I^+\cup\{0\}$, since in that case $\overline{\mathbb X_t^4}\subset\Aff^4$ is convex. We cannot directly conclude in the same way when $t\in I^-$, as $\overline{\mathbb X_t^4}\cap \Aff^4$ is not convex when $t<0$.

\begin{lemma}\label{lem:subset X^4_t}
For all $t\in I$, we have
$$\mathfrak r_{|t|}(\overline {\mathcal Q}_t)\cap\partial\mathbb X_t^4=\{[2:\sqrt2:\sqrt2:0:0]\}.$$
\end{lemma}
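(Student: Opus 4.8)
The plan is the following. The inclusion $\supseteq$ is a direct verification: the point $v_0:=[2:\sqrt2:\sqrt2:0:0]$ is the vertex ${\mathcal V}_{\p0\p3\m0\m3\l A\l L}$ of $\r_{|t|}(\overline{\mathcal Q}_t)$ from Table \ref{table:vertices}, it satisfies all the inequalities of Table \ref{table:walls rescaled} and \eqref{table:walls_aux} (with equality in $\r_{|t|}\p0,\r_{|t|}\p3,\r_{|t|}\m0,\r_{|t|}\m3,\r_{|t|}\l A,\l L$ and strict inequality in $\l M,\l N$), and $q_t(2,\sqrt2,\sqrt2,0,0)=-4+2+2+t|t|\cdot 0=0$ for every $t$, with positive first coordinate; hence $v_0\in\r_{|t|}(\overline{\mathcal Q}_t)\cap\partial\mathbb X_t^4$. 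For the reverse inclusion one starts from a point $[x]\in\r_{|t|}(\overline{\mathcal Q}_t)\cap\partial\mathbb X_t^4$; by Lemma \ref{lem:subset A^4} (when $t\in I^-$) and the remark after Lemma \ref{lem: vertices} (when $t\in I^+\cup\{0\}$) it lies in $\Aff^4$, so one normalizes $x_0=1$ and uses affine coordinates $y_i=x_i$, and since the roll symmetry $R$ preserves $\overline{\mathcal Q}_t$ and commutes with $\r_{|t|}$ one may further assume $y_4\ge 0$. The hypothesis $q_t(x)=0$ reads $y_1^2+y_2^2+y_3^2+t|t|\,y_4^2=1$; the half-spaces $\l L,\l M,\l N,\r_{|t|}\l A$ give $0\le y_2\le y_1\le\tfrac{\sqrt2}{2}$ and $|y_3|\le y_2$, and adding $\r_{|t|}\p0$ to $\r_{|t|}\p3$ gives $y_1+y_2\le\sqrt2$.

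The key device I would use is the linear form $\beta(x):=-2x_0+\sqrt2\,x_1+\sqrt2\,x_2$. As a functional $\beta(x)=2\bigl(-x_0+\sqrt2\,x_1\bigr)+\sqrt2\bigl(-x_1+x_2\bigr)$ is a positive combination of the defining linear forms of the half-spaces $\r_{|t|}\l A$ and $\l L$, so $\beta\le 0$ on $\r_{|t|}(\overline{\mathcal Q}_t)$, with equality exactly on $\partial(\r_{|t|}\l A)\cap\partial\l L$; writing a point of that intersection as $[\sqrt2:1:1:a:b]$ and imposing $\r_{|t|}\p0,\r_{|t|}\p3,\r_{|t|}\m0,\r_{|t|}\m3$ forces $a=b=0$ (the first two give $a+b=0$ and the last two then give $a\le 0$ and $a\ge 0$, using $1+t|t|>0$), so $\partial(\r_{|t|}\l A)\cap\partial\l L\cap\r_{|t|}(\overline{\mathcal Q}_t)=\{v_0\}$. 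On the other hand $\beta(x)=b_t(v,x)$ where $v=(2,\sqrt2,\sqrt2,0,0)$ satisfies $q_t(v)=0$. Thus the lemma reduces to the assertion: if $[x]\in\r_{|t|}(\overline{\mathcal Q}_t)$ and $q_t(x)=0$, then $\beta(x)=0$.

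For $t\in I^+\cup\{0\}$ this is immediate, since $\overline{\mathbb X_t^4}$ is convex and contains $\r_{|t|}(\overline{\mathcal Q}_t)$ by the remark after Lemma \ref{lem: vertices}; then $\r_{|t|}(\overline{\mathcal Q}_t)\cap\partial\mathbb X_t^4$ is a union of faces of the polytope lying on $\partial\mathbb X_t^4$, a positive-dimensional such face would place a segment on $\partial\mathbb X_t^4$ and hence two vertices of Table \ref{table:vertices} on it ($\partial\Hyp^4$ contains no segment, and $\partial\HP^4$ only contains segments in the degenerate direction, excluded because no edge of $\r_{|t|}(\overline{\mathcal Q}_t)$ is vertical, as one reads off Table \ref{table:vertices}), and Lemma \ref{lem: vertices} says only $v_0$ lies on the boundary; so the intersection is $\{v_0\}$.

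The case $t\in I^-$ is the real obstacle, as $\overline{\mathbb X_t^4}\cap\Aff^4$ is not convex and one must argue directly from $y_1^2+y_2^2+y_3^2-t^2y_4^2=1$ together with the linear inequalities. Setting $\delta:=\sqrt2-y_1-y_2\ge 0$, one first rules out $\delta\ge\tfrac{\sqrt2}{2}$ (in that range $y_1^2+y_2^2+y_3^2\le\tfrac34<1$); next, the maximum of $y_1^2+y_2^2$ under $0\le y_2\le y_1\le\tfrac{\sqrt2}{2}$ and $y_1+y_2=\sqrt2-\delta$ equals $1-\sqrt2\,\delta+\delta^2$, whence the equation yields $y_3^2\ge\delta(\sqrt2-\delta)$, and combining this with $y_3^2\le y_2^2\le\tfrac14(\sqrt2-\delta)^2$ forces $\delta\le\tfrac{\sqrt2}{5}$. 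The remaining step — showing that in fact $\delta=0$, so that $y_1=y_2=\tfrac{\sqrt2}{2}$ and then $y_3=y_4=0$, i.e. $[x]=v_0$ — is where I expect the real work: it requires exploiting the full strength of $\r_{|t|}\p0,\r_{|t|}\p3,\r_{|t|}\m0,\r_{|t|}\m3$ (for instance $y_3\le 0$, then $-y_3\le\delta+t^2y_4$ from $\r_{|t|}\m3$, and $(1-t^2)y_4\le 2\delta$ from $\r_{|t|}\p0+\r_{|t|}\m3$), with a case split on the sign of $\delta-y_4$. As with Lemma \ref{lem: vertices}, the resulting finite bookkeeping can be handled by hand or verified by computer, and introduces no new idea.
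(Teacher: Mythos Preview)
Your route is far more elaborate than the paper's. The paper cites \cite{MR} for $t\in I^+$ and, for $t\in I^-\cup\{0\}$, argues in a few lines directly from the affine inequalities: summing the two in \eqref{eq:Qt_1} and using the chain \eqref{eq:Qt_3} it asserts $y_1=y_2=\tfrac{\sqrt2}{2}$; then \eqref{eq:Qt_1} forces $y_3+y_4=0$, and the boundary equation $y_1^2+y_2^2+y_3^2-t^2y_4^2=1$ gives $y_3^2=t^2y_4^2$, so $y_3=y_4=0$ since $t^2\neq1$. No appeal to Lemma~\ref{lem: vertices}, no auxiliary functional $\beta$, no case split on $\delta$. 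Your instinct that \eqref{eq:Qt_2} is needed is in fact well placed: the paper's first step does not follow from \eqref{eq:Qt_1} and \eqref{eq:Qt_3} alone, since for instance $y_1=y_2=y_3=-y_4=(3-t^2)^{-1/2}$ satisfies \eqref{eq:Qt_1}, \eqref{eq:Qt_3} and the quadratic while $y_1<\tfrac{\sqrt2}{2}$; this point is excluded only by \eqref{eq:Qt_2}. So the paper's written argument has a small slip precisely where you sensed the difficulty.

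Your own proposal, however, has a genuine gap: for $t\in I^-$ you explicitly stop before proving $\delta=0$, relegating the ``remaining step'' to unspecified ``finite bookkeeping'' to be ``handled by hand or verified by computer.'' This is the entire content of the lemma in the non-convex case and must actually be carried out --- the estimates you list (e.g.\ $(1-t^2)y_4\le2\delta$) become weak as $t\to-1^+$, and it is not evident that your outlined case split closes without further ideas. Your treatment of $t\in I^+\cup\{0\}$ via convexity and Lemma~\ref{lem: vertices} is valid in spirit but also not airtight (for $t=0$ the boundary $\partial\HP^4$ is a cylinder, so the claim ``the intersection is a union of faces'' needs justification), and it introduces a dependence on computer verification that a direct inequality argument, once \eqref{eq:Qt_2} is brought in, avoids.
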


\begin{proof}
This is already proven in \cite{MR} when $t\in I^+$. So, let us assume that $t\in I^-\cup\{0\}$. In affine coordinates, \eqref{eq:Qt_1} and \eqref{eq:Qt_3} read as:
$$-\sqrt{2} + y_{1} + y_{2} + y_{3} + y_{4} \leq 0,\quad -\sqrt{2} + y_{1} + y_{2} - y_{3} - y_{4} \leq 0,$$
$$-y_2\leq y_3\leq y_2\leq y_1\leq \tfrac{\sqrt2}{2}.$$
By summing the first two equations and using the third, we get $y_1=y_2=\sqrt{2}/2$. This implies $y_3+y_4=0$. Now, the affine coordinates of a point in $\partial\mathbb X^4_t$ satisfy
$$y_1^2+y_2^2+y_3^2-t^2y_4^2=1.$$
This implies $y_3^2-t^2y_4^2=0$. Together with $y_3+y_4=0$, we get $y_3=y_4=0$ since $t^2\neq 1$. This concludes the proof.
\end{proof}

We are finally ready to prove Proposition \ref{prop: const-combinatorics}.

\begin{proof}[Proof of Proposition \ref{prop: const-combinatorics}]
By Lemma \ref{lem:subset A^4}, $\mathfrak r_{|t|}(\overline {\mathcal P}_t)\subset\Aff^4$. Recall that $[1:0:0:0:0]\in\mathfrak r_{|t|}({\mathcal P}_t)\cap\mathbb X_t^4$. By Lemma \ref{lem:subset X^4_t}, the intersection $\mathfrak r_{|t|}(\overline {\mathcal P}_t)\cap \partial{\mathbb X_t^4}$ consists solely of vertices of $\mathfrak r_{|t|}(\overline {\mathcal P}_t)$, hence we have also $\mathfrak r_{|t|}(\overline {\mathcal P}_t)\subset\overline{\mathbb X_t^4}$. Thus, $\mathfrak r_{|t|}(\overline {\mathcal P}_t)\subset\overline{\mathbb X_t^4}\cap\Aff^4$. 
The combinatorics of $\mathfrak r_{|t|}(\overline {\mathcal P}_t)$ is constant by Lemma \ref{lem: vertices}. The proof is complete.
\end{proof}

In contrast with $\overline {\mathcal P}_t$, the polytope ${\mathcal P}_t$ is simple \cite[Proposition 3.12]{MR}. Moreover, each ideal vertex of ${\mathcal P}_t$ belongs to exactly 6 facets of $\overline {\mathcal P}_t$ \cite[Proposition 3.16]{MR}. (We applied Proposition \ref{prop: const-combinatorics} to conclude when $t\in I^-$.) We adopt the following notation for the faces of ${\mathcal P}_t$ when $t\neq0$ (and similarly for the rescaled limit $\lim_{t\to0}\mathfrak r_{|t|}{\mathcal P}_t$):
\begin{itemize}
\item facets: ${\mathcal F}_{\l H}=\partial\l H\cap {\mathcal P}_t$, 
\item ridges: ${\mathcal R}_{\l H_1\l H_2}=\partial\l H_1\cap\partial\l H_2\cap {\mathcal P}_t$,
\item edges: ${\mathcal E}_{\l H_1\l H_2\l H_3}=\partial\l H_1\cap\partial\l H_2\cap\partial\l H_3\cap {\mathcal P}_t$,
\item finite vertices: ${\mathcal V}_{\l H_1\ldots\l H_4}=\partial\l H_1\cap\ldots\cap\partial\l H_4\cap {\mathcal P}_t$,
\item ideal vertices: ${\mathcal V}_{\l H_1\ldots\l H_6}=\partial\l H_1\cap\ldots\cap\partial\l H_6\cap\overline {\mathcal P}_t$,
\end{itemize}
where $\l H,\l H_i\subset\SP^4$ are half-spaces from the list in Table \ref{table:walls}.

We conclude the section with a combinatorial description of the facets of the polytope ${\mathcal P}_t$. This follows by applying Proposition \ref{prop: const-combinatorics} to \cite[Proposition 3.16]{MR}, where the combinatorics was studied for $t\in I^+$.

\begin{prop} \label{prop: facets}
For all $t\in I$, the combinatorics of each of the 22 facets ${\mathcal F}_{\l X}$, ${\mathcal F}_{\m i}$ and ${\mathcal F}_{\p i}$ of $\r_{|t|}{\mathcal P}_t$ where $\l i\in\{\l0,\ldots,\l7\}$ and $\l X\in\{\l A,\ldots,\l F\}$, is described in Figures \ref{fig:facet_L}, \ref{fig:facet_N} and \ref{fig:facet_P}, respectively.
\end{prop}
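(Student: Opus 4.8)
The plan is to derive the statement directly from Proposition \ref{prop: const-combinatorics} together with the hyperbolic analysis carried out in \cite{MR}. The combinatorics of a facet ${\mathcal F}_{\l H}$ of $\mathfrak r_{|t|}\overline{\mathcal P}_t$ is, by definition, the sub-poset of the face poset of $\mathfrak r_{|t|}\overline{\mathcal P}_t$ consisting of the faces contained in the bounding hyperplane $\partial\l H$. Proposition \ref{prop: const-combinatorics} asserts that the whole face poset of $\mathfrak r_{|t|}\overline{\mathcal P}_t$ is independent of $t\in I$; since $\overline{\mathcal P}_t$ is the closure of ${\mathcal P}_t$ and its intersection with $\partial\mathbb{X}^4_t$ consists of vertices only (Lemmas \ref{lem: vertices} and \ref{lem:subset X^4_t}), the same is true of the face poset of ${\mathcal P}_t$ together with the partition of each facet's vertex set into finite and ideal vertices. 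Consequently the combinatorial type of each of the 22 facets of $\r_{|t|}{\mathcal P}_t$, with its ideal/finite vertex data, does not depend on $t\in I$.

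It then suffices to identify these combinatorial types at one value of the parameter, which I would choose in $I^+$. For $t\in I^+$ the polytope ${\mathcal P}_t\subset\Hyp^4$ is the Kerckhoff--Storm polytope, and \cite[Proposition 3.16]{MR} (see also \cite[Section 3.2]{MR}) gives the combinatorics of its facets: the six ``letter'' facets ${\mathcal F}_{\l X}$, $\l X\in\{\l A,\ldots,\l F\}$, are the polyhedra drawn in Figure \ref{fig:facet_L}, the eight facets ${\mathcal F}_{\m i}$ are those in Figure \ref{fig:facet_N}, and the eight facets ${\mathcal F}_{\p i}$ are those in Figure \ref{fig:facet_P}. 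Combined with the previous paragraph, this yields the proposition for all $t\in I$. To keep the verification light I would exploit the symmetries of Lemma \ref{lem:symmetry}: the group $\langle r_{\l L},r_{\l M},r_{\l N}\rangle\cong\mathfrak S_4$ together with the roll symmetry $R$ permutes the facets inside each of the three families and commutes with $\mathfrak r_{|t|}$, so it is enough to read off the incidences of one representative per family --- say ${\mathcal F}_{\l A}$, ${\mathcal F}_{\m0}$ and ${\mathcal F}_{\p0}$ --- from the vertex list of $\mathfrak r_{|t|}\overline{\mathcal Q}_t$ in Table \ref{table:vertices} and its $\mathfrak S_4$-translates.

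I do not expect a genuine difficulty here: the one point that demands care is purely administrative, namely checking that the abstract face poset produced in the proof of Lemma \ref{lem: vertices} matches the concrete pictures of Figures \ref{fig:facet_L}, \ref{fig:facet_N} and \ref{fig:facet_P}, and that the white dots there (the ideal vertices) are exactly the vertices lying on $\partial\mathbb{X}^4_t$, as located by Lemma \ref{lem:subset X^4_t} and its translates under $\langle r_{\l L},r_{\l M},r_{\l N}\rangle$. This is a finite combinatorial check; for $t\in I^+$ it is already contained in \cite{MR}, and the role of Proposition \ref{prop: const-combinatorics} is precisely to transport it to the range $t\le 0$, where we are in the AdS and half-pipe setting.
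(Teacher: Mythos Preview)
Your proposal is correct and follows essentially the same approach as the paper: the paper's proof simply says that the statement follows by applying Proposition \ref{prop: const-combinatorics} to \cite[Proposition 3.16]{MR}, where the combinatorics was studied for $t\in I^+$. Your write-up expands this with the (accurate) observation that the ideal/finite partition of the vertex set is also preserved and that the symmetries of Lemma \ref{lem:symmetry} reduce the check to one representative per family, but the core argument is the same.
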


\begin{figure}
\includegraphics[scale=1]{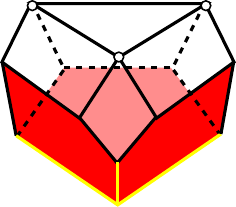}
\caption[A facet ${\mathcal F}_{\p i}$ of ${\mathcal P}_t$.]{\footnotesize A facet ${\mathcal F}_{\p i}$, $\p i\in\{\p0,\ldots,\p7\}$, of the 4-polytope ${\mathcal P}_t$. The white dots represent ideal vertices, the black edges are right-angled, and the yellow edges have some other varying dihedral angle. The three red pentagons are ridges of ${\mathcal P}_t$ of type ${\mathcal R}_{\p i\p j}$, and have varying dihedral angle $\theta_t$ or $\varphi_t$ (see Proposition \ref{prop: ridges}). Each of the three quadrilaterals with one ideal vertex (resp. two ideal vertices) is a ridge of ${\mathcal P}_t$ of type ${\mathcal R}_{\p i\m j}$ (resp. ${\mathcal R}_{\p i\l X}$). The horizontal ideal triangle is the ridge ${\mathcal R}_{\p i\m i}$.}
\label{fig:facet_P}
\end{figure}

\subsection{Geometry of the polytope} \label{sec: geometry polytope}
We continue to describe the polytope ${\mathcal P}_t$. Recall Sections \ref{sec geom halfspaces} and \ref{lemma hyperplane hp} about hyperplanes and angles in $\Hyp^n$, $\AdS^n$ and $\HP^n$. By applying Lemmas \ref{lemma hyperplane ads} and \ref{lemma hyperplane hp} to the list of vectors in Tables \ref{table:walls} and \ref{table:walls rescaled}, we get:

\begin{prop} \label{prop: hyperplanes polytope}
When $t\in I^-$, each hyperplane $\partial\p i\cap\AdS^4$ is spacelike, while each hyperplane $\partial\m i\cap\AdS^4$ and $\partial\l X\cap\AdS^4$ is timelike, for all $\l i\in\{\l0,\ldots,\l7\}$ and $\l X\in\{\l A,\ldots,\l F\}$.

Similarly when $t=0$, the rescaled limits of $\partial\p i$ are non-degenerate hyperplanes in $\HP^4$, while the rescaled limits of $\partial\m i$ and $\partial\l X$ are degenerate hyperplanes.
\end{prop}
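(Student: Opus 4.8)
The plan is to apply directly the classification results for hyperplanes, namely Lemma \ref{lemma hyperplane ads} in the Anti-de Sitter case and Lemma \ref{lemma hyperplane hp} in the half-pipe case, to the explicit dual vectors recorded in Table \ref{table:walls} and Table \ref{table:walls rescaled}. Since these lemmas detect the causal type of a bounding hyperplane purely from algebraic data of the dual vector $(\alpha)\in\SP^{4,*}$ --- the sign of $q_{-1}(\alpha)$ for $\AdS^4$, and whether the last coordinate of $\alpha$ is nonzero together with the sign of $q_0(\alpha)$ for $\HP^4$ --- the statement reduces to a short list of evaluations of $q_{-1}$ and $q_0$.

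First I would observe that within each of the three families $\{\p 0,\ldots,\p 7\}$, $\{\m 0,\ldots,\m 7\}$ and $\{\l A,\ldots,\l F\}$ the dual vectors differ from one another only by signs of coordinates and by coordinate permutations that fix $q_{-1}$ and $q_0$ (this is exactly the symmetry content of Lemma \ref{lem:symmetry}, through the action of $\langle r_{\l L},r_{\l M},r_{\l N}\rangle$ and the roll symmetry $R$); hence it is enough to evaluate one representative per family. For $t\in I^-$, so that $0<t^2<1$, one computes
$$q_{-1}(\p 0)=-2t^2+t^2+t^2+t^2-1=t^2-1<0,$$
so that every $\partial\p i\cap\AdS^4$ is spacelike; likewise
$$q_{-1}(\m 0)=-2+1+1+1-t^2=1-t^2>0\quad\text{and}\quad q_{-1}(\l A)=-1+2=1>0,$$
so that every $\partial\m i\cap\AdS^4$ and every $\partial\l X\cap\AdS^4$ is timelike. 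Lemma \ref{lemma hyperplane ads} then yields the first assertion.

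For $t=0$, the rescaled limit of a bounding hyperplane is computed by setting $t=0$ in the corresponding (continuous in $t$) entry of Table \ref{table:walls rescaled}. The last coordinate of $\r_{|t|}\p i$ equals $\pm 1\neq 0$, so by Lemma \ref{lemma hyperplane hp} the rescaled limit of $\partial\p i$ is a non-degenerate (spacelike) hyperplane of $\HP^4$. The last coordinate of $\r_{|t|}\m i$ equals $\pm t|t|$, which vanishes at $t=0$, and the last coordinate of $\r_{|t|}\l X$ is identically $0$; evaluating $q_0$ on the corresponding limiting dual vectors gives
$$-2+1+1+1=1>0\quad\text{and}\quad -1+2=1>0$$
respectively, so Lemma \ref{lemma hyperplane hp} shows that the rescaled limits of $\partial\m i$ and of $\partial\l X$ are degenerate hyperplanes of $\HP^4$ (and, incidentally, that all these half-spaces do meet $\HP^4$, in agreement with Proposition \ref{prop: const-combinatorics}).

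There is no genuine obstacle in this argument; the only points deserving a moment of care are that $q_{-1}$ and $q_0$ are insensitive to the varying signs, so that a single representative per family really suffices, and that in the half-pipe case one must also verify the strict positivity of $q_0$ on the candidate degenerate hyperplanes in order to invoke Lemma \ref{lemma hyperplane hp}.
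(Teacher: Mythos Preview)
Your proof is correct and follows exactly the approach indicated in the paper, which simply states that the proposition follows by applying Lemmas \ref{lemma hyperplane ads} and \ref{lemma hyperplane hp} to the vectors of Tables \ref{table:walls} and \ref{table:walls rescaled}. You have carried out those evaluations explicitly; the only superfluous step is the appeal to Lemma \ref{lem:symmetry}, since the constancy of $q_{-1}$ and $q_0$ within each family is immediate from the fact that these forms depend only on the squares of the coordinates, and within each family the vectors differ only by signs.
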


We will also need the following:

\begin{prop} \label{prop: ridges}
The constant dihedral angles of ${\mathcal P}_t$ are right. The non-constant ones equal $\theta_t\in[\frac\pi2,\pi)$ when $t\in I^+$, and $\varphi_t\in(0,+\infty)$ when $t\in I^-$, where
$$\cos\theta_t=\frac{3t^2-1}{1+t^2}\quad\mbox{and}\quad\cosh\varphi_t=\frac{3t^2+1}{1-t^2}.$$
A ridge is compact if and only if its dihedral angle is non-constant, and such ridges consist precisely of ${\mathcal R}_{\p i\p j}$ for all distinct $\p i,\p j\in\lbrace\p0,\ldots,\p7\rbrace$ such that $i\equiv j\ (\mathrm{mod}\ 2)$.
\end{prop}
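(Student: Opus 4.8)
The plan is to read off the complete list of ridges from the combinatorics — which is constant in $t$ — and then compute each dihedral angle directly from Tables \ref{table:walls} and \ref{table:walls rescaled} by means of the angle formulas of Section \ref{sec:half-spaces}, using the symmetries of Lemma \ref{lem:symmetry} to keep the number of cases finite. First I would enumerate the ridges: by Proposition \ref{prop: const-combinatorics} the face poset of $\r_{|t|}\overline{\mathcal P}_t$ is independent of $t\in I$, and by Proposition \ref{prop: facets} the $2$-faces of each of the $22$ facets — hence all the ridges of ${\mathcal P}_t$ — are described explicitly by Figures \ref{fig:facet_L}, \ref{fig:facet_N} and \ref{fig:facet_P}. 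The group $\langle r_{\l L},r_{\l M},r_{\l N},R\rangle<\Aut(\SP^4)$ permutes the $22$ bounding hyperplanes within the three classes $\{\p i\}$, $\{\m i\}$, $\{\l X\}$ and preserves the polytope (Lemma \ref{lem:symmetry}), so it acts on the set of ridges with only finitely many orbits; a set of representatives is ${\mathcal R}_{\p0\p2}$ (the type ${\mathcal R}_{\p i\p j}$ with $i\equiv j\ (\mathrm{mod}\ 2)$, i.e.\ the red pentagons of Figure \ref{fig:facet_P}), ${\mathcal R}_{\p i\m i}$, ${\mathcal R}_{\p i\m j}$ with $i\neq j$, ${\mathcal R}_{\p i\l X}$ and ${\mathcal R}_{\m i\l X}$. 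In particular one reads off that no ridge joins two hyperplanes of the same class unless they are $\p i,\p j$ with $i\equiv j\ (\mathrm{mod}\ 2)$, and that all such $\p i,\p j$ do occur; moreover every ridge is spacelike or timelike (never lightlike) by Propositions \ref{prop: hyperplanes polytope} and \ref{prop: facets}.

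Next I would compute the angle of each representative. For a ridge ${\mathcal R}_{\l H\l H'}$ with $\l H=(\alpha)$ and $\l H'=(\alpha')$ I evaluate $q_\bullet(\alpha)$, $q_\bullet(\alpha')$ and $b_\bullet(\alpha,\alpha')$ — with $\bullet=1$ for $t\in I^+$ and $\bullet=-1$ for $t\in I^-$ from Table \ref{table:walls}, and $\bullet=0$ for the rescaled limit from Table \ref{table:walls rescaled} — and apply Lemma \ref{lemma angle hyp} (hyperbolic case), Lemma \ref{lem: AdS angle} or Lemma \ref{lem: AdS intersection timelike} (AdS case, according to the causal types recorded in Propositions \ref{prop: hyperplanes polytope} and \ref{prop: facets}), or Definition \ref{def: angle HP} (half-pipe case). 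For every representative other than ${\mathcal R}_{\p0\p2}$ one finds $b_\bullet(\alpha,\alpha')=0$: these pairs are $b$-orthogonal at $t=1$ and the orthogonality relations persist for all $t$ by Remark \ref{rmk:orthogonality preserved}; since two transversely intersecting hyperplanes meet at a right dihedral angle exactly when their defining forms are $b_\bullet$-orthogonal (in each configuration occurring here the $2$-plane normal to the ridge is non-degenerate), all these ridges have constant, right dihedral angle. For ${\mathcal R}_{\p0\p2}$, with $\p0=(-\sqrt2\,|t|:|t|:|t|:|t|:1)$ and $\p2=(-\sqrt2\,|t|:|t|:-|t|:-|t|:1)$, a direct computation gives, for $t\in I^+$, $q_1(\p0)=q_1(\p2)=t^2+1$ and $b_1(\p0,\p2)=1-3t^2$, hence $\cos\theta_t=-b_1(\p0,\p2)/(t^2+1)=(3t^2-1)/(1+t^2)$ by Lemma \ref{lemma angle hyp}; and, for $t\in I^-$, $q_{-1}(\p0)=q_{-1}(\p2)=t^2-1<0$ (in accordance with Proposition \ref{prop: hyperplanes polytope}) and $b_{-1}(\p0,\p2)=-3t^2-1$, with $|b_{-1}(\p0,\p2)|>1-t^2$ always, hence $\cosh\varphi_t=(3t^2+1)/(1-t^2)$ by Lemma \ref{lem: AdS angle}; the same computation on the ($t$-independent) rescaled vectors of Table \ref{table:walls rescaled} settles the half-pipe case via Definition \ref{def: angle HP}.

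It then remains to record the ranges and the compactness. Since $\tfrac{d}{du}\tfrac{3u-1}{1+u}=\tfrac{4}{(1+u)^2}>0$ and $\tfrac{d}{du}\tfrac{3u+1}{1-u}=\tfrac{4}{(1-u)^2}>0$, the functions $t\mapsto\theta_t$ and $t\mapsto\varphi_t$ are strictly monotone in $t^2$; evaluating at the endpoints of $I^+=(0,\tfrac1{\sqrt3}]$ and $I^-=(-1,0)$ gives $\cos\theta_t\in[-1,0)$, so $\theta_t\in[\tfrac\pi2,\pi)$, and $\cosh\varphi_t\in(1,+\infty)$, so $\varphi_t\in(0,+\infty)$; in particular these angles are genuinely non-constant. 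Finally, a ridge is compact if and only if it contains no ideal vertex of $\overline{\mathcal P}_t$; recalling that the $12$ ideal vertices all lie in $\Hyp^3=\{x_4=0\}$ (the one in the fundamental domain being $[2:\sqrt2:\sqrt2:0:0]$, cf.\ Lemma \ref{lem:subset X^4_t}), one checks directly — reducing to $i=0$, $j=2$ via the symmetry group — that $\partial\p i\cap\partial\p j\cap\{x_4=0\}\cap\overline{\Hyp^4}=\emptyset$, so the red pentagons ${\mathcal R}_{\p i\p j}$ ($i\equiv j$) carry no ideal vertex and are compact, whereas every other ridge has an ideal vertex by the combinatorial description of Proposition \ref{prop: facets} (Figures \ref{fig:facet_L}–\ref{fig:facet_P}). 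Thus the compact ridges are exactly those with non-constant dihedral angle, which finishes the proof.

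The main obstacle is the bookkeeping in the first two steps: correctly extracting from Proposition \ref{prop: facets} which pairs among the $22$ hyperplanes actually bound a common ridge (most pairs do not, and precisely the ridges that occur must be orthogonal unless of type $\p i\p j$ with $i\equiv j$), and, in the Anti-de Sitter case, keeping track of the causal type of each bounding hyperplane and of each ridge so as to invoke the correct angle lemma and the correct orthogonality criterion. Once this is set up, the computations themselves are short.
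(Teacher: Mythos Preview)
Your proposal is correct and follows essentially the same approach as the paper's own proof: both reduce the constant (right) angles to the persistence of the orthogonality relations (Remark \ref{rmk:orthogonality preserved}), both compute the non-constant angle at a representative ridge ${\mathcal R}_{\p i\p j}$ via Lemma \ref{lemma angle hyp} (hyperbolic) and Lemma \ref{lem: AdS angle} (AdS), and both import the combinatorics and compactness from Proposition \ref{prop: const-combinatorics}. The only difference is cosmetic: the paper defers the entire $t\in I^+$ case and the combinatorial identification of the ridges to \cite[Proposition 3.10]{MR}, whereas you redo these computations from Proposition \ref{prop: facets} and the symmetry group, making your argument more self-contained but not methodologically different. (A tiny slip: your endpoint brackets ``$\cos\theta_t\in[-1,0)$'' should read $(-1,0]$; the conclusion $\theta_t\in[\tfrac\pi2,\pi)$ is nonetheless correct.)
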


\begin{proof}
This is proven for $t\in I^+$ in \cite[Proposition 3.10]{MR}. Such ridges are compact also when $t\in I^-$ by Proposition \ref{prop: const-combinatorics}. By applying Lemma \ref{lem: AdS angle} to the list of vectors in Table \ref{table:walls}, we conclude also for $t\in I^-$. (In fact, we have already observed in Remark \ref{rmk:orthogonality preserved} that the orthogonality between the vectors of Table \ref{table:walls} is maintained when $t\in(-1,0)$ for the bilinear form of signature $-+++-$.)
\end{proof}

We now describe the links of the vertices of ${\mathcal P}_t$, whose geometric structures have been described in Section \ref{sec:cusps} for ideal vertices and Section \ref{sec:poly-conemfds} for finite vertices. These are depicted in Figures \ref{fig:links}, \ref{fig: cusp transition} and \ref{fig: cusp transition2} and described by the following proposition:

\begin{figure}
\includegraphics[scale=.14]{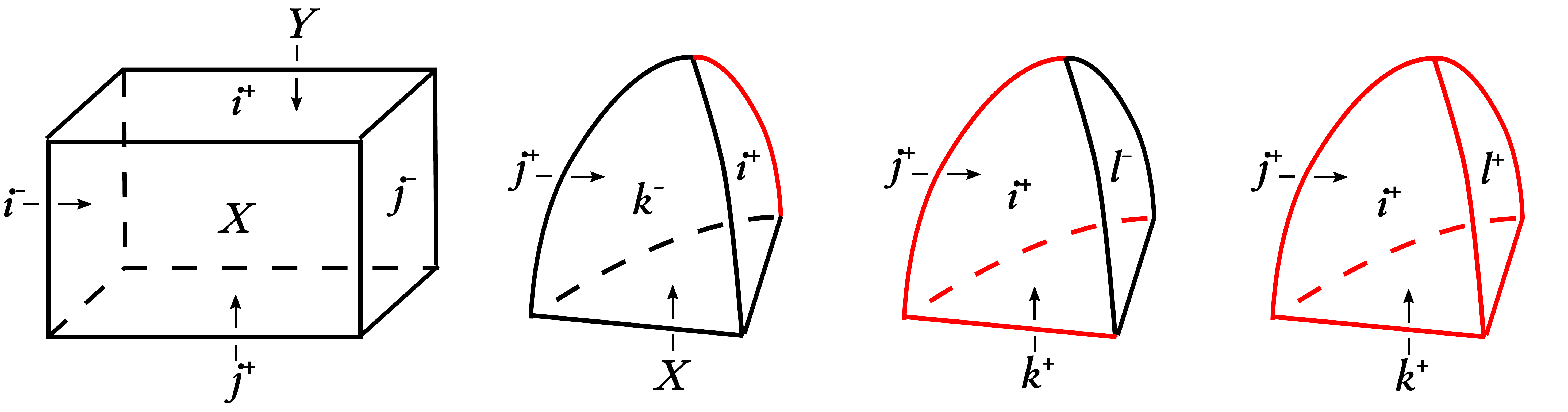}
\caption[The links of the vertices of ${\mathcal P}_t$.]{\footnotesize The links of the vertices of ${\mathcal P}_t$ (see Proposition \ref{prop: vertices}). When $t\in I^+$ (resp. $t\in I^-$) the link of an ideal vertex is a Euclidean (resp. Minkowski) right paralleleped, and the link of a finite vertex is a spherical tetrahedron (resp. de Sitter tetrahedron with spacelike facets). The black edges are right-angled, and the red edges have varying dihedral angle.}
\label{fig:links}
\end{figure}

\begin{prop} \label{prop: vertices}
The 46 vertices of ${\mathcal P}_t$ are divided by the similarity class of their link in:
\begin{itemize}
\item 12 ideal vertices of type ${\mathcal V}_{\p i\m i\p j\m j\l X\l Y}$, 
\item 24 finite vertices of type ${\mathcal V}_{\p i\p j\m k\l X}$, 
\item 8 finite vertices of type ${\mathcal V}_{\p i\p j\p k\m \ell}$, 
\item 2 finite vertices of type ${\mathcal V}_{\p i\p j\p k\p\ell}$, 
\end{itemize}
The link of each ideal vertex is a rectangular parallelepiped (which in a horospherical section is Euclidean when $t\in I^+$, and Minkowski when $t\in I^-$), while the link each finite vertex is a tetrahedron (which is spherical when $t\in I^+$, and HS when $t\in I^-$). A similar statement holds for the rescaled limit $\lim_{t\to0}\mathfrak r_{|t|}{\mathcal P}_t$ in the half-pipe setting.
\end{prop}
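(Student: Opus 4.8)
The plan is to leverage the combinatorial information already established in Proposition \ref{prop: const-combinatorics}, Proposition \ref{prop: facets} and Proposition \ref{prop: ridges}, together with the explicit vertex list of Table \ref{table:vertices}, so that the statement about links reduces to a bookkeeping exercise followed by a short geometric computation. First I would recall the general principle from Section \ref{sec:poly-conemfds}: the link $\mathcal L_p$ of a vertex $p = [v]$ of $\mathfrak r_{|t|}{\mathcal P}_t$ is the projective polytope in $\SP^3$ over $\R^5/\langle v\rangle$ cut out by the half-spaces $\l H$ with $p \in \partial\l H$. Since $\mathcal P_t$ is simple and each finite vertex lies on exactly $4$ bounding hyperplanes while each ideal vertex lies on exactly $6$ (by \cite[Propositions 3.12, 3.16]{MR}, extended via Proposition \ref{prop: const-combinatorics}), the combinatorial type of $\mathcal L_p$ is a tetrahedron for finite vertices and — using that the $6$ hyperplanes through an ideal vertex come in three "parallel'' pairs — a parallelepiped for ideal vertices. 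The four similarity classes are then read off by letting the symmetry group $\langle r_{\l L}, r_{\l M}, r_{\l N}, R\rangle$ act on the vertices of the fundamental domain $\mathfrak r_{|t|}(\overline{\mathcal Q}_t)$ listed in Table \ref{table:vertices}: one checks that the $13$ vertices of $\overline{\mathcal Q}_t$ fall into orbits whose representatives in $\overline{\mathcal P}_t$ are of the four stated types, and counts the orbit sizes to obtain $12 + 24 + 8 + 2 = 46$.

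The second and more geometric part is to identify the intrinsic geometry of each link. For a finite vertex $p$, the stabiliser $\mathrm{Stab}(p)$ is $\O(4)$ in the hyperbolic case ($t \in I^+$), $\O(1,3)$ in the AdS case ($t \in I^-$), and the half-pipe point-stabiliser $\Isom(\R^3)\times\Z/2\Z$ when $t = 0$ (Section \ref{sec:poly-conemfds}); accordingly $\mathcal L_p$ carries a spherical, an HS, or a half-pipe-link structure, respectively, and one must check that the four bounding hyperplanes through $p$ restrict to a \emph{spacelike} simplex in the AdS and HP cases. This last point follows from Proposition \ref{prop: hyperplanes polytope}: for $t \in I^-$ the hyperplanes $\partial\p i$ are spacelike and $\partial\m i, \partial\l X$ are timelike, but the key observation — already used in the three-dimensional warm-up — is that at a finite vertex the relevant \emph{facets of the link} are the ridges of $\mathcal P_t$ meeting at $p$, and one verifies from Table \ref{table:walls} (via Lemma \ref{lemma hyperplane ads} and Lemma \ref{lem: AdS intersection timelike}) that each such ridge is spacelike, so the link simplex sits inside the de Sitter region $\dS^3 \subset \HS^3$. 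For an ideal vertex, $\mathcal L_p$ degenerates to a Euclidean (resp. Minkowski, resp. Galilean) geometry on a horospherical cross-section by the discussion in Section \ref{sec:cusps}; the parallelepiped structure comes from the three pairs of disjoint hyperplanes, which by Remark \ref{rmk:orthogonality preserved} meet each other orthogonally for \emph{all} $t$, hence the cross-section is a \emph{right} (rectangular) parallelepiped, Euclidean for $t \in I^+$ and Minkowski for $t \in I^-$, by Section \ref{sec transition horospheres}.

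Finally, for the rescaled limit $\lim_{t\to 0}\mathfrak r_{|t|}{\mathcal P}_t$ one simply observes that all of the above is uniform in $t \in I$: the combinatorics is constant by Proposition \ref{prop: const-combinatorics}, the orthogonality relations persist at $t = 0$ by Remark \ref{rmk:orthogonality preserved}, and the point-stabilisers in $\HP^4$ are the rescaled limits of those in $\Hyp^4$ and $\AdS^4$ by Remark \ref{rem: AdS-cone}. Thus the link of a finite vertex is the double of a spacelike simplex in the space of rays of $T_x\HP^4$ (Section \ref{sec:poly-conemfds}), and the link of an ideal vertex is a right parallelepiped with a Galilean cross-section. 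I expect the main obstacle to be the careful verification that the ridges meeting at each finite AdS vertex are all spacelike — i.e. that the link simplex lies entirely in the de Sitter part of $\HS^3$ and never touches the two hyperbolic caps; this requires applying Lemma \ref{lem: AdS intersection timelike} (and its analogue for a spacelike–timelike pair) to the specific sign patterns of the vectors $\p i, \m i, \l X$ in Table \ref{table:walls}, case by case according to the four vertex types, and checking in each case that the relevant Gram-matrix minors have the right sign. The parallelepiped/tetrahedron dichotomy and the orbit count are, by contrast, routine once the symmetry action on Table \ref{table:walls} is tabulated.
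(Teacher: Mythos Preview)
Your approach is the same as the paper's: the paper's proof is two sentences --- it cites \cite[Proposition~3.16]{MR} for $t\in I^+$ and then invokes Propositions~\ref{prop: const-combinatorics}, \ref{prop: hyperplanes polytope} and~\ref{prop: ridges} to transport the statement to $t\in I^-$ and to the rescaled limit. You have unpacked that transport into the explicit bookkeeping it entails.

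One point to clean up, however. You write that ``at a finite vertex the relevant facets of the link are the ridges of $\mathcal P_t$ meeting at $p$''. This is not right: the facets of $\mathcal L_p$ are cut out by the \emph{facets} of $\mathcal P_t$ through $p$; ridges of $\mathcal P_t$ correspond to \emph{edges} of $\mathcal L_p$. More importantly, the proposition only asserts that each finite link is an HS tetrahedron; it does not claim the simplex lies in the de~Sitter region of $\HS^3$ or has only spacelike facets. Your anticipated ``main obstacle'' (Gram-minor checks via Lemma~\ref{lem: AdS intersection timelike} to place the simplex inside $\dS^3$) is therefore unnecessary here --- and would in fact fail as stated, since for vertices of type ${\mathcal V}_{\p i\p j\m k\l X}$ two of the four bounding hyperplanes ($\partial\m k$ and $\partial\l X$) are timelike by Proposition~\ref{prop: hyperplanes polytope}, so not all link facets are spacelike. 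The finer question of where the singular locus sits inside $\HS^3$ only becomes relevant later, in Proposition~\ref{prop: near_Sigma}, and there it concerns the links in the glued cone-manifold $\mathcal X_t$, not the raw links in $\mathcal P_t$.
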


\begin{proof}
These facts are proven in \cite[Proposition 3.16]{MR} for $t\in I^+$. By applying Propositions \ref{prop: const-combinatorics}, \ref{prop: hyperplanes polytope} and \ref{prop: ridges}, we conclude also for $t\in I^-$ and for the rescaled limit.
\end{proof}


\begin{figure}
\centering
\begin{minipage}[c]{.33\textwidth}
\centering
\includegraphics[scale=0.5]{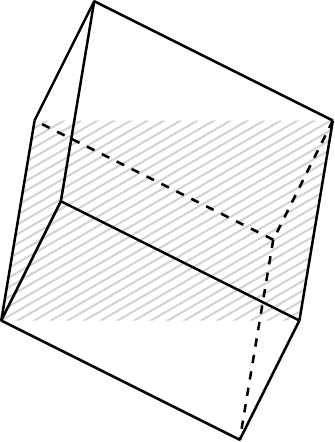}
\end{minipage}%
\begin{minipage}[c]{.33\textwidth}
\centering
\includegraphics[scale=0.5]{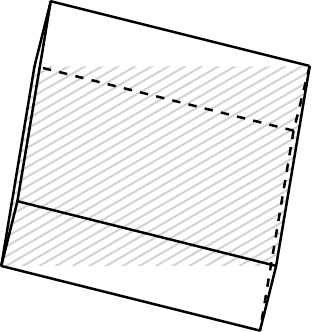}
\end{minipage}%
\begin{minipage}[c]{.33\textwidth}
\centering
\includegraphics[scale=0.5]{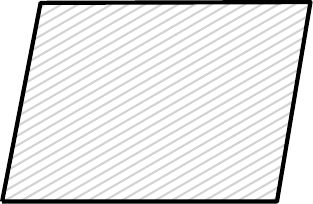}
\end{minipage}
\caption[The link of an ideal vertex of ${\mathcal P}_t$.]{\footnotesize The link of an ideal vertex of ${\mathcal P}_t$, obtained by intersecting ${\mathcal P}_t$ with a horosphere. The further intersection with $\Hyp^3$, which is constant in $t$, {is a rectangle (shaded in the picture)}. When $t\to 0$, the rectangular parallepiped collapses to this rectangle.} 
\label{fig: cusp transition}
\end{figure}

\begin{figure}
\centering
\begin{minipage}[c]{.33\textwidth}
\centering
\includegraphics[scale=0.5]{euclideancusp1.pdf}
\end{minipage}%
\begin{minipage}[c]{.33\textwidth}
\centering
\includegraphics[scale=0.47]{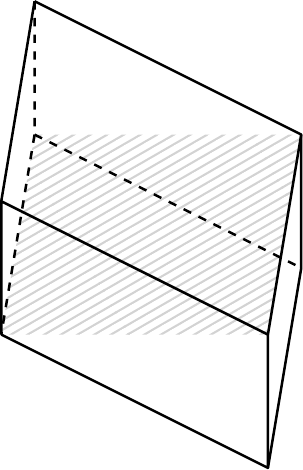}
\end{minipage}%
\begin{minipage}[c]{.33\textwidth}
\centering
\includegraphics[scale=0.45]{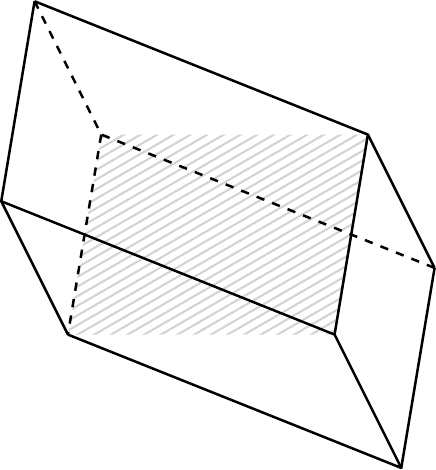}
\end{minipage}
\caption[Geometric transition on the link of an ideal vertex.]{\footnotesize After rescaling, the geometry of the link of an ideal vertex transitions from Euclidean (left) to Minkoskian (right), via Galilean geometry (centre). The intersection with the fixed copy of $\Hyp^3$ is shaded. This is an example of the transition explained in Section \ref{sec transition horospheres}.} 
\label{fig: cusp transition2}
\end{figure}

As a consequence of Proposition \ref{prop: vertices}, we get:

\begin{cor} \label{cor: finite volume polytope}
When $t\in I^-$, the anti-de Sitter polytope ${\mathcal P}_t\subset\AdS^4$ has finite volume.
The same holds for the half-pipe polytope $\lim_{t\to0}\mathfrak r_{|t|}({\mathcal P}_t)\subset\HP^4$.
\end{cor}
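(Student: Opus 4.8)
The plan is to reduce the finiteness of the volume to a local statement at each ideal vertex, exactly as in the hyperbolic case. A projective polytope ${\mathcal P}_t$ contained in $\AdS^4$ (resp. $\HP^4$) has finite volume if and only if it is the intersection of finitely many half-spaces with $\AdS^4$ (resp. $\HP^4$), its closure $\overline{\mathcal P}_t$ meets $\partial\AdS^4$ (resp. $\partial\HP^4$) only in finitely many points — which by Proposition \ref{prop: const-combinatorics} and Lemma \ref{lem: vertices} are precisely the $12$ ideal vertices — and near each such ideal vertex the polytope looks like a cusp neighbourhood of the type described in Section \ref{sec:cusps}. So the first step is to invoke Proposition \ref{prop: vertices}: the link of each ideal vertex of ${\mathcal P}_t$ is a compact (rectangular parallelepiped) cross-section, i.e. the intersection of ${\mathcal P}_t$ with a small horosphere centred at that ideal vertex is compact.

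Next I would argue that a neighbourhood of an ideal vertex $v$ inside ${\mathcal P}_t$ is isometric to a truncated cusp, in the sense of Definition \ref{defi cusp}: after conjugating by an isometry of $\AdS^4$ (resp. a $G_{\HP^4}$-transformation) sending $v$ to the point $p=[1:1:0:0:0]$, the polytope near $v$ sits inside the upper half-space model $\zeta_t$ of Section \ref{sec upper halfspace}, between the bounding hyperplanes through $v$; since by Proposition \ref{prop: vertices} exactly six facets pass through $v$ and their link is a compact parallelepiped, the region $\{y_1>c\}\cap{\mathcal P}_t$ is exactly the quotient — no, more simply, it is a fundamental chunk of — a toric-type cusp as in Example \ref{ex:hp torical cusp}. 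Then Remark \ref{rem: finite volume} applies verbatim: a cusp in an AdS or half-pipe manifold has finite volume, the volume form being the canonical pseudo-Riemannian (resp. Galilean, cf.\ \cite{surveyseppifillastre}) one. This handles the (finitely many) non-compact ends.

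Finally, away from these cusp neighbourhoods the polytope ${\mathcal P}_t$ is compact: by Lemma \ref{lem:subset X^4_t} (and its consequence in the proof of Proposition \ref{prop: const-combinatorics}) the only points of $\overline{\mathcal P}_t$ on $\partial\mathbb X^4_t$ are the $12$ ideal vertices, so removing a small horoball around each of them leaves a closed subset of $\mathbb X^4_t$ which is bounded in the affine chart $\Aff^4$ (Lemma \ref{lem:subset A^4}) and hence has finite volume for the metric $b_t$, which is smooth and non-degenerate there (for $t<0$) or for the canonical half-pipe volume form (for $t=0$). Adding up the compact part and the finitely many finite-volume cusps gives $\mathrm{vol}({\mathcal P}_t)<\infty$.

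The only real subtlety — the step I expect to require the most care — is the identification of a neighbourhood of an ideal vertex with a genuine cusp in the sense of Definition \ref{defi cusp}, because in the AdS and HP settings "ideal vertex" and "cusp" are not as canonically tied together as in the hyperbolic world: one must check that the six bounding hyperplanes through $v$, together with the horosphere, cut out precisely the region stabilised by a $\Z^3$ acting as in Example \ref{ex:hp torical cusp}, and that the induced geometry on the horospherical cross-section is the Minkowski (resp. Galilean) structure of Proposition \ref{prop: vertices}. Once this matching is in place, the finiteness is immediate from Remark \ref{rem: finite volume}, and the half-pipe case follows by the same argument using the canonical half-pipe volume form together with the fact (Proposition \ref{prop: const-combinatorics}) that $\lim_{t\to0}\mathfrak r_{|t|}({\mathcal P}_t)$ has the same combinatorics, hence the same cusp cross-sections, as ${\mathcal P}_t$ for $t\neq 0$.
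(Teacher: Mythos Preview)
Your proposal is correct and follows essentially the same approach as the paper: truncate ${\mathcal P}_t$ by horospheres at the $12$ ideal vertices, identify each end with a fundamental domain of a cusp (Definition \ref{defi cusp}), invoke Remark \ref{rem: finite volume} for the cusp pieces, and observe that the remaining truncated part is compact. The paper's proof is more terse---it packages the compactness of the truncated part into the statement that ``each edge joins two vertices'' via Proposition \ref{prop: const-combinatorics}---but the logical skeleton is identical, and you have correctly flagged the one step (the AdS/HP cusp identification) that the paper leaves implicit.
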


\begin{proof}
By Proposition \ref{prop:polytope_hyp}, when $t\in I^+$ the hyperbolic polytope ${\mathcal P}_t$ has finite volume. Equivalently, each edge of ${\mathcal P}_t$ joins two (finite or ideal) vertices of ${\mathcal P}_t$. 
By Proposition \ref{prop: const-combinatorics}, this last fact also holds in $\AdS^4$ when $t\in I^-$ and in $\HP^4$ for the rescaled limit. By truncating the ends of ${\mathcal P}_t$ (resp. $\lim_{t\to0}\mathfrak r_{|t|}({\mathcal P}_t)$) with horospheres (see Section \ref{sec: horospheres}), we have decomposed the polytope in a compact part plus 12 regions, each isometric to the fundamental domain of a cusp in an anti-de Sitter (resp. half-pipe) 4-manifold (see Definition \ref{defi cusp}). Therefore (see Remark \ref{rem: finite volume}), the polytope has finite volume.
\end{proof}

\subsection{Orbifold transition} \label{subsec orbifold transition}

Roughly speaking, a $(G,X)$-\emph{orbifold} is a space locally modelled on quotients of $X$ by finite subgroups of $G$. 
We refer to \cite{thurstonnotes,choi} for the details.

The geometry of ${\mathcal P}_t$ gives the complement of the ridges with non-constant dihedral angle (see Proposition \ref{prop: ridges}) a natural structure of orbifold (this is different, but somehow related, to the concept of ``mirror polytope'' \cite{marquis,CLM}), and this fact will be convenient in the sequel. 

Let us first prove a preliminary lemma. Recall Section \ref{sec: reflections} about reflections and their limits. For the sake of clarity, we will make explicit the dependence of the half-spaces of Table \ref{table:walls} in $t$, by a subscript $\l H_t$.

\begin{lemma}\label{lemma reflections C1}
For every half-space $\l H$ of Table \ref{table:walls}, let $r_{\l H}=r_{\l H}(t)$ be the reflection in $\Isom(\Hyp^4)$ for $t\in I^+$ and in $\Isom(\AdS^4)$ for $t\in I^-$ which fixes $\partial \l H$. Then $\mathfrak r_{|t|}r_{\l H}(t)\mathfrak r_{|t|}^{-1}$ extends to a $C^1$ path in $\Aut(\SP^4)$ for $t\in I^-\cup\{0\}\cup I^+$.
\end{lemma}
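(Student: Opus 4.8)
The plan is to exhibit, for every bounding half-space $\l H$ and every $t\neq 0$, an explicit matrix representative of $\r_{|t|}r_{\l H}(t)\r_{|t|}^{-1}$ and then to read off from it that its entries extend to $C^1$ functions of $t$ across $t=0$; as a byproduct one sees that the limit lies in $G_{\HP^4}$, which is what the half-pipe orbifold construction needs. First I would pass to the rescaled picture. Since $r_{\l H}(t)$ lies in $\Isom(\Hyp^4)$ for $t\in I^+$ (resp. $\Isom(\AdS^4)$ for $t\in I^-$) and fixes $\partial\l H_t$ pointwise, and since (Section \ref{sec:rescaled_limit}) $\r_{|t|}$ conjugates the relevant isometry group onto $\Isom(\X_t^4)=\O(q_t)$, the conjugate $\r_{|t|}r_{\l H}(t)\r_{|t|}^{-1}$ is the unique reflection in $\O(q_t)$ fixing pointwise the hyperplane $\partial(\r_{|t|}\l H_t)$, where $\r_{|t|}\l H_t$ is the covector of Table \ref{table:walls rescaled} (it is $q_t$-nondegenerate by Proposition \ref{prop: hyperplanes polytope}), $q_t(x)=-x_0^2+x_1^2+x_2^2+x_3^2+t|t|x_4^2$, and $J_t=\mathrm{diag}(-1,1,1,1,t|t|)$ is the Gram matrix of $q_t$. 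A reflection in $\O(q_t)$ fixing the hyperplane dual to a covector $\beta=(\beta_0:\dots:\beta_4)$ with $N_t(\beta):=\beta J_t^{-1}\beta^{T}\neq0$ is represented by
\begin{equation*}
R_t(\beta)=I-\frac{2}{N_t(\beta)}\,\bigl(J_t^{-1}\beta^{T}\bigr)\,\beta,\qquad
N_t(\beta)=-\beta_0^2+\beta_1^2+\beta_2^2+\beta_3^2+\frac{\beta_4^2}{t|t|},
\end{equation*}
so it remains to substitute the three types of rows of Table \ref{table:walls rescaled} and let $t\to0$.

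For $\l H=\l X\in\{\l A,\dots,\l F\}$ the covector $\r_{|t|}\l X$ has last coordinate $0$ and the others constant, so $N_t=1$, $J_t^{-1}(\r_{|t|}\l X)^{T}$ is constant, and $R_t(\r_{|t|}\l X)$ is a $t$-independent matrix of block form $\mathrm{diag}(\widehat A,1)$ with $\widehat A\in\O(\widehat q)$; the path is constant, hence $C^1$, and lies in $G_{\HP^4}$ by \eqref{eq form isometries hp}. For $\l H=\m i$ one has $\r_{|t|}\m i=(-\sqrt2:\epsilon_1:\epsilon_2:\epsilon_3:\delta\,t|t|)$ with $\epsilon_j,\delta\in\{\pm1\}$, whence $J_t^{-1}(\r_{|t|}\m i)^{T}=(\sqrt2,\epsilon_1,\epsilon_2,\epsilon_3,\delta)^{T}$ and $N_t=1+t|t|\neq0$ on $I$, so
\begin{equation*}
R_t(\r_{|t|}\m i)=I-\frac{2}{1+t|t|}\,(\sqrt2,\epsilon_1,\epsilon_2,\epsilon_3,\delta)^{T}(-\sqrt2,\epsilon_1,\epsilon_2,\epsilon_3,\delta\,t|t|),
\end{equation*}
rational in $t|t|$ with nonvanishing denominator; at $t=0$ the last column of the correction vanishes, so $R_0$ has the form \eqref{eq form isometries hp}. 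The only slightly delicate case is $\l H=\p i$, where $\r_{|t|}\p i=(-\sqrt2:\epsilon_1:\epsilon_2:\epsilon_3:\epsilon_4)$ is constant with all $\epsilon_j=\pm1$: now $J_t^{-1}(\r_{|t|}\p i)^{T}=(\sqrt2,\epsilon_1,\epsilon_2,\epsilon_3,\epsilon_4/(t|t|))^{T}$ and $N_t=1+1/(t|t|)$ both blow up as $t\to0$, but multiplying numerator and denominator by $t|t|$ gives
\begin{equation*}
R_t(\r_{|t|}\p i)=I-\frac{2}{1+t|t|}\,(\sqrt2\,t|t|,\epsilon_1 t|t|,\epsilon_2 t|t|,\epsilon_3 t|t|,\epsilon_4)^{T}(-\sqrt2,\epsilon_1,\epsilon_2,\epsilon_3,\epsilon_4),
\end{equation*}
again rational in $t|t|$ with denominator $1+t|t|$, which on $I=(-1,\tfrac1{\sqrt3}]$ vanishes only at the excluded endpoint $t=-1$; at $t=0$ only the last row of $R_0$ differs from that of $I$, so once more the limit has the form \eqref{eq form isometries hp}.

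Finally I would conclude by noting that $t\mapsto t|t|=\mathrm{sign}(t)\,t^2$ is $C^1$ on $I$ (with derivative $2|t|$); hence in each of the three cases $R_t(\r_{|t|}\l H)$ — a fixed rational expression in $t|t|$ whose denominator is bounded away from $0$ — is a $C^1$ matrix-valued function of $t\in I^-\cup\{0\}\cup I^+$, which is exactly the claimed $C^1$ extension of $\r_{|t|}r_{\l H}(t)\r_{|t|}^{-1}$ to a path in $\Aut(\SP^4)$, with value at $t=0$ in $G_{\HP^4}$. I expect the only genuine (and mild) obstacle to be the $\p i$ case: the reflection formula applied verbatim yields entries blowing up like $1/(t|t|)$, and one must observe that rescaling the matrix representative cancels this singularity. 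Note also that, since $t|t|$ is $C^1$ but not $C^2$, the resulting path is genuinely $C^1$ and no smoother, in agreement with the statement.
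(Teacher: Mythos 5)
Your proof is correct, and it is essentially the same computation as the paper's, packaged slightly differently. The paper stays in the unrescaled picture and writes each reflection as $\mathrm{id}\mp 2J_{\pm1}\alpha_i(t)\alpha_i(t)^T$ with $\alpha_i(t)$ normalized to have $q_{\pm1}$-norm $\pm1$; it then conjugates by $\r_{|t|}$ and reads off that the resulting column- and row-vector factors each extend $C^1$ across $t=0$. This forces a separate $+$-sign variant of the formula in the $\p i$ case, since those covectors are spacelike for $q_{-1}$. You instead pass directly to the rescaled picture and use the unnormalized $\O(q_t)$-reflection formula $R_t(\beta)=I-\tfrac{2}{N_t(\beta)}\bigl(J_t^{-1}\beta^T\bigr)\beta$, which absorbs the sign of $q_t$ on $\beta$ into $N_t$ and so treats the spacelike and timelike facets uniformly; the trade-off is that for $\p i$ both $J_t^{-1}\beta^T$ and $N_t$ blow up as $t\to 0$, and you correctly observe that the singularity is an artifact of the representative and cancels after multiplying through by $t|t|$. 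Both arguments reduce to the $C^1$-regularity of $t\mapsto t|t|$ and the nonvanishing of $1+t|t|$ on $I$, and both correctly identify the $\p i$ facets as the only subtle case; your uniform formula trades the paper's sign case-analysis for the need to clear denominators. As a side remark, your computation also makes plain that the entries of the rescaled reflection in the $\p i$ case are, up to constants, $1/(1+t|t|)$ and $t|t|/(1+t|t|)$ (not the square-root variants the paper writes), a harmless slip in the paper that does not affect the conclusion.
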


Observe that Lemma \ref{lemma reflections C1} does not follow immediately from the convergence of the hyperplanes $\mathfrak r_{|t|}(\partial\l H_t)$ to a half-pipe hyperplane because, as remarked in Section \ref{remark:reflections}, in half-pipe geometry there is \emph{not} a uniquely determined reflection in a hyperplane.

\begin{proof}
Let us start by the case $\l H=\m i\in\l i\in\{\m 0,\ldots,\m 7\}$. For $t\in I^+$, the hyperbolic reflection $r_{\l i^-}$ (for which we will make explicit the dependence on $t$) can be written as the matrix
\begin{equation} \label{eq: reflection hyp}
r_{\l i^-}(t)=\mathrm{id}-2J_1\alpha_i(t)\alpha_i(t)^T~,
\end{equation}
where 
$J_1=\mathrm{diag}(-1,1,1,1,1)$ and
$$\alpha_i(t)^T=\frac{1}{\sqrt{1+t^2}}\left( -\sqrt{2},\pm1,\pm1,\pm1,\pm t \right)$$
is obtained from the vector defining $\m i$ (see Table \ref{table:walls}) by normalising with respect to the Minkowski product of $\R^{1,4}$.
(The signs in the $\pm$ symbols are fixed once and forever according to the choice of $\m i$.)
Indeed, one can check (using that $\alpha_i(t)^TJ_1\alpha_i(t)=1$) that the expression of Equation \eqref{eq: reflection hyp} maps $J_1\alpha_i(t)$ to its opposite, whereas it fixes every $v\in\partial \l i^-$, since $v$ satisfies $\alpha_i(t)^T v=0$.

Similarly, when $t\in I^-$ the AdS reflection can be expressed as
\begin{equation} \label{eq: reflection ads}
r_{\l i^-}(t)=\mathrm{id}-2J_{-1}\alpha_i(t)\alpha_i(t)^T~,
\end{equation}
where $J_{-1}=\mathrm{diag}(-1,1,1,1,-1)$ and 
$$\alpha_i(t)=\frac{1}{\sqrt{1-t^2}}\left( -\sqrt{2}:\pm1:\pm1:\pm1:\pm t \right)~.$$
Hence we get the expression (for $t\neq 0)$:
$$\r_{|t|}r_{\l i^-}(t)\r_{|t|}^{-1}=\mathrm{id}-2\left[J_{\mathrm{sign}(t)}\r_{|t|}\alpha_i(t)\right]\left[\r_{|t|}^{-1}\alpha_i(t)^T\right]~.$$
The term in the first square bracket thus reads for both $t>0$ and $t<0$ as the column vector:
$$\frac{1}{\sqrt{1+t|t|}}\left(\sqrt{2}:\pm1:\pm1:\pm1:\pm 1 \right)$$
while the second square bracket has the form (horizontal vector):
$$\frac{1}{\sqrt{1+t|t|}}\left( -\sqrt{2}:\pm1:\pm1:\pm1:\pm t|t| \right)~.$$
Since both extend $C^1$ to $t=0$, so does $\r_{|t|}r_{\l i^-}(t)\r_{|t|}^{-1}$.

For $\l H=\l X\in \l X\in\{\l A,\ldots,\l F\}$, the path $\r_{|t|}r_{\l X}\r_{|t|}^{-1}$ is actually constant, since $\l X$ does not depend on $t$ and it can be easily checked that the hyperbolic and AdS reflections, expressed as in Equations \eqref{eq: reflection hyp} and \eqref{eq: reflection ads},  coincide and commute with $\r_{|t|}$.
Finally, for the case $\l H=\p i\in\{\p 0,\ldots,\p 7\}$, when $t\in I^-$ there is a small difference in the formula of Equation \eqref{eq: reflection hyp}, which now becomes
\begin{equation} \label{eq: reflection ads 2}
r_{\l i^+}(t)=\mathrm{id}+2J_{-1}\alpha_i(t)\alpha_i(t)^T~,
\end{equation}
due to the fact that the $\m i$ are timelike while the $\p i$ are spacelike, and again the rescaled limit is the same as for $t\in I^+$. With this caveat, it can be checked directly that all the entries in $\r_{|t|}r_{\l i^+}(t)\r_{|t|}^{-1}$ are, up to constants, either of the form $1/\sqrt{1+t|t|}$ or of the form $t|t|/\sqrt{1+t|t|}$, and thus the path is $C^1$ in $\Aut(\SP^4)$.
\end{proof}

From the proof of Lemma \ref{lemma reflections C1}, we see also that the convergence is not $C^2$.

\begin{remark} \label{remark rescaled holonomy}
The proof of Lemma \ref{lemma reflections C1} enables us to compute also the limits of $\r_{|t|}r_{\l H}(t)\r_{|t|}^{-1}$, as {$t\to0$} in the half-pipe group $G_{\HP^4}$. In fact, for the reflections along the hyperplanes $\m i$, we obtain immediately
\begin{equation}\label{eq:colomba}
\lim_{t\to 0}\r_{|t|}r_{\l i^-}(t)\r_{|t|}^{-1}=\left(
\begin{array}{ccc|c}
  &&& 0 \\
  
   & \mathrm{id}-2Jv_iv_i^T & & \vdots \\
  &&& 0 \\
    \hline  
   \ldots &\pm 2v_i^T & \ldots  & 1
\end{array}
\right)=\phi(r_i,\mp 2Jv_i)~,
\end{equation}
where we put
{$v_i^T=( -\sqrt{2},\pm 1,\pm 1,\pm 1)$} (namely, the first four terms of the corresponding vector in Table \ref{table:walls}, and $r_i=\mathrm{id}-2Jv_iv_i^T$ is the reflection in $\Hyp^3$ in the plane determined by $v_i$, for $J=\mathrm{diag}(-1,1,1,1)$. The sign in Equation \eqref{eq:colomba} depends on the oddity of $i$, since it follows from the sign in the last entry of $\alpha_i(t)$, and in fact the correct sign is $(-1)^{i+1}$. In the last equality, we applied the isomorphism $\phi$ of Lemma \ref{lemma isomorphism halfpipe and minkowski}. 

For any half-space $\l X\in\{\l A,\ldots,\l F\}$, the same computation shows easily that 
$$\lim_{t\to 0}\r_{|t|}r_{\l X}\r_{|t|}^{-1}=\phi(r_{\l X},0)~,$$
where $r_{\l X}$ is now interpreted as the reflection in $\Hyp^3$ associated to the plane $\Hyp^3\cap{\partial}\l X$.

Finally, for the half-spaces of the form $\p i$ the computation is again similar following Lemma \ref{lemma reflections C1}. One obtains
$$\lim_{t\to 0}\r_{|t|}r_{\p i}(t)\r_{|t|}^{-1}=\left(\begin{array}{ccc|c}
  &&& 0 \\
  
   & \mathrm{id} & & \vdots \\
  &&& 0 \\
    \hline  
   \ldots &\mp 2v_i^T & \ldots  & -1
\end{array}
\right)=\phi(-\mathrm{id},\mp 2Jv_i)~,$$
where $v_i$ is defined as above.
\end{remark}

We are ready to describe the natural orbifold structure on a subset of $\mathcal P_t$.

\begin{prop} \label{prop: transiz orbifold}
The set
$${\mathcal P}^\times_t= {\mathcal P}_t\smallsetminus\bigcup_{i\neq j}{\mathcal R}_{\p i\p j}$$
is isometric to a hyperbolic orbifold when $t\in I^+$, and to an anti-de Sitter orbifold when $t\in I^-$. Similarly, the rescaled limit $\lim_{t\to0}\mathfrak r_{|t|}{\mathcal P}^\times_t$ has a natural structure of half-pipe orbifold.
\end{prop}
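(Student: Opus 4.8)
The plan is to realise $\mathcal P^\times_t$, and its rescaled limit at $t=0$, as a \emph{reflection orbifold}: the underlying space is the polytope with its facets silvered, and the orbifold charts are obtained by unfolding around each face, exactly as in the classical Coxeter-polytope construction (see \cite{thurstonnotes,choi}). For each bounding hyperplane $\l H$ of Table \ref{table:walls}, let $r_{\l H}=r_{\l H}(t)$ be the reflection fixing $\partial\l H$ inside $\Isom(\Hyp^4)$ for $t\in I^+$ and inside $\Isom(\AdS^4)$ for $t\in I^-$; at $t=0$ take instead $\bar r_{\l H}:=\lim_{t\to 0}\mathfrak r_{|t|}\,r_{\l H}(t)\,\mathfrak r_{|t|}^{-1}\in G_{\HP^4}$, which exists and is computed in Remark \ref{remark rescaled holonomy}. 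Given a point $p$ in the relative interior of a face $F$ of the polytope that survives in $\mathcal P^\times_t$ (equivalently, $F$ is contained in none of the ridges $\mathcal R_{\p i\p j}$), let $\partial\l H_1,\dots,\partial\l H_m$ be the bounding hyperplanes through $p$ and set $\Gamma_p=\langle r_{\l H_1},\dots,r_{\l H_m}\rangle$. The orbifold chart at $p$ will be a $\Gamma_p$-invariant ball $V\subset\mathbb X^4_t$ around $p$ together with the quotient $V\to V/\Gamma_p$, whose image is a neighbourhood of $p$ in $\mathcal P^\times_t$.

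The first step is the key algebraic input: the reflections through $p$ pairwise commute and $\Gamma_p\cong(\Z/2)^m$. Since $\mathcal P_t$ is simple \cite[Proposition 3.12]{MR} with combinatorics constant in $t$ by Proposition \ref{prop: const-combinatorics}, $F$ lies in exactly these $m$ bounding hyperplanes, and since $F$ is contained in no ridge $\mathcal R_{\p i\p j}$, at most one of $\l H_1,\dots,\l H_m$ belongs to $\{\p 0,\dots,\p 7\}$. Hence any two of them, $\partial\l H_a$ and $\partial\l H_b$, span a ridge of $\mathcal P_t$ which is not of type $\mathcal R_{\p i\p j}$, so by Proposition \ref{prop: ridges} the hyperplanes are orthogonal (for $b_1$ when $t\in I^+$, for $b_{-1}$ when $t\in I^-$), and orthogonal reflections commute. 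For the rescaled limit the commutation passes to the limit $t\to 0$: by Remark \ref{rmk:orthogonality preserved} the orthogonality relations among the rows of Table \ref{table:walls} persist for all $t$, so $\mathfrak r_{|t|}r_{\l H_a}(t)\mathfrak r_{|t|}^{-1}$ and $\mathfrak r_{|t|}r_{\l H_b}(t)\mathfrak r_{|t|}^{-1}$ commute for every $t\neq 0$, and commuting is a closed condition. That no non-empty sub-product of the generators is trivial — hence $\Gamma_p\cong(\Z/2)^m$ — is immediate in the $\Hyp^4$ and $\AdS^4$ cases (distinct reflections in $m$ hyperplanes in general position), and in the $\HP^4$ case follows from a short inspection of the formulae of Remark \ref{remark rescaled holonomy}, using once more that at most one of the $\l H_a$ lies in $\{\p 0,\dots,\p 7\}$.

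The second step is the local model and the compatibility of charts. Each $r_{\l H}$, and likewise each $\bar r_{\l H}$ — which for the degenerate facets $\m i$ and $\l X$ is the involution of $G_{\HP^4}$ fixing the degenerate hyperplane $\partial\l H$ pointwise, by passing to the $C^1$-limit via Lemma \ref{lemma reflections C1} — has fixed locus exactly $\partial\l H$. Since $\Gamma_p\cong(\Z/2)^m$ is a finite group fixing $p$, generated by reflections in the $m$ hyperplanes through $p$ which are in general position, its action near $p$ is linearizable and conjugate to the standard action of $(\Z/2)^m$ by reflections in $m$ coordinate hyperplanes of $\R^4$; hence $V\cap\mathcal P_t$ is a fundamental domain and $V/\Gamma_p$ is canonically homeomorphic to the neighbourhood $V\cap\mathcal P^\times_t$ of $p$. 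On overlaps, the transition maps between two such charts are restrictions of words in the $r_{\l H}$'s, hence elements of $\Isom(\Hyp^4)$, $\Isom(\AdS^4)$ or $G_{\HP^4}$, exactly as in the classical reflection-orbifold argument. This equips $\mathcal P^\times_t$ with a hyperbolic (resp. Anti-de Sitter) orbifold structure for $t\in I^+$ (resp. $t\in I^-$), and the rescaled limit with a half-pipe orbifold structure.

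I expect the genuinely delicate point to be the half-pipe case, and it is conceptual rather than computational: by Proposition \ref{prop:hp reflections deg} a degenerate hyperplane of $\HP^4$ does not determine a reflection, so the half-pipe orbifold structure is \emph{not} manufactured by the convergence of the polytopes $\mathfrak r_{|t|}\mathcal P_t$ alone. The construction above circumvents this by \emph{choosing}, along each degenerate facet $\m i$ and $\l X$, the reflection arising as the rescaled limit of the hyperbolic and AdS reflections; this is the choice — and, as confirmed later, essentially the only one — for which all the commutation relations used above survive the limit, and it is also the choice that will make the face pairings of the copies of $\mathcal P_t$ match up in Section \ref{sec 4dim}. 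For the spacelike facets $\p i$ there is no ambiguity by Proposition \ref{prop:hp reflections space}, and one checks that the unique reflection there coincides with the limit computed in Remark \ref{remark rescaled holonomy}.
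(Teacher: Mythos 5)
Your proposal is correct and takes essentially the same route as the paper: silvering the facets, exploiting the orthogonality given by Proposition \ref{prop: ridges} to show that the local reflection groups are elementary abelian $2$-groups, and defining the half-pipe reflections as the rescaled limits from Lemma \ref{lemma reflections C1}, precisely because Proposition \ref{prop:hp reflections deg} shows a degenerate hyperplane does not determine a unique reflection. The only difference is organisational: the paper first observes via Proposition \ref{prop: vertices} that $\mathcal P^\times_t$ contains no finite vertex, so it suffices to exhibit the single group $\Delta_t=\langle r_{\p 0},r_{\m 1},r_{\l A}\rangle\cong(\Z/2\Z)^3$ at an edge $\mathcal E_{\p i\m j\l X}$ and invoke the symmetry of Lemma \ref{lem:symmetry}, whereas you treat a generic face $F$ with $m$ bounding hyperplanes and prove $\Gamma_p\cong(\Z/2\Z)^m$ directly.
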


\begin{proof}
When $t\in I^+$ (resp. $t\in I^-$), we associate to each facet ${\mathcal F}_{\l H}$ of ${\mathcal P}_t$ the unique hyperbolic (resp. AdS) reflection $r_{\l H}$ that fixes the bounding hyperplane $\partial\l H$. By Lemma \ref{lemma reflections C1}, when $t\to 0^\pm$, the rescaled reflections $\lim_{t\to0}\mathfrak r_{|t|}r_{\l H}(t)\mathfrak r_{|t|}^{-1}$ converge to a half-pipe reflection.

Note that by Proposition \ref{prop: vertices} ${\mathcal P}^\times_t$ does not contain any finite vertex of ${\mathcal P}_t$, hence it only remains to check the orbifold structure at the edges. Now, note that by Proposition \ref{prop: facets} each edge of ${\mathcal P}_t$ disjoint from each of the ridges ${\mathcal R}_{\p i\p j}$ is of type ${\mathcal E}_{\p i\m j\l X}$. Moreover, since the hyperbolic (resp. AdS) hyperplanes ${\partial}\p0$, ${\partial}\m1$ and ${\partial}\l A$ are pairwise orthogonal for all $t$, the corresponding hyperbolic (resp. AdS) reflections commute. So we have
$$\Delta_t=\langle r_{\p0},r_{\m1},r_{\l A}\rangle\cong(\Z/2\Z)^3,$$
and similarly for the rescaled limit $\lim_{t\to0}\mathfrak r_{|t|}\Delta_t\mathfrak r_{|t|}^{-1}$.

By symmetry and Proposition \ref{prop: ridges} (and also Proposition \ref{prop: hyperplanes polytope} in the AdS case) the conjugacy class of $\Delta_t$ in $\mathrm{Isom}(\Hyp^4)$ or $\mathrm{Isom}(\AdS^4)$ does not depend on the chosen triple of reflections $r_{\p i},r_{\m j},r_{\l X}$ such that there is an edge ${\mathcal E}_{\p i\m j\l X}$ of the polytope.

In this way, ${\mathcal P}^\times_t$ (together with the associated reflections) is locally modelled on
$$\Hyp^4/_{\Delta_t}\quad\mbox{and}\quad\AdS^4/_{\Delta_t}$$  
when $t\in I^+$ and $t\in I^-$, respectively. Similarly, $\lim_{t\to0}\mathfrak r_{|t|}{\mathcal P}^\times_t$  is locally modelled on
$$\HP^4/_{\lim_{t\to0}\mathfrak r_{|t|}\Delta_t\mathfrak r_{|t|}^{-1}}.$$
The proof is complete.
\end{proof}

\begin{remark} \label{rem: transiz orbifold}
By an opportune orbifold version of Definition \ref{defi transition}, the path $t\mapsto {\mathcal P}^\times_t$ defines a geometric transition on an orbifold. Moreover, the transition is $C^1${. Indeed,} the holonomy representation depends $C^1$ on the parameter $t$ as a consequence of Lemma \ref{lemma reflections C1}. The developing map also depends $C^1$ essentially because the vectors defining the rescaled polytope $\mathfrak r_{|t|}({\mathcal P}_t)$ depend $C^1$ on $t$.
\end{remark}

\begin{remark} \label{rem: right-angled}
By Proposition \ref{prop: ridges}, the polytope ${\mathcal P}_{\nicefrac1{\sqrt3}}$ is right-angled. In particular, it can be thought as a hyperbolic 4-orbifold. In contrast with ${\mathcal P}_t^\times$, the orbifold ${\mathcal P}_{\nicefrac1{\sqrt3}}$ is complete (and clearly ${\mathcal P}_{\nicefrac1{\sqrt3}}$ is the metric completion of ${\mathcal P}_{\nicefrac1{\sqrt3}}^\times$).
\end{remark}

\begin{remark} \label{rmk: HP holonomy cusps}
Let us briefly elucidate the geometric structure of the cusp sections of the orbifold $\mathcal P^\times_t$ and of its recaled limit. In Figure \ref{fig: cusp transition2} we showed a horospherical section of {an ideal vertex of} the polytope $\mathfrak r_{|t|}({\mathcal P}_t)$, for $t<0$, $t=0$, $t>0$. The subgroup of the orbifold fundamental group of $\mathcal P_t^\times$ 
preserving a cusp is isomorphic to the Coxeter group $\Gamma_{\mathrm{cube}}$ generated by reflections in the sides of a Euclidean cube --- see 
\cite{transition_char_var} for more details. In the hyperbolic and AdS case, the restriction of the holonomy representation of the orbifold $\mathcal P_t^\times$ to this peripheral subgroup $\Gamma_{\mathrm{cube}}$ maps each generator to a Euclidean or Minkowski reflection in a face of the rectangular parallelepiped (as in Figures \ref{fig: cusp transition} and \ref{fig: cusp transition2}). 
\end{remark}

\subsection{The cuboctahedron} \label{sec: cuboctahedron}

If a bounded Euclidean polytope $\overline {\mathcal P}\subset\R^n$ is \emph{vertex-transitive}, i.e. its symmetry group acts transitively on the set of the vertices, then $\overline {\mathcal P}$ is inscribed in a closed ball $\overline  {\mathcal B}$. Let us identify $\R^n$ with our favourite affine chart $\Aff^n\subset\SP^n$ of the projective sphere. Up to similarity, we can put $\overline  {\mathcal B}=\overline{\Hyp^n}\subset\SP^n$, so that ${\mathcal P}=\overline {\mathcal P}\cap\Hyp^n$ is an \emph{ideal} hyperbolic polytope, i.e. all the vertices of ${\mathcal P}$ are ideal. The polytope ${\mathcal P}$ is unique up to isometry of $\Hyp^n$.

A \emph{Euclidean cuboctahedron} $\overline {\mathcal C}\subset\R^3$ (see Figure \ref{fig:cuboct}) is the convex envelop of the midpoints of the edges of a regular cube (or, equivalently, of a regular octahedron). The polyhedron $\overline {\mathcal C}$ is vertex-transitive, and has 14 facets, consisting of 6 squares and 8 triangles.

Let now ${\mathcal C}\subset\Hyp^3$ be the \emph{ideal hyperbolic cuboctahedron}.
We shall identify the cuboctahedron ${\mathcal C}$ with ${\mathcal P}_0\subset\Hyp^3\subset\Hyp^4,\AdS^4$ thanks to the following (see Figure \ref{fig:cuboctahedron}):

\begin{figure}
\includegraphics[scale=.35]{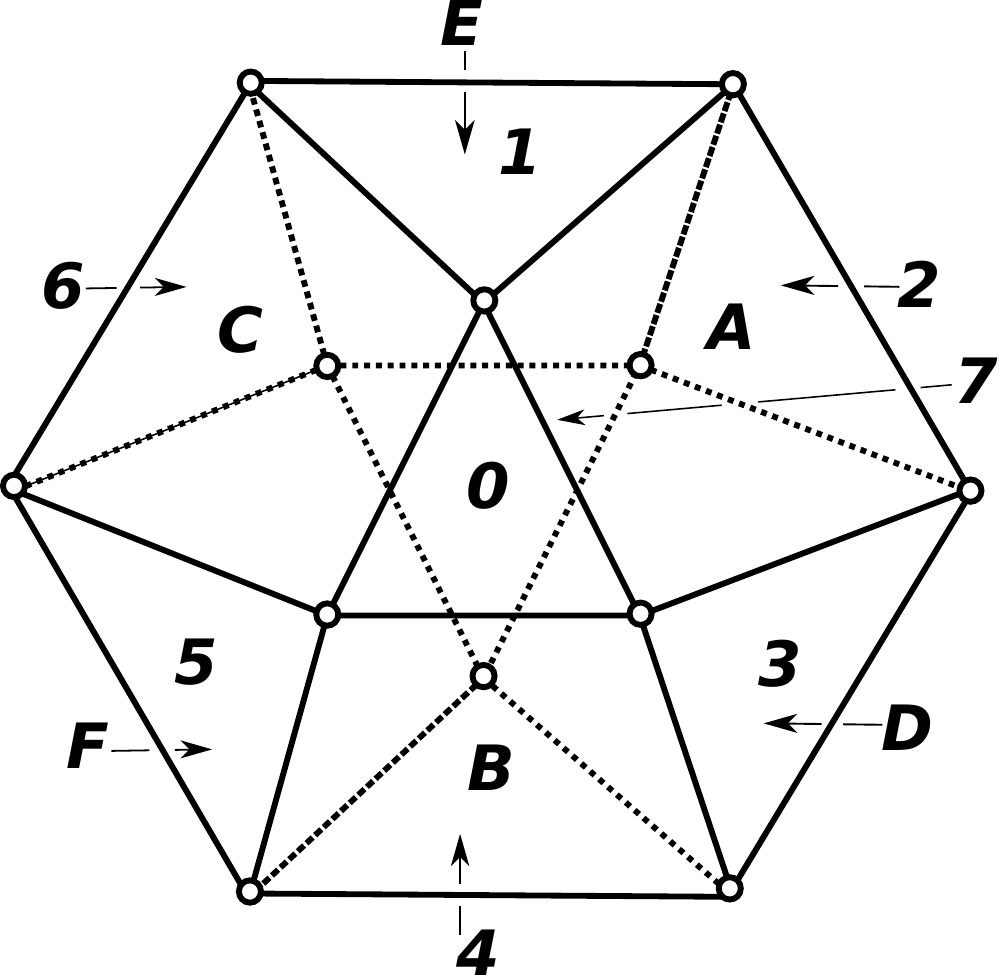}
\caption[The cuboctahedral slice $\mathcal P_0$ of $\mathcal P_t$.]{\footnotesize The ideal right-angled cuboctahedron ${\mathcal C}={\mathcal P}_t\cap\Hyp^3={\mathcal P}_0$. A quadrilateral face with label $\l X\in\{\l A,\ldots,\l F\}$ coincides with ${\mathcal F}_{\l X}\cap\Hyp^3$, while a triangular face with label $\l i\in\{\l0,\ldots,\l7\}$ coincides with the ridge ${\mathcal R}_{\p i\m i}$ of ${\mathcal P}_t$.}
\label{fig:cuboctahedron}
\end{figure}

\begin{prop} \label{prop: cuboctahedron}
The set ${\mathcal P}_t\cap \Hyp^3$ does not depend on $t\in I$ and is isometric to ${\mathcal C}$.
Its 6 quadrilateral faces are given by ${\mathcal F}_{\l X}\cap\Hyp^3$ for all $\l X\in\lbrace\l A,\ldots,\l F\rbrace$, while the 8 triangular faces are the ridges of ${\mathcal P}_t$ of type ${\mathcal R}_{\p i\m i}$ for all $\l i\in\lbrace\l0,\ldots,\l7\rbrace$.
Moreover, we have $${\mathcal P}_t\cap \Hyp^3=\bigcap_{s\in I}{\mathcal P}_s={\mathcal P}_0.$$
\end{prop}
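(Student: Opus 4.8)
\textbf{Proof plan for Proposition \ref{prop: cuboctahedron}.}

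The plan is to compute ${\mathcal P}_t\cap\Hyp^3$ explicitly by intersecting the defining half-spaces of $\overline{\mathcal P}_t$ (Table \ref{table:walls}) with the hyperplane $\{x_4=0\}$. First I would observe that setting $x_4=0$ in each vector of Table \ref{table:walls} kills the entire $t$-dependence: the vectors $\p i$ become $(-\sqrt2\,|t|:\pm|t|:\pm|t|:\pm|t|:0)$, which as projective classes in $\SP^{3,*}$ equal $(-\sqrt2:\pm1:\pm1:\pm1:0)$ independently of $|t|\neq 0$; the vectors $\m i$ become $(-\sqrt2:\pm1:\pm1:\pm1:0)$ directly; and the $\l X$ are already independent of $t$. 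In particular $\p i$ and $\m i$ restrict to the \emph{same} half-space of $\SP^3$, so the section is defined by only $8+6=14$ distinct half-spaces, independent of $t\in I$. This immediately gives ${\mathcal P}_t\cap\Hyp^3=\bigcap_{s\in I}{\mathcal P}_s$, and since this common section is contained in every ${\mathcal P}_s$, it equals ${\mathcal P}_0$ by the defining formula ${\mathcal P}_0=\overline{\mathcal P}_0\cap\Hyp^4$ together with $\overline{\mathcal P}_0\subset\Hyp^3$ (which follows from the collapse statement, or can be read off Table \ref{table:walls} at $t=0$, where $\p i=(0:0:0:0:\pm1)$ forces $x_4=0$).

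Next I would identify this 14-faced polytope with the ideal right-angled cuboctahedron ${\mathcal C}$. The $8$ hyperplanes coming from the $\p i$--$\m i$ pairs, namely $(-\sqrt2:\pm1:\pm1:\pm1:0)$ with all sign combinations, are the eight ``diagonal'' hyperplanes cutting off the corners of the region bounded by the $6$ coordinate-type hyperplanes $\l A,\ldots,\l F$, which are $(-1:\pm\sqrt2:0:0:0)$ and cyclic permutations. In the affine chart this is precisely the classical construction of a cuboctahedron as a cube (bounded by the six $\l X$-planes) with its eight vertices truncated (by the eight diagonal planes). One checks that each such plane meets $\Hyp^3$ (using Lemma \ref{lemma hyperplane hyp}: $q_1$ of each vector is positive) and that the resulting polytope is ideal and right-angled — but in fact all of this is already contained in the cited hyperbolic analysis of ${\mathcal P}_0$ from \cite{KS,MR} via Proposition \ref{prop:polytope_hyp}, so I would simply invoke that ${\mathcal P}_0$ is the stated 3-polytope and match faces. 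The $6$ quadrilateral faces are then $\partial\l X\cap\Hyp^3={\mathcal F}_{\l X}\cap\Hyp^3$, while the $8$ triangular faces are $\partial\p i\cap\partial\m i\cap\Hyp^3$; since $\partial\p i\cap\partial\m i\cap{\mathcal P}_t$ is by definition the ridge ${\mathcal R}_{\p i\m i}$, and by Proposition \ref{prop: cuboctahedron}'s companion Proposition \ref{prop: ridges}/Proposition \ref{prop: facets} these ridges lie in $\Hyp^3$ (consistent with Figure \ref{fig:facet_P}, where ${\mathcal R}_{\p i\m i}$ is the horizontal ideal triangle), the triangular faces are exactly the ${\mathcal R}_{\p i\m i}$.

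I expect the only genuinely delicate point to be verifying that the $14$-face intersection is genuinely the cuboctahedron and not a degenerate or differently-combinatorial object — i.e., that the $8$ diagonal hyperplanes each truncate a vertex of the cube to a triangle (rather than, say, missing the cube or cutting deeper). This is a finite check about the combinatorics of $14$ explicit half-spaces in $\SP^3$, and it can be settled either by the direct identification with the Euclidean cuboctahedron inscribed in $\overline{\Hyp^3}$ (invoking the vertex-transitivity discussion preceding the proposition, which shows such an ideal polytope is unique) or, more economically, by citing that ${\mathcal P}_0$ was already determined in \cite{KS,MR} and simply reading off from there that it is ${\mathcal C}$ with the face labelling as claimed. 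All remaining assertions — independence of $t$, the equality with $\bigcap_{s\in I}{\mathcal P}_s$ and with ${\mathcal P}_0$ — are then immediate from the $t$-independence of the restricted defining vectors noted in the first step.
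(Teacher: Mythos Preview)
Your proposal is correct and is essentially the same argument the paper has in mind: the paper's own proof consists of a single sentence deferring to \cite[Proposition 3.19]{MR} and observing that the same computation extends to $t\in I^-$, and what you have written is precisely that computation --- restrict each defining half-space to $\{x_4=0\}$, observe that $\p i$ and $\m i$ collapse to the same $t$-independent half-space of $\SP^3$, and identify the resulting $14$-faced polytope with the ideal right-angled cuboctahedron already determined in \cite{KS,MR}. The only point worth tightening is the chain of equalities at the end: once you know the section is $t$-independent, the inclusion ${\mathcal P}_t\cap\Hyp^3\subset\bigcap_{s\in I}{\mathcal P}_s$ is clear, and the reverse inclusion follows because $0\in I$ and (as you note) $\overline{\mathcal P}_0\subset\{x_4=0\}$, so $\bigcap_{s}{\mathcal P}_s\subset{\mathcal P}_0\subset\Hyp^3$.
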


\begin{proof}
It is straightforward to check that the same proof of \cite[Proposition 3.19]{MR} for $t\in I^+\cup\{0\}$ applies also when $t\in I^-$ (recall the isometric embedding $\iota\colon\Hyp^3\hookrightarrow\X_t^4$ defined by \eqref{eq: iota} in Section \ref{sec recipe}).
\end{proof}

A nice feature of the hyperbolic polyhedron ${\mathcal C}$ is that it is right-angled. In particular, there is a unique hyperbolic  orbifold $\Hyp^3/_{\Gamma_{\mathrm{co}}}$ isometric to ${\mathcal C}$, where the discrete group $\Gamma_{\mathrm{co}}<\Isom(\Hyp^3)$ is generated by reflections through the bounding hyperplanes of ${\mathcal C}$.
We shall thus interpret ${\mathcal C}$ as an orbifold.

\subsection{From polytopes to manifolds} \label{sec: construction}
We now build the cone-manifolds of Theorem \ref{teo: main}.

Let $\mathcal{N}\to {\mathcal C}$ be an \emph{orbifold covering} for some 3-manifold $\mathcal{N}$; in other words, we can assume to have a torsion-free subgroup $\Gamma<\Gamma_{\mathrm{co}}$ and
$$\mathcal{N}=\Hyp^3/_\Gamma\to\Hyp^3/_{\Gamma_{\mathrm{co}}}={\mathcal C}.$$

\begin{remark} \label{rem: same proof}
The first two points of Theorem \ref{teo: main} (that is, when $t\in I^+$) were proven in \cite[Theorem 1.2]{MR} for a particular manifold $\mathcal{N}$ such that $\Gamma<\Gamma_{\mathrm{co}}$ is normal and $\Gamma/\Gamma_{\mathrm{co}}\cong\Z/{2\Z}\times\Z/{2\Z}$.
Following our arguments, the proof given there can be indeed extended to every $\mathcal{N}$ that orbifold-covers ${\mathcal C}$, as in our hypothesis. Then the main content of our Theorem \ref{teo: main} is extending the deformation to half-pipe and anti-de Sitter geometry, for a rather general choice of $\mathcal{N}$ (see also the discussion of Remark \ref{rem: cuboctahedral_manifolds} for this point).
\end{remark}

The covering $\mathcal{N}\to {\mathcal C}$ induces a tessellation of the hyperbolic 3-manifold $\mathcal{N}$ into copies of ${\mathcal C}$. One can think of $\mathcal{N}$ as obtained by pairing the facets of such copies of ${\mathcal C}$ through the maps induced by the identity. The existence (and abundance) of such orbifold-covers from a manifold to ${\mathcal C}$ is a consequence of Selberg's Lemma (and Malcev's Theorem).

Now, we pick a copy of ${\mathcal P}_t$ for each copy of ${\mathcal C}$ in $\mathcal{N}$. Recall that by Propsition \ref{prop: cuboctahedron} we put ${\mathcal C}={\mathcal P}_0\subset {\mathcal P}_t$. If two copies of ${\mathcal C}$ in $\mathcal{N}$ are adjacent along a quadrilateral face ${\mathcal F}_{\l X}\cap\Hyp^3$, we glue the corresponding two copies of ${\mathcal P}_t$ along the facet ${\mathcal F}_{\l X}$ through the map induced by the identity. If two copies of ${\mathcal C}$ in $\mathcal{N}$ are adjacent along a triangular face ${\mathcal R}_{\p i\m i}$, we glue the corresponding two copies of ${\mathcal P}_t$ along the facet ${\mathcal F}_{\m i}$ through the map induced by the identity. We call $\mathcal{X}'_t$ the resulting space. Note that we have paired all the facets of the copies of ${\mathcal P}_t$, with the exception of those of type ${\mathcal F}_{\p0},\ldots,{\mathcal F}_{\p7}$.

Finally, let $\mathcal{X}_t$ be the space obtained by doubling $\mathcal{X}'_t$ along the unpaired facets. We call also $\mathfrak r_{|t|}(\mathcal{X}_t)$ the space obtained similarly to $\mathcal{X}_t$, by taking copies of the rescaled polytope $\mathfrak r_{|t|}({\mathcal P}_t)$ in place of copies of ${\mathcal P}_t$. We have:

\begin{prop} \label{prop: product}
For all $t\in I\smallsetminus\{0\}$, the space $\mathcal{X}_t$ is homeomorphic to $\mathcal{N}\times S^1$. The same holds for the rescaled $\mathfrak r_{|t|}(\mathcal{X}_t)$ for all $t\in I$.
\end{prop}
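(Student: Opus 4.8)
The plan is to reduce the statement to the single polytope and then let the gluing pattern, which depends only on $\mathcal N$, do the rest. The key input is a description of $\mathcal P_t$ — established for $t\in I^+$ in \cite{MR}, and, as I will argue, independent of $t\in I$ up to homeomorphism — as, after truncating its twelve cusps, a product $\mathcal P_t\cong\mathcal C\times[0,1]$ realized by the collapse onto the cuboctahedral slice $\mathcal C=\mathcal P_t\cap\Hyp^3$ of Proposition \ref{prop: cuboctahedron}, in which: the facets $\mathcal F_{\l X}$ ($\l X\in\{\l A,\ldots,\l F\}$) lie over the quadrilateral faces of $\mathcal C$ and the facets $\mathcal F_{\m i}$ ($\l i\in\{\l0,\ldots,\l7\}$) over the triangular faces ${\mathcal R}_{\p i\m i}$ of $\mathcal C$ (Figures \ref{fig:facet_L} and \ref{fig:facet_N}), each a topological product (face of $\mathcal C$)$\,\times\,[0,1]$; the remaining eight facets $\mathcal F_{\p i}$, four with $i$ even and four with $i$ odd, make up the two ends $\mathcal C\times\{0\}$ and $\mathcal C\times\{1\}$, each a tiled copy of $\mathcal C$ (Figure \ref{fig:facet_P}); and $\mathcal C$ itself sits as a section of the product. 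I would take care to fix this decomposition canonically — intrinsically on each lateral facet, exploiting the symmetries of Lemma \ref{lem:symmetry} — so that it is preserved by the isometry identifications appearing below.

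Granting this, the construction of $\mathcal X'_t$ becomes transparent. The orbifold cover tessellates $\mathcal N$ into $d$ copies $\mathcal C_1,\dots,\mathcal C_d$ of $\mathcal C$, glued along quadrilateral and triangular faces, with $\mathcal N=(\bigsqcup_j\mathcal C_j)/{\!\sim}$. Taking a copy $\mathcal P_t^{(j)}\cong\mathcal C_j\times[0,1]$ for each $j$, the facet pairings that build $\mathcal X'_t$ are, by construction, induced by the identity on the shared facet, hence on each lateral facet they are of the form (face identification of $\mathcal N$)$\,\times\,\mathrm{id}_{[0,1]}$. Therefore $\mathcal X'_t\cong\big((\bigsqcup_j\mathcal C_j)/{\!\sim}\big)\times[0,1]=\mathcal N\times[0,1]$, with the unpaired facets $\mathcal F_{\p i}$ assembling into the two boundary copies $\mathcal N\times\{0,1\}$; doubling along these yields $\mathcal X_t\cong\mathcal N\times S^1$, which settles the case $t\in I^+$.

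For $t\in I\smallsetminus\{0\}$ the rescaling $\mathfrak r_{|t|}\in\Aut(\SP^4)$ is a homeomorphism of $\SP^4$ that preserves the whole face stratification and fixes $\{x_4=0\}$ pointwise; composing the homeomorphism above with $\mathfrak r_{|t|}$ transports the entire description (now phrased via Table \ref{table:walls rescaled}) to $\mathfrak r_{|t|}(\mathcal P_t)$, since $\mathfrak r_{|t|}(\mathcal P_t)\cap\Hyp^3=\mathfrak r_{|t|}(\mathcal C)=\mathcal C$. Hence $\mathfrak r_{|t|}(\mathcal X_t)\cong\mathcal N\times S^1$ for every $t\neq0$, and applying $\mathfrak r_{|t|}^{-1}$ also $\mathcal X_t\cong\mathcal N\times S^1$ for $t\in I^-$. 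Finally, by Proposition \ref{prop: const-combinatorics} the polytopes $\mathfrak r_{|t|}(\overline{\mathcal P_t})$ carry one and the same labelled combinatorial type for all $t\in I$, including $t=0$; refining it by the $t$-independent slice $\mathcal C=\bigcap_{s\in I}\mathcal P_s$ keeps the subdivision constant, and since two convex polytopes with the same labelled, subdivided face poset are related by a stratum-preserving homeomorphism, the product description of $\mathcal P_t$ — and therefore the whole assembly — persists to the limit polytope $\lim_{t\to0}\mathfrak r_{|t|}(\mathcal P_t)$. Thus $\mathfrak r_{|t|}(\mathcal X_t)\cong\mathcal N\times S^1$ at $t=0$ as well.

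I expect the real work to lie entirely in the first step: extracting from the facet combinatorics of Proposition \ref{prop: facets} the product description of $\mathcal P_t$ and, above all, choosing it so that it is simultaneously compatible with every facet identification used in the construction — this is the delicate point, since (unlike a generic product structure) the decomposition must be canonical on each lateral facet; the symmetries of Lemma \ref{lem:symmetry} and the fact that the gluing isometries all preserve $\Hyp^3$ are what make this possible. A secondary point is to keep track of the cusps: the twelve ideal vertices of $\mathcal P_t$ coincide with those of $\mathcal C$ (Proposition \ref{prop: vertices}), so the truncated cusp neighbourhoods assemble exactly into the cusps of $\mathcal N$ crossed with $S^1$, creating no extra ends. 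Everything else reduces to bookkeeping together with the two inputs \cite{MR} and Proposition \ref{prop: const-combinatorics}.
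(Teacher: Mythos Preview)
Your proposal is correct and follows essentially the same approach as the paper: establish a homeomorphism $\mathcal P_t\cong\mathcal C\times[-1,1]$ (the paper uses $[-1,1]$ rather than $[0,1]$) sending the even-indexed $\mathcal F_{\p i}$ to $\mathcal C\times\{-1\}$, the odd-indexed ones to $\mathcal C\times\{1\}$, and each lateral facet $\mathcal F_{\l X}$, $\mathcal F_{\m i}$ to a product over the corresponding face of $\mathcal C$; then assemble into $\mathcal X'_t\cong\mathcal N\times[-1,1]$ and double. The paper invokes \cite[Proposition~4.13]{MR} together with Proposition~\ref{prop: const-combinatorics} for this product description, exactly as you anticipate, and is less explicit than you are about the canonicality of the decomposition on lateral facets and the $t=0$ limit.
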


\begin{proof}
By the proof of \cite[Proposition 4.13]{MR} and by Proposition \ref{prop: const-combinatorics}, as $t\neq0$ there is a homeomorphism ${\mathcal P}_t\to {\mathcal C}\times[-1,1]$ which restricts to
\begin{align*}
{\mathcal P}_0\to\ & {\mathcal C}\times\{0\} & \mbox{(see\ Figure\ \ref{fig:cuboctahedron}),}\\
{\mathcal F}_{\p0}\cup {\mathcal F}_{\p2}\cup {\mathcal F}_{\p4}\cup {\mathcal F}_{\p6}\to\ & {\mathcal C}\times\{-1\} & \mbox{(see Figure \ref{fig:altogether}),}\\
{\mathcal F}_{\p1}\cup {\mathcal F}_{\p3}\cup {\mathcal F}_{\p5}\cup {\mathcal F}_{\p7}\to\ & {\mathcal C}\times\{1\} & \mbox{(see Figure \ref{fig:altogether}),}\\
{\mathcal F}_{\l X}\to\ & {\mathcal Q}\times[-1,1] & \mbox{(see Figure \ref{fig:facet_L}),}\\
\mbox{and}\quad {\mathcal F}_{\m i}\to\ & \mathcal T\times[-1,1] & \mbox{(see Figure \ref{fig:facet_N}),}
\end{align*}
where for each $\l X\in\{\l A,\ldots,\l F\}$ (resp. $\m i\in\{\m0,\ldots,\m7\}$) there is a quadrilateral (resp. triangular) face ${\mathcal Q}\subset\partial {\mathcal C}$ (resp. $\mathcal T\subset\partial {\mathcal C}$) of ${\mathcal C}$ (recall Proposition \ref{prop: cuboctahedron}). This extends to a homeomorphism $\mathcal{X}'_t\to \mathcal{N}\times[-1,1]$. By doubling, the proof is complete.
\end{proof}

\begin{figure}
\includegraphics[scale=1.2, angle=270]{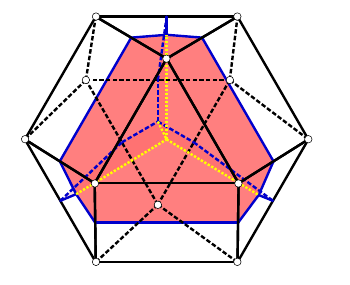}
\caption[The roof of $\mathcal P_t$ is a bent cuboctahedron.]{\footnotesize 
The union ${\mathcal C}_0={\mathcal F}_{\p0}\cup {\mathcal F}_{\p2}\cup {\mathcal F}_{\p4}\cup {\mathcal F}_{\p6}$ (resp. $\mathcal C_1={\mathcal F}_{\p1}\cup {\mathcal F}_{\p3}\cup {\mathcal F}_{\p5}\cup {\mathcal F}_{\p7}$) is an ideal right-angled cuboctahedron, pleated along the 6 red pentagons ${\mathcal R}_{\p i\p j}$ (each with 3 blue edges and 2 yellow edges in the picture). The facets of ${\mathcal C}_0$ are divided as follows: 4 ideal triangles ${\mathcal R}_{\p i\m i}$; 4 ideal triangles, each subdivided by the pleats as ${\mathcal R}_{\p i\m\ell}\cup {\mathcal R}_{\p j\m\ell}\cup {\mathcal R}_{\p k\m\ell}$; 6 ideal quadrilaterals, each subdivided by a pleat as ${\mathcal R}_{\p i \l X}\cup {\mathcal R}_{\p j\l X}$. The black edges are edges of 
the cuboctahedron, while
the blue edges are contained in its facets
, and the yellow edges intersect in the barycentre ${\mathcal V}_{\p0\p2\p4\p6}$ (resp.${\mathcal V}_{\p1\p3\p5\p7}$)
.}
\label{fig:altogether}
\end{figure}

\subsection{Transition and cone structures}

In this section, we give the promised cone-manifold structure to the space $\mathcal{X}_t$ constructed above and conclude the proof of Theorem \ref{teo: main}.

For $t\neq0$, we put
$$\Sigma_t\ =\bigcup_{{\mathcal P}_t\mbox{\tiny\ in\ }\mathcal{X}_t}\ \bigcup_{i\neq j}{\mathcal R}_{\p i\p j}\ \subset\ \mathcal{X}_t,$$
where the union runs over all the copies of ${\mathcal P}_t$ in $\mathcal{X}_t$. In other words, $\Sigma_t\subset \mathcal{X}_t$ is the union of the ridges with non-constant dihedral angle of the copies of ${\mathcal P}_t$ in $\mathcal{X}_t$ (see Proposition \ref{prop: ridges}), and we have (see Proposition \ref{prop: transiz orbifold})
$$\mathcal{X}_t\smallsetminus\Sigma_t\ =\bigcup_{{\mathcal P}_t\mbox{\tiny\ in\ }\mathcal{X}_t}{\mathcal P}_t^\times.$$
The couple $(\mathcal{X}_t,\Sigma_t)$ is homeomorphic to $(\mathcal{N}\times S^1,\Sigma)$ by Proposition \ref{prop: product}, where $\Sigma\subset \mathcal{N}\times S^1$ is a foam by Proposition \ref{prop: vertices}. If the covering $\mathcal{N}\to {\mathcal C}$ is finite, the foam $\Sigma$ is compact by Proposition \ref{prop: ridges}.

Recall from Proposition \ref{prop: transiz orbifold} that ${\mathcal P}^\times_t$ has a natural structure of orbifold. We have:

\begin{prop} \label{prop: orbifold covering}
The natural map $\mathcal{X}_t\smallsetminus\Sigma_t\to {\mathcal P}_t^\times$ is an orbifold covering, and similarly for the rescaled limits.
\end{prop}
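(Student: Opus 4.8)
The plan is to verify the orbifold-covering property locally, stratum by stratum, using the explicit combinatorial structure of the gluing. Recall that $\mathcal{X}_t \smallsetminus \Sigma_t$ is obtained by gluing copies of $\mathcal{P}_t^\times$ along their facets ${\mathcal F}_{\l X}$ and ${\mathcal F}_{\m i}$ according to the pairing data coming from the orbifold covering $\mathcal N \to \mathcal C$, and that each copy of $\mathcal{P}_t^\times$ carries the orbifold structure described in Proposition \ref{prop: transiz orbifold}, whose underlying local models are $\Hyp^4/_{\Delta_t}$ (resp. $\AdS^4/_{\Delta_t}$, resp. the half-pipe analogue) near edges, and the smooth model near interior points of facets. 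First I would observe that there is a natural continuous map $\mathcal{X}_t \smallsetminus \Sigma_t \to \mathcal{P}_t^\times$: on each copy of $\mathcal{P}_t^\times$ it is (a translate of) the identity, and it is well-defined on the glued space precisely because the facet pairings are induced by the identity on the relevant faces of $\mathcal P_t$, hence descend compatibly with the reflections $r_{\l X}$ and $r_{\m i}$ associated to those facets. The content of the proposition is that this map is an orbifold covering, i.e. that every point of the target has an evenly-covered orbifold chart.

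Next I would stratify $\mathcal{P}_t^\times$ into: (a) interior points, (b) interior points of facets ${\mathcal F}_{\l X}$ and ${\mathcal F}_{\m i}$ (the paired facets), (c) interior points of ridges ${\mathcal R}_{\p i \m j}$, ${\mathcal R}_{\p i \l X}$, and ${\mathcal R}_{\l X \l Y}$, i.e. those ridges with \emph{constant} (right) dihedral angle, (d) interior points of the edges ${\mathcal E}_{\p i \m j \l X}$, and (e) the ideal vertices. Interior points and points of paired facets are handled by the standard ``mirror/doubling'' bookkeeping: at an interior point the map is a local homeomorphism onto a smooth chart; at a point of ${\mathcal F}_{\l X}$, exactly two copies of $\mathcal P_t^\times$ are glued, and the chart on the target is the quotient of a small hyperbolic (resp. AdS, resp. HP) ball by the single reflection $r_{\l X}$, which by Lemma \ref{lemma reflections C1} is a well-defined isometry (resp. $G_{\HP}$-isomorphism), so the union of the two half-ball charts lifts the orbifold chart. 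For a point on a constant-angle ridge ${\mathcal R}_{\p i \m j}$ (and similarly for ${\mathcal R}_{\p i \l X}$, ${\mathcal R}_{\l X \l Y}$), the two bounding hyperplanes meet orthogonally (Proposition \ref{prop: ridges}), so the local reflection group is $(\Z/2\Z)^2$ generated by two commuting reflections, four copies of $\mathcal P_t^\times$ glue cyclically around the ridge, and their union maps homeomorphically onto the quotient $\X_t^4/_{(\Z/2\Z)^2}$. At an edge ${\mathcal E}_{\p i \m j \l X}$, three pairwise-orthogonal hyperplanes meet, $\Delta_t = \langle r_{\p i}, r_{\m j}, r_{\l X}\rangle \cong (\Z/2\Z)^3$ as in Proposition \ref{prop: transiz orbifold}, eight copies of $\mathcal P_t^\times$ assemble around the edge, and again one gets the local orbifold model $\X_t^4/_{\Delta_t}$. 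The ideal vertices of type ${\mathcal V}_{\p i \m i \p j \m j \l X \l Y}$ lie on the boundary and a horospherical section reduces to the analogous statement for the cusp cross-section (the Euclidean/Minkowski/Galilean cube orbifold of Remark \ref{rmk: HP holonomy cusps}), handled in the same way one dimension down.

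The combinatorial input that makes all of this go through — and the step I expect to be the main obstacle — is checking that around each constant-angle ridge the copies of $\mathcal{P}_t^\times$ close up correctly, i.e. that the facet pairings prescribed by $\mathcal N \to \mathcal C$ really produce a cyclic arrangement of exactly four (resp. eight, at edges) copies whose union is a full orbifold chart with no monodromy obstruction. This is where one must use that the pairings of the ${\mathcal F}_{\l X}$ facets correspond to the adjacencies of the quadrilateral faces of $\mathcal C$ and the pairings of the ${\mathcal F}_{\m i}$ facets to the triangular-face adjacencies, and that the cuboctahedron $\mathcal C = \mathcal P_t \cap \Hyp^3$ is right-angled, so the link of each edge of $\mathcal C$ closes up after exactly the expected number of copies; this is precisely the content that was already verified for the specific manifold of \cite{MR}, and by Remark \ref{rem: same proof} the argument extends verbatim to an arbitrary orbifold cover $\mathcal N \to \mathcal C$. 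Once the local pictures are assembled, the map is a local homeomorphism between the underlying spaces that is compatible with the orbifold structures, hence an orbifold covering; since all constructions are carried out uniformly for $t \in I^+$, $t \in I^-$, and the rescaled limit (using Lemma \ref{lemma reflections C1} and Proposition \ref{prop: const-combinatorics} to guarantee constancy of combinatorics and $C^1$-dependence of the reflections), the same conclusion holds for $\mathfrak r_{|t|}(\mathcal{X}_t) \smallsetminus \mathfrak r_{|t|}(\Sigma_t) \to \mathfrak r_{|t|}(\mathcal{P}_t^\times)$ and its limit. I would close by recording that $\mathcal{X}_t \smallsetminus \Sigma_t$ is connected (being glued from copies of the connected $\mathcal P_t^\times$), so it is genuinely a connected orbifold cover.
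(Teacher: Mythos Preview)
Your approach is correct and covers all the required cases, but it differs from the paper's in one economical respect. The paper observes (via Proposition~\ref{prop: vertices}) that every stratum of $\mathcal{P}_t^\times$ is non-compact and reaches an ideal vertex --- in particular there are no $0$-strata --- and uses this to reduce the entire verification to the single case $k=1$ (edges). It then carries out that check by passing to a horospherical section: the link $\mathcal L_{\mathcal V}$ of an ideal vertex is a right parallelepiped, and the link $\ell_v$ of a vertex of $\mathcal L_{\mathcal V}$ is a mirror triangle $\Delta(2,2,2)$; tracking the gluing on $\ell_v$ (four copies at the first step, then doubling) yields the covering $S^2\to\Delta(2,2,2)$, which is the desired local model $\R^4\to\R^4/(\Z/2\Z)^3$.

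Your stratum-by-stratum argument is more direct and perhaps more transparent, at the cost of handling cases (b), (c), (d), (e) separately rather than all at once. One small correction: there are no ridges of type ${\mathcal R}_{\l X\l Y}$ in this polytope (no two letter facets are adjacent), so that case is vacuous; and the ridges ${\mathcal R}_{\m i\l X}$ do occur and should be mentioned explicitly, since they are the ones whose four-copy closing-up genuinely relies on the right-angledness of $\mathcal C$ and the fact that $\mathcal N$ is a manifold cover --- exactly the point you flag as the ``main obstacle''. The paper's reduction sidesteps this bookkeeping by pushing everything to the ideal vertex, where the right-angledness of $\mathcal C$ manifests as the right-angledness of the Euclidean parallelepiped.
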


\begin{proof}
We continue to refer to \cite{thurstonnotes,choi} for details about orbifolds and their coverings.

By the proof of Proposition \ref{prop: transiz orbifold}, it suffices to check that locally, near a $k$-stratum of the orbifold ${\mathcal P}^\times_t$, the map $\mathcal{X}_t\smallsetminus\Sigma_t\to {\mathcal P}_t^\times$ is modelled on the quotient map $\R^4\to\R^4/_{(\Z/2\Z)^{4-k}}$. Here, the $i$-th factor of $(\Z/2\Z)^{4-k}<(\Z/2\Z)^{4}$ is generated by the reflection $r_i\in\O(4)$ along the hyperplane $\{x_i=0\}\subset\R^4$. Note that by Proposition \ref{prop: vertices}, each stratum of the orbifold ${\mathcal P}^\times_t$ is non-compact and has an ideal vertex (in particular, ${\mathcal P}^\times_t$ has no 0-strata). This implies that it suffices to consider the case $k=1$ only.

By symmetry (see Lemma \ref{lem:symmetry}), we can fix a horosection $H$ of the ideal vertex ${\mathcal V}={\mathcal V}_{\p0\m0\p3\m3\l A\l B}$ of ${\mathcal P}_t$ and look at the effect of the gluing on the copies of the link $\mathcal L_{\mathcal V}=H\cap {\mathcal P}_t$. We also know by Proposition \ref{prop: vertices} that the orbifold structure on ${\mathcal L}_{\mathcal V}$ is that of a right parallelepiped. Again by symmetry, we can fix the vertex $v=v_{\m0\p3\l A}$ of ${\mathcal L}_{\mathcal V}$ and look at the effect of the gluing of the copies of its link $\ell_v$. We refer to Figure \ref{fig: gluing_link_of_link}. Note that the orbifold structure on $\ell_v$ is that of a mirror triangle $\Delta(2,2,2)=S^2/_{(\Z/2\Z)^3}$.

\begin{figure}
\includegraphics[scale=.4]{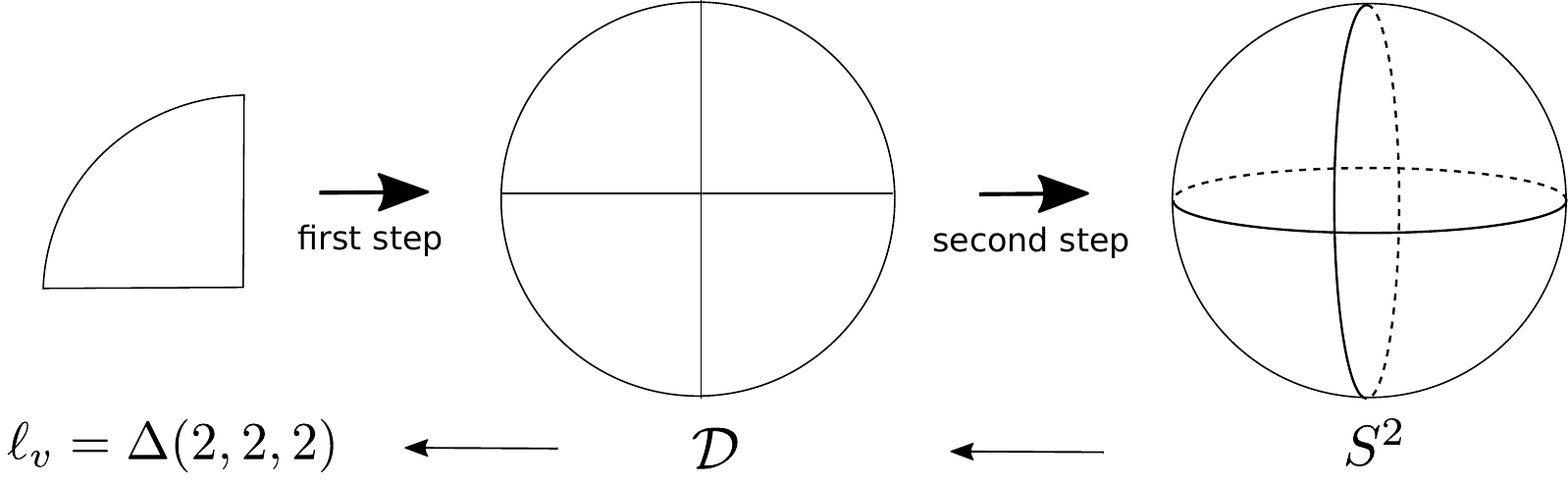}
\caption[Gluing the links of the ideal vertices.]{\footnotesize The effect of the gluing on the link $\ell_v$ of a vertex $v$ of the link ${\mathcal L}_{\mathcal V}$ (which is depicted in Figure \ref{fig:links}--left) of an ideal vertex ${\mathcal V}$ of ${\mathcal P}_t$ (see the proof of Proposition \ref{prop: orbifold covering}). The two top arrows represent the two steps of the construction of $\mathcal{X}_t$, while the bottom ones indicate orbifold coverings.}
\label{fig: gluing_link_of_link}
\end{figure}

Recall from Section \ref{sec: construction} that $\mathcal{X}_t$ is built from some copies of ${\mathcal P}_t$ in two steps. When we pair the facets of ${\mathcal P}_t$ of type ${\mathcal F}_{\l X}$ and ${\mathcal F}_{\m i}$, we glue four copies of $\ell_v$ around its vertex of type $\m0\l A$ and get a disc ${\mathcal D}=S^2/_{\Z/2\Z}$ with mirror boundary. The reason is that ${\mathcal C}$ is right-angled and $\mathcal{N}$ is a hyperbolic manifold, so each edge of its tessellation into copies of ${\mathcal C}$ has valence 4. By doubling along the unpaired facets, we double ${\mathcal D}$ and get the sphere $S^2$. Thus, the map $\mathcal{X}_t\smallsetminus\Sigma_t\to {\mathcal P}_t^\times$ induces at $v$ the orbifold covering $S^2\to\Delta(2,2,2)$, and therefore it is locally modelled on $\R^4\to\R^4/_{(\Z/2\Z)^3}$ near the 1-strata of ${\mathcal P}_t^\times$. The proof is complete.
\end{proof}

\begin{remark}
Recalling Remark \ref{rem: right-angled}, we have also that $\mathcal{X}_{\nicefrac1{\sqrt3}}\to {\mathcal P}_{\nicefrac1{\sqrt3}}$ is an orbifold covering.
\end{remark}

Recall now Remark \ref{rem: transiz orbifold}. By lifting to $\mathcal{X}_t\smallsetminus\Sigma_t$ the geometric structures of the orbifold ${\mathcal P}^\times_t$, and similarly for the rescaled limits, we immediately get:

\begin{cor} \label{cor: transition manifold}
The family $\{\mathcal{X}_t\smallsetminus\Sigma_t\}_{t\in I}$ defines a $C^1$ geometric transition on $\mathcal{N}\times S^1\smallsetminus\Sigma$.
\end{cor}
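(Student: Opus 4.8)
The plan is to assemble the statement from the pieces already in place. First I would recall that, by Proposition \ref{prop: transiz orbifold}, the set $\mathcal P^\times_t$ carries a hyperbolic orbifold structure for $t\in I^+$, an Anti-de Sitter orbifold structure for $t\in I^-$, and the rescaled limit $\lim_{t\to 0}\mathfrak r_{|t|}\mathcal P^\times_t$ carries a half-pipe orbifold structure. By Proposition \ref{prop: orbifold covering}, the natural map $\mathcal X_t\smallsetminus\Sigma_t\to\mathcal P^\times_t$ is an orbifold covering, and likewise for the rescaled limits; hence the manifold $\mathcal X_t\smallsetminus\Sigma_t$ inherits, for each $t\in I$, a geometric structure of the appropriate type — hyperbolic if $t>0$, half-pipe if $t=0$ (after rescaling), Anti-de Sitter if $t<0$ — by pulling back the orbifold $(G,X)$-structure along the covering. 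By Proposition \ref{prop: product}, the underlying smooth manifold is $\mathcal N\times S^1\smallsetminus\Sigma$ for all $t$, with $\Sigma$ the foam of Proposition \ref{prop: vertices}; this identifies the family as a family of geometric structures on a \emph{fixed} manifold, which is what Definition \ref{defi transition} requires.

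The only substantive thing to verify is that the family is genuinely a \emph{continuous} (indeed $C^1$) path of real projective structures in the sense of Definition \ref{defi transition}, and that the structures conjugate to the three geometries in the correct ranges of $t$. For the latter: by construction, gluing copies of $\mathfrak r_{|t|}(\mathcal P_t)$ produces developing maps with image in $\X^4_t$ and holonomy in $\Isom(\X^4_t)$, and applying $\mathfrak r_{|t|}$ conjugates this into $\Hyp^4$ with holonomy in $\Isom(\Hyp^4)$ for $t>0$, and into $\AdS^4$ with holonomy in $\Isom(\AdS^4)$ for $t<0$; at $t=0$ the rescaled limit lands in $\HP^4$ with holonomy in $G_{\HP^4}$ by Lemma \ref{lemma convergence hyp} and Lemma \ref{lemma convergence ads}, precisely as in the recipe of Section \ref{sec recipe}. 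For $C^1$-ness I would invoke Remark \ref{rem: transiz orbifold}: on $\mathcal P^\times_t$ the holonomy is built from the reflections $r_{\l H}(t)$ through the bounding hyperplanes, and Lemma \ref{lemma reflections C1} says each $\mathfrak r_{|t|}r_{\l H}(t)\mathfrak r_{|t|}^{-1}$ extends to a $C^1$ path in $\Aut(\SP^4)$ across $t=0$; since the holonomy of any loop is a finite word in these reflections, $t\mapsto\rho_t(\gamma)$ is $C^1$ for every $\gamma$. Likewise the developing maps depend $C^1$ on $t$ because the vectors of Table \ref{table:walls rescaled} defining $\mathfrak r_{|t|}(\mathcal P_t)$ do. These properties pass to the finite cover $\mathcal X_t\smallsetminus\Sigma_t\to\mathcal P^\times_t$ without change, since pulling back a $C^1$ family of orbifold structures along a fixed covering yields a $C^1$ family of manifold structures.

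I expect the main (mild) obstacle to be purely bookkeeping: checking that the orbifold version of Definition \ref{defi transition} alluded to in Remark \ref{rem: transiz orbifold} is compatible with the manifold definition after passing to the cover, i.e.\ that ``$C^1$ family of orbifold $(G,X)$-structures'' pulls back to ``$C^1$ family of manifold $(G,X)$-structures'' in the precise sense of Section \ref{section geometric transition} — this is routine, since an orbifold covering map is a local $(G,X)$-isomorphism onto orbifold charts and the developing/holonomy data lift canonically. No new geometry is needed; the corollary is a formal consequence of Propositions \ref{prop: transiz orbifold}, \ref{prop: orbifold covering}, \ref{prop: product} and Lemma \ref{lemma reflections C1}.
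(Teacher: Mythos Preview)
Your proposal is correct and follows essentially the same approach as the paper: the corollary is deduced by lifting the $C^1$ orbifold transition on $\mathcal P^\times_t$ (Remark \ref{rem: transiz orbifold}, built from Proposition \ref{prop: transiz orbifold} and Lemma \ref{lemma reflections C1}) through the orbifold covering of Proposition \ref{prop: orbifold covering}, with Proposition \ref{prop: product} identifying the underlying manifold. The paper states this in one sentence; your expansion of the bookkeeping is accurate and adds nothing new.
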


\begin{remark} \label{rem:cusp_transition}
Recall Sections \ref{sec: horospheres}, \ref{sec transition horospheres} and \ref{sec upper halfspace} about horospheres and transition. From the proof of Proposition \ref{prop: orbifold covering} one can recover the geometric transition on each cusp section of $\mathcal{X}_t$ (see Figure \ref{fig: cusp transition} and \ref{fig: cusp transition2}). We have a path of (non-singular) Euclidean structures (on the 3-torus or $\mathcal K\times S^1$, where $\mathcal K$ is the Klein bottle) collapsing to a Euclidean surface (a flat 2-torus or Klein bottle, which is a cusp section of $\mathcal{N}$), such that by rescaling in the direction of collapse the path extends to Minkowskian structures, via a transitional Galilean structure.
\end{remark}

To conclude the proof of Theorem \ref{teo: main}, it remains to understand what happens near $\Sigma_t$. Recall Definition \ref{def: AdS cone-manifold} of simple hyperbolic, anti-de Sitter, or half-pipe cone-manifold. Recall also Proposition \ref{prop: ridges} where the explicit expressions of the dihedral angles $\theta_t$ and $\varphi_t$ of ${\mathcal P}_t$ are given. We have:

\begin{prop} \label{prop: near_Sigma}
When $t\in I^-$, the space $\mathcal{X}_t$ is a simple anti-de Sitter cone-manifold with spacelike singularity along $\Sigma_t$, whose 2-strata have all the same magnitude $\beta_t=-2\cdot\varphi_t$.

Similarly, $\mathcal{X}_t$ is a simple hyperbolic cone-manifold with cone angles $\alpha_t=2\cdot\theta_t$ when $t\in I^+$, and the rescaled limit $\lim_{t\to0}\mathfrak r_{|t|}\mathcal{X}_t$ is a simple half-pipe cone-manifold.
\end{prop}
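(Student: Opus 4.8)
The plan is to combine the orbifold covering of Proposition~\ref{prop: orbifold covering} with the local analysis of links at the singular locus. The overall strategy is: away from $\Sigma_t$, the structure has already been produced in Corollary~\ref{cor: transition manifold}, so the only thing left is to verify the cone-manifold axioms of Definition~\ref{def:proj_simple_cone-mfd} and Definition~\ref{def: AdS cone-manifold} in a neighbourhood of each $2$-stratum of $\Sigma_t$, and to compute the cone angle / magnitude / infinitesimal cone angle there. First I would recall that, by construction, $\mathcal{X}_t$ is glued from copies of ${\mathcal P}_t$, so near a point of a $2$-stratum of $\Sigma_t$ the space is modelled on a neighbourhood of a relative interior point of some ridge ${\mathcal R}_{\p i\p j}$ inside the double $D(\mathcal P_t)$ --- this is exactly the local model $\mathcal D$ of Section~\ref{sec cone-mfds}, with the reflections $r_{\p i},r_{\p j}$ chosen as the hyperbolic (resp.\ AdS, HP) reflections fixing $\partial\p i,\partial\p j$. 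By Proposition~\ref{prop: ridges}, each such ridge ${\mathcal R}_{\p i\p j}$ is compact and codimension two, and by Proposition~\ref{prop: hyperplanes polytope} both $\partial\p i$ and $\partial\p j$ are spacelike (for $t\in I^-$ and for the rescaled limit at $t=0$), so the ridge is a spacelike codimension-two face. Hence the local model is precisely the AdS (resp.\ hyperbolic, half-pipe) suspension over the double of an HS (resp.\ spherical, half-pipe) simplex as discussed in Section~\ref{sec:poly-conemfds}, and the singularity is spacelike in the AdS and HP cases. This gives the cone-manifold structure on all of $\mathcal{X}_t$: combine the orbifold charts of Corollary~\ref{cor: transition manifold} on $\mathcal{X}_t\smallsetminus\Sigma_t$ with these doubling charts along $\Sigma_t$, checking that transition functions are stratum-preserving isomorphisms --- which they are, being restrictions of the isometries used to glue the polytope copies.

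Next I would compute the holonomy of a meridian of a $2$-stratum. A meridian $\gamma$ around ${\mathcal R}_{\p i\p j}$ has holonomy conjugate to the product $r_{\p i}r_{\p j}$ of the two reflections, which fixes pointwise the codimension-two subspace $\partial\p i\cap\partial\p j$. By Section~\ref{sec rotations}, since both hyperplanes are spacelike and non-degenerate, in the hyperbolic case ($t\in I^+$) this product is a rotation of angle $2\theta_t$, where $\theta_t$ is the dihedral angle of the ridge given in Proposition~\ref{prop: ridges}; in the AdS case ($t\in I^-$) it is a boost, and since the dihedral ``angle'' there is $\varphi_t\in(0,+\infty)$, the boost has magnitude $-2\varphi_t$ (the sign convention being the one fixed in Section~\ref{sec:poly-conemfds}, so that $\beta_t=-2\varphi_t<0$); in the half-pipe case ($t=0$) it is an infinitesimal rotation whose infinitesimal angle is the $t\to0$ limit of $2\theta_t$ (equivalently of $2\varphi_t$), by the computation in Section~\ref{sec rotations} relating $\lim_{t\to0}\r_t r_t\r_t^{-1}$ to a translation. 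The fact that \emph{all} $2$-strata of $\Sigma_t$ receive the same label-type ${\mathcal R}_{\p i\p j}$ up to the symmetries $r_{\l L},r_{\l M},r_{\l N},R$ (Lemma~\ref{lem:symmetry}), together with the symmetry argument already invoked in the proof of Proposition~\ref{prop: transiz orbifold}, shows the cone angle (resp.\ magnitude) is the same on every $2$-stratum. I should also note here that in forming the double $\mathcal{X}_t$, when a ridge lies in a facet ${\mathcal F}_{\p i}$ along which we double, the two copies of the adjacent ridge get identified so that the resulting cone angle is genuinely $2\theta_t$ rather than $\theta_t$ --- this is the point made in the warm-up Section~\ref{sec:warm-up} about cone angle $2\theta$.

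The main subtle point --- and the step I expect to be the real obstacle --- is the half-pipe case at $t=0$: one must check that the rescaled limit $\lim_{t\to0}\r_{|t|}\mathcal{X}_t$ genuinely carries a \emph{half-pipe} cone-manifold structure, i.e.\ that the reflections $r_{\p i}$ used in the doubling charts along $\Sigma_0$ are the \emph{same} half-pipe reflections as those implicitly used in the orbifold charts away from $\Sigma_0$ (from Proposition~\ref{prop: transiz orbifold}). As emphasized repeatedly in the excerpt (Proposition~\ref{prop:hp reflections deg}), a degenerate hyperplane in $\HP^4$ does not determine its reflection uniquely; however the hyperplanes $\partial\p i$ rescale in the limit to \emph{non-degenerate} (spacelike) hyperplanes of $\HP^4$ by Proposition~\ref{prop: hyperplanes polytope}, and by Proposition~\ref{prop:hp reflections space} a spacelike hyperplane \emph{does} have a unique half-pipe reflection. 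Moreover, Lemma~\ref{lemma reflections C1} and Remark~\ref{remark rescaled holonomy} give the explicit $C^1$ limit $\lim_{t\to0}\r_{|t|}r_{\p i}(t)\r_{|t|}^{-1}=\phi(-\mathrm{id},\mp 2Jv_i)$, which one checks fixes this spacelike limit hyperplane pointwise --- hence equals that unique half-pipe reflection. This reconciles the two families of charts and completes the proof that $\lim_{t\to0}\r_{|t|}\mathcal{X}_t$ is a simple half-pipe cone-manifold. Finally, all three structures have spacelike singularity and finite volume by Corollary~\ref{cor: finite volume polytope} applied cell-by-cell (the singularity being compact when $\mathcal N\to\mathcal C$ is finite), which, together with Corollary~\ref{cor: transition manifold} and the angle computations above, assembles into the statement of Theorem~\ref{teo: main} (in Propositions~\ref{prop: near_Sigma} and the preceding results).
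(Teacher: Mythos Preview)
Your argument handles the generic points of the $2$-strata of $\Sigma_t$ correctly, and the holonomy computation giving $\alpha_t=2\theta_t$ and $\beta_t=-2\varphi_t$ is fine. However, there is a genuine gap: you only verify the cone-manifold axioms near interior points of a ridge ${\mathcal R}_{\p i\p j}$, whereas $\Sigma_t$ is a foam, with $1$-strata and $0$-strata as well. Definition~\ref{def:proj_simple_cone-mfd} requires stratum-preserving charts into a model $\mathcal D=D(\l H_1\cap\cdots\cap\l H_4)$ at \emph{every} point, and near an edge or a vertex of the foam your two-hyperplane local model $D(\p i\cap\p j)$ does not suffice. Moreover, even within a $2$-stratum you have not addressed the points coming from vertices of type ${\mathcal V}_{\p i\p j\m k\l X}$: there the first step of the construction glues several copies of ${\mathcal P}_t$ along ${\mathcal F}_{\m k}$ and ${\mathcal F}_{\l X}$ before the doubling, and one must check that the resulting link is still the correct cone $3$-sphere.

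The paper's proof closes exactly this gap by working at the level of links of the finite vertices. Using the symmetries of Lemma~\ref{lem:symmetry} and the classification in Proposition~\ref{prop: vertices}, it reduces to the three representative vertices ${\mathcal V}_{\p1\p3\m0\l A}$, ${\mathcal V}_{\p1\p3\p5\m0}$, and ${\mathcal V}_{\p1\p3\p5\p7}$, and traces through both steps of the gluing on their (spherical or HS) links. In each case the result is identified with the link of a point in a $2$-, $1$-, or $0$-stratum of the single model $\mathcal D=D(\p1\cap\p3\cap\p5\cap\p7)$, which shows that $\mathcal X_t$ is locally modelled on $\mathcal D$ everywhere. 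Your half-pipe discussion is essentially correct and matches the paper's Remark~\ref{remark rescaled holonomy}, but it too is incomplete without this vertex-by-vertex link analysis.
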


\begin{proof}
Let us fix $t\in I^-$. We will show that $\mathcal{X}_t$ is locally modelled on 
$$\mathcal D=D(\p1\cap\p3\cap\p5\cap\p7)$$
in the sense of Definition \ref{def:proj_simple_cone-mfd}, where for each bounding hyperplane of the polytope $\p1\cap\p3\cap\p5\cap\p7\subset\SP^4$ we choose the unique AdS reflection that fixes it (recall Section \ref{sec: reflections} about reflections).

To this purpose, it suffices to look at the effect of the gluing of the copies of ${\mathcal P}_t$ on the link ${\mathcal L}_{\mathcal V}$ of each finite vertex ${\mathcal V}$. By symmetry (Lemma \ref{lem:symmetry}) and Proposition \ref{prop: vertices}, it suffices to consider the vertices ${\mathcal V}_{\p1\p3\m0\l A}$, ${\mathcal V}_{\p1\p3\p5\m0}$ and ${\mathcal V}_{\p1\p3\p5\p7}$ only. Recall from Section \ref{sec: construction} that $\mathcal{X}_t$ is built from some copies of ${\mathcal P}_t$ in two steps.  We refer to Figure \ref{fig: gluing_link}.

\begin{figure}
\includegraphics[scale=.1]{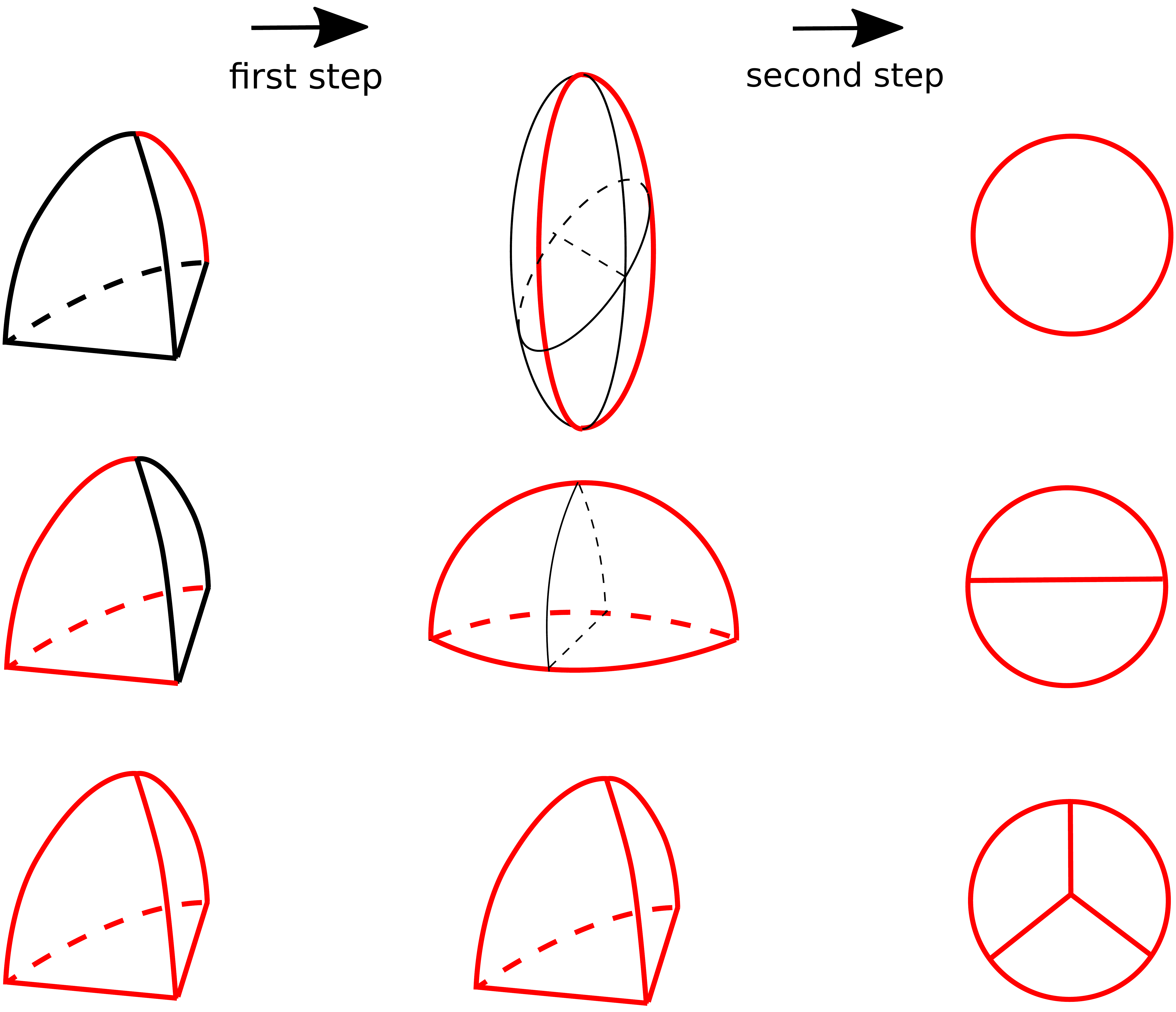}
\caption[Gluing the links of the finite vertices.]{\footnotesize The effect of the gluing on the link ${\mathcal L}_{\mathcal V}$ of a finite vertex ${\mathcal V}$ of ${\mathcal P}_t$ (see the proof of Proposition \ref{prop: near_Sigma}). At each line we see the effect on a different class of vertices (see Figure \ref{fig:links}). The two arrows represent the two steps of the construction of $\mathcal{X}_t$. In the first two columns we have some polyhedra, while in the third column we have cone-manifolds homeomorphic to $S^3$ (the singular locus is a red graph in $S^3$). These polyhedra and cone-manifolds are spherical when $t\in I^+$, and HS (with spacelike red locus contained in the de Sitter region) when $t\in I^-$. Each of the three cone-manifolds in the third column is the link of a point in a 2- (top), 1- (centre), and 0-stratum (bottom) of the cone-manifold $\mathcal{X}_t$.}
\label{fig: gluing_link}
\end{figure}

\begin{itemize}
\item If ${\mathcal V}={\mathcal V}_{\p1\p3\m0\l A}$, the same argument in the proof of Proposition \ref{prop: orbifold covering} implies that at the first step we glue 4 copies of ${\mathcal L}_{\mathcal V}$ around its edge of type $\m0\l A$. The resulting space is a polyhedron in $\HS^3$ (see Section \ref{sec:HS}) obtained as the intersection of two spacelike half-spaces. At the second step this polyhedron is doubled, and we get an HS cone 3-sphere with singular locus a spacelike unknotted circle in the de Sitter region. This is the link of a point in a 2-stratum of $\mathcal D$ (corresponding to $\partial \p 1\cap\partial \p 3$).
\item If ${\mathcal V}={\mathcal V}_{\p1\p3\p5\m0}$, at the first step we just double ${\mathcal L}_{\mathcal V}$ along its $\m0$-face, and then double the resulting polyhedron, to get an HS cone-sphere with singular locus a spacelike unknotted theta-graph in the de Sitter region. This is the link of a point in a 1-stratum of $\mathcal D$ (corresponding to $\partial \p 1\cap\partial \p 3\cap\partial\p5$).
\item If ${\mathcal V}={\mathcal V}_{\p1\p3\p5\p7}$ the link ${\mathcal L}_{\mathcal V}$ is doubled, to get an HS cone-sphere with singular locus a spacelike unknotted complete graph with four vertices in the de Sitter region. This is the link of a vertex of $\mathcal D$ (corresponding to $\partial \p 1\cap\partial \p 3\cap\partial\p5\cap\p7$).
\end{itemize}
In particular, giving $\mathcal{X}_t$ the naturally induced stratification, it is locally modelled on $\mathcal D$.
The proof for $t\in I^-$ is complete.

We omit the details for the hyperbolic and half-pipe case, since the proof goes exactly as in the AdS case with the obvious modifications (see Remark \ref{remark rescaled holonomy} regarding the choice of an HP reflection along each bounding hyperplane for the half-pipe case).
\end{proof}

\begin{figure}[b]
\begin{minipage}[c]{.35\textwidth}
\centering
\includegraphics[scale=.25]{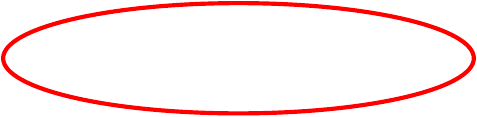}
\end{minipage}
\begin{minipage}[c]{.45\textwidth}
\centering
\includegraphics[scale=.25]{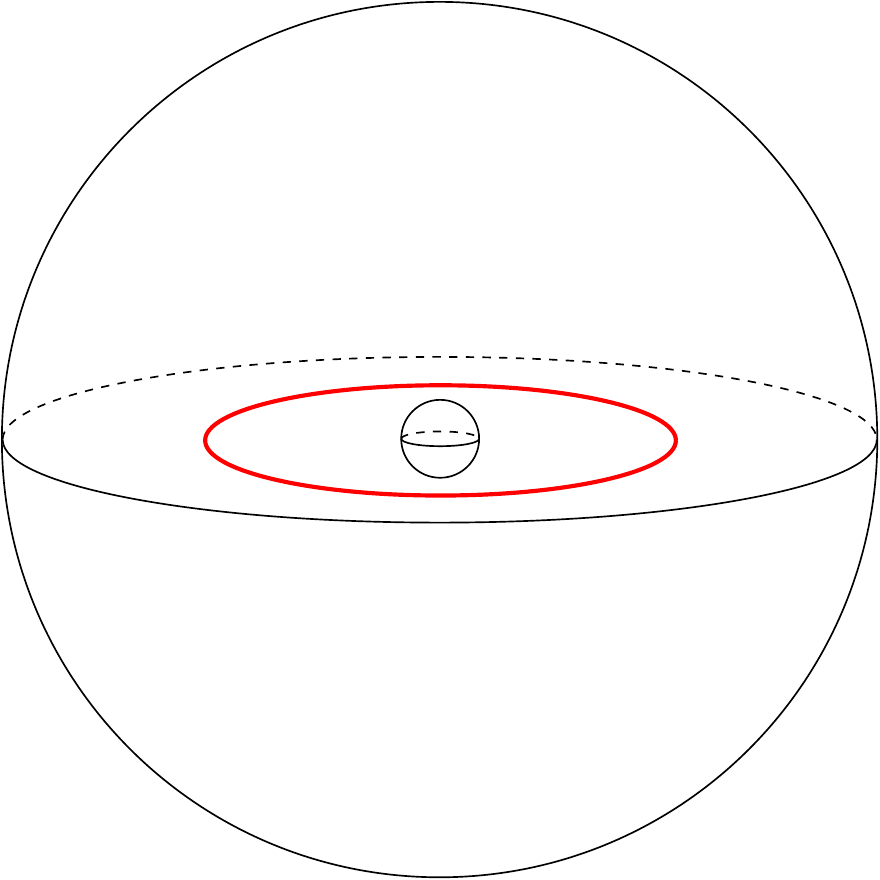}
\end{minipage}
\caption[The link of a point in a 2-stratum of $\Sigma_t$.]{\footnotesize The link of a point in a 2-stratum of $\Sigma_t\subset\mathcal X_t$ is a cone 3-sphere with singular locus an unknotted circle (drawn in red). The geometry is spherical when $t\in I^+$ (left), and HS when $t\in I^-$ (right). In the HS case, the two balls (one internal and one external) are copies of $\Hyp^3$ and represent the timelike directions --- see Section \ref{def: AdS cone-manifold}.}
\label{fig:transition link 1}
\end{figure}

\begin{figure}[b]
\begin{minipage}[c]{.35\textwidth}
\centering
\includegraphics[scale=.25]{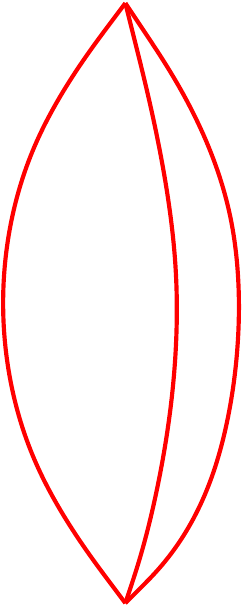}
\end{minipage}
\begin{minipage}[c]{.45\textwidth}
\centering
\includegraphics[scale=.25]{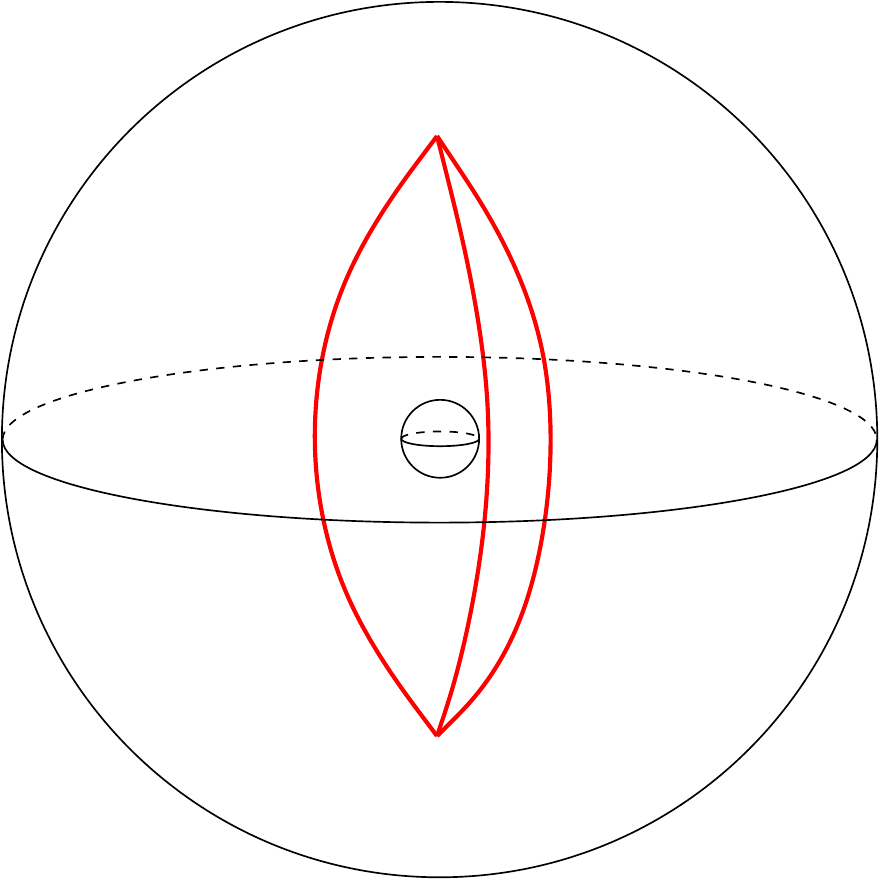}
\end{minipage}
\caption[The link of a point in an edge of $\Sigma_t$.]{\footnotesize
The link of a point in an edge (i.e. a 1-stratum) of $\Sigma_t\subset\mathcal X_t$, $t\neq0$, is a cone 3-sphere with singular locus an unknotted theta-graph.}
\label{fig:transition link 2}
\end{figure}

\begin{figure}[b]
\begin{minipage}[c]{.35\textwidth}
\centering
\vspace{0.22cm}
\includegraphics[scale=.25]{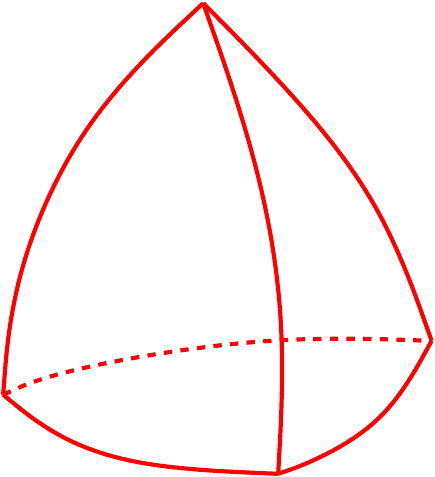}
\end{minipage}
\begin{minipage}[c]{.45\textwidth}
\centering
\includegraphics[scale=.25]{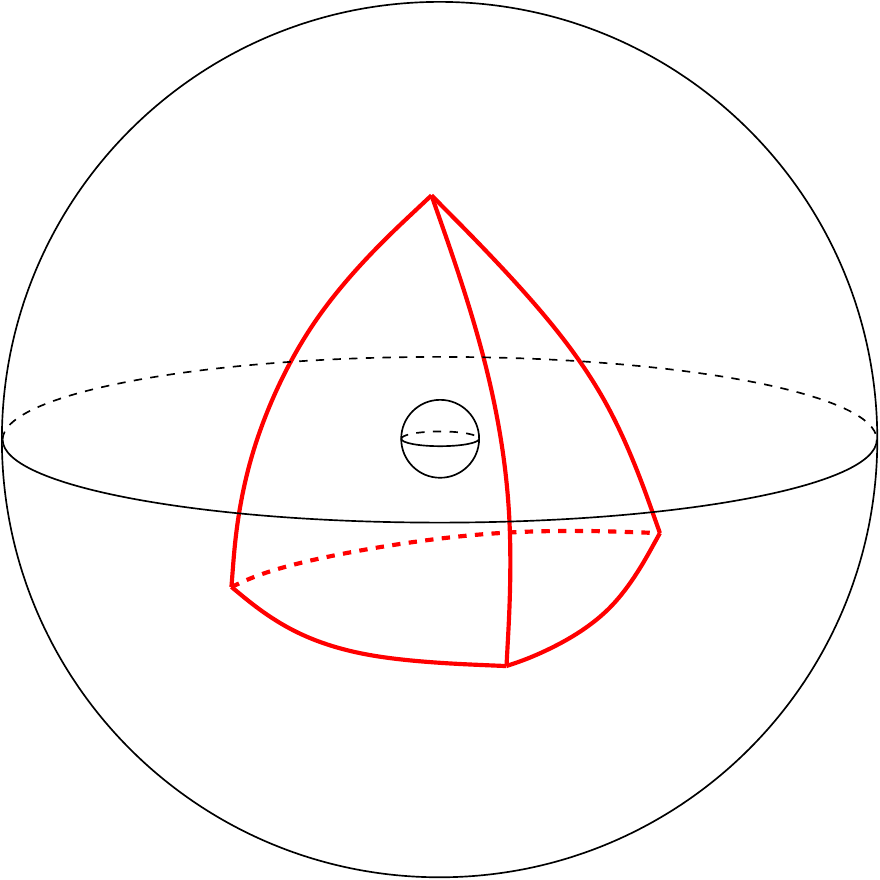}
\end{minipage}
\caption[The link of a vertex of $\Sigma_t$.]{\footnotesize The link of a vertex (i.e. a 0-stratum) of $\Sigma_t\subset\mathcal X_t$, $t\neq0$, is a cone 3-sphere with singular locus an unknotted complete graph on four vertices.}
\label{fig:transition link 3}
\end{figure}

The cone structure on the links of points of {$\Sigma_t$} is drawn in Figures \ref{fig:transition link 1}, \ref{fig:transition link 2} and \ref{fig:transition link 3}. As mentioned in Remark \ref{rem: AdS-cone}, there is indeed a geometric transition from spherical to HS cone structures, as a singular version of the transition that one can visualise in Figure \ref{fig:transition_stabilizers}.

By noticing that as $t\to0$ the cone-manifold $\mathcal{X}_t$ collapses to the hyperbolic 3-manifold $\mathcal{N}$, the proof of Theorem \ref{teo: main} is complete. We conclude with a last observation.

\begin{remark} \label{rem: cuboctahedral_manifolds}
Theorem \ref{teo: main} can be extended as follows. A \emph{cuboctahedral manifold} is a hyperbolic 3-manifold $\mathcal{N}$ that can be tessellated by some copies of the ideal right-angled cuboctahedron ${\mathcal C}$. Note that ${\mathcal C}$ has octahedral symmetry $\Isom({\mathcal C})\cong\Z/2\Z\times\mathfrak S_4$. Note also that every isometry between two faces of ${\mathcal C}$ is the restriction of a symmetry of ${\mathcal C}$. Moreover, as Figure \ref{fig:cuboctahedron} suggests, we have $\Isom({\mathcal P}_t)\cong\Isom({\mathcal C})$ in such a way that every symmetry of ${\mathcal C}={\mathcal P}_0\subset {\mathcal P}_t$ (see Proposition \ref{prop: cuboctahedron}) is the restriction of a symmetry of ${\mathcal P}_t$. (To show this, the same argument of \cite[Proposition 2.4]{RS} applies, by substituting ``upper tetrahedral facet'' with the link of the vertex ${\mathcal V}_{\p1\p3\p5\p7}$.)

Consider now the natural chequerboard colouring of the triangular faces of ${\mathcal C}$, inherited from that of the octahedron. It is easy to check that a symmetry of ${\mathcal C}$ preserves the chequerboard colouring if and only if its $\Z/2\Z$-factor is trivial. Moreover, this holds if and only if the corresponding symmetry of ${\mathcal P}_t$ preserves the half-space $\{x_4\geq0\}$ (and thus fixes the vertex ${\mathcal V}_{\p1\p3\p5\p7}$).

Thanks to this, it is not difficult to conclude that Theorem \ref{teo: main} holds for every cuboctahedral manifold $\mathcal{N}$ with a tessellation such that every pairing map between the copies of ${\mathcal C}$ is induced by a symmetry of ${\mathcal C}$ which preserves the chequerboard colouring. More generally, for every cuboctahedral manifold $\mathcal{N}$ one can find a 4-manifold (such that itself or a double covering is homeomorphic to $\mathcal N\times S^1$) supporting a geometric transition as in the conclusions of Theorem \ref{teo: main}.
\end{remark}

\bibliographystyle{alpha}
\bibliographystyle{ieeetr}
\bibliography{sr-bibliography}

\end{document}